\numberwithin{equation}{section}
\newtheorem{thm}{Theorem}[section]
\newtheorem{lem}[thm]{Lemma}
\newtheorem{cor}[thm]{Corollary}
\newtheorem{pro}[thm]{Proposition}
\newtheorem{ex}[thm]{Example}
\newtheorem{defi}[thm]{Definition}
\newcommand{\g}{\mathfrak g}
\newcommand{\kt}{\mathfrak t}
\newcommand{\kl}{\mathfrak l}
\newcommand{\kr}{\mathfrak r}
\newcommand{\bk}{\mathbf{k}}
\newcommand{\bz}{\mathbb Z}
\newcommand{\End}{\mathrm{End}}
\newcommand{\ad}{\mathrm{ad}}
\newcommand{\fl}{\mathbf l}
\newcommand{\fr}{\mathbf r}
\newcommand{\fu}{\mathbf u}
\def\id{\mathop {\fam0 id}\nolimits}
\newcommand{\ZYBE}{\mathrm{ZYBE}}
\newcommand{\CYBE}{\mathrm{CYBE}}
\newcommand{\AYBE}{\mathrm{AYBE}}
\newcommand{\PPYBE}{\mathrm{PPYBE}}
\newcommand{\PLYBE}{\mathrm{PLYBE}}
\newcommand{\PYBE}{\mathrm{PYBE}}
\begin{document}

\title[Affinization of Zinbiel and pre-Poisson bialgebras, Poisson bialgebras]
{Affinization of Zinbiel bialgebras and pre-Poisson bialgebras, infinite-dimensional
Poisson bialgebras}

\author{Yanhong Guo}
\address{School of Mathematics and Statistics, Henan University, Kaifeng 475004,
China}
\email{guoaihong1983@163.com}

\author{Bo Hou}
\address{School of Mathematics and Statistics, Henan University, Kaifeng 475004,
China}
\email{houbo@henu.edu.cn, bohou1981@163.com}

\vspace{-10mm}

\begin{abstract}
The purpose of this paper is to construct infinite-dimensional Poisson bialgebras by the
affinization of pre-Poisson algebras. There is a natural Poisson algebra structure on the
tensor product of a pre-Poisson algebra and a perm algebra, and the Poisson algebra
structure on the tensor product of a pre-Poisson algebra and a special perm algebra
characterizes the pre-Poisson algebra. We extend such correspondences to the context
of bialgebras, that is, there is a Poisson bialgebra structure on the tensor product
of a pre-Poisson bialgebra and a quadratic $\bz$-graded perm algebra.
In this process, we provide the affinization of Zinbiel bialgebras, and give a
correspondence between symmetric solutions of the Yang-Baxter equation in pre-Poisson
algebras and certain skew-symmetric solutions of the Yang-Baxter equation in the induced
infinite-dimensional Poisson algebras. The similar correspondences for the related
triangular bialgebra structures and $\mathcal{O}$-operators are given.
\end{abstract}

\keywords{Poisson bialgebra, pre-Poisson bialgebra, Zinbiel bialgebra,
infinitesimal bialgebra, affinization of pre-Poisson bialgebra, Yang-Baxter equation,
$\mathcal{O}$-operator.}
\makeatletter
\@namedef{subjclassname@2020}{\textup{2020} Mathematics Subject Classification}
\makeatother
\subjclass[2020]{
17B62, 
17B63, 
17B38, 
17A30, 
17D25. 
}

\maketitle

\vspace{-12mm}
\tableofcontents 



\vspace{-12mm}

\section{Introduction}\label{sec:intr}
The purpose of this paper is to give a construction of infinite-dimensional Poisson
bialgebras via the affinization of pre-Poisson bialgebras.  A pre-Poisson
algebra is both a Zinbiel algebra and a pre-Lie algebra which are compatible in a
certain sense. We use the affinization of Zinbiel bialgebras and pre-Lie bialgebras
to achieve the affinization of pre-Poisson bialgebras.

\smallskip\noindent
1.1. {\bf Poisson algebras and pre-Poisson algebras.}
Poisson algebras were originally coming from the study of the Hamiltonian mechanics.
A Poisson algebra is the algebraic structure corresponding to a Poisson manifold.
Poisson algebras serve as fundamental structures in various areas of mathematics
and mathematical physics, including Poisson geometry \cite{Vai,Wei},
algebraic geometry \cite{GK,Pol}, classical and quantum mechanics \cite{Arn,Dir},
quantum groups \cite{Dri} and quantization theory \cite{Hue,Kon}.

The notion of a pre-Poisson algebra was first introduced by Aguiar \cite{Agu},
which combines a Zinbiel algebra and a pre-Lie algebra on the same vector space
satisfying some compatibility conditions. The notion of Zinbiel algebras
(also called dual Leibniz algebras) was introduced by Loday in \cite{Lod1}, and further
studied in \cite{Liv,Lod2}. Pre-Lie algebras (also called left-symmetric algebras,
quasi-associative algebras, Vinberg algebras and so on) are a class of nonassociative
algebras coming from the study of convex homogeneous cones, affine manifolds and
affine structures on Lie groups, and cohomologies of associative algebras \cite{Bur,Baa}.
They also appeared in many fields in mathematics and mathematical physics, such as complex
and symplectic structures on Lie groups and Lie algebras, Poisson brackets and infinite
dimensional Lie algebras, vertex algebras, integrable systems and quantum field theory.
A pre-Poisson algebra naturally gives rise to its sub-adjacent Poisson algebra through the
anti-commutator of Zinbiel algebras and the commutator of pre-Lie algebras.
On the other hand, a Rota-Baxter operator of weight zero on a Poisson algebra can
give rise to a pre-Poisson algebra \cite{Agu}.

\smallskip\noindent
1.2. {\bf Bialgebras theory and classical Yang-Baxter equation.}
A bialgebra structure on a given algebra structure is obtained as a coalgebra
structure together which gives the same algebra structure on the dual space with
a set of compatibility conditions between the products and coproducts.
Two famous examples of bialgebras are Lie bialgebras \cite{Dri} and antisymmetric
infinitesimal bialgebras \cite{Zhe,Bai}. Lie bialgebras can be viewed as a linearization
of Poisson-Lie groups. It were first introduced by Drinfeld in the context of the
theory of Yang-Baxter equations and quantum groups.
Infinitesimal bialgebras first appeared in the work of Joni and Rota to give an
algebraic framework for the calculus of divided differences \cite{JR}.

The bialgebra theories of various algebra structures have been extensively developed,
such as pre-Lie bialgebras \cite{Bai1}, Jordan bialgebras \cite{Zhe}, Novikov
bialgebras \cite{HBG}, perm bialgebras \cite{Hou,LZB}, and so on. The Poisson
bialgebras have been studied in \cite{NB}. Recently, Wang and Sheng have given a
bialgebra theory of pre-Poisson algebras and shown that there is a one-to-one
correspondence between quadratic Rota-Baxter pre-Poisson algebras and factorizable
pre-Poisson bialgebras \cite{WS}. In \cite{LL}, Lin and Lu have studied the
quasi-triangular and factorizable Poisson bialgebras. In this paper, we construct
an infinite-dimensional Poisson bialgebras by the affinization of pre-Poisson bialgebras.
More precisely, we show that there is a Poisson bialgebra structure on the tensor product
of a pre-Poisson bialgebra and a quadratic perm algebra, and a bialgebra structure
for a pre-Poisson algebra which could be characterized by the fact that its affinization
by a special quadratic perm algebra gives an infinite-dimensional Poisson bialgebra.

The classical Yang-Baxter equation (for short $\CYBE$) arose from the study of
inverse scattering theory in the 1980s and was recognized as the semi-classical limit
of the quantum Yang-Baxter equation \cite{Yang,Bax}. The study of the $\CYBE$
in a Lie algebra has substantial ramifications and applications in the areas
of symplectic geometry, quantum groups, integrable systems, and quantum field theory
\cite{BD,Sto}. The classical Yang-Baxter equations in a Poisson algebra and a
pre-Poisson algebra have been studied in \cite{NB} and \cite{WS}. Similar to
the results for Lie algebra, we can obtain that a skew-symmetric solution of the
Yang-Baxter equation in a Poisson algebra (resp, a symmetric solution of the Yang-Baxter
equation in a pre-Poisson algebra) induces a triangular Poisson bialgebra structure (resp.
a triangular pre-Poisson bialgebra structure). In this paper, we show that any symmetric
solution of the Yang-Baxter equation in a pre-Poisson algebra can be elevated to a
skew-symmetric solution of the Yang-Baxter equation in the Poisson algebra obtained by
the affinization, thus obtaining that the Poisson bialgebra obtained by the affinization
is triangular if the pre-Poisson bialgebra is triangular.

\smallskip\noindent
1.3. {\bf Affinization of pre-Poisson bialgebras.}
An important construction of infinite-dimensional algebras is a process of
affinization which equips an algebra structure on the tensor product, over a
field $\bk$, $A\otimes\bk[\kt, \kt^{-1}]$, of a certain algebra $A$ of finite dimension
with another algebra structure on the space of Laurent polynomials $\bk[\kt, \kt^{-1}]$
\cite{BN}. The classical instance of the affinization is the Lie algebra affinization
$L\otimes\bk[\kt, \kt^{-1}]$, where $L$ is a finite-dimensional Lie algebra and
$\bk[\kt, \kt^{-1}]$ is the commutative associative algebra \cite{Kac}.
Recently, further research has been conducted on the affinization of the bialgebra structure.
Since the operad of (left) Novikov algebras and the operad of right Novikov algebras
are Koszul dual, in \cite{HBG}, Hong, Bai and Guo have proposed a method for
constructing infinite-dimensional Lie bialgebras using the affinization of Novikov
bialgebras. Similarly, since the operads of perm algebras and pre-Lie algebras are
the Koszul dual each other, Lin, Zhou and Bai have constructed infinite-dimensional
Lie bialgebras by using the pre-Lie bialgebras and perm bialgebras, respectively \cite{LZB}.

Roughly speaking, the affinization of a given algebra structure is to define an
algebra structure on the vector space of Laurent polynomials and obtain another algebra
structure on the tensor product, which could resolve the given algebra structure in turn.
In this paper, we consider the affinization of Zinbiel bialgebras and pre-Poisson
bialgebras. We first show that the tensor product of a Zinbiel algebra and a perm algebra
is a commutative associative algebra and when the perm algebra is a special
perm algebra on the vector space of Laurent polynomials, the tensor product characterizes
the Zinbiel algebra as a property of the associative algebra. Next, we consider
the dual version of the Zinbiel algebra affinization for Zinbiel coalgebras.
We introduce the notion of completed cocommutative coassociative coalgebras
and prove that there is a natural completed cocommutative coassociative coalgebra
structure on the tensor product of a Zinbiel coalgebra and the completed
perm coalgebra. Therefore, we use the quadratic $\bz$-graded perm algebra lifts the
affinization characterizations of Zinbiel algebras and Zinbiel coalgebras to
the level of Zinbiel bialgebras. Based on these results and the affinization of
pre-Lie bialgebras given in \cite{LZB}, we present the affinization of pre-Lie bialgebras.

\smallskip\noindent
{\bf Theorem.} {\it Let $(A, \ast, \circ, \vartheta, \theta)$ be a finite-dimensional
pre-Poisson bialgebra, $(B=\oplus_{i\in\bz}B_{i}, \diamond, \omega)$ be a quadratic
$\bz$-graded perm algebra and $(A\otimes B, \cdot, [-,-])$ be the induced $\bz$-graded
Poisson algebra from $(A, \ast, \circ)$ by $(B=\oplus_{i\in\bz}B_{i}, \diamond)$.
Define two linear maps $\Delta, \delta: A\otimes B\rightarrow(A\otimes B)\otimes
(A\otimes B)$ by Eqs. \eqref{Zcoass} and \eqref{PLcoLie} for $\nu=\nu_{\omega}$.
Then $(A\otimes B, \cdot, [-,-], \Delta, \delta)$ is a completed Poisson bialgebra.

Moreover, if $(B=\oplus_{i\in\bz}B_{i}, \diamond, \omega)$ is the quadratic $\bz$-graded
perm algebra given in Example \ref{ex:qu-perm}, then $(A\otimes B, \cdot, [-,-],
\Delta, \delta)$ is a completed Poisson bialgebra if and only if
$(A, \ast, \circ, \vartheta, \theta)$ is a pre-Poisson bialgebra.}

\smallskip\noindent
Moreover, we also consider the affinization of quasi-Frobenius pre-Poisson algebras.

With the development of bialgebra theory, the relationship between different types
of bialgebras has attracted the attention of scholars \cite{HBG,LZB,LLB,HBG1,CH}.
It is worth noting that although a pre-Poisson algebra naturally has a Poisson algebra
structure (by the anti-commutator of Zinbiel algebras and the commutator of pre-Lie algebras),
there is generally no Poisson bialgebra structure on a pre-Poisson bialgebra unless strong
conditions are added. Here, we establish the connection between Poisson bialgebras and
pre-Poisson bialgebras using the affinization.

\smallskip\noindent
1.4. {\bf Outline of the paper.}
This paper is organized as follows. In Section \ref{sec:poi-alg}, we recall the
notions of pre-Poisson algebras, Poisson algebras perm algebras and representations of
pre-Poisson algebras and Poisson algebras. Then we show in Proposition \ref{pro:tensor-ass}
that there are Poisson algebra structures on the tensor products of perm algebras
and pre-Poisson algebras. In Section \ref{sec:zbialg}, we give the affinization of
Zinbiel bialgebras by quadratic $\bz$-graded perm algebras. We introduce the completed
tensor product, completed coproduct, and show in Proposition \ref{pro:Zco-coass}
that there exists a completed cocommutative coassociative coalgebra structure on the
tensor product of a Zinbiel coalgebra and a completed perm coalgebra, which could
give a characterization of the Zinbiel coalgebra by its affinization. Therefore,
we show in Theorem \ref{thm:Z-perm-ass} that there is a natural completed infinitesimal
bialgebra structure on the tensor product of a perm bialgebra and a quadratic
$\bz$-graded perm algebra. In Section \ref{sec:pbialg}, based on the affinization of
Zinbiel bialgebras and pre-Lie bialgebras, we provide the affinization of pre-Poisson
bialgebras in Theorem \ref{thm:PP-Poisbia}. Moreover, we present a natural construction
of skew-symmetric solutions of the $\PYBE$ in the Poisson algebra $(A\otimes B, \cdot,
[-,-])$ from the symmetric solutions of the $\PPYBE$ in pre-Poisson algebra $(A, \ast,
\circ)$. Finally, a construction of quasi-Frobenius Poisson algebras from quasi-Frobenius
pre-Poisson algebras corresponding to a class of symmetric solutions of the $\PPYBE$ is given.

Throughout this paper, we fix $\bk$ as a field of characteristic zero.
All the vector spaces, algebras are over $\bk$ and are finite-dimensional
unless otherwise specified, and all tensor products are also over $\bk$.
For any vector space $V$, we denote $V^{\ast}$ the dual space of $V$. We denote the
identity map by $\id$. For vector spaces $A$ and $V$, we fix the following notations:
\begin{enumerate}
\item[$(i)$] Let $f: V\rightarrow V$ be a linear map. Define a linear map $f^{\ast}:
      V^{\ast}\rightarrow V^{\ast}$ by
      $\langle f^{\ast}(\xi),\; v\rangle=\langle\xi,\; f(v)\rangle$,
      for any $v\in V$ and $\xi\in V^{\ast}$.
\item[$(ii)$] Let $\mu: A\rightarrow\End_{\bk}(V)$ be a linear map. Define a linear
      map $\mu^{\ast}: A\rightarrow\End_{\bk}(V^{\ast})$ by
      $\langle\mu^{\ast}(a)(\xi),\; v\rangle=-\langle\xi,\; \mu(a)(v)\rangle$,
      for any $a\in A$, $v\in V$ and $\xi\in V^{\ast}$.
\item[$(iii)$] For any $r\in A\otimes A$, we define a linear map
      $r^{\sharp}: A^{\ast}\rightarrow A$ by
      $\langle\xi_{2},\; r^{\sharp}(\xi_{1})\rangle=\langle\xi_{1}\otimes\xi_{2},\;
      r\rangle$, for any $\xi_{1}, \xi_{2}\in A^{\ast}$.
\end{enumerate}

\section{Poisson bialgebras and pre-Poisson bialgebras}\label{sec:poi-alg}
In this section, we recall the notions of Poisson algebras, pre-Poisson algebras
and the bialgebra theory of Poisson algebras and pre-Poisson algebras.
For the details, see \cite{NB,Agu,WS}.
First we recall the notion of Poisson algebras and pre-Poisson algebras.

\begin{defi}\label{def:Palg}
A {\bf Poisson algebra} is a triple $(P, \cdot, [-,-])$, where $(P, \cdot)$ is a
commutative associative algebra and $(P, [-,-])$ is a Lie algebra, such that the
Leibniz rule holds:
$$
[p_{1},\; p_{2}p_{3}]=[p_{1}, p_{2}]p_{3}+p_{2}[p_{1}, p_{3}],
$$
for any $p_{1}, p_{2}, p_{3}\in P$, where $p_{1}p_{2}:=p_{1}\cdot p_{2}$.
\end{defi}

Let $(A, \cdot)$ be a commutative associative algebra. Recall that a representation of
$(A, \cdot)$ is a pair $(V, \mu)$, where $V$ is a vector space and $\mu:
A\rightarrow\End_{\bk}(V)$ is a linear map such that $\mu(a_{1})\mu(a_{2})=\mu(a_{1}a_{2})$
for any $a_{1}, a_{2}\in A$. Let $(\g, [-,-])$ be a Lie algebra. Recall that a
representation of $(\g, [-,-])$ is a pair $(V, \rho)$, where $V$ is a vector space
and $\mu: \g\rightarrow\End_{\bk}(V)$ is a linear map such that $\rho(g_{1}g_{2})=
\rho(a_{1})\rho(a_{2})-\rho(a_{2})\rho(a_{1})$ for any $g_{1}, g_{2}\in\g$.
Let $(P, \cdot, [-,-])$ be a Poisson algebra, $V$ be a vector space and
$\mu, \rho: P\rightarrow\End_{\bk}(V)$ be two linear maps. Recall that $(V, \mu, \rho)$ is
called a {\bf representation of} $(P, \cdot, [-,-])$, if $(V, \rho)$ is a
representation of the Lie algebra $(P, [-,-])$ and $(V, \mu)$ is a representation
of the commutative associative algebra $(P, \cdot)$ and $\mu, \rho$ satisfy
the following conditions:
\begin{align*}
\rho(p_{1}p_{2})&=\mu(p_{2})\rho(p_{1})+\mu(p_{1})\rho(p_{2}),\\
\mu([p_{1}, p_{2}])&=\rho(p_{1})\mu(p_{2})-\mu(p_{2})\rho(p_{1}),
\end{align*}
for any $p_{1}, p_{2}\in P$. In particular, $(P, \fu_{P}, \ad_{P})$ is a
representation of the Poisson algebra $(P, \cdot, [-,-])$, which is called the
{\bf regular representation} of $(P, \cdot, [-,-])$, where $\fu_{P}(p_{1})(p_{2})
=p_{1}p_{2}$ and $\ad_{P}(p_{1})(p_{2})$ $=[p_{1}, p_{2}]$ for any $p_{1}, p_{2}\in P$.
Moreover, $(P^{\ast}, -\fu^{\ast}_{P}, \ad^{\ast}_{P})$ is also a representation of
the Poisson algebra $(P, \cdot, [-,-])$, which is called the {\bf coregular
representation} of $(P, \cdot, [-,-])$.

\begin{defi}\label{def:Zalg}
A {\bf Zinbiel algebra} $(A, \ast)$ is a vector space $A$ together with a bilinear
operation $\ast: A\otimes A\rightarrow A$ such that
$$
a_{1}\ast(a_{2}\ast a_{3})=(a_{1}\ast a_{2})\ast a_{3}+(a_{2}\ast a_{1})\ast a_{3},
$$
for any $a_{1}, a_{2}, a_{3}\in A$.
\end{defi}

In a Zinbiel algebra $(A, \ast)$, it is easy to see that $a_{1}\ast(a_{2}\ast a_{3})
=a_{2}\ast(a_{1}\ast a_{3})$ for any $a_{1}, a_{2}, a_{3}\in A$. Moreover, if we define
$a_{1}a_{2}=a_{1}\ast a_{2}+a_{2}\ast a_{1}$, then $(A, \cdot)$ is a commutative
associative algebra.

\begin{defi}\label{def:PLalg}
A {\bf pre-Lie algebra } $(A, \circ)$ is a vector space $A$ equipped with a bilinear
multiplication $\circ: A\otimes A\rightarrow A$ such that for any $a_{1}, a_{2}, a_{3}\in A$,
$$
(a_{1}\circ a_{2})\circ a_{3}-a_{1}\circ(a_{2}\circ a_{3})
=(a_{2}\circ a_{1})\circ a_{3}-a_{2}\circ(a_{1}\circ a_{3}).
$$
\end{defi}

Let $(A, \circ)$ be a pre-Lie algebra. Then the commutator $[a_{1}, a_{2}]:=
a_{1}\circ a_{2}-a_{2}\circ a_{1}$ for any $a_{1}, a_{2}\in A$ defines a Lie algebra
$(A, [-,-])$, which is called the sub-adjacent Lie algebra of $(A, \circ)$.

\begin{defi}[\cite{Agu}]\label{def:PPalg}
A {\bf pre-Poisson algebra} is a triple $(A, \ast, \circ)$, where $(P, \ast)$ is a
Zinbiel algebra and $(P, \circ)$ is a pre-Lie algebra, satisfying the following
compatibility conditions:
\begin{align}
(a_{1}\circ a_{2}-a_{2}\circ a_{1})\ast a_{3}
&=a_{1}\circ(a_{2}\ast a_{3})-a_{2}\ast(a_{1}\circ a_{3}), \label{PPalg1}\\
(a_{1}\ast a_{2}+a_{2}\ast a_{1})\circ a_{3}
&=a_{1}\ast(a_{2}\circ a_{3})+a_{2}\ast(a_{1}\circ a_{3}), \label{PPalg2}
\end{align}
for any $a_{1}, a_{2}, a_{3}\in A$.
\end{defi}

\begin{ex}\label{ex:bialia}
Let $A$ be a two-dimensional vector space with a basis $\{e_{1}, e_{2}\}$.
If we define two linear maps $\ast, \circ: A\otimes A\rightarrow A$ by
$e_{1}\ast e_{1}=ae_{2}$, $e_{1}\circ e_{1}=be_{1}+ce_{2}$, $e_{1}\circ e_{2}=be_{2}
=e_{2}\circ e_{1}$ and $e_{1}\cdot e_{2}=e_{2}\cdot e_{1}=e_{2}\cdot e_{2}
=e_{2}\circ e_{2}=0$, then $(A, \ast, \circ)$ is a pre-Poisson algebra.
\end{ex}

Let $(A, \ast, \circ)$ be a pre-Poisson algebra. Then $(A, \cdot, [-,-])$ is a
Poisson algebra, where the bracket $[-,-]$ and the multiplication $\cdot$
are given by $[a_{1}, a_{2}]=a_{1}\circ a_{2}-a_{2}\circ a_{1}$ and
$a_{1}a_{2}=a_{1}\ast a_{2}+a_{2}\ast a_{1}$ for any $a_{1}, a_{2}\in A$.
On the other hand, recall that a {\bf Rota-Baxter operator} on a Poisson algebra
$(P, \cdot, [-,-])$ is a linear map $\mathcal{R}: P\rightarrow P$ satisfying
\begin{align*}
\mathcal{R}(p_{1})\mathcal{R}(p_{2})&=\mathcal{R}\big(\mathcal{R}(p_{1})p_{2}
+p_{1}\mathcal{R}(p_{2})\big),\\
[\mathcal{R}(p_{1}),\; \mathcal{R}(p_{2})]&=\mathcal{R}\big([\mathcal{R}(p_{1}),\, p_{2}]
+[p_{1},\, \mathcal{R}(p_{2})]\big),
\end{align*}
for any $p_{1}, p_{2}\in P$. Let $\mathcal{R}: P\rightarrow P$ be a Rota-Baxter
operator of a Poisson algebra $(P, \cdot, [-,-])$. Define new operations
$\ast_{\mathcal{R}}, \circ_{\mathcal{R}}: P\otimes P\rightarrow P$ by
$$
p_{1}\ast_{\mathcal{R}}p_{2}=\mathcal{R}(p_{1})p_{2}, \qquad
p_{1}\circ_{\mathcal{R}}p_{2}=[\mathcal{R}(p_{1}),\; p_{2}],
$$
for any $p_{1}, p_{2}\in P$. Then $(P, \ast_{\mathcal{R}}, \circ_{\mathcal{R}})$
is a pre-Poisson algebra \cite{Agu}.

Let us recall the notions of representations of Zinbiel algebras and pre-Lie algebras.
Then we introduce the representations of pre-Poisson algebras.
A {\bf representation of a Zinbiel algebra} $(A, \ast)$ is a triple $(V, \kl, \kr)$,
where $V$ is a vector space and $\kl, \kr: A\rightarrow\End_{\bk}(V)$ are linear maps
such that the following equations hold for all $a_{1}, a_{2}\in A$,
\begin{align*}
\kl(a_{1})\kl(a_{2})&=\kl(a_{1}\ast a_{2})+\kl(a_{2}\ast a_{1}), \\
\kr(a_{1}\ast a_{2})&=\kl(a_{1})\kr(a_{2})=\kr(a_{2})\kl(a_{1})+\kr(a_{2})\kr(a_{1}).
\end{align*}
A {\bf representation of a pre-Lie algebra} $(A, \circ)$ is a triple $(V, \hat{\kl},
\hat{\kr})$, where $V$ is a vector space, $\hat{\kl}, \hat{\kr}: A\rightarrow\End_{\bk}(V)$
are linear maps such that the following equations hold for all $a_{1}, a_{2}\in A$,
\begin{align*}
\hat{\kl}(a_{1})\hat{\kl}(a_{2})-\hat{\kl}(a_{1}\circ a_{2})
&=\hat{\kl}(a_{2})\hat{\kl}(a_{1})-\hat{\kl}(a_{2}\circ a_{1}),\\
\hat{\kl}(a_{1})\hat{\kr}(a_{2})-\hat{\kr}(a_{2})\hat{\kl}(a_{1})
&=\hat{\kr}(a_{1}\circ a_{2})-\hat{\kr}(a_{2})\kr(a_{1}).
\end{align*}

\begin{defi}
A {\bf representation of a pre-Poisson algebra} $(A, \ast, \circ)$ is a quintuple
$(V, \kl, \kr, \hat{\kl}, \hat{\kr})$, where $(V, \kl, \kr)$ is a representation of
the Zinbiel algebra $(A, \ast)$, $(V, \hat{\kl}, \hat{\kr})$ is a representation
of the pre-Lie algebra $(A, \circ)$ and for any $a_{1}, a_{2}\in A$,
\begin{align*}
\kl(a_{1}\circ a_{2}-a_{2}\circ a_{1})
&=\hat{\kl}(a_{1})\kl(a_{2})-\kl(a_{2})\hat{\kl}(a_{1}),\\
\kr(a_{1}\circ_P a_{2})
&=\kr(a_{2})\hat{\kr}(a_{1})-\kr(a_{2})\hat{\kl}(a_{1})+\hat{\kl}(a_{1})\kr(a_{2}),\\
\kr(a_{1}\circ_P a_{2})
&=\hat{\kr}(a_{2})\kl(a_{1})+\hat{\kr}(a_{2})\kr(a_{1})-\kl(a_{1})\hat{\kr}(a_{2}),\\
\hat{\kl}(a_{1}\ast a_{2}+a_{2}\ast a_{1})
&=\kl(a_{1})\hat{\kl}(a_{2})+\kl(a_{2})\hat{\kl}(a_{1}),\\
\hat{\kr}(a_{1}\ast a_{2})
&=\kl(a_{1})\hat{\kr}(a_{2})+\kr(a_{2})\hat{\kr}(a_{1})-\kr(a_{2})\hat{\kl}(a_{1}).
\end{align*}
\end{defi}

Let $(A, \ast, \circ)$ be a pre-Poisson algebra. If we define linear maps $\fl_{A},
\fr_{A}, \hat{\fl}_{A}, \hat{\fr}_{A}: A\otimes\End_{\bk}(A)$ by $\fl_{A}(a_{1})(a_{2})
=a_{1}\ast a_{2}$, $\fr_{A}(a_{1})(a_{2})=a_{2}\ast a_{1}$, $\hat{\fl}_{A}(a_{1})(a_{2})
=a_{1}\circ a_{2}$ and $\hat{\fr}_{A}(a_{1})(a_{2})=a_{2}\circ a_{1}$ for any $a_{1},
a_{2}\in A$, then $(A, \fl_{A}, \fr_{A}, \hat{\fl}_{A}, \hat{\fr}_{A})$ is a
representation of the pre-Poisson algebra $(A, \ast, \circ)$, which is called the
{\bf regular representation} of $(A, \ast, \circ)$, and $(A^{\ast}, -\fl^{\ast}_{A}
-\fr^{\ast}_{A}, \fr^{\ast}_{A}, \hat{\fl}^{\ast}_{A}-\hat{\fr}^{\ast}_{A},
-\hat{\fr}^{\ast}_{A})$ is also a representation of the pre-Poisson algebra $(A,
\ast, \circ)$, which is called the {\bf coregular representation} of $(A, \ast, \circ)$.

Next, we consider the bialgebra theory of Poisson algebras and pre-Poisson algebras.
Recall that a {\bf coassociative coalgebra} $(A, \Delta)$ is a vector space $A$
with a linear map $\Delta: A\rightarrow A\otimes A$ satisfying the coassociative law:
$$
(\Delta\otimes\id)\Delta=(\id\otimes\Delta)\Delta.
$$
A coassociative coalgebra $(A, \Delta)$ is called {\bf cocommutative} if
$\Delta=\tau\Delta$, where $\tau: A\otimes A\rightarrow A\otimes A$ is the
flip operator defined by $\tau(a_{1}\otimes a_{2}):=a_{2}\otimes a_{1}$ for
all $a_{1}, a_{2}\in A$. An {\bf infinitesimal bialgebra} is a triple $(A, \cdot, \Delta)$,
where $(A, \cdot)$ is a commutative associative algebra, $(A, \Delta)$ is a cocommutative
coassociative coalgebra and for any $a_{1}, a_{2}\in A$,
$$
\Delta(a_{1}a_{2})=(\fu_{A}(a_{2})\otimes\id)\Delta(a_{1})
+(\id\otimes\,\fu_{A}(a_{1}))\Delta(a_{2}).
$$
Recall that  {\bf Lie coalgebra} $(\g, \delta)$
is a vector space $\g$ with a linear map $\delta: \g\rightarrow\g\otimes\g$,
such that $\tau\delta=-\delta$ and $(\id\otimes\delta)\delta
-(\tau\otimes\id)(\id\otimes\delta)\delta=(\delta\otimes\id)\delta$.
A {\bf Lie bialgebra} is a triple $(\g, [-,-], \delta)$ such that
$(\g, [-,-])$ is a Lie algebra, $(\g, \delta)$ is a Lie coalgebra, and the following
compatibility condition holds:
$$
\delta([g_{1}, g_{2}])=(\ad_{\g}(g_{1})\otimes\id+\id\otimes\ad_{\g}(g_{1}))
(\delta(g_{2}))-(\ad_{\g}(g_{2})\otimes\id+\id\otimes\ad_{\g}(g_{2}))(\delta(g_{1})),
$$
for any $g_{1}, g_{2}\in\g$. Let $(P, \Delta)$ be a cocommutative coassociative coalgebra
and $(P, \delta)$ be a Lie coalgebra. If $(\id\otimes\Delta)\delta=(\delta\otimes\id)\Delta
+(\tau\otimes\id)(\id\otimes\delta)\Delta$, then $(P, \Delta, \delta)$ is called a
{\bf Poisson coalgebra}. It is easy to see that $(P, \Delta, \delta)$ is a Poisson
coalgebra if and only if $(P^{\ast}, \Delta^{\ast}, \delta^{\ast})$ is a Poisson algebra.

\begin{defi}\label{def:Pbialg}
Let $(P, \cdot, [-,-])$ be a Poisson algebra and $(P, \Delta, \delta)$ be a Poisson
coalgebra. Then the quintuple $(P, \cdot, [-,-], \Delta, \delta)$ is called a
{\bf Poisson bialgebra} if
\begin{enumerate}
\item[$(i)$] $(P, \cdot, \Delta)$ is an infinitesimal bialgebra,
\item[$(ii)$] $(P, [-,-], \delta)$ is a Lie bialgebra, 		
\item[$(iii)$] $\Delta$ and $\delta$ are compatible in the following sense:
     for any $p_{1}, p_{2}\in P$,
\begin{align*}
&\Delta([p_{1}, p_{2}])=\big(\ad_{P}(p_{1})\otimes\id+\id\otimes\ad_{P}(p_{1})
\big)\Delta(p_{2})+\big(\fu_{P}(p_{2})\otimes\id
-\id\otimes\fu_{P}(p_{2})\big)\delta(p_{1}),\\
&\qquad\quad \delta(p_{1}p_{2})=(\fu_{P}(p_{1})\otimes\id)\delta(p_{2})
+(\fu_{P}(p_{2})\otimes\id)\delta(p_{1})\\[-1mm]
&\qquad\qquad\qquad\qquad+(\id\otimes\ad_{P}(p_{1}))\Delta(p_{2})
+(\id\otimes\ad_{P}(p_{2}))\Delta(p_{1}).
\end{align*}		
\end{enumerate}
\end{defi}

Recall that a {\bf Zinbiel coalgebra} is a vector space $A$ with a linear map
$\vartheta: A \rightarrow A\otimes A$ such that
\begin{align}
(\vartheta\otimes\id)\vartheta+(\tau\otimes\id)(\vartheta\otimes\id)\vartheta
=(\id\otimes\vartheta)\vartheta.                \label{zinbco}
\end{align}
A {\bf Zinbiel bialgebra} is a triple $(A, \ast, \vartheta)$, where $(A, \ast)$
is a Zinbiel algebra and $(A, \vartheta)$ is a Zinbiel coalgebra such that for
all $a_{1}, a_{2}\in A$, the following compatible conditions hold:
\begin{align}
\vartheta(a_{1}\ast a_{2})+\tau(\vartheta(a_{1}\ast a_{2}))
&=(\id\otimes\fr_{A}(a_{2}))\vartheta(a_{1})+(\fl_{A}(a_{1})\otimes\id)\vartheta(a_{2})
+\tau((\id\otimes\fl_{A}(a_{1}))\vartheta(a_{2})),  \label{Zbialg1}\\
\vartheta(a_{1}\ast a_{2}+a_{2}\ast a_{1})
&=(\fl_{A}(a_{1})\otimes\id)\vartheta(a_{2})+(\id\otimes\fl_{A}(a_{2}))\vartheta(a_{1})
+(\id\otimes\fr_{A}(a_{2}))\vartheta(a_{1}).           \label{Zbialg2}
\end{align}
Recall that a {\bf pre-Lie coalgebra} is a pair $(A, \theta)$, where $A$ is a vector
space and $\theta: A\rightarrow A\otimes A$ is a linear map satisfying
$$
(\id\otimes\theta)\theta-(\tau\otimes\id)(\id\otimes\theta)\theta
=(\theta\otimes\id)\theta-(\tau\otimes\id)(\theta\otimes\id)\theta,
$$
for any $a_{1}, a_{2}\in A$. Let $(A, \circ)$ be a pre-Lie algebra and
$(A, \theta)$ be a pre-Lie coalgebra. If for any $a_{1}, a_{2}\in A$, the
following compatibility conditions are satisfied:
\begin{align*}
&(\theta-\tau\theta)(a_{1}\circ a_{2})
-(\hat{\fl}_{A}(a_{1})\otimes\id)((\theta-\tau\theta)(a_{2})) \\[-1mm]
&\quad-(\id\otimes\hat{\fl}_{A}(a_{1}))((\theta-\tau\theta)(a_{2}))
-(\id\otimes\hat{\fr}_{A}(a_{2}))(\theta(a_{1}))
+(\hat{\fr}_{A}(a_{2})\otimes\id)(\tau(\theta(a_{1})))=0,\\
&\theta(a_{1}\circ a_{2}-a_{2}\circ a_{1})-(\id\otimes(\hat{\fr}_{A}(a_{2})
-\hat{\fl}_{A}(a_{2})))(\theta(a_{1})) \\[-1mm]
&\quad-(\id\otimes(\hat{\fl}_{A}(a_{1})-\hat{\fr}_{A}(a_{1})))(\theta(a_{2}))
-(\hat{\fl}_{A}(a_{1})\otimes\id)(\theta(a_{2}))
+(\hat{\fl}_{A}(a_{2})\otimes\id)(\theta(a_{1}))=0,
\end{align*}
then we call $(A, \circ, \theta)$ a {\bf pre-Lie bialgebra}.
Let $(A, \vartheta)$ be a Zinbiel coalgebra and $(A, \theta)$ be a pre-Lie coalgebra.
If
\begin{align}
&((\theta-\tau\theta)\otimes\id)\vartheta
=(\id\otimes\vartheta)\theta-(\tau\otimes\id)(\id\otimes\theta)\vartheta, \label{PPco1}\\
&((\vartheta+\tau\vartheta)\otimes\id)\theta
=(\id\otimes\theta)\vartheta+(\tau\otimes\id)(\id\otimes\theta)\vartheta, \label{PPco2}
\end{align}
then $(A, \vartheta, \theta)$ is called a {\bf pre-Poisson coalgebra}. It is easy to
see that $(A, \vartheta, \theta)$ is a pre-Poisson coalgebra if and only if $(A^{\ast},
\vartheta^{\ast}, \theta^{\ast})$ is a pre-Poisson algebra.

\begin{defi}[\cite{WS}]\label{def:PLbialg}
Let $(A, \ast, \circ)$ be a pre-Poisson algebra and $(A, \vartheta, \theta)$ be a
pre-Poisson coalgebra. Then the quintuple $(A, \ast, \circ, \vartheta, \theta)$ is
called a {\bf pre-Poisson bialgebra} if
\begin{enumerate}
\item[$(i)$] $(A, \ast, \vartheta)$ is a Zinbiel bialgebra,
\item[$(ii)$] $(A, \circ, \theta)$ is a pre-Lie bialgebra, 		
\item[$(iii)$] $\vartheta$ and $\theta$ are compatible in the following sense:
     for any $a_{1}, a_{2}\in A$,
\begin{align}
\theta(a_{1}\ast a_{2}+a_{2}\ast a_{1})
&=(\id\otimes(\fl_{A}+\fr_{A})(a_{2}))\theta(a_{1})
+(\id\otimes(\fl_{A}+\fr_{A})(a_{1}))\theta(a_{2}) \label{PPbialg1}\\[-1mm]
&\qquad -(\hat{\fl}_{A}(a_{1})\otimes\id)\vartheta(a_{2})
-(\hat{\fl}_{A}(a_{2})\otimes\id)\vartheta(a_{1}),  \nonumber\\
\vartheta(a_{1}\circ a_{2}-a_{2}\circ a_{1})
&=(\fl_{A}(a_{2})\otimes\id-\id\otimes(\fl_{A}+\fr_{A})(a_{2}))
\theta(a_{1})                               \label{PPbialg2}\\[-1mm]
&\qquad +(\hat{\fl}_{A}(a_{1})\otimes\id+\id\otimes(\hat{\fl}_{A}
-\hat{\fr}_{A})(a_{1}))\vartheta(a_{2}),          \nonumber\\
(\vartheta+\tau\vartheta)(a_{1}\circ a_{2})
&=-(\id\otimes\fr_{A}(a_{2}))\theta(a_{1})
-(\fr_{A}(a_{2})\otimes\id)(\tau(\theta(a_{1})))   \label{PPbialg3}\\[-1mm]
&\qquad +(\hat{\fl}_{A}(a_{1})\otimes\id
+\id\otimes\hat{\fl}_{A}(a_{1}))((\vartheta+\tau\vartheta)(a_{2})),    \nonumber\\
(\theta-\tau\theta)(a_{1}\ast a_{2})
&=(\id\otimes\fr_{A}(a_{2}))\theta(a_{1})+(\id\otimes\fl_{A}(a_{1}))
((\theta-\tau\theta)(a_{2}))                   \label{PPbialg4}\\[-1mm]
&\qquad +(\hat{\fr}_{A}(a_{2})\otimes\id)(\tau(\vartheta(a_{1})))
-(\hat{\fl}_{A}(a_{1})\otimes \id)((\vartheta+\tau\vartheta)(a_{2})).    \nonumber
\end{align}		
\end{enumerate}
\end{defi}

In \cite{NB}, Ni and Bai have studied the bialgebra theory and Yang-Baxter equations for
Poisson algebras. A Poisson bialgebra can be characterized by a Manin triple or
a matched pair of Poisson algebras. Recently, the quasi-triangular Poisson bialgebras
and quasi-triangular pre-Poisson bialgebras have been studied in \cite{LL} and \cite{WS}
respectively. Here we mainly consider the close relationship between (triangular)
Poisson bialgebras and (triangular) pre-Poisson bialgebras.
A pre-Lie algebra $(A, \circ)$ naturally induces a Lie algebra $(A, [-,-])$,
where $[a_{1}, a_{2}]=a_{1}\circ a_{2}-a_{2}\circ a_{1}$ for any $a_{1}, a_{2}\in A$.
A pre-Lie coalgebra $(A, \vartheta)$ natural induces a Lie coalgebra $(A, \delta)$,
where $\delta=\vartheta-\tau\vartheta$. But a pre-Lie bialgebra $(A, \circ, \vartheta)$
does not induce a Lie coalgebra $(A, [-,-], \delta)$ in general \cite{Bai}.
Thus, we cannot use this method to construct Poisson bialgebras using pre-Poisson bialgebras.
In \cite{LZB}, Lin, Zhou and Bai have constructed Lie bialgebras by
the tensor product of pre-Lie bialgebras and perm algebras. Recall that a
{\bf perm algebra} is a pair $(B, \diamond)$, where $B$ is a vector space and
$\diamond: B\otimes B\rightarrow B$ is a bilinear operator such that for
any $b_{1}, b_{2}, b_{3}\in B$,
$$
b_{1}\diamond(b_{2}\diamond b_{3})=(b_{1}\diamond b_{2})\diamond b_{3}
=(b_{2}\diamond b_{1})\diamond b_{3}.
$$
In a perm algebra $(B, \diamond)$, wa also have $b_{1}\diamond(b_{2}\diamond b_{3})
=b_{2}\diamond(b_{1}\diamond b_{3})$ for any $b_{1}, b_{2}, b_{3}\in B$.

\begin{pro}[\cite{LZB}]\label{pro:tensor-lie}
Let $(A, \circ)$ be a pre-Lie algebra and $(B, \diamond)$ be a perm algebra.
Define a binary bracket $[-,-]$ on $A\otimes B$ by
\begin{align}
[a_{1}\otimes b_{1},\; a_{2}\otimes b_{2}]
=(a_{1}\circ a_{2})\otimes(b_{1}\diamond b_{2})
-(a_{2}\circ a_{1})\otimes(b_{2}\diamond b_{1}),  \label{ind-lie}
\end{align}
for any $a_{1}, a_{2}\in A$ and $b_{1}, b_{2}\in B$. Then $(A\otimes B, \ast)$ is a
Lie algebra, which is called {\bf the Lie algebra induced from $(A, \circ)$
by $(B, \diamond)$}.
\end{pro}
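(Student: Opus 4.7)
The plan is to verify the two defining Lie algebra axioms for the bracket \eqref{ind-lie}: anti-symmetry and the Jacobi identity.

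Anti-symmetry is immediate from the definition. Swapping $a_{1}\otimes b_{1}$ with $a_{2}\otimes b_{2}$ on the right-hand side of \eqref{ind-lie} interchanges the two summands and flips the overall sign, yielding exactly $-[a_{1}\otimes b_{1},\; a_{2}\otimes b_{2}]$. No algebra axioms are needed here.

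For the Jacobi identity, I would expand the cyclic sum $[[a_{1}\otimes b_{1}, a_{2}\otimes b_{2}], a_{3}\otimes b_{3}]+\text{cyclic}$ directly. Each double bracket produces four pure tensors, so the total cyclic sum is a signed combination of twelve pure tensors in $A\otimes B$. The crucial structural observation is that in a perm algebra every ternary product reduces to the normal form $(b_{i}\diamond b_{j})\diamond b_{k}$, and this normal form depends only on the unordered pair $\{b_{i}, b_{j}\}$ together with the rightmost factor $b_{k}$; indeed, the perm axioms give $b_{i}\diamond(b_{j}\diamond b_{k})=(b_{i}\diamond b_{j})\diamond b_{k}=(b_{j}\diamond b_{i})\diamond b_{k}$. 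Grouping the twelve terms by the rightmost factor $b_{1}$, $b_{2}$ or $b_{3}$ partitions them into three blocks of four, with all four $b$-factors inside each block identified.

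Within each block the $a$-coefficients combine into an expression of the shape
$$(a_{i}\circ a_{j})\circ a_{k}-a_{i}\circ(a_{j}\circ a_{k})-(a_{j}\circ a_{i})\circ a_{k}+a_{j}\circ(a_{i}\circ a_{k}),$$
which vanishes by the pre-Lie (left-symmetry) identity of Definition \ref{def:PLalg}. Summing the three blocks therefore gives zero, establishing Jacobi. The main obstacle is purely organizational: carefully tracking the twelve terms produced by the three double brackets, correctly applying the perm identities to align the $b$-factors within each block, and matching the resulting $a$-coefficients against the pre-Lie relation with the correct choice of the triple $(a_{i}, a_{j}, a_{k})$ in each of the three blocks.
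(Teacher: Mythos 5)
Your proof is correct: the twelve terms of the cyclic sum do split into three blocks according to the rightmost perm factor, within each block the perm identities identify all four $b$-products, and the resulting $a$-coefficient is exactly the left-symmetry relation, so each block vanishes. The paper itself cites this proposition from \cite{LZB} without proof, but your argument is precisely the normal-form-grouping technique the paper uses to prove the parallel statement for Zinbiel algebras (Proposition \ref{pro:tensor-ass}), so there is nothing further to add.
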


Similar to this proposition, for the tensor product of Zinbiel algebras and perm algebras,
we have:

\begin{pro}\label{pro:tensor-ass}
Let $(A, \ast)$ be a Zinbiel algebra and $(B, \diamond)$ be a perm algebra.
Define a binary operator $\cdot$ on $A\otimes B$ by
\begin{align}
(a_{1}\otimes b_{1})(a_{2}\otimes b_{2})
=(a_{1}\ast a_{2})\otimes(b_{1}\diamond b_{2})
+(a_{2}\ast a_{1})\otimes(b_{2}\diamond b_{1}), \label{ind-ass}
\end{align}
for any $a_{1}, a_{2}\in A$ and $b_{1}, b_{2}\in B$. Then $(A\otimes B, \cdot)$ is a
commutative associative algebra, which is called {\bf the commutative associative
algebra induced from $(A, \ast)$ by $(B, \diamond)$}.
\end{pro}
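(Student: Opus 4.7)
The plan is to verify the two defining axioms of a commutative associative algebra directly from the formula \eqref{ind-ass}. Commutativity is essentially immediate: swapping the roles of $(a_{1}\otimes b_{1})$ and $(a_{2}\otimes b_{2})$ in \eqref{ind-ass} merely interchanges its two summands, so $(a_{1}\otimes b_{1})(a_{2}\otimes b_{2})=(a_{2}\otimes b_{2})(a_{1}\otimes b_{1})$ follows without appeal to either structural axiom. All the work lies in proving associativity.

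For associativity, I would expand both $((a_{1}\otimes b_{1})(a_{2}\otimes b_{2}))(a_{3}\otimes b_{3})$ and $(a_{1}\otimes b_{1})((a_{2}\otimes b_{2})(a_{3}\otimes b_{3}))$ by applying \eqref{ind-ass} twice. Each side produces four tensor-product terms; on the left-parenthesized side the $\ast$- and $\diamond$-factors are already left-nested (e.g.\ $((a_{1}\ast a_{2})\ast a_{3})\otimes((b_{1}\diamond b_{2})\diamond b_{3})$), while on the right-parenthesized side they are right-nested (e.g.\ $(a_{1}\ast(a_{2}\ast a_{3}))\otimes(b_{1}\diamond(b_{2}\diamond b_{3}))$). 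The goal is to rewrite the right-nested side into a sum of left-nested tensors and match them term-by-term.

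The rewrite uses the two structural identities jointly. In the Zinbiel factor, the defining axiom $a_{1}\ast(a_{2}\ast a_{3})=(a_{1}\ast a_{2})\ast a_{3}+(a_{2}\ast a_{1})\ast a_{3}$ splits each right-nested $\ast$-triple on the right-hand side into a sum of two left-nested ones, so four terms become eight. In the perm factor, the axioms $b_{1}\diamond(b_{2}\diamond b_{3})=(b_{1}\diamond b_{2})\diamond b_{3}=(b_{2}\diamond b_{1})\diamond b_{3}$ flatten every right-nested $\diamond$-triple and, crucially, supply the symmetry $(b_{i}\diamond b_{j})\diamond b_{k}=(b_{j}\diamond b_{i})\diamond b_{k}$ that corresponds precisely to the transposition of the first two $\ast$-arguments produced by the Zinbiel splitting. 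A parallel expansion of the left-parenthesized side likewise yields six left-nested tensor terms (the fourth term there is already a sum of two via an application of the Zinbiel axiom read in the opposite direction), and a direct bookkeeping comparison shows the six terms match.

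The main obstacle is simply keeping track of which perm-algebra rearrangement accompanies which Zinbiel split. One has to notice that whenever the Zinbiel identity is used to transpose $a_{i}$ and $a_{j}$ inside a nested product, the matching left-commutativity of the perm factor in the same two slots has to be invoked; getting the indices right on all six resulting terms is the only delicate point. Once that correspondence is laid out, the verification is a purely mechanical check, and the commutative associativity of $(A\otimes B,\cdot)$ follows.
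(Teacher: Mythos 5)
Your proposal is correct and follows essentially the same route as the paper: commutativity is read off directly from Eq.~\eqref{ind-ass}, and associativity is verified by expanding both bracketings into four terms each, using the perm identities to normalize the $\diamond$-factors (so that terms are classified by which $b_{i}$ sits in the last slot) and the Zinbiel identity, together with its consequence $a_{1}\ast(a_{2}\ast a_{3})=a_{2}\ast(a_{1}\ast a_{3})$, to cancel the resulting $\ast$-coefficients. The paper organizes this as showing the difference of the two bracketings vanishes group by group, which is the same bookkeeping you describe.
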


\begin{proof}
Clearly, the operator $\cdot$ is commutative.
For any $a_{1}, a_{2}, a_{3}\in A$, $b_{1}, b_{2}, b_{3}\in B$, since
$(A, \ast)$ is a Zinbiel algebra and $(B, \diamond)$ is a perm algebra, we have
\begin{align*}
&\;(a_{1}\otimes b_{1})((a_{2}\otimes b_{2})(a_{3}\otimes b_{3}))
-((a_{1}\otimes b_{1})(a_{2}\otimes b_{2}))(a_{3}\otimes b_{3})\\
=&\;(a_{1}\ast(a_{2}\ast a_{3}))\otimes(b_{1}\diamond(b_{2}\diamond b_{3}))
+((a_{2}\ast a_{3})\ast a_{1})\otimes((b_{2}\diamond b_{3})\diamond b_{1})\\[-1mm]
&\quad+(a_{1}\ast(a_{3}\ast a_{2}))\otimes(b_{1}\diamond(b_{3}\diamond b_{2}))
+((a_{3}\ast a_{2})\ast a_{1})\otimes((b_{3}\diamond b_{2})\diamond b_{1})\\[-1mm]
&\quad-((a_{1}\ast a_{2})\ast a_{3})\otimes((b_{1}\diamond b_{2})\diamond b_{3})
-(a_{3}\ast(a_{1}\ast a_{2}))\otimes(b_{3}\diamond(b_{1}\diamond b_{2}))\\[-1mm]
&\quad-((a_{2}\ast a_{1})\ast a_{3})\otimes((b_{2}\diamond b_{1})\diamond b_{3})
-(a_{3}\ast(a_{2}\ast a_{1}))\otimes(b_{3}\diamond(b_{2}\diamond b_{1}))\\
\end{align*}
\begin{align*}
=&\;\Big(a_{1}\ast(a_{2}\ast a_{3})-(a_{1}\ast a_{2})\ast a_{3}
-(a_{2}\ast a_{1})\ast a_{3}\Big)\otimes(b_{1}\diamond(b_{2}\diamond b_{3}))\\[-1mm]
&\ \ +\Big((a_{2}\ast a_{3})\ast a_{1}+(a_{3}\ast a_{2})\ast a_{1}
-a_{3}\ast(a_{2}\ast a_{1})\Big)\otimes(b_{2}\diamond(b_{3}\diamond b_{1}))\qquad\;\\[-1mm]
&\ \ +\Big(a_{1}\ast(a_{3}\ast a_{2})-a_{3}\ast(a_{1}\ast a_{2})
\Big)\otimes(b_{3}\diamond(b_{1}\diamond b_{2}))\\
=&\; 0.
\end{align*}
That is, $(A, \cdot)$ is a commutative associative algebra.
\end{proof}

Not only that, we can get a Poisson algebra by the tensor product of a pre-Poisson
algebra and a perm algebra.

\begin{pro}\label{pro:tensor-poi}
Let $(A, \ast, \circ)$ be a pre-Poisson algebra and $(B, \diamond)$ be a perm algebra.
Define two binary operators $[-,-], \cdot: (A\otimes B)\otimes(A\otimes B)
\rightarrow A\otimes B$ by Eqs. \eqref{ind-lie} and \eqref{ind-ass} respectively. Then
$(A\otimes B, \cdot, [-,-])$ is a Poisson algebra, which is called {\bf the
Poisson algebra induced from $(A, \ast, \circ)$ by $(B, \diamond)$}.
\end{pro}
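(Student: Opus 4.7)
The strategy is the natural one: two of the three axioms of a Poisson algebra are already established by the preceding propositions, so the real content is verifying the Leibniz rule. More precisely, the plan is as follows.

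First I would invoke Proposition \ref{pro:tensor-ass} to conclude that $(A\otimes B, \cdot)$ is a commutative associative algebra, since $(A, \ast)$ is a Zinbiel algebra (as part of the pre-Poisson data) and $(B, \diamond)$ is a perm algebra. Next, Proposition \ref{pro:tensor-lie} gives that $(A\otimes B, [-,-])$ is a Lie algebra, since $(A, \circ)$ is a pre-Lie algebra. Thus only the Leibniz identity
$$
[p_{1},\; p_{2}p_{3}]=[p_{1}, p_{2}]p_{3}+p_{2}[p_{1}, p_{3}]
$$
remains to be checked, and by bilinearity it suffices to verify it on pure tensors $p_{i}=a_{i}\otimes b_{i}$ for $i=1,2,3$.

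The key observation that organizes the calculation is the collapse of triple products in a perm algebra: for any $b_{i}, b_{j}, b_{k}\in B$ one has
$$
b_{i}\diamond(b_{j}\diamond b_{k})=(b_{i}\diamond b_{j})\diamond b_{k}
=(b_{j}\diamond b_{i})\diamond b_{k}=b_{j}\diamond(b_{i}\diamond b_{k}),
$$
so triple products in $B$ are symmetric in the two leftmost slots and depend only on which element occupies the rightmost slot. I would expand both sides of the Leibniz identity using \eqref{ind-ass} and \eqref{ind-lie}, and then collect terms according to their rightmost $B$-slot, which takes one of three values $b_{1}$, $b_{2}$, or $b_{3}$.

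Matching coefficients slot by slot, the identity reduces to three scalar equations in $A$. For the $b_{3}$-rightmost slot the condition becomes $a_{1}\circ(a_{2}\ast a_{3})=(a_{1}\circ a_{2}-a_{2}\circ a_{1})\ast a_{3}+a_{2}\ast(a_{1}\circ a_{3})$, which is precisely \eqref{PPalg1}; the $b_{2}$-rightmost slot yields the same relation with $a_{2}$ and $a_{3}$ interchanged; and the $b_{1}$-rightmost slot collapses, after using $a_{2}\ast a_{3}+a_{3}\ast a_{2}$, to $(a_{2}\ast a_{3}+a_{3}\ast a_{2})\circ a_{1}=a_{3}\ast(a_{2}\circ a_{1})+a_{2}\ast(a_{3}\circ a_{1})$, which is \eqref{PPalg2} (with a relabeling). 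Thus all three coefficient identities are built into the pre-Poisson axioms.

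The only real obstacle is bookkeeping: each of the three expressions in the Leibniz identity expands into four terms, giving twelve summands in total, and one must carefully identify which of the three equivalence classes of triple $\diamond$-products each lies in. Once the perm identity is used to merge these into three groups indexed by the rightmost $B$-element, the $A$-coefficients are exactly what \eqref{PPalg1} and \eqref{PPalg2} produce, and the proof concludes.
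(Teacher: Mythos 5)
Your proposal is correct and follows essentially the same route as the paper's proof: both reduce to checking the Leibniz rule on pure tensors (citing Propositions \ref{pro:tensor-lie} and \ref{pro:tensor-ass} for the Lie and commutative associative structures), both exploit the collapse of triple $\diamond$-products in a perm algebra to group the twelve summands by their rightmost $B$-factor, and both identify the three resulting coefficient identities with Eq.~\eqref{PPalg1} (twice, with $a_{2}$ and $a_{3}$ interchanged) and Eq.~\eqref{PPalg2} (relabeled). No gaps.
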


\begin{proof}
By Propositions \ref{pro:tensor-lie} and \ref{pro:tensor-ass}, we need
to show that the Leibniz rule holds. For any $a_{1}, a_{2}, a_{3}\in A$,
$b_{1}, b_{2}, b_{3}\in B$, since $(A, \ast, \circ)$ is a pre-Poisson algebra
and $(B, \diamond)$ is a perm algebra, we have
\begin{align*}
&\;[a_{1}\otimes b_{1},\; a_{2}\otimes b_{2}](a_{3}\otimes b_{3})
+(a_{2}\otimes b_{2})[a_{1}\otimes b_{1},\; a_{3}\otimes b_{3}]\\
=&\;\Big((a_{1}\circ a_{2})\ast a_{3}-(a_{2}\circ a_{1})\ast a_{3}
+a_{2}\ast(a_{1}\circ a_{3})\Big)\otimes(b_{1}\diamond(b_{2}\diamond b_{3}))\\[-1mm]
&\ \ +\Big(-a_{3}\ast(a_{2}\circ a_{1})-a_{2}\ast(a_{3}\circ a_{1})
\Big)\otimes(b_{2}\diamond(b_{3}\diamond b_{1}))\\[-1mm]
&\ \ +\Big(a_{3}\ast(a_{1}\circ a_{2})+(a_{1}\circ a_{3})\ast a_{2}
-(a_{3}\circ a_{1})\ast a_{2}\Big)\otimes(b_{3}\diamond(b_{1}\diamond b_{2}))\\
=&\; (a_{1}\circ(a_{2}\ast a_{3}))\otimes(b_{1}\diamond(b_{2}\diamond b_{3}))
-((a_{2}\ast a_{3})\circ a_{1}\otimes((b_{2}\diamond b_{3})\diamond b_{1})\\[-1mm]
&\quad+(a_{1}\circ(a_{3}\ast a_{2}))\otimes(b_{1}\diamond(b_{3}\diamond b_{2}))
-((a_{3}\ast a_{2})\circ a_{1}\otimes((b_{3}\diamond b_{2})\diamond b_{1})\\
=&\; [(a_{1}\otimes b_{1}),\ \ (a_{2}\otimes b_{2})(a_{3}\otimes b_{3})].
\end{align*}
Thus, $(A\otimes B, \cdot, [-,-])$ is a Poisson algebra.
\end{proof}

\begin{ex}\label{ex:preP-P}
Let $A=\bk\{e_{1}, e_{2}\}$ be a two dimensional vector space. Define two
bilinear operators $\ast, \circ: A\otimes A\rightarrow A$ by $e_{1}\ast e_{1}=e_{2}$,
$e_{1}\circ e_{1}=e_{1}$, $e_{1}\circ e_{2}=e_{2}\circ e_{1}=e_{2}$ and others are
all zero. Then $(A, \ast, \circ)$ is a pre-Poisson algebra. Consider the
two dimensional perm algebra $(B=\bk\{x_{1}, x_{2}\}, \diamond)$, where the nonzero
products are given by $x_{2}\diamond x_{1}=x_{1}$ and $x_{2}\diamond x_{2}=x_{2}$.
Then the bracket and the dot product defined by Eqs. \eqref{ind-lie} and \eqref{ind-ass}
give a Poisson algebra structure on $A\otimes B$ as follows:
\begin{align*}
&\qquad\qquad\qquad (e_{1}\otimes x_{1})(e_{1}\otimes x_{2})=e_{2}\otimes x_{1},\qquad\qquad
(e_{1}\otimes x_{2})(e_{1}\otimes x_{2})=e_{2}\otimes x_{2},\\
&[e_{1}\otimes x_{1},\; e_{1}\otimes x_{2}]=-e_{1}\otimes x_{1},\qquad
[e_{1}\otimes x_{1},\; e_{2}\otimes x_{2}]=-e_{2}\otimes x_{1},\qquad
[e_{1}\otimes x_{2},\; e_{2}\otimes x_{1}]=e_{2}\otimes x_{1}.
\end{align*}
\end{ex}

This paper mainly elevates this conclusion to the level of bialgebras by the affinization
of pre-Poisson bialgebras. A pre-Poisson algebra is a Zinbiel algebra and a pre-Lie algebra
on the same vector space satisfying some compatibility conditions.
The affinization of pre-Lie bialgebras has been considered in \cite{LZB}. In next
section, we start with the affinization of Zinbiel bialgebras.

\section{Infinite-dimensional infinitesimal bialgebras from Zinbiel bialgebras}
\label{sec:zbialg}
In this section, we recall the notion of a quadratic $\bz$-graded perm algebra, as
a $\bz$-graded perm algebra equipped with an invariant bilinear form. We show that
the tensor product of a finite-dimensional Zinbiel bialgebra and a quadratic
$\bz$-graded perm algebra can be naturally endowed with a completed infinitesimal bialgebra.
The converse of this result also holds when the quadratic $\bz$-graded perm algebra
is special, giving the desired characterization of the Zinbiel bialgebra
that its affinization is a completed infinitesimal bialgebra.

\subsection{Affinization of Zinbiel bialgebras}\label{subsec:affZbia}
To provide the affinization characterization of Zinbiel algebra, we first recall
the notion of $\bz$-graded perm algebra.

\begin{defi}\label{def:zgrad-alg}
A {\bf $\bz$-graded commutative associative algebra} (resp. {\bf $\bz$-graded perm algebra})
is a commutative associative algebra $(A, \ast)$ (resp. a perm algebra $(B, \diamond)$) with a
linear decomposition $A=\oplus_{i\in\bz}A_{i}$ (resp. $B=\oplus_{i\in\bz}B_{i}$) such that
each $A_{i}$ (resp. $B_{i}$) is finite-dimensional and $A_{i}\ast A_{j}\subseteq A_{i+j}$
(resp. $B_{i}\diamond B_{j}\subseteq B_{i+j}$) for all $i, j\in\bz$.
\end{defi}

\begin{ex}[\cite{LZB}]\label{ex:grperm}
Let $B=\{f_{1}\partial_{1}+f_{2}\partial_{2}\mid f_{1}, f_{2}\in\bk[x_{1}^{\pm},
x_{2}^{\pm}]\}$ and define a binary operation $\diamond: B\otimes B\rightarrow B$ by
\begin{align*}
(x_{1}^{i_{1}}x_{2}^{i_{2}}\partial_{s})\diamond(x_{1}^{j_{1}}x_{2}^{j_{2}}\partial_{t})
:=\delta_{s,1}x_{1}^{i_{1}+j_{1}+1}x_{2}^{i_{2}+j_{2}}\partial_{t}
+\delta_{s,2}x_{1}^{i_{1}+j_{1}}x_{2}^{i_{2}+j_{2}+1}\partial_{t},
\end{align*}
for any $i_{1}, i_{2}, j_{1}, j_{2}\in\bz$ and $t\in\{1, 2\}$.
Then $(B, \diamond)$ is a $\bz$-graded perm algebra with the linear decomposition
$B=\oplus_{i\in\bz}B_{i}$, where
$$
B_{i}=\Big\{\sum_{k=1}^{2}f_{k}\partial_{k}\mid f_{k}\text{ is a homogeneous
polynomial with }\deg(f_{k})=i-1,\; k=1,2\Big\}
$$
for all $i\in\bz$.
\end{ex}

By Proposition \ref{pro:tensor-ass}, we have the following proposition.

\begin{pro}\label{pro:aff-Zalg}
Let $(A, \ast)$ be a finite-dimensional Zinbiel algebra and $(B=\oplus_{i\in\bz}B_{i},
\diamond)$ be a $\bz$-graded perm algebra. If we define a binary operation $\cdot$ on
$A\otimes B$ by Eq. \eqref{ind-ass}, then $(A\otimes B, \cdot)$ is a $\bz$-graded
commutative associative algebra.
Moreover, if $(B=\oplus_{i\in\bz}B_{i}, \diamond)$ is the $\bz$-graded perm algebra given in
Example \ref{ex:grperm}, then $(A\otimes B, \cdot)$ is a $\bz$-graded commutative
associative algebra if and only if $(A, \ast)$ is a Zinbiel algebra.
\end{pro}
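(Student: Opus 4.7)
The plan is to prove the two claims in sequence. The first claim follows from Proposition \ref{pro:tensor-ass}, which already makes $(A \otimes B, \cdot)$ into a commutative associative algebra; the grading is then immediate because, setting $(A \otimes B)_i := A \otimes B_i$, formula \eqref{ind-ass} yields
$$(A \otimes B_i) \cdot (A \otimes B_j) \subseteq A \otimes (B_i \diamond B_j + B_j \diamond B_i) \subseteq A \otimes B_{i+j}.$$

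For the converse in the second claim, I would revisit the expansion of the associator $(x \cdot y) \cdot z - x \cdot (y \cdot z)$ for $x = a_1 \otimes b_1$, $y = a_2 \otimes b_2$, $z = a_3 \otimes b_3$ that appears inside the proof of Proposition \ref{pro:tensor-ass}. Without assuming $(A,\ast)$ is Zinbiel but still applying the perm identities on $B$, the eight terms collapse into three summands whose $B$-components are $b_1 \diamond (b_2 \diamond b_3)$, $b_2 \diamond (b_3 \diamond b_1)$ and $b_3 \diamond (b_1 \diamond b_2)$, with respective $A$-coefficients
\begin{align*}
\textbf{(I)} \quad &a_1 \ast (a_2 \ast a_3) - (a_1 \ast a_2) \ast a_3 - (a_2 \ast a_1) \ast a_3, \\
\textbf{(II)} \quad &(a_2 \ast a_3) \ast a_1 + (a_3 \ast a_2) \ast a_1 - a_3 \ast (a_2 \ast a_1), \\
\textbf{(III)} \quad &a_1 \ast (a_3 \ast a_2) - a_3 \ast (a_1 \ast a_2).
\end{align*}
The key point is that \textbf{(I)} is precisely the Zinbiel defect at $(a_1, a_2, a_3)$. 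Hence, to deduce the Zinbiel identity from associativity of $(A \otimes B, \cdot)$, it suffices to exhibit one triple $b_1, b_2, b_3$ in the perm algebra of Example \ref{ex:grperm} for which the three $B$-elements above are linearly independent in $B$.

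For this final step I would take $b_1 = \partial_1$, $b_2 = \partial_2$, $b_3 = \partial_1 + \partial_2$ and use the bilinear extension $b \diamond b' = (f_1 x_1 + f_2 x_2) b'$ for $b = f_1 \partial_1 + f_2 \partial_2$, with $B$ viewed as a free $\bk[x_1^{\pm}, x_2^{\pm}]$-module. The three $B$-elements then become $x_1 x_2(\partial_1 + \partial_2)$, $x_2(x_1 + x_2)\partial_1$ and $x_1(x_1 + x_2)\partial_2$, and their linear independence over $\bk$ is clear from reading off the $\partial_1$- and $\partial_2$-components separately. The main obstacle is exactly this construction: for most natural monomial triples (all $\partial_i$'s, any triple with two equal entries, etc.) two of the $B$-elements coincide, so associativity produces only combined identities rather than \textbf{(I)} on its own. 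The mixed term $\partial_1 + \partial_2$ is what breaks those coincidences and isolates the Zinbiel defect, completing the converse.
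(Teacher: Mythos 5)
Your proposal is correct and follows essentially the same route as the paper: the forward direction is Proposition \ref{pro:tensor-ass} together with the grading, and the converse specializes the three-term grouping of the associator (with $B$-components $b_1\diamond(b_2\diamond b_3)$, $b_2\diamond(b_3\diamond b_1)$, $b_3\diamond(b_1\diamond b_2)$ and $A$-coefficients \textbf{(I)}, \textbf{(II)}, \textbf{(III)}) to explicit elements of the perm algebra of Example \ref{ex:grperm} and compares coefficients. The only divergence is the choice of test elements: the paper uses the monomial triples $(\partial_1,\partial_2,\partial_1)$ and $(\partial_1,\partial_1,\partial_2)$, and in the latter the component $b_1\diamond(b_2\diamond b_3)=x_1^2\partial_2$ is already distinct from the other two (both equal to $x_1x_2\partial_1$), so the Zinbiel defect \textbf{(I)} is isolated by comparing the coefficient of $x_1^2\partial_2$ without needing your non-monomial entry $\partial_1+\partial_2$ — your construction is valid, but the ``obstacle'' you describe does not actually arise for that monomial triple.
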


\begin{proof}
By Proposition \ref{pro:tensor-ass}, $(A\otimes B, \cdot)$ is a commutative associative
algebra. Since $(B=\oplus_{i\in\bz}B_{i}, \diamond)$ is $\bz$-graded, $(A\otimes B, \cdot)$
is a $\bz$-graded commutative associative algebra. If $(B=\oplus_{i\in\bz}B_{i}, \diamond)$
is the $\bz$-graded perm algebra given in Example \ref{ex:grperm}, then we have
\begin{align*}
\Big((a_{1}\otimes\partial_{1})(a_{2}\otimes\partial_{2})\Big)
(a_{3}\otimes\partial_{1})&=\Big((a_{1}\ast a_{2})\otimes x_{1}\partial_{2})
+(a_{2}\ast a_{1})\otimes x_{2}\partial_{1}\Big)(a_{3}\otimes\partial_{1})\\
&=((a_{1}\ast a_{2})\ast a_{3})\otimes x_{1}x_{2}\partial_{1}
+(a_{3}\ast(a_{1}\ast a_{2}))\otimes x_{1}^{2}\partial_{2}\\[-1mm]
&\quad+((a_{2}\ast a_{1})\ast a_{3})\otimes x_{1}x_{2}\partial_{1}
+(a_{3}\ast(a_{2}\ast a_{1}))\otimes x_{1}x_{2}\partial_{1},
\end{align*}
and
\begin{align*}
(a_{1}\otimes\partial_{1})\Big((a_{2}\otimes\partial_{2})
(a_{3}\otimes\partial_{1})\Big)&=(a_{1}\otimes\partial_{1})
\Big((a_{2}\ast a_{3})\otimes x_{2}\partial_{1}
+(a_{2}\ast a_{3})\otimes x_{1}\partial_{2}\Big)\\
&=(a_{1}\ast(a_{2}\ast a_{3}))\otimes x_{1}x_{2}\partial_{1}
+((a_{2}\ast a_{3})\ast a_{1})\otimes x_{1}x_{2}\partial_{1}\\[-1mm]
&\quad+(a_{1}\ast(a_{3}\ast a_{2}))\otimes x_{1}^{2}\partial_{2}
+((a_{3}\ast a_{2})\ast a_{1})\otimes x_{1}x_{2}\partial_{1}.
\end{align*}
Comparing the coefficients of $x_{1}^{2}\partial_{2}$, we get $a_{3}\ast(a_{1}\ast a_{2})
=a_{1}\ast(a_{3}\ast a_{2})$. Similarly, comparing the coefficients of
$x_{1}^{2}\partial_{2}$ in equations $((a_{1}\otimes\partial_{1})
(a_{2}\otimes\partial_{1}))(a_{3}\otimes\partial_{2})=(a_{1}\otimes\partial_{1})
((a_{2}\otimes\partial_{1})(a_{3}\otimes\partial_{2}))$, we get $a_{3}\ast(a_{2}\ast a_{1})
=(a_{3}\ast a_{2})\ast a_{1}+(a_{2}\ast a_{3})\ast a_{1}$.
Thus, $(A, \ast)$ is a Zinbiel algebra.
\end{proof}

To carry out the Zinbiel coalgebra affinization, we need to extend the codomain
of the comultiplication $\vartheta$ to allow infinite sums.
Let $U=\oplus_{i\in\bz}U_{i}$ and $V=\oplus_{j\in\bz}V_{j}$ be $\bz$-graded vector spaces.
We call the {\bf completed tensor product} of $U$ and $V$ to be the vector space
$$
U\,\hat{\otimes}\,V:=\prod_{i,j\in\bz}U_{i}\otimes V_{j}.
$$
If $U$ and $V$ are finite-dimensional, then $U\,\hat{\otimes}\,V$ is just the usual
tensor product $U\otimes V$. In general, an element in $U\,\hat{\otimes}\,V$ is an
infinite formal sum $\sum_{i,j\in\bz}X_{ij} $ with $X_{ij}\in U_{i}\otimes V_{j}$.
So $X_{ij}=\sum_{\alpha} u_{i, \alpha}\otimes v_{j, \alpha}$ for pure tensors
$u_{i, \alpha}\otimes v_{j, \alpha}\in U_{i}\otimes V_{j}$ with $\alpha$ in a finite
index set. Thus a general term of $U\,\hat{\otimes}\,V$ is a possibly infinite sum
$\sum_{i,j,\alpha}u_{i\alpha}\otimes v_{j\alpha}$, where $i, j\in\bz$ and $\alpha$ is
in a finite index set (which might depend on $i, j$). With these notations, for linear
maps $f: U\rightarrow U'$ and $g: V\rightarrow V'$, define
$$
f\,\hat{\otimes}\,g: U\,\hat{\otimes}\,V\rightarrow U'\,\hat{\otimes}\,V',
\qquad \sum_{i,j,\alpha}u_{i,\alpha}\otimes v_{j, \alpha}\mapsto
\sum_{i,j,\alpha} f(u_{i, \alpha})\otimes g(v_{j, \alpha}).
$$
Also the twist map $\tau$ has its completion
$\hat{\tau}: V\,\hat{\otimes}\,V\rightarrow V\,\hat{\otimes}\,V$,
$\sum_{i,j,\alpha}u_{i, \alpha}\otimes v_{j, \alpha}\mapsto
\sum_{i,j,\alpha}v_{j, \alpha}\otimes u_{i, \alpha}$.
Moreover, we define a (completed) comultiplication to be a linear map
$\vartheta: V\rightarrow V\,\hat{\otimes}\,V$,
$\vartheta(v):=\sum_{i, j, \alpha}v_{1, i, \alpha}\otimes v_{2, j, \alpha}$.
Then we have the well-defined map
$$
(\vartheta\,\hat{\otimes}\,\id)\vartheta(v)=(\vartheta\,\hat{\otimes}\,\id)
\Big(\sum_{i,j,\alpha}v_{1, i, \alpha}\otimes v_{2, j, \alpha}\Big)
:=\sum_{i,j,\alpha}\vartheta(v_{1, i, \alpha})\otimes v_{2, j, \alpha}
\in V\,\hat{\otimes}\,V\,\hat{\otimes}\,V.
$$

\begin{defi}\label{def:ali-coa}
\begin{enumerate}
\item[$(i)$] A {\bf completed cocommutative coassociative coalgebra} is a pair
    $(A, \Delta)$ where $A=\oplus_{i\in\bz}A_{i}$ is a $\bz$-graded vector space and
    $\Delta: A\rightarrow A\,\hat{\otimes}\, A$ is a linear map satisfying
    $$
    \hat{\tau}\Delta=\Delta,\qquad\qquad (\Delta\,\hat{\otimes}\,\id)\Delta
    =(\id\,\hat{\otimes}\,\Delta)\Delta.
    $$
\item[$(ii)$] A {\bf completed perm coalgebra} is a pair $(B, \nu)$, where
    $B=\oplus_{i\in\bz}B_{i}$ is a $\bz$-graded vector space and $\nu: B\rightarrow
    B\,\hat{\otimes}\,B$ is a linear map satisfying
    $$
    (\nu\,\hat{\otimes}\,\id)\nu=(\id\,\hat{\otimes}\,\nu)\nu
    =(\tau\,\hat{\otimes}\,\id)(\nu\,\hat{\otimes}\,\id)\nu.
    $$
\end{enumerate}
\end{defi}

\begin{ex}[\cite{LZB}]\label{ex:grpermco}
Consider the $\bz$-graded vector space $B=\{f_{1}\partial_{1}+f_{2}\partial_{2}\mid
f_{1}, f_{2}\in\bk[x_{1}^{\pm}, x_{2}^{\pm}]\}=\oplus_{i\in\bz}B_{i}$ given in
Example \ref{ex:grperm}. Define a linear map $\nu: B\rightarrow B\,\hat{\otimes}\,B$ by
\begin{align*}
\nu(x_{1}^{m}x_{2}^{n}\partial_{1})&=\sum_{i_{1}, i_{2}\in\bz}
\Big(x_{1}^{i_{1}}x_{2}^{i_{2}}\partial_{1}\otimes x_{1}^{m-i_{1}}
x_{2}^{n-i_{2}+1}\partial_{1} -x_{1}^{i_{1}}x_{2}^{i_{2}}\partial_{2}
\otimes x_{1}^{m-i_{1}+1}x_{2}^{n-i_{2}}\partial_{1}\Big), \\[-2mm]
\nu(x_{1}^{m}x_{2}^{n}\partial_{2})&=\sum_{i_{1}, i_{2}\in\bz}
\Big(x_{1}^{i_{1}}x_{2}^{i_{2}}\partial_{1}\otimes x_{1}^{m-i_{1}}
x_{2}^{n-i_{2}+1}\partial_{2}-x_{1}^{i_{1}}x_{2}^{i_{2}}\partial_{2}
\otimes x_{1}^{m-i_{1}+1}x_{2}^{n-i_{2}}\partial_{2}\Big),
\end{align*}
for any $m, n \in\bz$. Then $(B=\oplus_{i\in\bz}B_{i}, \nu)$ is a completed perm coalgebra.
\end{ex}

Now, we consider the dual version of the Zinbiel algebra affinization,
for Zinbiel coalgebras. We give the dual version of Proposition \ref{pro:aff-Zalg}.

\begin{pro}\label{pro:Zco-coass}
Let $(A, \vartheta)$ be a finite-dimensional Zinbiel coalgebra
and $(B=\oplus_{i\in\bz}B_{i}, \nu)$ be a completed perm coalgebra.
Define a linear map $\Delta: A\otimes B\rightarrow(A\otimes B)\,\hat{\otimes}\,(A\otimes B)$ by
\begin{align}
\Delta(a\otimes b)
&=(\id\otimes\id+\hat{\tau})\big(\vartheta(a)\bullet\nu(b)\big)     \label{Zcoass} \\
&:=\sum_{(a)}\sum_{i,j,\alpha}\Big((a_{(1)}\otimes b_{1,i,\alpha})
\otimes(a_{(2)}\otimes b_{2,j,\alpha})+(a_{(2)}\otimes b_{2,j,\alpha})
\otimes(a_{(1)}\otimes b_{1,i,\alpha})\Big),   \nonumber
\end{align}
for any $a\in A$ and $b\in B$, where $\vartheta(a)=\sum_{(a)}a_{(1)}\otimes a_{(2)}$
in the Sweedler notation and $\nu(b)=\sum_{i,j,\alpha}b_{1,i,\alpha}\otimes b_{2,j,\alpha}$.
Then $(A\otimes B, \Delta)$ is a completed cocommutative coassociative coalgebra.
Moreover, if $(B=\oplus_{i\in\bz}B_{i}, \nu)$ is the completed perm coalgebra given in
Example \ref{ex:grpermco}, then $(A\otimes B, \Delta)$ is a completed cocommutative
coassociative coalgebra if and only if $(A, \vartheta)$ is a Zinbiel coalgebra.
\end{pro}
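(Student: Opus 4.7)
The plan is to dualize the argument used in Proposition \ref{pro:aff-Zalg}, replacing the Zinbiel product by the coproduct $\vartheta$ and the perm product by the completed perm coproduct $\nu$. First, the cocommutativity condition $\hat{\tau}\Delta=\Delta$ is immediate from the definition, since $\Delta$ is manifestly symmetrized by the factor $(\id\otimes\id+\hat{\tau})$; applying $\hat{\tau}$ simply permutes the two summands on the right-hand side of \eqref{Zcoass}.

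Next, to verify coassociativity $(\Delta\,\hat{\otimes}\,\id)\Delta=(\id\,\hat{\otimes}\,\Delta)\Delta$, I would expand both sides on a generator $a\otimes b$ using Sweedler-type notation. Writing $\vartheta(a)=\sum_{(a)}a_{(1)}\otimes a_{(2)}$ and $\nu(b)=\sum b_{(1)}\otimes b_{(2)}$, one iteration produces eight families of pure tensors in the triple completed tensor product $(A\otimes B)\,\hat{\otimes}\,(A\otimes B)\,\hat{\otimes}\,(A\otimes B)$. The axioms available are the Zinbiel coalgebra identity \eqref{zinbco}, namely $(\vartheta\otimes\id)\vartheta+(\tau\otimes\id)(\vartheta\otimes\id)\vartheta=(\id\otimes\vartheta)\vartheta$, and the two completed perm coalgebra identities $(\nu\,\hat{\otimes}\,\id)\nu=(\id\,\hat{\otimes}\,\nu)\nu=(\tau\,\hat{\otimes}\,\id)(\nu\,\hat{\otimes}\,\id)\nu$. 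I would group the terms in the difference $(\Delta\,\hat{\otimes}\,\id)\Delta-(\id\,\hat{\otimes}\,\Delta)\Delta$ into three blocks matched by the three patterns of iterated $\nu$-application (the direct dual of the patterns $b_{1}\diamond(b_{2}\diamond b_{3})$, $b_{2}\diamond(b_{3}\diamond b_{1})$, $b_{3}\diamond(b_{1}\diamond b_{2})$ appearing in the proof of Proposition \ref{pro:tensor-ass}); within each block, the $B$-factors collapse via the completed perm coalgebra identities while the $A$-factors collapse via \eqref{zinbco}, yielding zero.

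For the converse, I would take $(B,\nu)$ to be the specific completed perm coalgebra of Example \ref{ex:grpermco} and assume $(A\otimes B,\Delta)$ is completed cocommutative coassociative. Evaluating coassociativity on elements of the form $a\otimes\partial_{1}$ and $a\otimes\partial_{2}$, I would compare coefficients of well-chosen pure tensors $x_{1}^{p_{1}}x_{2}^{q_{1}}\partial_{s_{1}}\otimes x_{1}^{p_{2}}x_{2}^{q_{2}}\partial_{s_{2}}\otimes x_{1}^{p_{3}}x_{2}^{q_{3}}\partial_{s_{3}}$ inside the triple completed tensor product over $B$. The explicit form of $\nu$ ensures that suitable choices of these monomials isolate exactly the individual compositions $(\vartheta\otimes\id)\vartheta$, $(\tau\otimes\id)(\vartheta\otimes\id)\vartheta$ and $(\id\otimes\vartheta)\vartheta$ applied to $a$, in the spirit of the coefficient comparison (of $x_{1}^{2}\partial_{2}$) carried out in the proof of Proposition \ref{pro:aff-Zalg}. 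Combining these relations will yield precisely \eqref{zinbco}.

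The main obstacle will be the bookkeeping inside the completed tensor product: since $\nu$ itself produces an infinite formal sum, the iterated maps $(\Delta\,\hat{\otimes}\,\id)\Delta$ and $(\id\,\hat{\otimes}\,\Delta)\Delta$ land in iterated infinite sums indexed over $\bz^{2}$, and one must confirm term by term that the reorganization into Zinbiel/perm-matching blocks is legitimate. This is not a conceptual difficulty so much as a careful dualization of the algebra-level calculation, but it is where any typographic slip is most likely to occur; the converse direction in particular hinges on identifying enough pure-tensor components of $B\,\hat{\otimes}\,B\,\hat{\otimes}\,B$ to pin down each term of the Zinbiel coalgebra identity separately.
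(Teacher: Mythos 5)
Your plan is correct and coincides with the paper's own argument: the forward direction is carried out there as a direct chain of equalities transforming $(\id\,\hat{\otimes}\,\Delta)\Delta$ into $(\Delta\,\hat{\otimes}\,\id)\Delta$ via the Zinbiel coalgebra identity \eqref{zinbco}, the completed perm coalgebra identities, and the braid relation for the twist maps, which is just your block-matching of the dualized Proposition \ref{pro:tensor-ass} written out term by term. The converse in the paper likewise computes $(\id\,\hat{\otimes}\,\nu)(\nu(\partial_{1}))$ and $(\nu\,\hat{\otimes}\,\id)(\nu(\partial_{1}))$ explicitly and compares the coefficient of a single well-chosen monomial tensor (of the form $x_{1}^{i_{1}}x_{2}^{i_{2}}\partial_{1}\otimes x_{1}^{j_{1}}x_{2}^{j_{2}}\partial_{2}\otimes x_{1}^{1-i_{1}-j_{1}}x_{2}^{1-i_{2}-j_{2}}\partial_{1}$), which recovers \eqref{zinbco} in one step rather than isolating each composition separately, but this is the same coefficient-comparison strategy you describe.
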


\begin{proof}
For any $\sum_{l}a'_{l}\otimes a''_{l}\otimes a'''_{l}\in A\otimes A\otimes A$ and
$\sum_{i,j,k,\alpha}b'_{i,\alpha}\otimes b''_{j,\alpha}
\otimes b'''_{k,\alpha}\in B\,\hat{\otimes}\,B\,\hat{\otimes}\,B$, we denote
$$
\Big(\sum_{l}a'_{l}\otimes a''_{l}\otimes a'''_{l}\Big)\bullet
\Big(\sum_{i,j,k,\alpha}b'_{i,\alpha}\otimes b''_{j,\alpha}\otimes b'''_{k,\alpha}\Big)
=\sum_{l}\sum_{i,j,k,\alpha}(a'_{l}\otimes b'_{i,\alpha})\otimes
(a''_{l}\otimes b''_{j,\alpha})\otimes(a'''_{l}\otimes b'''_{k,\alpha}).
$$
Then, by using the above notations, since $(B, \nu)$ is a perm coalgebra,
$(A, \vartheta)$ is a Zinbiel coalgebra and
$(\id\,\hat{\otimes}\,\hat{\tau})(\hat{\tau}\,\hat{\otimes}\,\id)
(\id\,\hat{\otimes}\,\hat{\tau})=(\hat{\tau}\,\hat{\otimes}\,\id)
(\id\,\hat{\otimes}\,\hat{\tau})(\hat{\tau}\,\hat{\otimes}\,\id)$,
for any $a\otimes b\in A\otimes B$, we have
\begin{align*}
&\;(\id\,\hat{\otimes}\,\Delta)(\Delta(a\otimes b))\\
=&\;(\id\otimes\vartheta)(\vartheta(a))\bullet(\id\,\hat{\otimes}\,\nu)(\nu(b))
+(\id\otimes\tau)((\id\otimes\vartheta)(\vartheta(a)))\bullet
(\id\,\hat{\otimes}\,\hat{\tau})((\id\,\hat{\otimes}\,\nu)(\nu(b)))\qquad\quad
\end{align*}
\begin{align*}
&\ \ +(\tau\otimes\id)((\id\otimes\tau)((\vartheta\otimes\id)(\vartheta(a))))\bullet
(\hat{\tau}\,\hat{\otimes}\,\id)((\id\,\hat{\otimes}\,\hat{\tau})((\id\,\hat{\otimes}\,\nu)
(\nu(b))))\\[-1mm]
&\ \ +(\id\otimes\tau)((\tau\otimes\id)((\id\otimes\tau)((\vartheta\otimes\id)(\vartheta(a)))))
\bullet(\id\,\hat{\otimes}\,\hat{\tau})((\hat{\tau}\,\hat{\otimes}\,\id)
((\id\,\hat{\otimes}\,\hat{\tau})((\id\,\hat{\otimes}\,\nu)(\nu(b)))))\\
=&\;(\vartheta\otimes\id)(\vartheta(a))\bullet(\nu\,\hat{\otimes}\,\id)(\nu(b))
+(\tau\otimes\id)((\vartheta\otimes\id)(\vartheta(a)))
\bullet(\id\,\hat{\otimes}\,\nu)(\nu(b))\\[-1mm]
&\ \ +(\id\otimes\tau)((\tau\otimes\id)((\id\otimes\vartheta)(\vartheta(a))))\bullet
(\id\,\hat{\otimes}\,\hat{\tau})((\hat{\tau}\,\hat{\otimes}\,\id)
((\id\,\hat{\otimes}\,\nu)(\nu(b))))\\[-1mm]
&\ \ +(\tau\otimes\id)((\id\otimes\tau)((\vartheta\otimes\id)(\vartheta(a))))\bullet
(\hat{\tau}\,\hat{\otimes}\,\id)((\id\,\hat{\otimes}\,\hat{\tau})((\id\,\hat{\otimes}\,\nu)
(\nu(b))))\\[-1mm]
&\ \ +(\tau\otimes\id)((\id\otimes\tau)((\id\otimes\vartheta)(\vartheta(a))))
\bullet(\hat{\tau}\,\hat{\otimes}\,\id)((\id\,\hat{\otimes}\,\hat{\tau})
((\nu\,\hat{\otimes}\,\id)(\nu(b))))\\[-1mm]
&\ \ -(\tau\otimes\id)((\id\otimes\tau)((\vartheta\otimes\id)(\vartheta(a))))
\bullet(\hat{\tau}\,\hat{\otimes}\,\id)((\id\,\hat{\otimes}\,\hat{\tau})
((\nu\,\hat{\otimes}\,\id)(\nu(b))))\\
=&\;(\vartheta\otimes\id)(\vartheta(a))\bullet(\nu\,\hat{\otimes}\,\id)(\nu(b))
+(\tau\otimes\id)((\vartheta\otimes\id)(\vartheta(a)))\bullet
(\hat{\tau}\,\hat{\otimes}\,\id)((\nu\,\hat{\otimes}\,\id)(\nu(b)))\\[-1mm]
&\ \ +(\id\otimes\tau)((\tau\otimes\id)((\id\otimes\vartheta)(\vartheta(a))))\bullet
(\id\,\hat{\otimes}\,\hat{\tau})((\hat{\tau}\,\hat{\otimes}\,\id)((\nu\,\hat{\otimes}\,\id)
(\nu(b))))\\[-1mm]
&\ \ +(\tau\otimes\id)((\id\otimes\tau)((\tau\otimes\id)((\id\otimes\vartheta)(\vartheta(a)))))
\bullet(\hat{\tau}\,\hat{\otimes}\,\id)((\id\,\hat{\otimes}\,\hat{\tau})
((\hat{\tau}\,\hat{\otimes}\,\id)((\nu\,\hat{\otimes}\,\id)(\nu(b)))))\\
=&\;(\Delta\,\hat{\otimes}\,\id)(\Delta(a\otimes b)).
\end{align*}
Moreover, it is easy to see that $\Delta=\hat{\tau}\Delta$.
Thus, $(A\otimes B, \Delta)$ is a completed cocommutative coassociative coalgebra.

Conversely, if $(B=\oplus_{i\in\bz}B_{i}, \nu)$ is the $\bz$-graded perm algebra
given in Example \ref{ex:grpermco}, then we have
\begin{align*}
&\;(\id\,\hat{\otimes}\,\nu)(\nu(\partial_{1}))\\
=&\;\sum_{i_{1}, i_{2}\in\bz}\sum_{j_{1}, j_{2}\in\bz}
\Big(x_{1}^{i_{1}}x_{2}^{i_{2}}\partial_{1}
\otimes\Big(x_{1}^{j_{1}}x_{2}^{j_{2}}\partial_{1}\otimes
x_{1}^{-i_{1}-j_{1}}x_{2}^{1-i_{2}-j_{2}+1}\partial_{1}
-x_{1}^{j_{1}}x_{2}^{j_{2}}\partial_{2}\otimes
x_{1}^{-i_{1}-j_{1}+1}x_{2}^{1-i_{2}-j_{2}}\partial_{1}\Big)\\[-4mm]
&\qquad\qquad\quad-x_{1}^{i_{1}}x_{2}^{i_{2}}\partial_{2}\otimes\Big(
x_{1}^{j_{1}}x_{2}^{j_{2}}\partial_{1}\otimes
x_{1}^{1-i_{1}-j_{1}}x_{2}^{-i_{2}-j_{2}+1}\partial_{1}
-x_{1}^{j_{1}}x_{2}^{j_{2}}\partial_{2}\otimes
x_{1}^{1-i_{1}-j_{1}+1}x_{2}^{-i_{2}-j_{2}}\partial_{1}\Big)\Big)
\end{align*}
and
\begin{align*}
&\;(\nu\,\hat{\otimes}\,\id)(\nu(\partial_{1}))\\
=&\;\sum_{i_{1}, i_{2}\in\bz}\sum_{j_{1}, j_{2}\in\bz}
\Big(\Big(x_{1}^{j_{1}}x_{2}^{j_{2}}\partial_{1}\otimes
x_{1}^{i_{1}-j_{1}}x_{2}^{i_{2}-j_{2}+1}\partial_{1}
-x_{1}^{j_{1}}x_{2}^{j_{2}}\partial_{2}\otimes
x_{1}^{i_{1}-j_{1}+1}x_{2}^{i_{2}-j_{2}}\partial_{1}\Big)
\otimes x_{1}^{-i_{1}}x_{2}^{1-i_{2}}\partial_{1}\\[-4mm]
&\qquad\qquad\quad-\Big(x_{1}^{j_{1}}x_{2}^{j_{2}}\partial_{1}\otimes
x_{1}^{i_{1}-j_{1}}x_{2}^{i_{2}-j_{2}+1}\partial_{2}
-x_{1}^{j_{1}}x_{2}^{j_{2}}\partial_{2}\otimes
x_{1}^{i_{1}-j_{1}+1}x_{2}^{i_{2}-j_{2}}\partial_{2}\Big)
\otimes x_{1}^{1-i_{1}}x_{2}^{-i_{2}}\partial_{1}\Big).
\end{align*}
Comparing the coefficients of $x_{1}^{i_{1}}x_{2}^{i_{2}}\partial_{1}
\otimes x_{1}^{j_{1}}x_{2}^{j_{2}}\partial_{2}\otimes
x_{1}^{1-i_{1}-j_{1}}x_{2}^{1-i_{2}-j_{2}}\partial_{1}$
in equation $(\id\,\hat{\otimes}\,\Delta)(\Delta(a\otimes\partial_{1}))
=(\Delta\,\hat{\otimes}\,\id)(\Delta(a\otimes\partial_{1}))$, we get
$(\id\otimes\vartheta)\vartheta=(\vartheta\otimes\id)\vartheta+(\tau\otimes\id)
(\vartheta\otimes\id)\vartheta$. Thus, $(A, \vartheta)$ is a Zinbiel coalgebra.
\end{proof}

Next, we extend the Zinbiel algebra affinization and the Zinbiel coalgebra
affinization to Zinbiel bialgebras. First, we recall the definition of quadratic
$\bz$-graded perm algebra.

\begin{defi}\label{def:quad}
Let $\omega(-, -)$ be a bilinear form on a $\bz$-graded perm algebra
$(B=\oplus_{i\in\bz}B_{i}, \diamond)$.
\begin{enumerate}\itemsep=0pt
\item[$(i)$] $\omega(-, -)$ is called {\bf invariant}, if $\omega(b_{1}\diamond b_{2},\;
     b_{3})=\omega(b_{1},\; b_{2}\diamond b_{3}-b_{3}\diamond b_{2})$ for any $b_{1},
     b_{2}, b_{3}\in B$;
\item[$(ii)$] $\omega(-, -)$ is called {\bf graded}, if there exists some $m\in\bz$
        such that $\omega(B_{i}, B_{j})=0$ when $i+j+m\neq0$.
\end{enumerate}
A {\bf quadratic $\bz$-graded perm algebra}, denoted by $(B=\oplus_{i\in\bz}B_{i},
\diamond, \omega)$, is a $\bz$-graded perm algebra together with an antisymmetric
invariant nondegenerate graded bilinear form.
In particular, if $B=B_{0}$, it is just the quadratic perm algebra.
\end{defi}

\begin{ex}[\cite{LZB}]\label{ex:qu-perm}
Let $(B=\oplus_{i\in\bz}B_{i}, \diamond)$ be the $\bz$-graded perm algebra given in
Example \ref{ex:grperm}, where $B=\{f_{1}\partial_{1}+f_{2}\partial_{2}\mid f_{1},
f_{2}\in\bk[x_{1}^{\pm}, x_{2}^{\pm}]\}=\oplus_{i\in\bz}B_{i}$. Define an
antisymmetric bilinear form $\omega(-,-)$ on $(B=\oplus_{i\in\bz}B_{i}, \cdot)$ by
\begin{align*}
&\omega(x_{1}^{i_{1}}x_{2}^{i_{2}}\partial_{2},\ \ x_{1}^{j_{1}}x_{2}^{j_{2}}\partial_{1})
=-\omega(x_{1}^{j_{1}}x_{2}^{j_{2}}\partial_{1},\ \ x_{1}^{i_{1}}x_{2}^{i_{2}}\partial_{2})
=\delta_{i_{1}+j_{1}, 0}\delta_{i_{2}+j_{2}, 0}, \\
&\qquad\quad\omega(x_{1}^{i_{1}}x_{2}^{i_{2}}\partial_{1},\ \
x_{1}^{j_{1}}x_{2}^{j_{2}}\partial_{1})=\omega(x_{1}^{i_{1}}x_{2}^{i_{2}}\partial_{2},\ \
x_{1}^{j_{1}}x_{2}^{j_{2}}\partial_{2})=0,
\end{align*}
for any $i_{1}, i_{2}, j_{1}, j_{2}\in\bz$.
Then $(B=\oplus_{i\in\bz}B_{i}, \diamond, \omega)$ is a quadratic $\bz$-graded perm algebra.
Moreover, $\{x_{1}^{-i_{1}}x_{2}^{-i_{2}}\partial_{2}$,\; $-x_{1}^{-i_{1}}
x_{2}^{-i_{2}}\partial_{1}\mid i_{1}, i_{2}\in\bz\}$ is the dual basis of
$\{x_{1}^{i_{1}}x_{2}^{i_{2}}\partial_{1},\; x_{1}^{i_{1}}x_{2}^{i_{2}}\partial_{2}\mid
i_{1}, i_{2}\in\bz\}$ with respect to $\omega(-,-)$, consisting of homogeneous elements.
\end{ex}

For a quadratic $\bz$-graded perm algebra $(B=\oplus_{i\in\bz}B_{i}, \diamond, \omega)$,
we have $\omega(b_{1}\diamond b_{2},\; b_{3})=\omega(b_{2},\; b_{1}\diamond b_{3})$
for any $b_{1}, b_{2}, b_{3}\in B$. Moreover, the nondegenerate skew-symmetric
bilinear form $\omega(-,-)$ induces bilinear forms
$$
(\underbrace{B\,\hat{\otimes}\,B\,\hat{\otimes}\,\cdots\,\hat{\otimes}\,
B}_{n\text{-fold}})\otimes(\underbrace{B\otimes B\otimes\cdots
\otimes B}_{n\text{-fold}})\longrightarrow\bk,
$$
for all $n\geq2$, which are denoted by $\hat{\omega}(-,-)$ and defined by
$$
\hat{\omega}\Big(\sum_{i_{1},\cdots,i_{n},\alpha} x_{1, i_{1}, \alpha}
\otimes\cdots\otimes x_{n, i_{n}, \alpha},\ \ y_{1}\otimes\cdots\otimes y_{n}\Big)
=\sum_{i_{1},\cdots,i_{n},\alpha}\prod_{j=1}^{n}
\omega(x_{j, i_{j}, \alpha},\; y_{j}).
$$
Then, one can check that $\hat{\omega}(-,-)$ is {\bf left nondegenerate}, i.e., if
$$
\hat{\omega}\Big(\sum_{i_{1}, \cdots, i_{n},\alpha}x_{1, i_{1}, \alpha}
\otimes\cdots\otimes x_{n, i_{n}, \alpha},\ \ y_{1}\otimes\cdots\otimes y_{n}\Big)
=\hat{\omega}\Big(\sum_{i_{1},\cdots,i_{n},\alpha} z_{1, i_{1}, \alpha}
\otimes\cdots\otimes z_{n, i_{n}, \alpha},\ \ y_{1}\otimes\cdots\otimes y_{n}\Big),
$$
for all homogeneous elements $y_{1}, y_{2},\cdots, y_{n}\in B$, then
$$
\sum_{i_{1},\cdots,i_{n},\alpha} x_{1, i_{1}, \alpha}
\otimes\cdots\otimes x_{n, i_{n}, \alpha}
=\sum_{i_{1},\cdots,i_{n},\alpha} z_{1, i_{1}, \alpha}
\otimes\cdots\otimes z_{n, i_{n}, \alpha}.
$$

\begin{lem}[\cite{LZB}]\label{lem:comp-dual}
Let $(B=\oplus_{i\in\bz}B_{i}, \diamond, \omega)$ be a quadratic $\bz$-graded perm algebra.
Define a linear map $\nu_{\omega}: B\rightarrow B\otimes B$ by $\hat{\omega}(\nu_{\omega}
(b_{1}),\; b_{2}\otimes b_{3})=-\omega(b_{1},\; b_{2}b_{3})$, for any $b_{1}, b_{2},
b_{3}\in B$. Then $(B, \nu_{\omega})$ is a completed perm coalgebra.
\end{lem}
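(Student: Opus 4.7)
The plan is to verify the two completed-perm-coalgebra identities
\[
(\nu_{\omega}\,\hat{\otimes}\,\id)\nu_{\omega}=(\id\,\hat{\otimes}\,\nu_{\omega})\nu_{\omega}
=(\tau\,\hat{\otimes}\,\id)(\nu_{\omega}\,\hat{\otimes}\,\id)\nu_{\omega}
\]
by pairing each side with an arbitrary triple of homogeneous test elements $b_{2}\otimes b_{3}\otimes b_{4}\in B\otimes B\otimes B$ using the completed form $\hat{\omega}$, and then invoking its left nondegeneracy (explicitly recorded in the excerpt just before Lemma~\ref{lem:comp-dual}) to conclude equality in $B\,\hat{\otimes}\,B\,\hat{\otimes}\,B$. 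The well-definedness of $\nu_{\omega}$ as a map into the completed tensor product is automatic from the gradedness of $\omega$ and the existence of a homogeneous dual basis as in Example~\ref{ex:qu-perm}; the only content is therefore the three coalgebra axioms.

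For the first equality, I would write $\nu_{\omega}(b_{1})=\sum_{\alpha}u_{\alpha}\otimes v_{\alpha}$ schematically. Two successive applications of the defining identity $\hat{\omega}(\nu_{\omega}(b),\,y\otimes z)=-\omega(b,\,y\diamond z)$ give
\[
\hat{\omega}\bigl((\nu_{\omega}\,\hat{\otimes}\,\id)\nu_{\omega}(b_{1}),\;b_{2}\otimes b_{3}\otimes b_{4}\bigr)
=-\hat{\omega}\bigl(\nu_{\omega}(b_{1}),\;(b_{2}\diamond b_{3})\otimes b_{4}\bigr)
=\omega\bigl(b_{1},\;(b_{2}\diamond b_{3})\diamond b_{4}\bigr),
\]
and analogously
\[
\hat{\omega}\bigl((\id\,\hat{\otimes}\,\nu_{\omega})\nu_{\omega}(b_{1}),\;b_{2}\otimes b_{3}\otimes b_{4}\bigr)
=\omega\bigl(b_{1},\;b_{2}\diamond(b_{3}\diamond b_{4})\bigr).
\]
The perm axiom $(b_{2}\diamond b_{3})\diamond b_{4}=b_{2}\diamond(b_{3}\diamond b_{4})$ makes these equal.

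For the second equality, applying $\tau\,\hat{\otimes}\,\id$ to the left factor of $(\nu_{\omega}\,\hat{\otimes}\,\id)\nu_{\omega}(b_{1})$ swaps the first two tensor slots before pairing, so the same unfolding yields
\[
\hat{\omega}\bigl((\tau\,\hat{\otimes}\,\id)(\nu_{\omega}\,\hat{\otimes}\,\id)\nu_{\omega}(b_{1}),\;b_{2}\otimes b_{3}\otimes b_{4}\bigr)
=\omega\bigl(b_{1},\;(b_{3}\diamond b_{2})\diamond b_{4}\bigr),
\]
which agrees with $\omega(b_{1},\,(b_{2}\diamond b_{3})\diamond b_{4})$ by the other perm identity $(b_{3}\diamond b_{2})\diamond b_{4}=(b_{2}\diamond b_{3})\diamond b_{4}$. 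Once all three pairings coincide for every homogeneous triple $b_{2},b_{3},b_{4}$, left nondegeneracy of $\hat{\omega}$ forces the underlying elements of $B\,\hat{\otimes}\,B\,\hat{\otimes}\,B$ to agree, completing the proof.

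The only genuine obstacle is bookkeeping around the completed tensor product: one has to confirm that every infinite sum that appears pairs with a fixed homogeneous triple into a finite sum, so that $\hat{\omega}$ is legitimately defined on $(\nu_{\omega}\,\hat{\otimes}\,\id)\nu_{\omega}(b_{1})$ and on $(\id\,\hat{\otimes}\,\nu_{\omega})\nu_{\omega}(b_{1})$. This is immediate from the gradedness clause in Definition~\ref{def:quad}, which forces $\omega(b_{i},b_{j})$ to vanish outside a prescribed degree pairing; the rest is symbolic manipulation with the defining relation of $\nu_{\omega}$ and the two perm axioms.
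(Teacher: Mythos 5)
Your argument is correct. The paper itself does not prove this lemma --- it is imported verbatim from \cite{LZB} --- so there is no in-text proof to compare against; but your strategy of pairing $(\nu_{\omega}\,\hat{\otimes}\,\id)\nu_{\omega}(b_{1})$, $(\id\,\hat{\otimes}\,\nu_{\omega})\nu_{\omega}(b_{1})$ and $(\tau\,\hat{\otimes}\,\id)(\nu_{\omega}\,\hat{\otimes}\,\id)\nu_{\omega}(b_{1})$ against homogeneous triples, reducing each to $\omega(b_{1},\,(b_{2}\diamond b_{3})\diamond b_{4})$, $\omega(b_{1},\,b_{2}\diamond(b_{3}\diamond b_{4}))$ and $\omega(b_{1},\,(b_{3}\diamond b_{2})\diamond b_{4})$ respectively and then invoking the two perm identities together with left nondegeneracy of $\hat{\omega}$, is exactly the technique this paper uses for the analogous manipulations of $\nu_{\omega}$ in the proofs of Theorem \ref{thm:Z-perm-ass} and Proposition \ref{pro:ZYBE-AYBE}. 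Your closing remark on well-definedness is also the right one: gradedness of $\omega$ confines $\nu_{\omega}(b_{1})$ for homogeneous $b_{1}$ to $\prod_{i+j=-\deg b_{1}-2m}B_{i}\otimes B_{j}$, so each graded component of the triple completed tensor product receives only finitely many contributions and all pairings are finite sums.
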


\begin{ex}[\cite{LZB}]\label{ex:ind-coperm}
Consider the quadratic $\bz$-graded perm algebra $(B=\oplus_{i\in\bz}B_{i}, \diamond,
\omega)$ given in Example \ref{ex:qu-perm}. Then the induced completed perm coalgebra
$(B=\oplus_{i\in\bz}B_{i}, \nu_{\omega})$ is just the completed perm coalgebra
$(B=\oplus_{i\in\bz}B_{i}, \nu)$ given in Example \ref{ex:grpermco}.
\end{ex}

We now give the notion and results on completed infinitesimal bialgebras.

\begin{defi}\label{def:CASIbia}
A {\bf completed infinitesimal bialgebra} is a triple $(A, \cdot, \Delta)$ consisting of a
vector space $A=\oplus_{i\in\bz}A_{i}$, a bilinear map $\cdot: A\otimes A\rightarrow A$ and
a linear map $\Delta: A\rightarrow A\otimes A$ such that
\begin{enumerate}\itemsep=0pt
\item[$(i)$] $(A=\oplus_{i\in\bz}A_{i}, \cdot)$ is a $\bz$-graded commutative
     associative algebra;
\item[$(ii)$] $(A=\oplus_{i\in\bz}A_{i}, \Delta)$ is a completed cocommutative
     coassociative coalgebra;
\item[$(iii)$] for any $a_{1}, a_{2}\in A$,
\begin{align}
\Delta(a_{1}a_{2})=(\fu_{A}(a_{2})\,\hat{\otimes}\,\id)(\Delta(a_{1}))
+(\id\,\hat{\otimes}\,\fu_{A}(a_{1}))(\Delta(a_{2})).             \label{CASI}
\end{align}
\end{enumerate}
\end{defi}

\begin{thm}\label{thm:Z-perm-ass}
Let $(A, \ast, \vartheta)$ be a finite-dimensional Zinbiel bialgebra,
$(B=\oplus_{i\in\bz}B_{i}, \diamond, \omega)$ be a quadratic $\bz$-graded perm
algebra and $(A\otimes B, \cdot)$ be the induced $\bz$-graded commutative
associative algebra from $(A, \ast)$ by $(B=\oplus_{i\in\bz}B_{i}, \diamond)$.
Define a linear map $\Delta: A\otimes B\rightarrow(A\otimes B)\otimes(A\otimes B)$ by
Eq. \eqref{Zcoass} for $\nu=\nu_{\omega}$. Then $(A\otimes B, \cdot, \Delta)$ is a
completed infinitesimal bialgebra, which is called the {\bf completed infinitesimal
bialgebra induced from $(A, \ast, \vartheta)$ by $(B, \diamond, \omega)$}.

Moreover, if $(B=\oplus_{i\in\bz}B_{i}, \diamond, \omega)$ is the quadratic $\bz$-graded
perm algebra given in Example \ref{ex:qu-perm}, then $(A\otimes B, \cdot, \Delta)$ is
a completed infinitesimal bialgebra if and only if $(A, \ast, \vartheta)$ is a
Zinbiel bialgebra.
\end{thm}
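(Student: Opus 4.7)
The plan is to verify the three axioms of Definition \ref{def:CASIbia} for $(A\otimes B, \cdot, \Delta)$. Axiom (i), that $(A\otimes B, \cdot)$ is a $\bz$-graded commutative associative algebra, is immediate from Proposition \ref{pro:aff-Zalg}. Axiom (ii), that $(A\otimes B, \Delta)$ is a completed cocommutative coassociative coalgebra, follows from Proposition \ref{pro:Zco-coass} once we know $(B, \nu_{\omega})$ is a completed perm coalgebra, which is Lemma \ref{lem:comp-dual}. Thus the substantive content of the forward direction is the compatibility condition \eqref{CASI}.

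To establish \eqref{CASI}, I would expand $\Delta((a_{1}\otimes b_{1})(a_{2}\otimes b_{2}))$ using the induced product \eqref{ind-ass} and the formula \eqref{Zcoass}, thereby producing two groups of terms: one coming from $(a_{1}\ast a_{2})\otimes(b_{1}\diamond b_{2})$ and involving $\vartheta(a_{1}\ast a_{2})\bullet \nu_{\omega}(b_{1}\diamond b_{2})$, and one from $(a_{2}\ast a_{1})\otimes(b_{2}\diamond b_{1})$. On the $A$ side I would invoke the Zinbiel bialgebra compatibilities \eqref{Zbialg1} and \eqref{Zbialg2} to rewrite $\vartheta(a_{1}\ast a_{2})+\tau\vartheta(a_{1}\ast a_{2})$ and $\vartheta(a_{1}\ast a_{2}+a_{2}\ast a_{1})$ as sums involving $\fl_{A}(a_{i})$ and $\fr_{A}(a_{j})$ applied to $\vartheta(a_{k})$. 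On the $B$ side I would use the invariance of $\omega$, through Lemma \ref{lem:comp-dual}, to rewrite $\nu_{\omega}(b_{1}\diamond b_{2})$ via the adjoints of left and right perm multiplications, so that the action of the perm algebra on $B$ transfers, under $\omega$, to an action on $\nu_{\omega}(b)$. Combining these two rewrites and using the involution $\id+\hat{\tau}$ built into $\Delta$ together with the perm identity $b_{1}\diamond(b_{2}\diamond b_{3})=b_{2}\diamond(b_{1}\diamond b_{3})$, I expect the cross terms to cancel and the remaining terms to reassemble into $(\fu_{A\otimes B}(a_{2}\otimes b_{2})\,\hat{\otimes}\,\id)\Delta(a_{1}\otimes b_{1})+(\id\,\hat{\otimes}\,\fu_{A\otimes B}(a_{1}\otimes b_{1}))\Delta(a_{2}\otimes b_{2})$.

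For the converse, assuming $(A\otimes B, \cdot, \Delta)$ is a completed infinitesimal bialgebra with $(B, \diamond, \omega)$ the specific algebra of Example \ref{ex:qu-perm} (whose coalgebra $\nu_{\omega}$ is explicit by Example \ref{ex:ind-coperm}), I would test \eqref{CASI} on pairs of the form $(a_{1}\otimes\partial_{s})(a_{2}\otimes\partial_{t})$ for $s,t\in\{1,2\}$, in the spirit of the second half of the proof of Proposition \ref{pro:Zco-coass}. Expanding both sides in the basis $\{x_{1}^{i_{1}}x_{2}^{i_{2}}\partial_{k}\}$ and comparing coefficients of appropriately chosen monomials that isolate a single $\tau$-type and a single application of $\vartheta$, I would recover \eqref{Zbialg1} from one comparison and \eqref{Zbialg2} from another, just as the forward direction applied them; combined with the fact that $(A,\ast)$ is already a Zinbiel algebra and $(A,\vartheta)$ a Zinbiel coalgebra by Propositions \ref{pro:aff-Zalg} and \ref{pro:Zco-coass}, this yields the Zinbiel bialgebra structure.

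The main obstacle will be bookkeeping in the forward direction: carefully accounting for the $\id+\hat{\tau}$ symmetrization together with the fact that $\vartheta$ is not cocommutative but $\omega$ is antisymmetric, so that the contributions from $\nu_{\omega}(b_{1}\diamond b_{2})$ involve both $b_{2}\diamond(-)$ and $(-)\diamond b_{2}$ terms through the defining identity $\omega(b_{1}\diamond b_{2}, b_{3})=\omega(b_{2}, b_{1}\diamond b_{3})=-\omega(b_{1}, b_{3}\diamond b_{2}-b_{2}\diamond b_{3})$. Matching signs and the correct operator placements on $A$ against those on $B$ will require the full pair \eqref{Zbialg1}–\eqref{Zbialg2}, not either alone, which is precisely why the Zinbiel bialgebra axiom appears in its present symmetric-plus-antisymmetric form.
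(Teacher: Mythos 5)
Your proposal follows the same route as the paper's proof: axioms (i) and (ii) are delegated to Proposition \ref{pro:aff-Zalg}, Proposition \ref{pro:Zco-coass} and Lemma \ref{lem:comp-dual}; the compatibility \eqref{CASI} is verified by expanding both sides, using the invariance of $\omega$ together with the left nondegeneracy of $\hat{\omega}$ to re-express $\nu_{\omega}(b\diamond b')$ and its twist, and then cancelling via \eqref{Zbialg1} and \eqref{Zbialg2}; and the converse is obtained by testing on $(a\otimes\partial_{s})(a'\otimes\partial_{t})$ and comparing coefficients. This matches the paper's argument in structure and in all essential steps, including the observation that both Zinbiel bialgebra axioms are needed.
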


\begin{proof}
By Proposition \ref{pro:Zco-coass} and Lemma \ref{lem:comp-dual}, we get
$(A\otimes B, \Delta)$ is a completed cocommutative coassociative coalgebra,
and need to show that Eq. \eqref{CASI} holds.
For any $a, a'\in A$, $b, b'\in B$, we have
\begin{align*}
&\;\Delta((a\otimes b)(a'\otimes b'))
-(\fu_{A\otimes B}(a'\otimes b')\,\hat{\otimes}\,\id)(\Delta(a\otimes b))
-(\id\,\hat{\otimes}\,\fu_{A\otimes B}(a\otimes b))(\Delta(a'\otimes b'))\\
=&\;\vartheta(a\ast a')\bullet\nu_{\omega}(b\diamond b')
+\tau(\vartheta(a\ast a'))\bullet\hat{\tau}(\nu_{\omega}(b\diamond b'))\\[-1mm]
&\;+\vartheta(a'\ast a)\bullet\nu_{\omega}(b'\diamond b)
+\tau(\vartheta(a'\ast a))\bullet\hat{\tau}(\nu_{\omega}(b'\diamond b))\\
&\;-\sum_{(a)}\sum_{i,j,\alpha}\Big(\big((a'\ast a_{(1)})\otimes
(b'\diamond b_{1,i,\alpha})\big)\otimes(a_{(2)}\otimes b_{2,j,\alpha})
+\big((a_{(1)}\ast a')\otimes(b_{1,i,\alpha}\diamond b')\big)
\otimes(a_{(2)}\otimes b_{2,j,\alpha})\Big)
\end{align*}
\begin{align*}
&\;-\sum_{(a)}\sum_{i,j,\alpha}\Big(\big((a'\ast a_{(2)})\otimes
(b'\diamond b_{2,j,\alpha})\big)\otimes(a_{(1)}\otimes b_{1,i,\alpha})
+\big((a_{(2)}\ast a')\otimes(b_{2,j,\alpha}\diamond b')\big)\otimes
(a_{(1)}\otimes b_{1,i,\alpha})\Big)\\[-2mm]
&\;-\sum_{(a')}\sum_{i,j,\alpha}\Big((a'_{(1)}\otimes b'_{1,i,\alpha})\otimes
\big((a\ast a'_{(2)})\otimes(b\diamond b'_{2,j,\alpha})\big)
+(a'_{(1)}\otimes b'_{1,i,\alpha})\otimes\big((a'_{(2)}\ast a)\otimes
(b'_{2,j,\alpha}\diamond b)\big)\Big)\\[-2mm]
&\;-\sum_{(a')}\sum_{i,j,\alpha}\Big((a'_{(2)}\otimes b'_{2,j,\alpha})\otimes
\big((a\ast a'_{(1)})\otimes(b\diamond b'_{1,i,\alpha})\big)
+(a'_{(2)}\otimes b'_{2,j,\alpha})\otimes\big((a'_{(1)}\ast a)
\otimes(b'_{1,i,\alpha}\diamond b)\big)\Big).
\end{align*}
Since $\omega(-,-)$ on $(B, \diamond)$ is invariant, we have
\begin{align*}
\hat{\omega}(\nu_{\omega}(b\diamond b'),\; e\otimes f)
&=-\omega(b\diamond b',\; e\diamond f)=-\omega(b',\; b\diamond(e\diamond f)),\\
\hat{\omega}\Big(\sum_{i,j,\alpha}(b'_{1,i,\alpha}\otimes
(b\diamond b'_{2,j,\alpha})),\; e\otimes f\Big)
&=-\omega(b',\; e\diamond(b\diamond f))=-\omega(b',\; b\diamond(e\diamond f)),
\end{align*}
for any $b, b', e, f\in B$. Since $\hat{\omega}(-,-)$ is left nondegenerate,
we obtain $\nu_{\omega}(b\diamond b')=\sum_{i,j,\alpha}(b'_{1,i,\alpha}\otimes
(b\diamond b'_{2,j,\alpha}))$. Similarly, we also have
$\sum_{i,j,\alpha}((b_{1,i,\alpha}\diamond b')\otimes b_{2,j,\alpha})
=\sum_{i,j,\alpha}(b'_{2,j,\alpha}\otimes(b'_{1,i,\alpha}\diamond b))=0$ and
\begin{align*}
\hat{\tau}(\nu_{\omega}(b'\diamond b))&=\sum_{i,j,\alpha}((b'\diamond b_{2,j,\alpha})
\otimes b_{1,i,\alpha})=\hat{\tau}(\nu_{\omega}(b\diamond b'))+\Phi,\\[-2mm]
\hat{\tau}(\nu_{\omega}(b\diamond b'))&=\sum_{i,j,\alpha}((b_{2,j,\alpha}\diamond b')
\otimes b_{1,i,\alpha})=\sum_{i,j,\alpha}(b'_{2,j,\alpha}\otimes
(b\diamond b'_{1,i,\alpha})),\\[-2mm]
\nu_{\omega}(b'\diamond b)&=\sum_{i,j,\alpha}((b'\diamond b_{1,i,\alpha})
\otimes b_{2,j,\alpha})=\sum_{i,j,\alpha}(b'_{1,i,\alpha}\otimes(b'_{2,j,\alpha}\diamond b)
=\nu_{\omega}(b\diamond b')+\Phi,
\end{align*}
where $\Phi\in B\otimes B$ such that $\hat{\omega}(\Phi,\; e\otimes f)
=\omega(b',\; e\diamond(f\diamond b))$. Thus, since $(A, \ast, \vartheta)$ is a
Zinbiel bialgebra, we obtain
\begin{align*}
&\;\Delta((a\otimes b)(a'\otimes b'))
-(\fu_{A\otimes B}(a'\otimes b')\,\hat{\otimes}\,\id)(\Delta(a\otimes b))
-(\id\,\hat{\otimes}\,\fu_{A\otimes B}(a\otimes b))(\Delta(a'\otimes b'))\\
=&\;\Big(\vartheta(a\ast a'+a'\ast a)-(\fl_{A}(a')\otimes\id)(\vartheta(a))
-(\id\otimes\fl_{A}(a))(\vartheta(a'))-(\id\otimes\fr_{A}(a))(\vartheta(a'))
\Big)\bullet\nu_{\omega}(b\diamond b')\\[-1mm]
&\ \ +\Big(\vartheta(a'\ast a)+\tau(\vartheta(a'\ast a))
-(\id\otimes\fr_{A}(a))(\vartheta(a')-(\fl_{A}(a')\otimes\id)(\vartheta(a))
-\tau((\id\otimes\fl_{A}(a'))(\vartheta(a))\Big)\bullet\Phi\\[-1mm]
&\ \ +\tau\Big(\vartheta(a\ast a'+a'\ast a)-(\id\otimes\fl_{A}(a'))(\vartheta(a))\\[-2mm]
&\qquad\qquad\qquad-(\id\otimes\fr_{A}(a'))(\vartheta(a))
-(\fl_{A}(a)\otimes\id)(\vartheta(a'))\Big)\bullet\hat{\tau}(\nu_{\omega}(b\diamond b'))\\
=&\; 0.
\end{align*}
Thus, $(A\otimes B, \cdot, \Delta)$ is a completed infinitesimal bialgebra.

Conversely, if $(B=\oplus_{i\in\bz}B_{i}, \diamond, \omega)$ is the quadratic $\bz$-graded
perm algebra given in Example \ref{ex:qu-perm} and $(A\otimes B, \cdot, \Delta)$ is
a completed infinitesimal bialgebra, then $(A, \ast)$ is a Zinbiel algebra and
$(A, \vartheta)$ is a Zinbiel coalgebra by Propositions \ref{pro:aff-Zalg} and
\ref{pro:Zco-coass} respectively. Now we only need to prove that Eqs. \eqref{Zbialg1}
and \eqref{Zbialg2} hold. Since $(A\otimes B, \cdot, \Delta)$ is a completed
infinitesimal bialgebra, for any $a, a'\in A$,
\begin{align*}
0=&\;\Delta((a\otimes\partial_{1})\ast(a'\otimes\partial_{1}))
-(\fu_{A\otimes B}(a'\otimes\partial_{1})\,\hat{\otimes}\,\id)(\Delta(a\otimes\partial_{1}))
-(\id\,\hat{\otimes}\,\fu_{A\otimes B}(a\otimes\partial_{1}))(\Delta(a'\otimes\partial_{1}))\\
=&\;\vartheta(a\ast a'+a'\ast a)\bullet\nu_{\omega}(x_{1}\partial_{1})
+\tau(\vartheta(a\ast a'+a'\ast a))\bullet\,\hat{\tau}(\nu_{\omega}(x_{1}\partial_{1}))\\
&\ \ -\sum_{(a)}\sum_{i_{1},i_{2}}\Big(
\Big((a_{(1)}\ast a')\otimes(x_{1}^{i_{1}+1}x_{2}^{i_{2}}\partial_{1})
+(a\ast a_{(1)})\otimes(x_{1}^{i_{1}+1}x_{2}^{i_{2}}\partial_{1})\Big)
\otimes(a_{(2)}\otimes(x_{1}^{-i_{1}}x_{2}^{1-i_{2}}\partial_{1}))\\[-5mm]
&\qquad\qquad\quad-\Big((a_{(1)}\ast a')\otimes(x_{1}^{i_{1}}x_{2}^{i_{2}+1}\partial_{1})
+(a\ast a_{(1)})\otimes(x_{1}^{i_{1}+1}x_{2}^{i_{2}}\partial_{2})\Big)
\otimes(a_{(2)}\otimes(x_{1}^{1-i_{1}}x_{2}^{-i_{2}}\partial_{1}))\Big)\\
&\ \ -\sum_{(a)}\sum_{i_{1},i_{2}}\Big(
\Big((a_{(1)}\ast a')\otimes(x_{1}^{i_{1}+1}x_{2}^{i_{2}}\partial_{1})
+(a'\ast a_{(1)})\otimes(x_{1}^{i_{1}+1}x_{2}^{i_{2}}\partial_{1})\Big)
\otimes(a_{(2)}\otimes(x_{1}^{-i_{1}}x_{2}^{1-i_{2}}\partial_{1}))\\[-5mm]
&\qquad\qquad\quad-\Big((a_{(1)}\ast a')\otimes(x_{1}^{i_{1}+1}x_{2}^{i_{2}}\partial_{1})
+(a'\ast a_{(1)})\otimes(x_{1}^{i_{1}+1}x_{2}^{i_{2}}\partial_{1})\Big)
\otimes(a_{(2)}\otimes(x_{1}^{1-i_{1}}x_{2}^{-i_{2}}\partial_{2}))\Big)\\
&\ \ -\sum_{(a')}\sum_{i_{1},i_{2}}\Big(
(a'_{(1)}\otimes(x_{1}^{i_{1}}x_{2}^{i_{2}}\partial_{1}))\otimes
\Big((a\ast a'_{(2)})\otimes(x_{1}^{1-i_{1}}x_{2}^{1-i_{2}}\partial_{1})
+(a'_{(2)}\ast a)\otimes(x_{1}^{1-i_{1}}x_{2}^{1-i_{2}}\partial_{1})\Big)\\[-5mm]
&\qquad\qquad\quad-(a'_{(1)}\otimes(x_{1}^{i_{1}}x_{2}^{i_{2}}\partial_{2}))\otimes
\Big((a\ast a'_{(2)})\otimes(x_{1}^{2-i_{1}}x_{2}^{-i_{2}}\partial_{1})
+(a'_{(2)}\ast a)\otimes(x_{1}^{2-i_{1}}x_{2}^{-i_{2}}\partial_{1})\Big)\Big)
\end{align*}
\begin{align*}
&\ \ -\sum_{(a')}\sum_{i_{1},i_{2}}\Big(
(a'_{(1)}\otimes(x_{1}^{i_{1}}x_{2}^{i_{2}}\partial_{1}))\otimes
\Big((a\ast a'_{(2)})\otimes(x_{1}^{1-i_{1}}x_{2}^{1-i_{2}}\partial_{1})
+(a'_{(2)}\ast a)\otimes(x_{1}^{1-i_{1}}x_{2}^{1-i_{2}}\partial_{1})\Big)\\[-5mm]
&\qquad\qquad\quad-(a'_{(1)}\otimes(x_{1}^{i_{1}}x_{2}^{i_{2}}\partial_{1}))\otimes
\Big((a\succ a'_{(2)})\otimes(x_{1}^{2-i_{1}}x_{2}^{-i_{2}}\partial_{2})
+(a'_{(2)}\ast a)\otimes(x_{1}^{1-i_{1}}x_{2}^{1-i_{2}}\partial_{1})\Big)\Big).
\end{align*}
Comparing the coefficients of $\partial_{2}\otimes\partial_{1}$ in the equation above,
we get Eq. \eqref{Zbialg2} holds. Similarly, comparing the coefficients of
$\partial_{1}\otimes\partial_{1}$ in the equation $\Delta((a\otimes\partial_{1})
\ast(a'\otimes\partial_{2}))-(\fr_{A\otimes B}(a'\otimes\partial_{2})\,\hat{\otimes}\,\id)
(\Delta(a\otimes\partial_{1}))-(\id\,\hat{\otimes}\,\fl_{A\otimes B}(a\otimes\partial_{1}))
(\Delta(a'\otimes\partial_{2}))=0$, we get Eq. \eqref{Zbialg1} holds.
Thus, $(A, \ast, \vartheta)$ is a Zinbiel bialgebra in this case.
\end{proof}

Now let us return to finite-dimensional infinitesimal bialgebras.
For a special case of Theorem \ref{thm:Z-perm-ass}, where $(B=B_{0}, \diamond,
\omega)$ is a finite-dimensional quadratic perm algebra, we have

\begin{cor}\label{cor:indassbia}
Let $(A, \ast, \vartheta)$ be a finite-dimensional Zinbiel bialgebra
and $(B, \diamond, \omega)$ be a quadratic perm algebra, and $(A\otimes B, \cdot)$
be the induced commutative associative algebra from $(A, \ast)$ by $(B, \diamond)$.
Define a linear map $\Delta: A\otimes B\rightarrow(A\otimes B) \otimes(A\otimes B)$ by
\begin{align}
\Delta(a\otimes b)
&=(\id\otimes\id+\tau)\big(\vartheta(a)\bullet\nu_{\omega}(b)\big)     \label{fZcoass} \\
&:=\sum_{(a)}\sum_{(b)}\Big((a_{(1)}\otimes b_{(1)})\otimes(a_{(2)}\otimes b_{(2)})
+(a_{(2)}\otimes b_{(2)})\otimes(a_{(1)}\otimes b_{(1)})\Big),   \nonumber
\end{align}
for any $a\in A$ and $b\in B$, where $\vartheta(a)=\sum_{(a)}a_{(1)}\otimes a_{(2)}$ and
$\nu_{\omega}(b)=\sum_{(b)}b_{(1)}\otimes b_{(2)}$ in the Sweedler notation.
Then $(A\otimes B, \cdot, \Delta)$ is an infinitesimal bialgebra, which is called the
{\bf infinitesimal bialgebra induced from $(A, \ast, \vartheta)$ by $(B, \diamond, \omega)$}.
\end{cor}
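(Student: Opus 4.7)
The plan is to deduce the corollary as a direct specialization of Theorem \ref{thm:Z-perm-ass}. First I would view the finite-dimensional quadratic perm algebra $(B, \diamond, \omega)$ as a $\bz$-graded quadratic perm algebra concentrated in degree $0$, i.e. $B = B_{0}$ and $B_{i} = 0$ for $i \neq 0$. With this trivial grading, all the completed constructions in Section \ref{sec:zbialg} collapse to their finite-dimensional counterparts: the completed tensor product $B \,\hat{\otimes}\, B = \prod_{i,j \in \bz} B_{i} \otimes B_{j}$ reduces to the ordinary tensor product $B \otimes B$, the map $\hat{\tau}$ reduces to $\tau$, and the completed perm coalgebra $(B, \nu_{\omega})$ reduces to an ordinary perm coalgebra in the finite-dimensional sense. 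Likewise, Eq. \eqref{Zcoass} collapses to Eq. \eqref{fZcoass} since the infinite formal sum indexed by $i, j$ disappears.

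Next I would apply Theorem \ref{thm:Z-perm-ass} to obtain that $(A \otimes B, \cdot, \Delta)$ is a completed infinitesimal bialgebra, where $\Delta$ is given by Eq. \eqref{Zcoass} with $\nu = \nu_{\omega}$. Under the trivial grading, the notion of completed infinitesimal bialgebra (Definition \ref{def:CASIbia}) specializes to the ordinary one: condition $(i)$ becomes $(A \otimes B, \cdot)$ is a commutative associative algebra (which holds by Proposition \ref{pro:tensor-ass}), condition $(ii)$ becomes the cocommutative coassociative coalgebra axiom, and the compatibility Eq. \eqref{CASI} becomes the ordinary infinitesimal bialgebra compatibility. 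Hence $(A \otimes B, \cdot, \Delta)$ is an infinitesimal bialgebra in the usual sense.

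Finally, I would verify that the explicit formula Eq. \eqref{fZcoass} in the corollary matches the formula Eq. \eqref{Zcoass} of the theorem once the grading is trivial; this is just a rewriting in Sweedler notation, where $\nu_{\omega}(b) = \sum_{(b)} b_{(1)} \otimes b_{(2)}$ replaces the infinite sum $\sum_{i,j,\alpha} b_{1,i,\alpha} \otimes b_{2,j,\alpha}$. Since there is essentially no obstacle here beyond bookkeeping — all the real work has already been done in Proposition \ref{pro:aff-Zalg}, Proposition \ref{pro:Zco-coass}, Lemma \ref{lem:comp-dual} and Theorem \ref{thm:Z-perm-ass} — the only step that requires a moment of care is checking that the definition of $\nu_{\omega}$ via $\hat{\omega}(\nu_{\omega}(b),\, b_{2} \otimes b_{3}) = -\omega(b,\, b_{2} \diamond b_{3})$ produces a genuinely finite sum in the ungraded case, which it does because $B$ is finite-dimensional and $\omega$ is nondegenerate. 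This yields the corollary.
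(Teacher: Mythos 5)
Your proposal is correct and matches the paper's own treatment: the paper presents this corollary precisely as the specialization of Theorem \ref{thm:Z-perm-ass} to the case $B=B_{0}$, where the completed tensor product, $\hat{\tau}$, and Eq. \eqref{Zcoass} all collapse to their finite-dimensional counterparts. Your additional remarks on the triviality of the grading condition on $\omega$ and the finiteness of $\nu_{\omega}(b)$ are accurate bookkeeping and do not change the argument.
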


\subsection{Infinite-dimensional infinitesimal bialgebra from the Zinbiel
Yang-Baxter equation}\label{subsec:affYBE}
In this subsection, we use symmetric solutions of the $\ZYBE$ to construct skew-symmetric
solutions of the $\AYBE$ in the induced commutative associative algebra. Thus, we
can construct a triangular infinite-dimensional infinitesimal bialgebra by the
tensor product of a triangular Zinbiel bialgebra and a quadratic $\bz$-graded perm algebra.
Moreover, we also present a construction of quasi-Frobenius $\bz$-graded
commutative associative algebras from quasi-Frobenius Zinbiel algebras corresponding to
a class of symmetric solutions of the $\ZYBE$.

Recall that a Zinbiel bialgebra $(A, \ast, \vartheta)$ is called {\bf coboundary}
if there exists $r\in A\otimes A$ such that
\begin{align}
\vartheta(a)=\vartheta_{r}(a):=\big(\id\otimes(\fl_{A}+\fr_{A})(a)
-\fl_{A}(a)\otimes\id\big)(r),                       \label{Zcobo}
\end{align}
for all $a\in A$. Let $(A, \ast)$ be a Zinbiel algebra and $r=\sum_{i}x_{i}\otimes y_{i}
\in A\otimes A$. We say $r$ is a solution of the {\bf Zinbiel Yang-Baxter equation}
(or $\ZYBE$) in $(A, \ast)$ if
\begin{align*}
\mathbf{Z}_{r}:&=r_{13}\ast r_{12}+r_{23}\ast r_{21}+r_{13}\ast r_{23}
+r_{23}\ast r_{13} \\[-1mm]
&\quad-r_{12}\ast r_{23}-r_{23}\ast r_{12}-r_{13}\ast r_{21}-r_{21}\ast r_{13}=0,
\end{align*}
where $r_{13}\ast r_{12}:=\sum_{i,j}(x_{i}\ast x_{j})\otimes y_{j}\otimes y_{i}$,
$r_{23}\ast r_{21}:=\sum_{i,j}y_{j}\otimes(x_{i}\ast x_{j})\otimes y_{i}$,
$r_{13}\ast r_{23}:=\sum_{i,j}x_{i}\otimes x_{j}\otimes(y_{i}\ast y_{j})$,
$r_{21}\ast r_{13}:=\sum_{i,j}(y_{i}\ast x_{j})\otimes x_{i}\otimes y_{j}$,
$r_{23}\ast r_{12}:=\sum_{i,j}x_{j}\otimes(x_{i}\ast y_{j})\otimes y_{i}$,
$r_{23}\ast r_{13}:=\sum_{i,j}x_{j}\otimes x_{i}\otimes(y_{i}\ast y_{j})$,
$r_{13}\ast r_{21}:=\sum_{i,j}(x_{i}\ast y_{j})\otimes x_{j}\otimes y_{i}$,
$r_{12}\ast r_{23}:=\sum_{i,j}x_{i}\otimes(y_{i}\ast x_{j})\otimes y_{j}$.

\begin{pro}[\cite{Wan}]\label{pro:spec-Zbia}
Let $(A, \ast)$ be a Zinbiel algebra, $r\in A\otimes A$ and $\vartheta_{r}: A\rightarrow
A\otimes A$ be a linear map defined by Eq. \eqref{Zcobo}. If $r$ is a symmetric
solution of the $\ZYBE$ in $(A, \ast)$, then $(A, \ast, \vartheta_{r})$ is a
Zinbiel bialgebra, which is called a {\bf triangular Zinbiel bialgebra} associated with $r$.
\end{pro}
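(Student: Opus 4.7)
The plan is to verify separately the three axioms that define a Zinbiel bialgebra structure on $(A, \ast, \vartheta_{r})$: the Zinbiel coalgebra identity \eqref{zinbco} for $\vartheta_{r}$, and the two compatibility conditions \eqref{Zbialg1} and \eqref{Zbialg2}. Write $r=\sum_{i}x_{i}\otimes y_{i}$ and use the symmetry hypothesis in the form $\sum_{i}x_{i}\otimes y_{i}=\sum_{i}y_{i}\otimes x_{i}$ throughout. The expansion of the coboundary is
$$
\vartheta_{r}(a)=\sum_{i}x_{i}\otimes(a\ast y_{i}+y_{i}\ast a)-\sum_{i}(a\ast x_{i})\otimes y_{i}.
$$
My expectation, based on the analogous coboundary theory for Lie and pre-Lie bialgebras, is that the compatibility conditions require only the symmetry of $r$ together with the Zinbiel axiom, while the coalgebra identity is precisely where the equation $\mathbf{Z}_{r}=0$ intervenes.

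First I would verify \eqref{Zbialg1} and \eqref{Zbialg2}. The approach is to substitute the expansion above into both sides and match terms pairwise. The key algebraic tools are the defining Zinbiel identity $a_{1}\ast(a_{2}\ast a_{3})=(a_{1}\ast a_{2})\ast a_{3}+(a_{2}\ast a_{1})\ast a_{3}$ and its immediate consequence $a_{1}\ast(a_{2}\ast a_{3})=a_{2}\ast(a_{1}\ast a_{3})$. For instance, to handle \eqref{Zbialg2}, after substitution the left-hand side produces terms of the form $x_{i}\otimes((a_{1}\ast a_{2}+a_{2}\ast a_{1})\ast y_{i}+y_{i}\ast(a_{1}\ast a_{2}+a_{2}\ast a_{1}))$ together with $-((a_{1}\ast a_{2}+a_{2}\ast a_{1})\ast x_{i})\otimes y_{i}$; using Zinbiel associativity these can be rewritten and grouped to match the three summands on the right-hand side, with the symmetry $r=\tau(r)$ invoked wherever a term involves swapping $x_{i}\leftrightarrow y_{i}$ in a leg to which no new operator is applied. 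The verification of \eqref{Zbialg1} is similar but needs the twist $\tau$ to absorb precisely the asymmetry between left and right multiplication by $a_{1}$; the main care is bookkeeping rather than new ideas.

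Next I would verify the Zinbiel coalgebra identity \eqref{zinbco} for $\vartheta_{r}$. This is the hardest step and the one where $\mathbf{Z}_{r}=0$ is used. I would compute each of the three triple-tensor expressions $(\vartheta_{r}\otimes\id)\vartheta_{r}(a)$, $(\tau\otimes\id)(\vartheta_{r}\otimes\id)\vartheta_{r}(a)$ and $(\id\otimes\vartheta_{r})\vartheta_{r}(a)$ by applying $\vartheta_{r}$ to the appropriate leg of the expansion of $\vartheta_{r}(a)$. This produces many terms, each labelled by the pair of indices $(i,j)$ and by a choice of which of the three summands in $\vartheta_{r}$ is used at each step. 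The terms naturally split into two kinds: those in which the element $a$ appears inside some $\ast$-product (call these the \emph{one-sided} terms) and those in which $a$ appears only through a single $\ast$-factor, leaving a residual expression built purely from $\sum_{i,j}x_{i}\otimes x_{j}\otimes\ldots\ast\ldots$ in the remaining two slots.

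The strategy is to show that the one-sided terms on both sides of \eqref{zinbco} cancel by direct use of the Zinbiel axiom, and that the remaining $a$-independent residue is precisely $(\fu_{A\otimes A\otimes A}(a)\otimes\cdots)\mathbf{Z}_{r}$ or an analogous expression built from $\mathbf{Z}_{r}$ with $a$ acting on one slot. The main obstacle will be organizing the eight summands of $\mathbf{Z}_{r}$ against the expansions so that each summand $r_{13}\ast r_{12}$, $r_{23}\ast r_{21}$, etc.\ is recognized, with the correct sign, inside the difference of the two sides of \eqref{zinbco}. Once this matching is done, the assumption $\mathbf{Z}_{r}=0$ gives the conclusion. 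Throughout this step the symmetry of $r$ is used to identify certain rearranged indices, e.g.\ to rewrite $r_{12}$ as $r_{21}$ in the appropriate slots, which is essential for matching the terms produced by $(\tau\otimes\id)(\vartheta_{r}\otimes\id)\vartheta_{r}$ with the corresponding summands of $\mathbf{Z}_{r}$.
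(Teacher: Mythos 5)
The paper offers no proof of this proposition: it is imported verbatim from \cite{Wan} (note the citation in the theorem header), so there is no in-paper argument to compare yours against. Judged on its own terms, your plan is the standard coboundary-bialgebra argument and its division of labour is correct. I checked the two compatibility conditions with $\vartheta_{r}$ expanded as you write it: Eq. \eqref{Zbialg2} holds for \emph{arbitrary} $r$ using only the Zinbiel identity and its consequence $a_{1}\ast(a_{2}\ast a_{3})=a_{2}\ast(a_{1}\ast a_{3})$ (the three terms $(a_{1}\ast x_{i})\otimes(a_{2}\ast y_{i}+y_{i}\ast a_{2})$ cancel against the second legs of the other two summands, and the rest matches by Zinbiel associativity), while Eq. \eqref{Zbialg1} leaves exactly three pairs of unmatched terms, each of which cancels after relabelling $x_{i}\leftrightarrow y_{i}$ via $r=\tau(r)$ — so your statement that the compatibilities need at most symmetry plus the Zinbiel axiom is accurate, and the co-Zinbiel identity \eqref{zinbco} is indeed the only place where $\mathbf{Z}_{r}=0$ must enter. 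The one step you leave genuinely unexecuted is the hardest one: exhibiting the difference $(\id\otimes\vartheta_{r})\vartheta_{r}-(\vartheta_{r}\otimes\id)\vartheta_{r}-(\tau\otimes\id)(\vartheta_{r}\otimes\id)\vartheta_{r}$ as an explicit action of $\fl_{A}(a)$, $\fr_{A}(a)$ on the slots of $\mathbf{Z}_{r}$ after the Zinbiel-identity cancellations. That is a finite bookkeeping computation of precisely the kind you describe, and your warning that the symmetry of $r$ is needed to recognize the summands $r_{21}\ast r_{13}$, $r_{13}\ast r_{21}$, etc.\ inside the expansion is the right flag for where it could go wrong; I see no gap in the plan, only work left to do.
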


Recall that an infinitesimal bialgebra $(A, \cdot, \Delta)$ is called {\bf coboundary} if
there exists an element $r\in A\otimes A$ such that
\begin{align}
\Delta(a)=\Delta_{r}(a):=(\id\otimes\fu_{A}(a)-\fu_{A}(a)\otimes\id)(r), \label{ascobo}
\end{align}
for all $a\in A$. Let $(A, \cdot)$ be a cocommutative coassociative algebra
and $r=\sum_{i}x_{i}\otimes y_{i}\in A\otimes A$. The equation
$$
\mathbf{A}_{r}:=r_{12}r_{13}+r_{12}r_{23}-r_{23}r_{13}=0
$$
is called the {\bf associative Yang-Baxter equation} (or $\AYBE$) in $(A, \cdot)$,
where $r_{12}r_{13}:=\sum_{i,j}(x_{i}x_{j})\otimes y_{i}\otimes y_{j}$,
$r_{12}r_{23}:=\sum_{i,j}x_{i}\otimes(y_{i}x_{j})\otimes y_{j}$ and
$r_{23}r_{13}:=\sum_{i,j}x_{j}\otimes x_{i}\otimes(y_{i}y_{j})$.
For a solution of the $\AYBE$ in commutative associative algebra $(A, \cdot)$,
if $r$ is a skew-symmetric solution of the $\AYBE$ in $(A, \cdot)$,
then $(A, \cdot, \Delta_{r})$ is an infinitesimal bialgebra, which is
called a {\bf triangular infinitesimal bialgebra} associated with $r$ \cite{Bai}.
Let $(A=\oplus_{i\in\bz}A_{i}, \cdot)$ be a $\bz$-graded commutative associative algebra.
Suppose that $r=\sum_{i,j,\alpha}x_{i\alpha}\otimes y_{j\alpha}\in A\,\hat{\otimes}\,A$.
We denote $r_{12}r_{13}:=\sum_{i,j,k,l,\alpha,\beta}(x_{i,\alpha}x_{k,\beta})\otimes
y_{j,\alpha}\otimes y_{l,\beta}$, $r_{12}r_{23}:=\sum_{i,j,k,l,\alpha,\beta}
x_{i,\alpha}\otimes(y_{j,\alpha}x_{k,\beta})\otimes y_{l,\beta}$,
$r_{23}r_{13}:=\sum_{i,j,k,l,\alpha,\beta}x_{k,\alpha}\otimes x_{i,\beta}\otimes
(y_{j,\alpha}y_{l,\beta})$. If $r\in A\,\hat{\otimes}\,A$ satisfies
$\mathbf{A}_{r}=r_{12}r_{13}+r_{12}r_{23}-r_{23}r_{13}=0$
as an element in $A\,\hat{\otimes}\,A\,\hat{\otimes}\,A$, then $r$ is called a
{\bf completed solution} of the $\AYBE$ in $(A=\oplus_{i\in\bz}A_{i}, \cdot)$.
If $A=A_{0}$ is finite-dimensional, a completed solution of the $\AYBE$ in $(A, \cdot)$
is just a solution of the $\AYBE$ in commutative associative algebra
$(A=A_{0}, \cdot)$. The same argument of the proof for \cite[Corollary 2.4.1]{Bai}
extends to the completed case. We obtain the following proposition.

\begin{pro}\label{pro:ass-tri}
Let $(A=\oplus_{i\in\bz}A_{i}, \cdot)$ be a $\bz$-graded commutative associative algebra
and $r\in A\,\hat{\otimes}\,A$ is a completed solution of the $\AYBE$ in
$(A=\oplus_{i\in\bz}A_{i}, \cdot)$. Define a bilinear map $\Delta_{r}: A\rightarrow
A\,\hat{\otimes}\,A$ by
\begin{align}
\Delta_{r}(a):=(\id\,\hat{\otimes}\,\fu_{A}(a)
-\fu_{A}(a)\,\hat{\otimes}\,\id)(r),          \label{cass-cobo}
\end{align}
for any $a\in A$. If $r$ is skew-symmetric, i.e., $r=-\hat{\tau}(r)$, then
$(A, \cdot, \Delta_{r})$ is a completed infinitesimal bialgebra, which is called a
{\bf triangular completed infinitesimal bialgebra} associated with $r$.
\end{pro}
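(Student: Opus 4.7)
The plan is to verify the three defining conditions of a completed infinitesimal bialgebra (Definition \ref{def:CASIbia}) for $(A, \cdot, \Delta_r)$, by adapting the finite-dimensional proof of \cite[Corollary 2.4.1]{Bai} to the completed tensor product setting. Condition $(i)$ is given, so we must establish $(ii)$ that $(A, \Delta_r)$ is a completed cocommutative coassociative coalgebra, and $(iii)$ the compatibility identity Eq. \eqref{CASI}. The subtlety is that $r=\sum_{i,j,\alpha} x_{i,\alpha}\otimes y_{j,\alpha}$ lives in $A\,\hat\otimes\,A$, so manipulations involve infinite sums; however, multiplication by a homogeneous $a\in A_n$ shifts degree by $n$, so at each fixed total degree only finitely many terms contribute and all formal calculations are legitimate in the completed setting.

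For cocommutativity $\hat\tau\Delta_r=\Delta_r$, we use the skew-symmetry $\hat\tau(r)=-r$ directly:
\begin{align*}
\hat\tau(\Delta_r(a))
&= \hat\tau\bigl((\id\,\hat\otimes\,\fu_A(a))(r)-(\fu_A(a)\,\hat\otimes\,\id)(r)\bigr) \\
&= (\fu_A(a)\,\hat\otimes\,\id)(\hat\tau(r))-(\id\,\hat\otimes\,\fu_A(a))(\hat\tau(r))
\;=\; \Delta_r(a).
\end{align*}
For the compatibility Eq. \eqref{CASI}, a direct expansion using only the associativity of $\cdot$ suffices. Since $\fu_A(a_2)\fu_A(a_1)=\fu_A(a_1a_2)=\fu_A(a_1)\fu_A(a_2)$, we have
\begin{align*}
(\fu_A(a_2)\,\hat\otimes\,\id)\Delta_r(a_1)
&= (\fu_A(a_2)\,\hat\otimes\,\fu_A(a_1))(r)-(\fu_A(a_1a_2)\,\hat\otimes\,\id)(r), \\
(\id\,\hat\otimes\,\fu_A(a_1))\Delta_r(a_2)
&= (\id\,\hat\otimes\,\fu_A(a_1a_2))(r)-(\fu_A(a_2)\,\hat\otimes\,\fu_A(a_1))(r),
\end{align*}
and summing the two cancels the cross terms and recovers $\Delta_r(a_1a_2)$. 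No use of the AYBE is needed for this part.

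The technical heart of the proposition is coassociativity $(\Delta_r\,\hat\otimes\,\id)\Delta_r=(\id\,\hat\otimes\,\Delta_r)\Delta_r$, and this is where the hypothesis $\mathbf{A}_r=0$ enters. The plan is to expand both sides as elements of $A\,\hat\otimes\,A\,\hat\otimes\,A$ in terms of products of two copies of $r$ placed in tensor positions $12$, $13$, $23$, each carrying one application of $\fu_A(a)$ in some slot. After collecting terms, the difference of the two sides splits into pieces of the form $(\fu_A(a)\,\hat\otimes\,\id\,\hat\otimes\,\id)(\mathbf{A}_r)$, $(\id\,\hat\otimes\,\fu_A(a)\,\hat\otimes\,\id)(\mathbf{A}_r)$ and $(\id\,\hat\otimes\,\id\,\hat\otimes\,\fu_A(a))(\mathbf{A}_r)$, together with $\hat\tau$-twists thereof produced by invoking skew-symmetry of $r$ to interchange the $12$- and $21$-positions. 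Each such piece vanishes because $\mathbf{A}_r=0$, yielding the desired identity.

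The main obstacle is bookkeeping: tracking the infinite-index expansion $r=\sum_{i,j,\alpha}x_{i,\alpha}\otimes y_{j,\alpha}$ and justifying that the reshuffling of triple sums in $A\,\hat\otimes\,A\,\hat\otimes\,A$ is legitimate. The $\bz$-grading is essential, ensuring that every expression collapses to finite computations within each fixed total-degree component, so the formal rearrangements that are automatic in the finite-dimensional case remain valid. Once this point is cleared, the identities reduce verbatim to those in the finite-dimensional argument of \cite{Bai}, and the proposition follows.
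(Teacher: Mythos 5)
Your proposal is correct and matches the paper's treatment: the paper gives no written proof of Proposition \ref{pro:ass-tri}, stating only that the argument of \cite[Corollary 2.4.1]{Bai} extends to the completed case, which is precisely what you carry out (cocommutativity from skew-symmetry, the compatibility \eqref{CASI} from associativity alone, coassociativity from $\mathbf{A}_r=0$ plus skew-symmetry, with the $\bz$-grading controlling the completed tensor products). The only caveat, which the paper itself also glosses over, is that well-definedness of the iterated coproducts and of $r_{12}r_{13}$ etc.\ in $A\,\hat{\otimes}\,A\,\hat{\otimes}\,A$ is implicitly assumed rather than derived from the grading alone.
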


We give the following relation between the solutions of the $\ZYBE$ in a Zinbiel algebra
and the completed solutions of the $\AYBE$ in the induced $\bz$-graded commutative
associative algebra. Let $(B=\oplus_{i\in\bz}B_{i}, \diamond, \omega)$ be a quadratic
$\bz$-graded perm algebra and $\{e_{j}\}_{j\in\Omega}$ be a basis of
$B=\oplus_{i\in\bz}B_{i}$ consisting of homogeneous elements. Since $\omega(-,-)$ is
graded, antisymmetric and nondegenerate, we get a homogeneous dual basis
$\{f_{j}\}_{j\in\Omega}$ of $B=\oplus_{i\in\bz}B_{i}$, which is called the dual basis
of $\{e_{i}\}_{i\in\Omega}$ with respect to $\omega(-,-)$, by $\omega(f_{i}, e_{j})
=\delta_{ij}$, where $\delta_{ij}$ is the Kronecker delta.

\begin{pro}\label{pro:ZYBE-AYBE}
Let $(A, \ast)$ be a Zinbiel algebra and $(B=\oplus_{i\in\bz}B_{i}, \diamond,
\omega)$ be a quadratic $\bz$-graded perm algebra, and $(A\otimes B, \cdot)$ be the
induced $\bz$-graded commutative associative algebra. Suppose that $r=\sum_{i}x_{i}
\otimes y_{i}\in A\otimes A$ is a symmetric solution of the $\ZYBE$ in $(A, \ast)$. Then
\begin{align}
\widehat{r}=\sum_{i}\sum_{j\in\Omega}(x_{i}\otimes e_{j})\otimes(y_{i}\otimes f_{j})
\in(A\otimes B)\,\hat{\otimes}\,(A\otimes B)  \label{r-indass}
\end{align}
is a skew-symmetric completed solution of the $\AYBE$ in $(A\otimes B, \cdot)$,
where $\{e_{j}\}_{j\in\Omega}$ is a homogeneous basis of $B=\oplus_{i\in\bz}B_{i}$
and $\{f_{j}\}_{j\in\Omega}$ is the homogeneous dual basis of $\{e_{j}\}_{j\in\Omega}$
with respect to $\omega(-,-)$.
\end{pro}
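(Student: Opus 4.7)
The plan is to verify the two assertions of the proposition---namely that $\widehat{r}$ is skew-symmetric and that $\widehat{r}$ satisfies the completed $\AYBE$---separately. Skew-symmetry is a short Casimir-type computation, while the $\AYBE$ verification is the main work.

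First, for skew-symmetry, I will establish the auxiliary identity
\[
\sum_{j\in\Omega} e_{j}\otimes f_{j}+\sum_{j\in\Omega} f_{j}\otimes e_{j}=0 \quad\text{in }B\,\hat{\otimes}\,B.
\]
To do this, I pair both sides against an arbitrary pair of homogeneous elements $c_{1}\otimes c_{2}$ under $\hat{\omega}$ and use the basis expansion $b=\sum_{j}\omega(f_{j},b)e_{j}$ to reduce the pairing to $\omega(c_{2},c_{1})+\omega(c_{1},c_{2})$, which vanishes by antisymmetry of $\omega$; left nondegeneracy of $\hat{\omega}$ then yields the identity. Writing $\widehat{r}$ as the image of $r\otimes\big(\sum_{j}e_{j}\otimes f_{j}\big)$ under the natural shuffle $A\otimes A\otimes B\,\hat{\otimes}\,B\to(A\otimes B)\,\hat{\otimes}\,(A\otimes B)$, the symmetry of $r$ together with the Casimir identity above gives $\hat{\tau}(\widehat{r})=-\widehat{r}$.

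Next, I tackle the completed $\AYBE$ by directly expanding
\[
\mathbf{A}_{\widehat{r}}=\widehat{r}_{12}\widehat{r}_{13}+\widehat{r}_{12}\widehat{r}_{23}-\widehat{r}_{23}\widehat{r}_{13}
\]
using the induced product $(a\otimes b)(a'\otimes b')=(a\ast a')\otimes(b\diamond b')+(a'\ast a)\otimes(b'\diamond b)$. Each of the three terms produces two summands, yielding six pieces in $(A\otimes B)^{\hat{\otimes}\,3}$, each a tensor whose $B$-part involves a single $\diamond$-product and whose $A$-part involves a single $\ast$-product. I would then regroup these six pieces according to the shape of their $B$-tensor pattern, and use the perm-algebra identities
\[
b_{1}\diamond(b_{2}\diamond b_{3})=(b_{1}\diamond b_{2})\diamond b_{3}=(b_{2}\diamond b_{1})\diamond b_{3}
\]
together with a dual-basis identity arising from the invariance of $\omega$, such as $\sum_{j}(b\diamond e_{j})\otimes f_{j}=\sum_{j}e_{j}\otimes(b\diamond f_{j})$, to put each piece into a common $B$-form. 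The $A$-coefficients will then collect, with the appropriate signs, into precisely the eight $\ast$-products constituting $\mathbf{Z}_{r}$; since $r$ is a solution of the $\ZYBE$, $\mathbf{Z}_{r}=0$, and therefore $\mathbf{A}_{\widehat{r}}=0$.

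The hard part is the combinatorial matching: aligning six expanded pieces against the eight terms of $\mathbf{Z}_{r}$ while tracking signs and the ordering of tensor slots is delicate. A potential shortcut, bypassing this bookkeeping, is to apply Theorem~\ref{thm:Z-perm-ass} to the triangular Zinbiel bialgebra $(A,\ast,\vartheta_{r})$ of Proposition~\ref{pro:spec-Zbia}, obtaining a completed infinitesimal bialgebra $(A\otimes B,\cdot,\Delta)$, and then check that the coproduct $\Delta$ defined by Eq.~\eqref{Zcoass} coincides with the coboundary coproduct $\Delta_{\widehat{r}}$ of Eq.~\eqref{cass-cobo}; combined with the already-established skew-symmetry of $\widehat{r}$, this would reduce the $\AYBE$ to the coassociativity of $\Delta_{\widehat{r}}$, which is then automatic.
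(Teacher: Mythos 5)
Your main argument is essentially the paper's proof: skew-symmetry follows from the Casimir identity $\sum_{j}e_{j}\otimes f_{j}=-\sum_{j}f_{j}\otimes e_{j}$ (antisymmetry of $\omega$ plus left nondegeneracy of $\hat{\omega}$) combined with the symmetry of $r$, and the $\AYBE$ is verified by expanding the three products into six pieces of the form $(A\text{-tensor})\bullet(B\text{-tensor})$, converting each $B$-tensor such as $\sum_{p,q}e_{q}\otimes(f_{q}\diamond e_{p})\otimes f_{p}$ and $\sum_{p,q}e_{p}\otimes e_{q}\otimes(f_{p}\diamond f_{q})$ into a combination of $\sum_{p,q}(e_{p}\diamond e_{q})\otimes f_{p}\otimes f_{q}$ and $\sum_{p,q}(e_{q}\diamond e_{p})\otimes f_{p}\otimes f_{q}$ via invariance and left nondegeneracy, and then recognizing the collected $A$-coefficients as $\mathbf{Z}_{r}$ and $(\id\otimes\tau)(\mathbf{Z}_{r})$ in the simplified symmetric form of the $\ZYBE$. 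This is exactly the route the paper takes, so your primary plan is sound.

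One warning: the ``shortcut'' you offer at the end to bypass the combinatorial matching does not work. For a coboundary coproduct $\Delta_{\widehat{r}}$ on a commutative associative algebra, coassociativity of $\Delta_{\widehat{r}}$ is strictly weaker than $\mathbf{A}_{\widehat{r}}=0$: the coassociator of $\Delta_{\widehat{r}}$ computes to an expression of the form $\big(\id\,\hat{\otimes}\,\id\,\hat{\otimes}\,\fu_{A\otimes B}(u)-\fu_{A\otimes B}(u)\,\hat{\otimes}\,\id\,\hat{\otimes}\,\id\big)(\mathbf{A}_{\widehat{r}})$, so its vanishing only forces an invariance condition on $\mathbf{A}_{\widehat{r}}$, not $\mathbf{A}_{\widehat{r}}=0$. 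Correspondingly, Proposition \ref{pro:ass-tri} is a one-way implication (from the $\AYBE$ to the completed infinitesimal bialgebra); knowing that $(A\otimes B,\cdot,\Delta)$ is a completed infinitesimal bialgebra with $\Delta=\Delta_{\widehat{r}}$ does not let you conclude that $\widehat{r}$ solves the $\AYBE$. So the direct computation (or an equivalent) cannot be avoided.
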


\begin{proof}
First, for any $p, q\in\Omega$, we have
\begin{align*}
&\;\widehat{r}_{12}\widehat{r}_{13}+\widehat{r}_{13}\widehat{r}_{23}
-\widehat{r}_{23}\widehat{r}_{12}\\
=&\;\sum_{i,j}\sum_{p,q}\big((x_{i}\ast x_{j})\otimes y_{i}\otimes y_{j}\big)
\bullet\big((e_{p}\diamond e_{q})\otimes f_{p}\otimes f_{q}\big)
+\big((x_{j}\ast x_{i})\otimes y_{i}\otimes y_{j}\big)
\bullet\big((e_{q}\diamond e_{p})\otimes f_{p}\otimes f_{q}\big)\\[-4mm]
&\qquad\quad+\big(x_{i}\otimes x_{j}\otimes(y_{i}\ast y_{j})\big)\bullet
\big(e_{p}\otimes e_{q}\otimes(f_{p}\diamond f_{q})\big)
+\big(x_{i}\otimes x_{j}\otimes(y_{j}\ast y_{i})\big)\bullet
\big(e_{p}\otimes e_{q}\otimes(f_{q}\diamond f_{p})\big)\\[-1mm]
&\qquad\quad-\big(x_{j}\otimes(x_{i}\ast y_{j})\otimes y_{i}\big)
\bullet\big(e_{q}\otimes(e_{p}\diamond f_{q})\otimes f_{p}\big)
-\big(x_{j}\otimes(y_{j}\ast x_{i})\otimes y_{i}\big)
\bullet\big(e_{q}\otimes(f_{q}\diamond e_{p})\otimes f_{p}\big).
\end{align*}
Moreover, for given $e_{s}, e_{u}, e_{v}\in B$, $s, u, v\in\Omega$, we have
\begin{align*}
\hat{\omega}\Big(\sum_{p,q}(e_{p}\diamond e_{q})\otimes f_{p}\otimes f_{q},\ \
e_{s}\otimes e_{u}\otimes e_{v}\Big)&=\omega(e_{u}\diamond e_{v},\; e_{s}),\\[-2mm]
\hat{\omega}\Big(\sum_{p,q}(e_{q}\diamond e_{p})\otimes f_{p}\otimes f_{q},\ \
e_{s}\otimes e_{u}\otimes e_{v}\Big)&=\omega(e_{v}\diamond e_{u},\; e_{s}),\\[-2mm]
\hat{\omega}\Big(\sum_{p,q} e_{q}\otimes(f_{q}\diamond e_{p})\otimes f_{p},\ \
e_{s}\otimes e_{u}\otimes e_{v}\Big)&=\omega(e_{v}\diamond e_{u}-e_{u}\diamond e_{v},\; e_{s}).
\end{align*}
By the left nondegeneracy of $\hat{\omega}(-,-)$, we get
$$
\sum_{p,q}e_{q}\otimes(f_{q}\diamond e_{p})\otimes f_{p}
=\sum_{p,q}\big((e_{q}\diamond e_{p})\otimes f_{p}\otimes f_{q}
-(e_{p}\diamond e_{q})\otimes f_{p}\otimes f_{q}\big).
$$
Similarly, we also have $\sum_{p,q}e_{p}\otimes e_{q}\otimes(f_{p}\diamond f_{q})
=\sum_{p,q}\big((e_{q}\diamond e_{p})\otimes f_{p}\otimes f_{q}
-(e_{p}\diamond e_{q})\otimes f_{p}\otimes f_{q}\big)$,
$\sum_{p,q}e_{q}\otimes(e_{p}\diamond f_{q})\otimes f_{p}
=\sum_{p,q}(e_{q}\diamond e_{p})\otimes f_{p}\otimes f_{q}$ and
$\sum_{p,q}e_{p}\otimes e_{q}\otimes(f_{q}\diamond f_{p})
=-\sum_{p,q}(e_{p}\diamond e_{q})\otimes f_{p}\otimes f_{q}$.
Moreover, since $r$ is symmetric, the $\ZYBE$ can be simplified as
$$
\mathbf{Z}_{r}=\sum_{i,j}\Big(-(x_{i}\ast x_{j})\otimes y_{i}\otimes y_{j}
-x_{i}\otimes(y_{i}\ast x_{j})\otimes y_{j}+x_{i}\otimes x_{j}\otimes(y_{i}\ast y_{j})
+x_{i}\otimes x_{j}\otimes(y_{j}\ast y_{i})\Big).
$$
Thus, we obtain
\begin{align*}
&\;\widehat{r}_{12}\widehat{r}_{13}+\widehat{r}_{13}\widehat{r}_{23}
-\widehat{r}_{23}\widehat{r}_{12}\\
=&\;\sum_{i,j}\sum_{p,q}\Big((x_{i}\ast x_{j})\otimes y_{i}\otimes y_{j}
+x_{i}\otimes(y_{i}\ast x_{j})\otimes y_{j}
-x_{i}\otimes x_{j}\otimes(y_{i}\ast y_{j})\\[-5mm]
&\qquad\qquad\qquad\qquad\qquad\qquad-x_{i}\otimes x_{j}\otimes(y_{j}\ast y_{i})
\Big)\bullet\big((e_{p}\diamond e_{q})\otimes f_{p}\otimes f_{q}\big)\\[-2mm]
&\quad+\Big((x_{j}\ast x_{i})\otimes y_{i}\otimes y_{j}
-x_{i}\otimes(y_{i}\ast x_{j})\otimes y_{j}
-x_{i}\otimes(x_{j}\ast y_{i})\otimes y_{j}\\[-2mm]
&\qquad\qquad\qquad\qquad\qquad\qquad+x_{i}\otimes x_{j}\otimes(y_{i}\ast y_{j})
\Big)\bullet\big((e_{q}\diamond e_{p})\otimes f_{p}\otimes f_{q}\big)\\
=&\; -\mathbf{Z}_{r}\bullet\Big(\sum_{p,q}(e_{p}\diamond e_{q})\otimes f_{p}\otimes f_{q}\Big)
-(\id\otimes\tau)(\mathbf{Z}_{r})\bullet\Big(\sum_{p,q}(e_{q}\diamond e_{p})
\otimes f_{p}\otimes f_{q}\Big).
\end{align*}
Thus, $\widehat{r}$ is a completed solution of the $\AYBE$ in $(A\otimes B, \cdot)$
if $r$ is a solution of the $\ZYBE$ in $(A, \ast)$.
Finally, for any $e_{s}, e_{t}\in B$, $s, t\in\Omega$, we have
$$
\hat{\omega}\Big(\sum_{j\in\Omega}e_{j}\otimes f_{j},\; e_{s}\otimes e_{t}\Big)
=\omega(e_{s}, e_{t})=-\omega(e_{t}, e_{s})
=-\hat{\omega}\Big(\sum_{j}f_{j}\otimes e_{j},\; e_{s}\otimes e_{t}\Big).
$$
The left nondegeneracy of $\hat{\omega}(-,-)$ yields that $\sum_{j}e_{j}\otimes f_{j}
=-\sum_{j}f_{j}\otimes e_{j}$. Since $r$ is symmetric, we get $\widehat{r}$
is skew-symmetric. The proof is finished.
\end{proof}

We give the main conclusion of this subsection.

\begin{thm}\label{thm:indu-triass}
Let $(A, \ast, \vartheta)$ be a Zinbiel bialgebra, $(B=\oplus_{i\in\bz}B_{i}, \diamond,
\omega)$ be a quadratic $\bz$-graded perm algebra and $(A\otimes B, \cdot, \Delta)$ be
the induced completed infinitesimal bialgebra from $(A, \ast, \vartheta)$ by
$(B, \diamond, \omega)$. If $(A, \ast, \vartheta)$ is triangular,
i.e., there exists a symmetric element $r\in A\otimes A$ such that $\vartheta=
\vartheta_{r}$, then $(A\otimes B, \cdot, \Delta)$ is also triangular,
where $\vartheta_{r}$ is given by Eq. \eqref{Zcobo}.
\end{thm}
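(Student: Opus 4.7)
The plan is to exhibit the natural candidate
$$\widehat{r} = \sum_i \sum_{j \in \Omega}(x_i \otimes e_j) \otimes (y_i \otimes f_j)$$
from Eq. \eqref{r-indass} as a triangular datum for the induced completed infinitesimal bialgebra $(A \otimes B, \cdot, \Delta)$. This requires two verifications: (a) $\widehat{r}$ is a skew-symmetric completed solution of the $\AYBE$ in $(A \otimes B, \cdot)$; (b) $\Delta = \Delta_{\widehat r}$, where $\Delta_{\widehat r}$ is defined by Eq. \eqref{cass-cobo}.

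Part (a) is essentially immediate from the earlier propositions. Since $(A, \ast, \vartheta_r)$ is a triangular Zinbiel bialgebra, Proposition \ref{pro:spec-Zbia} tells us that $r$ is a symmetric solution of the $\ZYBE$ in $(A, \ast)$. Proposition \ref{pro:ZYBE-AYBE} then produces $\widehat{r}$ as a skew-symmetric completed solution of the $\AYBE$ in $(A \otimes B, \cdot)$, and Proposition \ref{pro:ass-tri} gives a triangular completed infinitesimal bialgebra structure $(A \otimes B, \cdot, \Delta_{\widehat r})$.

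Part (b) is the main computation. I would expand $\Delta_{\widehat r}(a \otimes b) = (\id \,\hat{\otimes}\, \fu_{A \otimes B}(a \otimes b) - \fu_{A \otimes B}(a \otimes b) \,\hat{\otimes}\, \id)(\widehat r)$ using the product on $A \otimes B$ from Eq. \eqref{ind-ass}, yielding four terms that mix the Zinbiel operators $\fl_A(a), \fr_A(a)$ with perm products on the $B$-factor involving $b \diamond e_j$, $e_j \diamond b$, $b \diamond f_j$, $f_j \diamond b$. On the other hand, I would expand $\Delta(a \otimes b) = (\id \otimes \id + \hat\tau)\big(\vartheta_r(a) \bullet \nu_\omega(b)\big)$ using Eq. \eqref{Zcobo} for $\vartheta_r$, which initially gives six terms. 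The term-by-term match then proceeds by (i) the symmetry of $r$, allowing swaps $\sum_i x_i \otimes y_i = \sum_i y_i \otimes x_i$; (ii) the defining identity $\hat\omega(\nu_\omega(b), b_1 \otimes b_2) = -\omega(b, b_1 \diamond b_2)$ of $\nu_\omega$ (Lemma \ref{lem:comp-dual}) combined with the invariance $\omega(b_1 \diamond b_2, b_3) = \omega(b_1, b_2 \diamond b_3 - b_3 \diamond b_2)$, which yields dual-basis identities such as expressing $\sum_j (b \diamond e_j) \otimes f_j$ and $\sum_j e_j \otimes (b \diamond f_j)$ in terms of $\nu_\omega(b)$ and $\hat\tau(\nu_\omega(b))$; and (iii) the left nondegeneracy of $\hat\omega$ to convert pairing identities into equalities of tensors. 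This is the same style of manipulation already used in the proof of Theorem \ref{thm:Z-perm-ass}.

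The main obstacle is the bookkeeping in part (b): six terms on the LHS, generated by the symmetrized Zinbiel-style coboundary, must collapse onto the four terms on the RHS coming from the associative-style coboundary, and the collapse crucially requires both the symmetry of $r$ and the invariance of $\omega$ applied in a coordinated way to shift perm products between the first and second tensor factors of the $B$-component. Once the identification $\Delta = \Delta_{\widehat r}$ is established, combining with part (a) shows that $\widehat r$ witnesses the triangularity of $(A \otimes B, \cdot, \Delta)$, completing the proof.
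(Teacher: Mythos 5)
Your proposal is correct and follows essentially the same route as the paper: exhibit $\widehat{r}$ from Eq.~\eqref{r-indass}, invoke Propositions~\ref{pro:ZYBE-AYBE} and~\ref{pro:ass-tri} to get the triangular structure $\Delta_{\widehat{r}}$, and then verify $\Delta=\Delta_{\widehat{r}}$ by expanding both coproducts and using the symmetry of $r$ together with dual-basis identities (such as $\sum_{l,k,\alpha}b_{1,l,\alpha}\otimes b_{2,k,\alpha}=\sum_{j}e_{j}\otimes(f_{j}\diamond b)$) obtained from the invariance of $\omega$ and the left nondegeneracy of $\hat{\omega}$. One small remark: the hypothesis of the theorem should be read as saying that $r$ is a symmetric \emph{solution} of the $\ZYBE$ (that is what ``triangular'' means in Proposition~\ref{pro:spec-Zbia}), rather than deducing this from the triangularity of the bialgebra, since Proposition~\ref{pro:spec-Zbia} is only stated in the forward direction.
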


\begin{proof}
Let $r=\sum_{i}x_{i}\otimes y_{i}$ be a symmetric solution of the $\ZYBE$ in $(A, \ast)$.
By Proposition \ref{pro:ZYBE-AYBE}, we have a skew-symmetric completed solution
$\widehat{r}$ of the $\AYBE$ in $(A\otimes B, \cdot)$. Thus, we get a triangular
completed infinitesimal bialgebra $(A\otimes B, \cdot, \Delta_{\widehat{r}})$, where
$\Delta_{\widehat{r}}$ is given by Eq. \eqref{ascobo} for $\widehat{r}$.
Now we only need to prove that $(A\otimes B, \cdot, \Delta)=(A\otimes B, \cdot,
\Delta_{\widehat{r}})$ as completed infinitesimal bialgebras, that is,
$\Delta=\Delta_{\widehat{r}}$. In fact, for any $a\in A$ and $b\in B$, we have
\begin{align*}
\Delta(a\otimes b)&=\sum_{i}\sum_{l,k,\alpha}\Big(
(x_{i}\otimes(a\ast y_{i}))\bullet(b_{1,i,\alpha}\otimes b_{2,k,\alpha})
+((a\ast y_{i})\otimes x_{i})\bullet(b_{2,k,\alpha}\otimes b_{1,i,\alpha})\\[-5mm]
&\qquad\qquad\quad +(x_{i}\otimes(y_{i}\ast a))\bullet(b_{1,i,\alpha}\otimes b_{2,k,\alpha})
+((y_{i}\ast a)\otimes x_{i})\bullet(b_{2,k,\alpha}\otimes b_{1,i,\alpha})\\[-2mm]
&\qquad\qquad\quad-((a\ast x_{i})\otimes y_{i})\bullet(b_{1,i,\alpha}\otimes b_{2,k,\alpha})
-(y_{i}\otimes(a\ast x_{i}))\bullet(b_{2,k,\alpha}\otimes b_{1,i,\alpha})\Big),
\end{align*}
where $\vartheta_{r}(a)=\big(\id\otimes(\fl_{A}+\fr_{A})(a)-\fl_{A}(a)\otimes\id\big)(r)
=\sum_{i}\big(x_{i}\otimes(a\ast y_{i})+x_{i}\otimes(y_{i}\ast a)
-(a\ast x_{i})\otimes y_{i}\big)$ and $\nu_{\omega}(b)=\sum_{l,k,\alpha}b_{1,l,\alpha}
\otimes b_{2,k,\alpha}$. On the other hand,
\begin{align*}
\Delta_{\widehat{r}}(a\otimes b)&=(\id\otimes\fu_{A\otimes B}(a\otimes b)
-\fu_{A\otimes B}(a\otimes b)\otimes\id)(\widehat{r})\\
&=\sum_{i}\sum_{j\in\Omega}\Big(
(x_{i}\otimes(a\ast y_{i}))\bullet(e_{j}\otimes(b\diamond f_{j}))
+(x_{i}\otimes(y_{i}\ast a))\bullet(e_{j}\otimes(f_{j}\diamond b))\\[-5mm]
&\qquad\qquad-((a\ast x_{i})\otimes y_{i})\bullet((b\diamond e_{j})\otimes f_{j})
-((x_{i}\ast a)\otimes y_{i})\bullet((e_{j}\diamond b)\otimes f_{j})\Big),
\end{align*}
where $\widehat{r}=\sum_{i}\sum_{j\in\Omega}(x_{i}\otimes e_{j})\otimes(y_{i}\otimes f_{j})$,
$\{e_{j}\}_{j\in\Omega}$ is a homogeneous basis of $B=\oplus_{i\in\bz}B_{i}$
and $\{f_{j}\}_{j\in\Omega}$ is the homogeneous dual basis of $\{e_{j}\}_{j\in\Omega}$
with respect to $\omega(-,-)$.

For two homogeneous basis elements $e_{s}, e_{t}\in B$, $s, t\in\Omega$, since
$$
\hat{\omega}\Big(\sum_{l,k,\alpha}b_{1,l,\alpha}\otimes b_{2,k,\alpha},\;
e_{s}\otimes e_{t}\Big)=\omega(b\diamond e_{t}-e_{t}\diamond b,\; e_{s})
=\hat{\omega}\Big(\sum_{j}e_{j}\otimes(f_{j}\diamond b),\; e_{s}\otimes e_{t}\Big),
$$
and $\hat{\omega}(-,-)$ is left nondegenerate, we get $\sum_{l,k,\alpha}b_{1,l,\alpha}
\otimes b_{2,k,\alpha}=\sum_{j}e_{j}\otimes(f_{j}\diamond b)$. Similarly, we also have
$\sum_{l,k,\alpha}b_{2,k,\alpha}\otimes b_{1,l,\alpha}=-\sum_{j}(e_{j}\diamond b)
\otimes f_{j}$ and $\sum_{j}(b\diamond e_{j})\otimes f_{j}=\sum_{j}e_{j}\otimes
(b\diamond f_{j})=\sum_{l,k,\alpha}\big(b_{1,l,\alpha}\otimes b_{2,k,\alpha}
-b_{2,k,\alpha}\otimes b_{1,l,\alpha}\big)$. Thus,
\begin{align*}
\Delta(a\otimes b)-\Delta_{\widehat{r}}(a\otimes b)
=&\;\sum_{i}\sum_{l,k,\alpha}\Big(
(a\ast y_{i})\otimes x_{i}+(y_{i}\ast a)\otimes x_{i}-y_{i}\otimes(a\ast x_{i})\\[-5mm]
&\qquad\qquad +x_{i}\otimes(a\ast y_{i})-(a\ast x_{i})\otimes y_{i}
-(x_{i}\ast a)\otimes y_{i}\Big)\bullet(b_{2,k,\alpha}\otimes b_{1,l,\alpha})\\
=&\; 0,
\end{align*}
if $r$ is symmetric. Therefore, $(A\otimes B, \cdot, \Delta)=(A\otimes B, \cdot,
\Delta_{\widehat{r}})$ as completed infinitesimal bialgebras, and so that
$(A\otimes B, \cdot, \Delta)$ is triangular.
\end{proof}

It is worth noting that this theorem also gives the following commutative diagram:
$$
\xymatrix@C=3cm@R=0.5cm{
\txt{$r$ \\ {\tiny a symmetric solution}\\ {\tiny of the $\ZYBE$ in $(A, \ast)$}}
\ar[d]_{{\rm Pro.}~\ref{pro:ZYBE-AYBE}}\ar[r]^{{\rm Pro.}~\ref{pro:spec-Zbia}} &
\txt{$(A, \ast, \vartheta_{r})$ \\ {\tiny a triangular Zinbiel bialgebra}}
\ar[d]^{{\rm Thm.}~\ref{thm:indu-triass}}_{{\rm Thm.}~\ref{thm:Z-perm-ass}} \\
\txt{$\widehat{r}$ \\ {\tiny a skew-symmetric completed solution}\\ {\tiny of the $\AYBE$
in $(A\otimes B, \cdot)$}}
\ar[r]^{{\rm Pro.}~\ref{pro:ass-tri}\quad} &
\txt{$(A\otimes B, \cdot, \Delta)=(A\otimes B, \cdot, \Delta_{\widehat{r}})$ \\
{\tiny a completed infinitesimal bialgebra}}}
$$

For finite-dimensional algebras, the $\mathcal{O}$-operators are considered to be the
operator form of solutions of the Yang-Baxter equation. We now consider the relationship
between the $\mathcal{O}$-operators of Zinbiel algebra and the $\mathcal{O}$-operators
of the induced commutative associative algebra. Let $(A, \cdot)$ be a commutative
associative algebra and $(V, \mu)$ be a representation of $(A, \cdot)$. Recall that a
linear map $\mathcal{T}: V\rightarrow A$ is called an {\bf $\mathcal{O}$-operator of
$(A, \cdot)$ associated to $(V, \mu)$} if for any $v_{1}, v_{2}\in V$,
$$
\mathcal{T}(v_{1})\mathcal{T}(v_{2})=\mathcal{T}\big(\mu(\mathcal{T}(v_{1}))(v_{2})
+\mu(\mathcal{T}(v_{2}))(v_{1})\big).
$$

\begin{pro}[\cite{Bai}]\label{pro:o-ass}
Let $(A, \cdot)$ be a commutative associative algebra, $r\in A\otimes A$ be
skew-symmetric. Then $r$ is a solution of the $\AYBE$ in $(A, \cdot)$ if and only
if $r^{\sharp}: A^{\ast}\rightarrow A$ is an $\mathcal{O}$-operator of $(A, \cdot)$
associated to the coregular representation $(A^{\ast}, -\fu_{A}^{\ast})$.
\end{pro}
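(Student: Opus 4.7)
The plan is to unpack both conditions into coordinate identities and match them using the skew-symmetry of $r$. Writing $r = \sum_i x_i \otimes y_i$, we have $r^\sharp(\xi) = \sum_i \xi(x_i) y_i$, and by the paper's sign convention $((-\fu_A^*)(a)\xi)(v) = \xi(av)$ for all $v \in A$. Skew-symmetry $r + \tau(r) = 0$ supplies the key identity $\sum_i f(x_i)g(y_i) = -\sum_i f(y_i)g(x_i)$ for any linear $f, g$, equivalently $(\id \otimes \xi)(r) = -r^\sharp(\xi)$.

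I would first unfold the $\mathcal{O}$-operator equation
\[
r^\sharp(\xi_1) r^\sharp(\xi_2) = r^\sharp\bigl((-\fu_A^*)(r^\sharp(\xi_1))(\xi_2) + (-\fu_A^*)(r^\sharp(\xi_2))(\xi_1)\bigr)
\]
as an identity in $A$ parameterized by $\xi_1, \xi_2 \in A^*$, explicitly in terms of the $x_i, y_i$. Then I would pair $\mathbf{A}_r = r_{12}r_{13} + r_{12}r_{23} - r_{23}r_{13}$ against $\id \otimes \xi_1 \otimes \xi_2$: the three summands become, respectively, $r^\sharp(\xi_1)r^\sharp(\xi_2)$, $r^\sharp((-\fu_A^*)(r^\sharp(\xi_2))(\xi_1))$, and $-r^\sharp((-\fu_A^*)(r^\sharp(\xi_1))(\xi_2))$, obtained by repeatedly applying $(\id \otimes \xi)(r) = -r^\sharp(\xi)$ together with skew-symm swaps inside the relevant tensor factor. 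By nondegeneracy of this pairing, $\mathbf{A}_r = 0$ is equivalent to the vanishing (for all $\xi_1, \xi_2$) of a specific $A$-valued expression that matches the $\mathcal{O}$-operator identity after further skew-symm manipulation and use of the commutativity of $A$ to reorder terms like $y_iy_j$.

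The main obstacle is the careful sign and index bookkeeping: the $\AYBE$-derived identity and the $\mathcal{O}$-operator identity initially produce scalar expressions with slightly different indexings --- for instance $\sum\xi_1(x_ix_j)\xi_2(y_i)\xi_3(y_j)$ on one side versus $\sum\xi_2(x_i)\xi_1(y_ix_j)\xi_3(y_j)$ on the other. Establishing their exact match requires systematic application of the swap $x_i \leftrightarrow y_i$ with its accompanying sign change, combined with relabeling of dummy indices via the commutativity of $A$; uniform execution of these manipulations across all three summands of $\mathbf{A}_r$ is the delicate step. Once completed, the equivalence $\mathbf{A}_r = 0 \Leftrightarrow r^\sharp \text{ is an } \mathcal{O}\text{-operator}$ follows from the nondegeneracy of the dual pairings.
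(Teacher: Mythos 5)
The paper does not actually prove this proposition --- it is quoted from the reference --- so your argument has to stand on its own. Your overall strategy (contract $\mathbf{A}_{r}$ with $\id\otimes\xi_{1}\otimes\xi_{2}$, convert everything to $r^{\sharp}$ via $(\id\otimes\xi)(r)=-r^{\sharp}(\xi)$, then invoke nondegeneracy) is the standard and correct one, and your preliminary identities are right. The gap is exactly in the step you defer to ``careful sign and index bookkeeping''. Carried out honestly, the three contractions are: $(\id\otimes\xi_{1}\otimes\xi_{2})(r_{12}r_{13})=r^{\sharp}(\xi_{1})r^{\sharp}(\xi_{2})$ (two sign flips, which cancel); $(\id\otimes\xi_{1}\otimes\xi_{2})(r_{12}r_{23})=+r^{\sharp}\big((-\fu_{A}^{\ast})(r^{\sharp}(\xi_{2}))\xi_{1}\big)$, because $\sum_{j}\xi_{1}(y_{i}x_{j})\xi_{2}(y_{j})=-\xi_{1}(r^{\sharp}(\xi_{2})y_{i})$ and the outer contraction $\sum_{i}x_{i}(\cdots y_{i})$ contributes a second minus sign; and $(\id\otimes\xi_{1}\otimes\xi_{2})(r_{23}r_{13})=-r^{\sharp}\big((-\fu_{A}^{\ast})(r^{\sharp}(\xi_{1}))\xi_{2}\big)$, because here $\xi_{1}$ hits the $x_{i}$ slot, so $\sum_{i}\xi_{1}(x_{i})\xi_{2}(y_{i}y_{j})=\xi_{2}(r^{\sharp}(\xi_{1})y_{j})$ carries no sign and only one flip occurs. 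Assembling the paper's $\mathbf{A}_{r}=r_{12}r_{13}+r_{12}r_{23}-r_{23}r_{13}$ therefore gives $r^{\sharp}(\xi_{1})r^{\sharp}(\xi_{2})+r^{\sharp}\big((-\fu_{A}^{\ast})(r^{\sharp}(\xi_{2}))\xi_{1}\big)+r^{\sharp}\big((-\fu_{A}^{\ast})(r^{\sharp}(\xi_{1}))\xi_{2}\big)$, i.e.\ $\mathbf{A}_{r}=0$ becomes the $\mathcal{O}$-operator identity with the wrong overall sign on the right-hand side; your claimed signs $(+,+,-)$ give yet another combination. Neither is the $\mathcal{O}$-operator condition, and no further skew-symmetry or commutativity manipulation can bridge this, since the conditions differ by terms of the form $2r^{\sharp}\big((-\fu_{A}^{\ast})(\cdots)\big)$ that do not vanish for a general skew-symmetric $r$.

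The underlying issue is that the displayed $\mathbf{A}_{r}$ is not the equation for which the cited equivalence holds: the correct commutative $\AYBE$ here is $r_{12}r_{13}-r_{12}r_{23}+r_{23}r_{13}=0$ (equivalently $r_{12}r_{13}+r_{13}r_{23}-r_{23}r_{12}=0$, the combination the paper itself manipulates in the proof of Proposition \ref{pro:ZYBE-AYBE}). With those signs your contraction scheme produces exactly $r^{\sharp}(\xi_{1})r^{\sharp}(\xi_{2})-r^{\sharp}\big((-\fu_{A}^{\ast})(r^{\sharp}(\xi_{1}))\xi_{2}\big)-r^{\sharp}\big((-\fu_{A}^{\ast})(r^{\sharp}(\xi_{2}))\xi_{1}\big)$ and the proof closes via left nondegeneracy. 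As written, your sketch asserts that the bookkeeping ``matches after further manipulation''; it does not, and a complete proof must either work with the corrected form of the $\AYBE$ or explicitly reconcile the sign discrepancy.
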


The $\mathcal{O}$-operator of Zinbiel algebras was considered in \cite{Wan}.
Recall that an {\bf $\mathcal{O}$-operator of a Zinbiel algebra $(A, \ast)$
associated to a representation $(V, \kl, \kr)$} is a linear map
$\mathcal{T}: V\rightarrow A$ such that
$$
\mathcal{T}(v_{1})\diamond\mathcal{T}(v_{2})=
\mathcal{T}\big(\kl(\mathcal{T}(v_{1}))(v_{2})+\kr(\mathcal{T}(v_{2}))(v_{1})\big),
$$
for any $v_{1}, v_{2}\in V$.

\begin{pro}[\cite{Wan}]\label{pro:o-Zib}
Let $(A, \ast)$ be a Zinbiel algebra, $r\in A\otimes A$ be symmetric.
Then $r$ is a solution of the $\ZYBE$ in $(A, \ast)$ if and only if $r^{\sharp}:
A^{\ast}\rightarrow A$ is an $\mathcal{O}$-operator of $(A, \ast)$ associated to
the coregular representation $(A^{\ast}, -\fl_{A}^{\ast}-\fr_{A}^{\ast}, \fr_{A}^{\ast})$.
\end{pro}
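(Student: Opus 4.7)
The plan is to unfold both sides of the claimed equivalence against arbitrary test covectors and match them term by term, using the symmetry of $r$ to collapse $\mathbf{Z}_{r}$ into the simplified form already recorded in the proof of Proposition \ref{pro:ZYBE-AYBE}.

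First I would fix $r=\sum_{i}x_{i}\otimes y_{i}=\sum_{i}y_{i}\otimes x_{i}$ and record the basic identity $r^{\sharp}(\xi)=\sum_{i}\langle\xi,x_{i}\rangle\,y_{i}=\sum_{i}\langle\xi,y_{i}\rangle\,x_{i}$, which follows from the symmetry of $r$ and the definition of $r^{\sharp}$ in the introduction. I would also translate the coregular representation explicitly: for $a\in A$, $\xi\in A^{\ast}$, $v\in A$,
\begin{align*}
\langle(-\fl^{\ast}_{A}-\fr^{\ast}_{A})(a)(\xi),\,v\rangle&=\langle\xi,\,a\ast v+v\ast a\rangle,\\
\langle\fr^{\ast}_{A}(a)(\xi),\,v\rangle&=-\langle\xi,\,v\ast a\rangle.
\end{align*}
This is the only dual-representation bookkeeping that will be needed in the sequel.

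Next I would write the $\mathcal{O}$-operator equation
$$
r^{\sharp}(\xi_{1})\ast r^{\sharp}(\xi_{2})=r^{\sharp}\bigl((-\fl^{\ast}_{A}-\fr^{\ast}_{A})(r^{\sharp}(\xi_{1}))(\xi_{2})+\fr^{\ast}_{A}(r^{\sharp}(\xi_{2}))(\xi_{1})\bigr)
$$
and pair it with an arbitrary third covector $\xi_{3}\in A^{\ast}$. The left side produces $\sum_{i,j}\langle\xi_{1},x_{i}\rangle\langle\xi_{2},x_{j}\rangle\langle\xi_{3},y_{i}\ast y_{j}\rangle$, which I would recognize as $\langle\xi_{1}\otimes\xi_{2}\otimes\xi_{3},\,r_{13}\ast r_{23}\rangle$. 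The right side, after applying the displayed formulas for the dual actions and then $r^{\sharp}$ once more, unfolds into three triple sums, which I would recognize (matching the positions of the $\xi_{k}$'s to the three tensor slots) as the pairings of $r_{12}\ast r_{13}+r_{21}\ast r_{13}$ and $-r_{23}\ast r_{12}$ with $\xi_{1}\otimes\xi_{2}\otimes\xi_{3}$.

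The last step is to observe that, since $r$ is symmetric, the full expression $\mathbf{Z}_{r}$ collapses to the four-term combination recorded in the proof of Proposition \ref{pro:ZYBE-AYBE}, and this is exactly what the difference of the two pairings computes (up to using symmetry of $r$ to convert $r_{21}\ast r_{13}$ into $r_{12}\ast r_{23}$ and $r_{23}\ast r_{12}$ into $r_{13}\ast r_{21}$). Since $\xi_{1},\xi_{2},\xi_{3}$ are arbitrary, the $\mathcal{O}$-operator identity is equivalent to $\mathbf{Z}_{r}=0$. The main obstacle will be purely clerical: consistently tracking which of $\xi_{1},\xi_{2},\xi_{3}$ is paired with which slot of $r_{ab}\ast r_{cd}$, and applying the symmetry of $r$ in the right places so that $\mathbf{Z}_{r}$ emerges in its symmetric-reduced form rather than the full eight-term expression.
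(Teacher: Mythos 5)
The paper does not prove this proposition; it is imported from \cite{Wan} as a citation, so there is no internal proof to measure you against. Your overall strategy --- unfold the $\mathcal{O}$-operator identity against a third covector $\xi_{3}$ and match the resulting trilinear expression with the components of $\mathbf{Z}_{r}$ --- is the standard and correct one, and your translations of $r^{\sharp}$ and of the coregular representation are right. The problem is that the specific matchings you assert are wrong, and followed literally they would prevent the argument from closing.

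First, pairing the right-hand side of the $\mathcal{O}$-operator identity with $\xi_{3}$ produces $\langle\xi_{1}\otimes\xi_{2}\otimes\xi_{3},\ r_{12}\ast r_{23}+r_{23}\ast r_{12}-r_{13}\ast r_{21}\rangle$, not the combination $r_{12}\ast r_{13}+r_{21}\ast r_{13}-r_{23}\ast r_{12}$ you name: the contribution of $(-\fl^{\ast}_{A}-\fr^{\ast}_{A})(r^{\sharp}(\xi_{1}))(\xi_{2})$ places the $\ast$-product in the \emph{second} tensor slot (it is $\xi_{2}$ that is evaluated on a product), and the contribution of $\fr^{\ast}_{A}(r^{\sharp}(\xi_{2}))(\xi_{1})$ places it in the first. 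Second, the symmetry of $r$ only exchanges $x_{i}\leftrightarrow y_{i}$ within a fixed tensor factor; it can never move the $\ast$-product from one slot to another, so the conversions you invoke ($r_{21}\ast r_{13}$ into $r_{12}\ast r_{23}$, and $r_{23}\ast r_{12}$ into $r_{13}\ast r_{21}$) are not legitimate. Third, and decisively: writing $f(a,b,c):=\langle\xi_{c},\,r^{\sharp}(\xi_{a})\ast r^{\sharp}(\xi_{b})\rangle$, the defect of the $\mathcal{O}$-operator identity paired with $\xi_{3}$ equals
\begin{align*}
f(1,2,3)-f(1,3,2)-f(3,1,2)+f(3,2,1),
\end{align*}
whereas $\langle\xi_{1}\otimes\xi_{2}\otimes\xi_{3},\,\mathbf{Z}_{r}\rangle=f(1,2,3)+f(2,1,3)-f(1,3,2)-f(2,3,1)$; these are genuinely different multilinear expressions, so the identification you claim does not hold. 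What is true is that the defect equals $-\langle\xi_{1}\otimes\xi_{3}\otimes\xi_{2},\,\mathbf{Z}_{r}\rangle$, i.e.\ $\xi_{3}$ must be fed into the second slot of $\mathbf{Z}_{r}$ and $\xi_{2}$ into the third. Since $\xi_{1},\xi_{2},\xi_{3}$ are arbitrary, this permuted pairing still gives the equivalence, so the proposition and your method do survive --- but the slot assignment is the actual mathematical content here, not the ``purely clerical'' afterthought you describe, and the one you commit to is incorrect.
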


Let $(B=B_{0}, \diamond, \omega)$ be a (finite-dimensional) quadratic perm algebra,
$\{e_{1}, e_{2},\cdots,e_{n}\}$ be a basis of $B$ and $\{f_{1}, f_{2},\cdots,f_{n}\}$
be the dual basis of $\{e_{1}, e_{2},\cdots,e_{n}\}$ with respect to $\omega(-,-)$.
Suppose $(A, \ast)$ is a Zinbiel algebra. By Proposition \ref{pro:ZYBE-AYBE}, we get
$$
\widehat{r}=\sum_{i}\sum_{j}(x_{i}\otimes e_{j})\otimes(y_{i}\otimes f_{j})
\in(A\otimes B)\otimes(A\otimes B)
$$
is a skew-symmetric solution of the $\AYBE$ in the induced commutative associative
algebra $(A\otimes B, \cdot)$ if $r=\sum_{i}x_{i}\otimes y_{i}$ is a symmetric solution
of the $\ZYBE$ in $(A, \ast)$.

\begin{pro}\label{pro:ind-oper}
With the notions above, the $\mathcal{O}$-operator $\widehat{r}^{\sharp}$
of $(A\otimes B, \cdot)$ associated to representation $((A\otimes B)^{\ast},
-\fu^{\ast}_{A\otimes B})$ is given by
$$
\widehat{r}^{\sharp}=r^{\sharp}\otimes\kappa^{\sharp},
$$
where $\kappa:=\sum_{j}e_{j}\otimes f_{j}$, $\kappa^{\sharp}: B^{\ast}\rightarrow B$
is given by $\langle\eta_{2},\; \kappa^{\sharp}(\eta_{1})\rangle=\langle\eta_{1}
\otimes\eta_{2},\; \kappa\rangle$, for any $\eta_{1}, \eta_{2}\in B^{\ast}$.
\end{pro}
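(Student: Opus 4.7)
The plan is straightforward: the identity $\widehat{r}^{\sharp}=r^{\sharp}\otimes\kappa^{\sharp}$ is ultimately an unwinding of the definition of the sharp operator on a decomposable tensor. First, I would invoke the canonical isomorphism $(A\otimes B)^{\ast}\cong A^{\ast}\otimes B^{\ast}$, valid because $A$ and $B$ are finite-dimensional, so that it suffices to verify the equality of the two linear maps on pure tensors $\xi_{1}\otimes\eta_{1}\in A^{\ast}\otimes B^{\ast}$.

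Next, I would pair both sides with an arbitrary test element $\xi_{2}\otimes\eta_{2}\in A^{\ast}\otimes B^{\ast}$. On the left, the defining formula $\langle \zeta_{2},\widehat{r}^{\sharp}(\zeta_{1})\rangle=\langle\zeta_{1}\otimes\zeta_{2},\widehat{r}\rangle$ applied to $\widehat{r}=\sum_{i,j}(x_{i}\otimes e_{j})\otimes(y_{i}\otimes f_{j})$ factors into a sum of products of $A$-pairings and $B$-pairings:
\begin{align*}
\langle \xi_{2}\otimes\eta_{2},\; \widehat{r}^{\sharp}(\xi_{1}\otimes\eta_{1})\rangle
=\sum_{i,j}\langle\xi_{1}, x_{i}\rangle\langle\eta_{1}, e_{j}\rangle\langle\xi_{2}, y_{i}\rangle\langle\eta_{2}, f_{j}\rangle.
\end{align*}
On the right, the same test element gives
$\langle\xi_{2},\; r^{\sharp}(\xi_{1})\rangle\cdot\langle\eta_{2},\; \kappa^{\sharp}(\eta_{1})\rangle$,
which by the definitions of $r^{\sharp}$ (applied to $r=\sum_{i}x_{i}\otimes y_{i}$) and $\kappa^{\sharp}$ (applied to $\kappa=\sum_{j}e_{j}\otimes f_{j}$) expands to exactly the same double sum. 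Nondegeneracy of the pairing between $(A\otimes B)^{\ast}$ and $A\otimes B$ then forces $\widehat{r}^{\sharp}(\xi_{1}\otimes\eta_{1})=r^{\sharp}(\xi_{1})\otimes\kappa^{\sharp}(\eta_{1})$, and by linear extension $\widehat{r}^{\sharp}=r^{\sharp}\otimes\kappa^{\sharp}$.

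Finally, I would remark that the fact that $\widehat{r}^{\sharp}$ is an $\mathcal{O}$-operator of $(A\otimes B, \cdot)$ associated with the coregular representation $((A\otimes B)^{\ast}, -\fu^{\ast}_{A\otimes B})$ is not an additional claim to check: it is a consequence of $\widehat{r}$ being a skew-symmetric solution of the AYBE (Proposition~\ref{pro:ZYBE-AYBE}) combined with Proposition~\ref{pro:o-ass}. The content of the proposition is therefore genuinely just the factorization formula, and since the computation is pure algebraic bookkeeping, there is no serious obstacle; the only care needed is in tracking which leg of the fourfold tensor gets paired with which functional.
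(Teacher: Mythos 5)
Your proposal is correct and follows essentially the same route as the paper: pair $\widehat{r}^{\sharp}(\xi_{1}\otimes\eta_{1})$ against a test element $\xi_{2}\otimes\eta_{2}$, factor the resulting double sum into the product $\langle\xi_{2},\,r^{\sharp}(\xi_{1})\rangle\langle\eta_{2},\,\kappa^{\sharp}(\eta_{1})\rangle$, and conclude by nondegeneracy of the pairing. Your closing remark that the $\mathcal{O}$-operator property itself is not part of the claim matches the paper's treatment as well.
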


\begin{proof}
By direct calculation, one can check that $\kappa^{\sharp}$ is
a linear isomorphism and $\langle\eta_{2},\; \kappa^{\sharp}(\eta_{1})\rangle
=\langle\eta_{1}\otimes\eta_{2},\; \kappa\rangle
=-\langle\eta_{2}\otimes\eta_{1},\; \kappa\rangle
=-\langle\eta_{1},\; \kappa^{\sharp}(\eta_{2})\rangle$.
Therefore, for any $\xi_{1}, \xi_{2}\in A^{\ast}$ and $\eta_{1}, \eta_{2}\in B^{\ast}$,
\begin{align*}
\langle\xi_{2}\otimes\eta_{2},\; \widehat{r}^{\sharp}(\xi_{1}\otimes\eta_{1})\rangle
&=\sum_{i,j}\langle(\xi_{1}\otimes\eta_{1})\otimes(\xi_{2}\otimes\eta_{2}),\ \
(x_{i}\otimes e_{j})\otimes(y_{i}\otimes f_{j})\rangle\\[-2mm]
&=\Big(\sum_{i}\langle\xi_{1}, x_{i}\rangle\langle\xi_{2}, y_{i}\rangle\Big)
\Big(\sum_{j}\langle\eta_{1}, e_{j}\rangle\langle\eta_{2}, f_{j}\rangle\Big)\\[-2mm]
&=\langle\xi_{2},\; r^{\sharp}(\xi_{1})\rangle
\langle\eta_{2},\; \kappa^{\sharp}(\eta_{1})\rangle\\
&=\langle\xi_{2}\otimes\eta_{2},\; r^{\sharp}(\xi_{1})\otimes
\kappa^{\sharp}(\eta_{1})\rangle.
\end{align*}
That is, $\widehat{r}^{\sharp}=r^{\sharp}\otimes\kappa^{\sharp}$ as linear maps.
\end{proof}

Therefore, by the Proposition above, we also have the following commutative diagram:
$$
\xymatrix@C=3cm@R=0.5cm{
\txt{$r$ \\ {\tiny a symmetric solution} \\ {\tiny of the $\ZYBE$ in $(A, \ast)$}}
\ar[d]_-{{\rm Pro.}~\ref{pro:ZYBE-AYBE}}\ar[r]^-{{\rm Pro.}~\ref{pro:o-Zib}} &
\txt{$r^{\sharp}$\\ {\tiny an $\mathcal{O}$-operator of $(A, \ast)$} \\
{\tiny associated to $(A^{\ast}, -\fl_{A}^{\ast}-\fr_{A}^{\ast}, -\fr_{A}^{\ast})$}}
\ar[d]^-{\mbox{$-\otimes\kappa^{\sharp}$}} \\
\txt{$\widehat{r}$ \\ {\tiny a skew-symmetric solution} \\ {\tiny of the $\CYBE$ in
$(A\otimes B, [-,-])$}} \ar[r]^-{{\rm Pro.}~\ref{pro:o-ass}}
& \txt{$\widehat{r}^{\sharp}=r^{\sharp}\otimes\kappa^{\sharp}$ \\
{\tiny an $\mathcal{O}$-operator of $(A\otimes B, \cdot)$ } \\
{\tiny associated to $((A\otimes B)^{\ast}, -\fu^{\ast}_{A\otimes B})$}}}
$$

\begin{ex}\label{ex:Zbia-Assbia}
Let $(A=\bk\{e_{1}, e_{2}\}, \ast)$ be a two dimensional Zinbiel algebra,
where $e_{1}\ast e_{1}=e_{2}$. Then $r=e_{1}\otimes e_{2}+e_{2}\otimes e_{1}$ is a
symmetric solution of the $\ZYBE$ in $(A, \ast)$. Thus, $r$ induces a triangular
Zinbiel bialgebra structure on $A=\bk\{e_{1}, e_{2}\}$, where the coproduct
$\vartheta$ on $A$ is given by $\vartheta(e_{1})=e_{2}\otimes e_{2}$ and
$\vartheta(e_{2})=0$. Consider the two dimensional quadratic perm algebra
$(B=\bk\{x_{1}, x_{2}\}, \diamond, \omega)$, where the nonzero products are
given by $x_{2}\diamond x_{1}=x_{1}$, $x_{2}\diamond x_{2}=x_{2}$, and bilinear
form $\omega(-,-)$ is given by $\omega(x_{2}, x_{1})=1$. Then we get a perm coalgebra
$(B, \nu)$, where $\nu(x_{1})=x_{1}\otimes x_{1}$ and $\nu(x_{2})=x_{1}\otimes x_{2}$.
By Corollary \ref{cor:indassbia}, we have an infinitesimal bialgebra $(A\otimes B, \cdot,
\Delta)$, where the commutative associative algebra $(A\otimes B, \cdot)$ has given
in Example \ref{ex:preP-P}, the coproduct $\Delta$ is given by
$$
\Delta(y_{1})=2y_{3}\otimes y_{3},\qquad \Delta(y_{2})=y_{3}\otimes y_{4}
+y_{4}\otimes y_{3}, \qquad \Delta(y_{3})=\Delta(y_{4})=0,
$$
$y_{1}:=e_{1}\otimes x_{1}$, $y_{2}:=e_{1}\otimes x_{2}$, $y_{3}:=e_{2}\otimes x_{1}$,
$y_{4}:=e_{2}\otimes x_{2}$.

On the other hand, note that $x_{2}, -x_{1}$ is the dual basis of $x_{1}, x_{2}$ in $B$
with respect to $\omega(-,-)$. We get
$$
\widehat{r}:=(e_{1}\otimes x_{1})\otimes(e_{2}\otimes x_{2})
-(e_{1}\otimes x_{2})\otimes(e_{2}\otimes x_{1})
+(e_{2}\otimes x_{1})\otimes(e_{1}\otimes x_{2})
-(e_{2}\otimes x_{2})\otimes(e_{1}\otimes x_{1})
$$
is a skew-symmetric solution of the $\AYBE$ in $(A\otimes B, \cdot)$.
One can check that the infinitesimal bialgebra structure induced by $\widehat{r}$ on
$A\otimes B$ is just the infinitesimal bialgebra structure given as above.
\end{ex}

\subsection{Affinization of quasi-Frobenius Zinbiel algebras}\label{subsec:affQF}
In this subsection, we consider the affinization of quasi-Frobenius Zinbiel algebras.
We give a construction of quasi-Frobenius $\bz$-graded commutative associative algebras
from quasi-Frobenius Zinbiel algebras corresponding to a class of symmetric
solutions of the $\ZYBE$.

\begin{defi}\label{def:frob-Z}
Let $(A, \ast)$ be a Zinbiel algebra. If there is a symmetric nondegenerate
bilinear form $\varpi(-,-)$ on $A$ satisfying
$$
\varpi(a_{1}\ast a_{2}+a_{2}\ast a_{1},\; a_{3})=\varpi(a_{1},\; a_{2}\ast a_{3})
+\varpi(a_{2},\; a_{1}\ast a_{3}),
$$
for any $a_{1}, a_{2}, a_{3}\in A$, then $(A, \ast, \varpi)$ is called a
{\bf quasi-Frobenius Zinbiel algebra}, and $\varpi(-,-)$ is called a 2-cocycle
of $(A, \ast)$.
\end{defi}

The quasi-Frobenius Zinbiel algebras are not only closely related to the pre-Zinbiel
algebras, but also closely related to symmetric solutions of the $\ZYBE$.

\begin{pro}\label{pro:quasi-Z}
Let $(A, \ast)$ be a Zinbiel algebra with a nondegenerate bilinear form $\varpi(-,-)$,
$\{e_{1}, e_{2}$, $\cdots, e_{n}\}$ be a basis of $A$ and $\{f_{1}, f_{2},\cdots,f_{n}\}$
be the dual basis of $\{e_{1}, e_{2},\cdots,e_{n}\}$ with respect to $\varpi(-,-)$.
Denote $r=\sum_{i}e_{i}\otimes f_{i}\in A\otimes A$.
Then $r$ is a symmetric solution of the $\ZYBE$ in $(A, \ast)$ if and only
if $(A, \ast, \varpi)$ is a quasi-Frobenius Zinbiel algebra.
\end{pro}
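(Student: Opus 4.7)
The plan is to convert the $\ZYBE$ for $r = \sum_i e_i \otimes f_i$ into a scalar identity via the nondegenerate pairing $\varpi$ and recognize it as the $2$-cocycle condition from Definition~\ref{def:frob-Z}.

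First I would verify the symmetry biconditional: $r$ is symmetric if and only if $\varpi$ is symmetric. With the dual-basis convention $\varpi(e_i, f_j) = \delta_{ij}$, the element $r$ encodes the inverse of the isomorphism $a \mapsto \varpi(-,a): A \to A^*$, while $\tau(r)$ encodes the inverse of $a \mapsto \varpi(a,-)$; these two isomorphisms coincide precisely when $\varpi$ is symmetric. Since the quasi-Frobenius condition already requires $\varpi$ to be symmetric, we may therefore restrict attention to the situation where both $r$ and $\varpi$ are symmetric, and it remains to show that $\mathbf{Z}_r = 0$ is equivalent to the $2$-cocycle identity.

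Next, using the simplified form of $\mathbf{Z}_r$ for symmetric $r$ already recorded in the proof of Proposition~\ref{pro:ZYBE-AYBE}, namely
\[
\mathbf{Z}_r = \sum_{i,j}\Big(-(e_i \ast e_j) \otimes f_i \otimes f_j - e_i \otimes (f_i \ast e_j) \otimes f_j + e_i \otimes e_j \otimes (f_i \ast f_j) + e_i \otimes e_j \otimes (f_j \ast f_i)\Big),
\]
I would pair $\mathbf{Z}_r$ with an arbitrary tensor $a \otimes b \otimes c \in A^{\otimes 3}$ via $\varpi$ in each of the three slots. The expansion identities $a = \sum_i \varpi(a, f_i) e_i = \sum_i \varpi(e_i, a) f_i$, valid for every $a \in A$ under the chosen convention together with the symmetry of $\varpi$, collapse each of the four terms of $\mathbf{Z}_r$ to a single Zinbiel product paired once with $\varpi$, producing respectively $-\varpi(b \ast c, a)$, $-\varpi(a \ast c, b)$, $\varpi(a \ast b, c)$, and $\varpi(b \ast a, c)$.

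Summing these four contributions and invoking the symmetry of $\varpi$, the total pairing becomes $\varpi(a \ast b + b \ast a, c) - \varpi(a, b \ast c) - \varpi(b, a \ast c)$, which is exactly the defect of the $2$-cocycle condition in Definition~\ref{def:frob-Z}. Since $\varpi$ is nondegenerate, so is $\varpi^{\otimes 3}$ on $A^{\otimes 3}$, hence $\mathbf{Z}_r = 0$ if and only if this defect vanishes for every $a, b, c \in A$, which yields both directions of the equivalence. The only real obstacle is bookkeeping: one must keep the dual-basis expansions of $a, b, c$ consistent with the chosen convention, and track the sign and order of each Zinbiel product carefully, so that each of the four terms contracts to the claimed expression without spurious swaps.
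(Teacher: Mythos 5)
Your argument is correct and is essentially the paper's own proof: both reduce to the simplified form of $\mathbf{Z}_{r}$ for symmetric $r$ and use the nondegeneracy of $\varpi$ to convert the vanishing of $\mathbf{Z}_{r}$ into the $2$-cocycle identity, the only difference being that you contract $\mathbf{Z}_{r}$ against arbitrary $a\otimes b\otimes c$ while the paper expands each term in the basis $\{e_{i}\otimes e_{j}\otimes f_{k}\}$ and reads off the coefficients $\varpi(f_{i}\ast f_{j}+f_{j}\ast f_{i},\,e_{k})-\varpi(f_{j}\ast e_{k},\,f_{i})-\varpi(f_{i}\ast e_{k},\,f_{j})$. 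Your four contracted terms and the symmetry reduction both check out, so no gap.
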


\begin{proof}
First, it is easy to see that $r=\sum_{i}e_{i}\otimes f_{i}$ is symmetric if and only if
$\varpi(-,-)$ is symmetric. Define a bilinear map $\varphi: A\otimes A\rightarrow
\End_{\bk}(A)$ by $\varphi(a_{1}\otimes a_{2})(a_{3})=\varpi(a_{3}, a_{2})a_{1}$,
for any $a_{1}, a_{2}, a_{3}\in A$. Then $\varphi$ is a linear isomorphism
if $\varpi(-,-)$ is nondegenerate. Note that the $\ZYBE$ can be simplified as
$$
\mathbf{Z}_{r}:=r_{13}\ast r_{23}+r_{23}\ast r_{13}-r_{12}\ast r_{13}-r_{12}\ast r_{23}
$$
if $r$ is symmetric, and in this case,
\begin{align*}
r_{13}\ast r_{23}&
=\sum_{i,j,k}\varpi(f_{i}\ast f_{j},\; e_{k})e_{i}\otimes e_{j}\otimes f_{k},\qquad
r_{23}\ast r_{13}
=\sum_{i,j,k}\varpi(f_{j}\ast f_{i},\; e_{k})e_{i}\otimes e_{j}\otimes f_{k},\\[-2mm]
r_{12}\ast r_{13}&
=\sum_{i,j,k}\varpi(f_{j}\ast e_{k},\; f_{i})e_{i}\otimes e_{j}\otimes f_{k},\qquad
r_{12}\ast r_{23}
=\sum_{i,j,k}\varpi(f_{i}\ast e_{k},\; f_{j})e_{i}\otimes e_{j}\otimes f_{k}.
\end{align*}
We obtain
\begin{align*}
\mathbf{Z}_{r}&=\sum_{i,j,k}\Big(\varpi(f_{i}\ast f_{j}+f_{i}\ast f_{j},\; e_{k})
-\varpi(f_{j}\ast e_{k},\; f_{i})-\varpi(f_{i}\ast e_{k},\; f_{j})\Big)
e_{i}\otimes e_{j}\otimes f_{k}.
\end{align*}
This means that $r$ is a symmetric solution of the $\ZYBE$ in $(A, \ast)$ if and
only if $(A, \ast, \varpi)$ is a quasi-Frobenius Zinbiel algebra.
\end{proof}

\begin{defi}
Let $(A=\oplus_{i\in\bz}A_{i}, \cdot)$ be a $\bz$-graded commutative associative algebra.
If there is an antisymmetric nondegenerate graded bilinear form $\mathcal{B}(-,-)$ on
$A=\oplus_{i\in\bz}A_{i}$ satisfying
\begin{align*}
\mathcal{B}(a_{1}a_{2},\; a_{3})+\mathcal{B}(a_{2}a_{3},\; a_{1})
+\mathcal{B}(a_{3}a_{1},\; a_{2})=0,
\end{align*}
for any $a_{1}, a_{2}, a_{3}\in A$, then $(A=\oplus_{i\in\bz}A_{i}, \cdot, \mathcal{B})$
is called a {\bf $\bz$-graded commutative associative algebra with Connes cocycle}.
\end{defi}

\begin{pro}\label{pro:Con-ass}
Let $(A, \ast, \varpi)$ be a quasi-Frobenius Zinbiel algebra, $(B=\oplus_{i\in\bz}B_{i},
\diamond, \omega)$ be a quadratic $\bz$-graded perm algebra and $(A\otimes B, \cdot)$
be the induced commutative associative algebra. Define a bilinear form
$\mathcal{B}(-,-)$ on $A\otimes B$ by
\begin{align}
\mathcal{B}(a_{1}\otimes b_{1},\; a_{2}\otimes b_{2})
=\varpi(a_{1}, a_{2})\omega(b_{1}, b_{2}),            \label{bilinear}
\end{align}
for any $a_{1}, a_{2}\in A$ and $b_{1}, b_{2}\in B$. Then $(A\otimes B, \cdot, \mathcal{B})$
is a $\bz$-graded commutative associative algebra with Connes cocycle.

Furthermore, if the quadratic $\bz$-graded perm algebra is given in Example \ref{ex:qu-perm},
then $(A\otimes B, \cdot, \mathcal{B})$ is a $\bz$-graded commutative associative
algebra with Connes cocycle if and only if $(A, \ast, \varpi)$ is a quasi-Frobenius
Zinbiel algebra.
\end{pro}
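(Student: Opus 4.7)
The plan is to verify the structural data on $(A\otimes B,\cdot,\mathcal{B})$ directly for the forward direction, and for the converse to extract the 2-cocycle identity by plugging in a single well-chosen triple of basis elements of $B$.

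For the forward direction, the $\bz$-graded commutative associative algebra structure on $A\otimes B$ is immediate from Proposition~\ref{pro:aff-Zalg}. The bilinear form $\mathcal{B}$ is antisymmetric since $\varpi$ is symmetric and $\omega$ is antisymmetric; nondegenerate by the standard pure-tensor argument using nondegeneracy of $\varpi$ and $\omega$; and graded because $\omega$ is graded while $\varpi$ pairs the whole of $A$ with itself in a single degree. The substantive check is the Connes cocycle identity. Expanding $\mathcal{B}(pq,s)+\mathcal{B}(qs,p)+\mathcal{B}(sp,q)$ with $p=a_1\otimes b_1$, $q=a_2\otimes b_2$, $s=a_3\otimes b_3$ via Eq.~\eqref{ind-ass} yields a sum of six scalar summands, each of the shape $\varpi(a_i\ast a_j,a_k)\,\omega(b_i\diamond b_j,b_k)$. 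I would then use the two structural identities on $(B,\diamond,\omega)$ recorded right after Definition~\ref{def:quad}, $\omega(b_i\diamond b_j,b_k)=\omega(b_j,b_i\diamond b_k)$, together with the invariance $\omega(b\diamond b',b'')=\omega(b,b'\diamond b''-b''\diamond b')$, to derive the linear relation $\omega(b_1\diamond b_2,b_3)=\omega(b_2\diamond b_1,b_3)+\omega(b_3\diamond b_2,b_1)$ and its cyclic analogues. Substituting these, the six $\omega$-factors collapse to two independent ones whose $\varpi$-coefficients each become, up to the symmetry of $\varpi$, of the shape $\varpi(a_i\ast a_j+a_j\ast a_i,a_k)-\varpi(a_i,a_j\ast a_k)-\varpi(a_j,a_i\ast a_k)$, and hence vanish by the quasi-Frobenius 2-cocycle condition on $(A,\ast,\varpi)$.

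For the converse, since $(A\otimes B,\cdot)$ is $\bz$-graded commutative associative and $B$ is the specific algebra of Example~\ref{ex:qu-perm}, the second half of Proposition~\ref{pro:aff-Zalg} gives that $(A,\ast)$ is a Zinbiel algebra. The antisymmetry of $\mathcal{B}$ together with the antisymmetry of $\omega$ forces $\varpi(a,a')=\varpi(a',a)$ after fixing $b,b'\in B$ with $\omega(b,b')\neq 0$; nondegeneracy of $\varpi$ follows analogously. To extract the 2-cocycle identity, I would evaluate the Connes cocycle condition on the triple $a_1\otimes x_1^{-1}\partial_1,\;a_2\otimes\partial_1,\;a_3\otimes\partial_2$: the products in Example~\ref{ex:grperm} together with the explicit pairing of Example~\ref{ex:qu-perm} make four of the six $\omega(b_i\diamond b_j,b_k)$ vanish and leave the remaining two equal to $\pm 1$, so the identity collapses directly to $\varpi(a_1\ast a_2+a_2\ast a_1,a_3)=\varpi(a_1,a_2\ast a_3)+\varpi(a_2,a_1\ast a_3)$.

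The main obstacle is the bookkeeping in the forward-direction Connes cocycle check: with six $\omega$-terms and several perm-algebra identities relating them, one has to choose the correct pair to retain and express the others, so that the two surviving coefficients align on the nose with the quasi-Frobenius identity. Once this substitution is carried out the computation collapses, and the converse reduces to a targeted evaluation at the explicit basis elements of the perm algebra in Example~\ref{ex:qu-perm}.
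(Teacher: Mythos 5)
Your proposal is correct and follows essentially the same route as the paper: the forward direction expands the six terms of the cyclic sum and uses the invariance identities $\omega(b\diamond b',b'')=\omega(b',b\diamond b'')$ and $\omega(b\diamond b',b'')=\omega(b,b'\diamond b''-b''\diamond b')$ to collapse everything onto two independent $\omega$-factors whose $\varpi$-coefficients vanish by the 2-cocycle condition, and the converse evaluates the Connes identity at an explicit triple (the paper uses $a_{1}\otimes\partial_{1},\,a_{2}\otimes\partial_{1},\,a_{3}\otimes x_{1}^{-1}\partial_{2}$, which is equivalent to your choice). One small slip in your description of the converse: with your triple only two of the six pairings $\omega(b_{i}\diamond b_{j},b_{k})$ vanish and four equal $\pm1$ — as they must, since the surviving identity $\varpi(a_{1}\ast a_{2}+a_{2}\ast a_{1},a_{3})=\varpi(a_{1},a_{2}\ast a_{3})+\varpi(a_{2},a_{1}\ast a_{3})$ has four terms — but the identity you extract is the right one.
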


\begin{proof}
First, since $\varpi(-,-)$ is symmetric nondegenerate and $\omega(-,-)$ is
antisymmetric nondegenerate graded, we obtain that $\mathcal{B}(-,-)$ is antisymmetric
nondegenerate graded. For any $a_{1}, a_{2}, a_{3}\in A$ and $b_{1}, b_{2}, b_{3}\in B$,
we have
\begin{align*}
&\;\mathcal{B}\big((a_{1}\otimes b_{1})(a_{2}\otimes b_{2}),\ \ (a_{3}\otimes b_{3})\big)
+\mathcal{B}\big((a_{2}\otimes b_{2})(a_{3}\otimes b_{3}),\ \ (a_{1}\otimes b_{1})\big)\\[-1mm]
&\qquad+\mathcal{B}\big((a_{3}\otimes b_{3})(a_{1}\otimes b_{1}),\ \
(a_{2}\otimes b_{2})\big)\\
=&\;\varpi(a_{1}\ast a_{2},\; a_{3})\omega(b_{1}\diamond b_{2},\; b_{3})
+\varpi(a_{2}\ast a_{1},\; a_{3})\omega(b_{2}\diamond b_{1},\; b_{3})
+\varpi(a_{2}\ast a_{3},\; a_{1})\omega(b_{2}\diamond b_{3},\; b_{1})\\[-1mm]
&\quad+\varpi(a_{3}\ast a_{2},\; a_{1})\omega(b_{3}\diamond b_{2},\; b_{1})
+\varpi(a_{3}\ast a_{1},\; a_{2})\omega(b_{3}\diamond b_{1},\; b_{2})
+\varpi(a_{1}\ast a_{3},\; a_{2})\omega(b_{1}\diamond b_{3},\; b_{2})\\
=&\;\Big(\varpi(a_{2}\ast a_{3},\; a_{1})-\varpi(a_{1}\ast a_{2},\; a_{3})
+\varpi(a_{1}\ast a_{3},\; a_{2})-\varpi(a_{2}\ast a_{1},\; a_{3})\Big)
\omega(b_{2}\diamond b_{3},\; b_{1})\\[-2mm]
&\quad+\Big(\varpi(a_{3}\ast a_{2},\; a_{1})+\varpi(a_{1}\ast a_{2},\; a_{3})
-\varpi(a_{1}\ast a_{3},\; a_{2})-\varpi(a_{3}\ast a_{1},\; a_{2})\Big)
\omega(b_{3}\diamond b_{2},\; b_{1})\\
=&\; 0.
\end{align*}
Therefore, $(A\otimes B, \cdot, \mathcal{B})$ is a $\bz$-graded commutative associative
algebra with Connes cocycle.

Suppose that the quadratic $\bz$-graded perm algebra $(B=\oplus_{i\in\bz}B_{i},
\diamond, \omega)$ is given in Example \ref{ex:qu-perm} and $(A\otimes B, \cdot,
\mathcal{B})$ is a $\bz$-graded commutative associative algebra with Connes cocycle.
We only need to show that $(A, \ast, \varpi)$ is a quasi-Frobenius Zinbiel algebra
if $(A\otimes B, \cdot, \mathcal{B})$ is a $\bz$-graded commutative associative
algebra with Connes cocycle. First, it is easy to see that $\varpi(-,-)$ is symmetric.
Second, for any $a_{1}, a_{2}, a_{3}\in A$, we have
\begin{align*}
0&=\mathcal{B}\big((a_{1}\otimes \partial_{1})(a_{2}\otimes \partial_{1}),\ \
(a_{3}\otimes x_{1}^{-1}\partial_{2})\big)+\mathcal{B}\big((a_{2}\otimes \partial_{1})
(a_{3}\otimes x_{1}^{-1}\partial_{2}),\ \ (a_{1}\otimes \partial_{1})\big)\\[-1mm]
&\qquad+\mathcal{B}\big((a_{3}\otimes x_{1}^{-1}\partial_{2})
(a_{1}\otimes \partial_{1}),\ \ (a_{2}\otimes \partial_{1})\big)\\
&= -\varpi(a_{1}\ast a_{2}+a_{2}\ast a_{1},\; a_{3})+\varpi(a_{1},\; a_{2}\ast a_{3})
+\varpi(a_{2},\; a_{1}\ast a_{3}).
\end{align*}
Thus, $(A, \ast, \varpi)$ is a quasi-Frobenius Zinbiel algebra in this case.
\end{proof}

Combining Propositions \ref{pro:ZYBE-AYBE}, \ref{pro:quasi-Z} and
\ref{pro:Con-ass}, we obtain the equivalences.

\begin{thm}\label{thm:quasi-ass-eq}
Let $(A, \ast)$ be a Zinbiel algebra with a nondegenerate bilinear form $\varpi(-,-)$,
$\{e_{1}, e_{2}$, $\cdots,e_{n}\}$ be a basis of $A$ and $\{f_{1}, f_{2},\cdots,f_{n}\}$
be the dual basis of $\{e_{1}, e_{2},\cdots,e_{n}\}$ with respect to $\varpi(-,-)$.
Denote $r=\sum_{i}e_{i}\otimes f_{i}\in A\otimes A$ and suppose $(A\otimes B, \cdot)$
is the commutative associative algebra induced by $(A, \ast)$ and the $\bz$-graded
perm algebra $(B, \diamond)$ given in Example \ref{ex:grperm}. Define a bilinear form
$\mathcal{B}(-,-)$ on $A\otimes B$ by Eq. \eqref{bilinear} for $\omega(-,-)$ given in
Example \ref{ex:qu-perm}. Then the following conditions are equivalent.
\begin{enumerate}\itemsep=0pt
\item[$(i)$] $(A, \ast, \varpi)$ is a quasi-Frobenius Zinbiel algebra;
\item[$(ii)$] $r$ is a symmetric solution of the $\ZYBE$ in $(A, \ast)$;
\item[$(iii)$] $\widehat{r}:=\sum_{i}\sum_{i_{1}, i_{2}\in\bz}\big((e_{i}\otimes
  x_{1}^{i_{1}}x_{2}^{i_{2}}\partial_{1})\otimes(f_{i}\otimes x_{1}^{-i_{1}}x_{2}^{-i_{2}}
  \partial_{2})+(e_{i}\otimes x_{1}^{i_{1}}x_{2}^{i_{2}}\partial_{2})\otimes(f_{i}\otimes
  x_{1}^{-i_{1}}x_{2}^{-i_{2}}\partial_{1})\big)\in(A\otimes B)\,\hat{\otimes}\,(A\otimes B)$
  is a skew-symmetric completed solution of the $\AYBE$ in $(A\otimes B, \cdot)$;
\item[$(iv)$] $((A\otimes B, \cdot, \mathcal{B})$ is a $\bz$-graded commutative
  associative algebra with Connes cocycle.
\end{enumerate}
\end{thm}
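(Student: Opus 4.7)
The plan is to establish the four-way equivalence as a cycle in which three of the edges are immediate from earlier results and only $(iii) \Rightarrow (ii)$ requires additional work. Specifically, $(i) \Leftrightarrow (ii)$ is Proposition \ref{pro:quasi-Z}; $(i) \Leftrightarrow (iv)$ is Proposition \ref{pro:Con-ass} applied to the quadratic $\bz$-graded perm algebra built from Examples \ref{ex:grperm} and \ref{ex:qu-perm}; and $(ii) \Rightarrow (iii)$ is Proposition \ref{pro:ZYBE-AYBE}, after observing that the $\widehat{r}$ displayed in the present theorem is just the expansion of the $\widehat{r}$ in that proposition once the homogeneous basis of $B$ and its $\omega$-dual from Example \ref{ex:qu-perm} are written out explicitly.

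For $(iii) \Rightarrow (ii)$, I would proceed in two steps. First I would deduce symmetry of $r$ from skew-symmetry of $\widehat{r}$: let $\{u_\alpha\}$ be a homogeneous basis of $B$ with $\omega$-dual $\{v_\alpha\}$, so that $\sum_\alpha u_\alpha \otimes v_\alpha = -\sum_\alpha v_\alpha \otimes u_\alpha$ by the last paragraph of the proof of Proposition \ref{pro:ZYBE-AYBE}. Using this identity one computes
\[
\widehat{r} + \hat{\tau}(\widehat{r}) = (r - \tau(r)) \bullet \Bigl(\sum_\alpha u_\alpha \otimes v_\alpha\Bigr),
\]
and since distinct values of $\alpha$ contribute linearly independent pieces in $B^{\otimes 2}$, the vanishing of the left-hand side forces $r = \tau(r)$, equivalently $\varpi$ is symmetric. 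Second, with $r$ symmetric, the identity obtained in the body of Proposition \ref{pro:ZYBE-AYBE} remains valid:
\begin{align*}
\widehat{r}_{12}\widehat{r}_{13} + \widehat{r}_{13}\widehat{r}_{23} - \widehat{r}_{23}\widehat{r}_{12}
&= -\mathbf{Z}_r \bullet \Bigl(\sum_{\alpha,\beta}(u_\alpha \diamond u_\beta) \otimes v_\alpha \otimes v_\beta\Bigr) \\
&\quad - (\id \otimes \tau)(\mathbf{Z}_r) \bullet \Bigl(\sum_{\alpha,\beta}(u_\beta \diamond u_\alpha) \otimes v_\alpha \otimes v_\beta\Bigr),
\end{align*}
and the AYBE for $\widehat{r}$ forces the left-hand side to vanish. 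I would then extract $\mathbf{Z}_r = 0$ by reading off the $A^{\otimes 3}$-coefficients of suitably chosen homogeneous monomials in $B^{\otimes 3}$, using the explicit formula for $\diamond$ in Example \ref{ex:grperm} to separate the two sums via the $\partial_1$-versus-$\partial_2$ case distinction.

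The main obstacle I expect is this final coefficient-extraction. The two tensors $\sum_{\alpha,\beta}(u_\alpha \diamond u_\beta) \otimes v_\alpha \otimes v_\beta$ and $\sum_{\alpha,\beta}(u_\beta \diamond u_\alpha) \otimes v_\alpha \otimes v_\beta$ have overlapping supports in $B^{\otimes 3}$, so one must choose test monomials in which only one summand contributes (for instance, by fixing the $\partial$-indices so that only the $\delta_{s,1}$ or only the $\delta_{s,2}$ term of $\diamond$ is active) in order to isolate $\mathbf{Z}_r$ and its $(\id \otimes \tau)$-flip separately. Conceptually, however, this is the same style of coefficient-matching already used in the ``only if'' portions of Proposition \ref{pro:aff-Zalg} and Theorem \ref{thm:Z-perm-ass}, so I do not expect any conceptual obstruction beyond the bookkeeping.
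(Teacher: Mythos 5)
Your proposal is correct, and its skeleton coincides with the paper's: the paper proves this theorem in one line by ``combining Propositions \ref{pro:ZYBE-AYBE}, \ref{pro:quasi-Z} and \ref{pro:Con-ass}.'' Those three results supply exactly the edges you list as immediate, namely $(i)\Leftrightarrow(ii)$, $(ii)\Rightarrow(iii)$ and $(i)\Leftrightarrow(iv)$ --- and nothing more. What you add, and what the paper silently omits, is an implication \emph{out of} $(iii)$: none of the cited propositions returns from the completed $\AYBE$ solution to the Zinbiel side, so the paper's cycle is not actually closed as written. Your two-step argument for $(iii)\Rightarrow(ii)$ is sound. The identity $\widehat{r}+\hat{\tau}(\widehat{r})=(r-\tau(r))\bullet\big(\sum_{\alpha}u_{\alpha}\otimes v_{\alpha}\big)$ follows from $\sum_{\alpha}u_{\alpha}\otimes v_{\alpha}=-\sum_{\alpha}v_{\alpha}\otimes u_{\alpha}$, and linear independence of the $u_{\alpha}\otimes v_{\alpha}$ does force $r=\tau(r)$. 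With symmetry in hand, the displayed identity from the proof of Proposition \ref{pro:ZYBE-AYBE} (whose derivation of the simplified $\mathbf{Z}_{r}$ uses symmetry of $r$, so the order of your two steps matters and is correct) applies verbatim, and the coefficient extraction you worry about does go through: taking, say, the second tensor factor of the form $x_{1}^{-i_{1}}x_{2}^{-i_{2}}\partial_{1}$ (dual to a $\partial_{2}$-element) and the third of the form $x_{1}^{-j_{1}}x_{2}^{-j_{2}}\partial_{2}$ (dual to a $\partial_{1}$-element) places the contributions of $\sum_{p,q}(e_{p}\diamond e_{q})\otimes f_{p}\otimes f_{q}$ and $\sum_{p,q}(e_{q}\diamond e_{p})\otimes f_{p}\otimes f_{q}$ on monomials ending in $\partial_{1}$ versus $\partial_{2}$ in the first slot, so $\mathbf{Z}_{r}$ and $(\id\otimes\tau)(\mathbf{Z}_{r})$ are isolated separately. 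One caveat you inherit from the paper rather than introduce: the combination computed in Proposition \ref{pro:ZYBE-AYBE} is $\widehat{r}_{12}\widehat{r}_{13}+\widehat{r}_{13}\widehat{r}_{23}-\widehat{r}_{23}\widehat{r}_{12}$, which does not literally match the definition $\mathbf{A}_{r}=r_{12}r_{13}+r_{12}r_{23}-r_{23}r_{13}$; if you use the hypothesis $\mathbf{A}_{\widehat{r}}=0$ you should note that for skew-symmetric $\widehat{r}$ in a commutative algebra the two combinations agree up to the relabelling already implicit in the paper's computation. In short: same approach as the paper, but more complete, since you identify and repair the missing converse direction.
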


It is worth remarking that the equivalence of $(iii)$ and $(iv)$ in Theorem
\ref{thm:quasi-ass-eq} provides an equivalence between skew-symmetric solutions of
the $\AYBE$ and commutative associative algebras with a Connes cocycle.

\begin{ex}\label{ex:FZ-Fass}
Consider the two dimensional quasi-Frobenius Zinbiel algebra $(A=\bk\{e_{1}, e_{2}\},
\ast, \varpi)$, where $e_{1}\ast e_{1}=e_{2}$, $\varpi(e_{1}, e_{2})=1$. Then
$e_{2}, e_{1}$ is the dual basis of $e_{1}, e_{2}$ with respect to $\varpi(-,-)$,
and the element $r$ defined in Theorem \ref{thm:quasi-ass-eq} is given by
$r=e_{1}\otimes e_{2}+e_{2}\otimes e_{1}$, which is just the symmetric solution of
the $\ZYBE$ in $(A, \ast)$ considered in Example \ref{ex:Zbia-Assbia}.
Let $(B=\bk\{x_{1}, x_{2}\}, \diamond, \omega)$ be the two dimensional quadratic
perm algebra given in Example \ref{ex:Zbia-Assbia}. Then $\widehat{r}$ defined by
Eq. \eqref{r-indass} is a skew-symmetric solution of the $\AYBE$ in $(A\otimes B, \cdot)$,
and the bilinear form $\mathcal{B}(-,-)$ on $A\otimes B$ is given by
$\mathcal{B}(e_{2}\otimes x_{2},\; e_{1}\otimes x_{1})=1=\mathcal{B}
(e_{1}\otimes x_{2},\; e_{2}\otimes x_{1})$.
\end{ex}

\smallskip
\section{Infinite-dimensional Poisson bialgebra from pre-Poisson bialgebras} \label{sec:pbialg}
In this section, we consider the affinization of pre-Poisson bialgebras.
More exactly, we show that the tensor product of a finite-dimensional pre-Poisson
bialgebra and a quadratic $\bz$-graded perm algebra can be naturally endowed with
a completed Poisson bialgebra. A pre-Poisson algebra is the structure of a pre-Lie
algebra and a Zinbiel algebra on the same vector space. The affinization of pre-Lie
bialgebras has been considered in \cite{LZB}. First, let us review the affinization
of pre-Lie bialgebras. A {\bf $\bz$-graded Lie algebra} is a Lie algebra $(\g, [-,-])$
with a linear decomposition $\g=\oplus_{i\in\bz}\g_{i}$ such that each $\g_{i}$ is
finite-dimensional and $[\g_{i}, \g_{j}]\subseteq\g_{i+j}$ for all $i, j\in\bz$.
A {\bf completed Lie coalgebra} is a pair $(\g, \delta)$ where $\g=\oplus_{i\in\bz}\g_{i}$
is a $\bz$-graded vector space and $\delta: \g\rightarrow\g\,\hat{\otimes}\,\g$ is a
linear map satisfying
$$
\hat{\tau}\delta=-\delta,\qquad\quad (\id\,\hat{\otimes}\,\delta)\delta
-(\hat{\tau}\,\hat{\otimes}\,\id)(\id\,\hat{\otimes}\,\delta)\delta
=(\delta\,\hat{\otimes}\,\id)\delta.
$$

For the affinization of pre-Lie algebras and pre-Lie coalgebras, we have

\begin{pro}[\cite{LZB}]\label{pro:aff-preli}
Let $(A, \circ)$ be a finite-dimensional pre-Lie algebra and $(B=\oplus_{i\in\bz}B_{i},
\diamond)$ be a $\bz$-graded perm algebra. If we define a bracket $[-,-]$ on $A\otimes B$
by Eq. \eqref{ind-lie}, then $(A\otimes B, [-,-])$ is a $\bz$-graded Poisson algebra.
Moreover, if $(B=\oplus_{i\in\bz}B_{i}, \diamond)$ is the $\bz$-graded perm algebra
given in Example \ref{ex:grperm}, then $(A\otimes B, [-,-])$ is a $\bz$-graded Lie
algebra if and only if $(A, \circ)$ is a pre-Lie algebra.

Let $(A, \theta)$ be a finite-dimensional pre-Lie coalgebra and $(B=\oplus_{i\in\bz}B_{i},
\nu)$ be a completed perm coalgebra. If we define a linear map $\delta: A\otimes B
\rightarrow(A\otimes B)\,\hat{\otimes}\,(A\otimes B)$ by
\begin{align}
\delta(a\otimes b)
&=(\id\otimes\id-\tau)(\theta(a)\bullet\nu(b))            \label{PLcoLie} \\
&:=\sum_{(a)}\sum_{i,j,\alpha}\Big((a_{(1)}\otimes b_{1,i,\alpha})
\otimes(a_{(2)}\otimes b_{2,j,\alpha})-(a_{(2)}\otimes b_{2,j,\alpha})
\otimes(a_{(1)}\otimes b_{1,i,\alpha})\Big),   \nonumber
\end{align}
for any $a\in A$ and $b\in B$, where $\theta(a)=\sum_{(a)}a_{(1)}\otimes a_{(2)}$
in the Sweedler notation and $\nu(b)=\sum_{i,j,\alpha}b_{1,i,\alpha}
\otimes b_{2,j,\alpha}$, then $(A\otimes B, \delta)$ is a completed Lie coalgebra.
Moreover, if $(B=\oplus_{i\in\bz}B_{i}, \nu)$ is the completed perm coalgebra given in
Example \ref{ex:grpermco}, then $(A\otimes B, \delta)$ is a completed Lie
coalgebra if and only if $(A, \theta)$ is a pre-Lie coalgebra.
\end{pro}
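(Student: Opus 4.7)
The plan is to prove the two halves of the proposition in parallel with the earlier Zinbiel-algebra/Zinbiel-coalgebra statements (Propositions \ref{pro:aff-Zalg} and \ref{pro:Zco-coass}), reusing as much of their structural input as possible.

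For the algebra half, the forward direction is almost immediate: Proposition \ref{pro:tensor-lie} already yields that $(A\otimes B, [-,-])$ is a Lie algebra. To promote this to a $\bz$-graded Lie algebra, I only need to check that with the induced decomposition $A\otimes B=\oplus_{i\in\bz}(A\otimes B_{i})$ one has $[A\otimes B_{i},\,A\otimes B_{j}]\subseteq A\otimes(B_{i}\diamond B_{j}+B_{j}\diamond B_{i})\subseteq A\otimes B_{i+j}$, which is immediate from \eqref{ind-lie}. For the converse, with $(B,\diamond)$ the perm algebra of Example \ref{ex:grperm}, I would feed elements of the form $a_{1}\otimes\partial_{s_{1}}$, $a_{2}\otimes\partial_{s_{2}}$, $a_{3}\otimes\partial_{s_{3}}$ (for $s_{i}\in\{1,2\}$) into the Jacobi identity, expand both sides using \eqref{ind-lie}, and isolate a monomial in $\bk[x_{1}^{\pm},x_{2}^{\pm}]$ that survives on only one side (e.g.\ $x_{1}^{2}\partial_{2}$ in the analogue of the calculation in Proposition \ref{pro:aff-Zalg}). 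Matching the coefficients of two such well-chosen monomials should extract exactly the pre-Lie identity
$(a_{1}\circ a_{2})\circ a_{3}-a_{1}\circ(a_{2}\circ a_{3})=(a_{2}\circ a_{1})\circ a_{3}-a_{2}\circ(a_{1}\circ a_{3})$.

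For the coalgebra half, the co-anticommutativity $\hat{\tau}\delta=-\delta$ is visible from \eqref{PLcoLie}. The substantive work is the completed co-Jacobi identity
$(\id\,\hat{\otimes}\,\delta)\delta-(\hat{\tau}\,\hat{\otimes}\,\id)(\id\,\hat{\otimes}\,\delta)\delta=(\delta\,\hat{\otimes}\,\id)\delta$.
I would expand each side using the bullet-notation introduced in the proof of Proposition \ref{pro:Zco-coass} and split the expansion into an $A$-factor (built from iterated $\theta$) and a $B$-factor (built from iterated $\nu$). The pre-Lie coalgebra axiom supplies the $A$-factor relation and the perm coalgebra axiom supplies the $B$-factor relation; the auxiliary identity
$(\id\,\hat{\otimes}\,\hat{\tau})(\hat{\tau}\,\hat{\otimes}\,\id)(\id\,\hat{\otimes}\,\hat{\tau})=(\hat{\tau}\,\hat{\otimes}\,\id)(\id\,\hat{\otimes}\,\hat{\tau})(\hat{\tau}\,\hat{\otimes}\,\id)$
is then used to align the twists across the two factors. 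The converse is handled, as in Proposition \ref{pro:Zco-coass}, by substituting the explicit $\nu$ of Example \ref{ex:grpermco}, expanding $(\id\,\hat{\otimes}\,\delta)\delta(a\otimes\partial_{1})$ and $(\delta\,\hat{\otimes}\,\id)\delta(a\otimes\partial_{1})$, and comparing the coefficient of a carefully chosen basis tensor (analogous to $x_{1}^{i_{1}}x_{2}^{i_{2}}\partial_{1}\otimes x_{1}^{j_{1}}x_{2}^{j_{2}}\partial_{2}\otimes x_{1}^{1-i_{1}-j_{1}}x_{2}^{1-i_{2}-j_{2}}\partial_{1}$) to extract the pre-Lie coalgebra identity on $(A,\theta)$.

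The main obstacle is the bookkeeping in the co-Jacobi verification: there are six terms on the combined left-hand side of the identity (three from each expansion, each with an associated twist), and matching them against the three terms on the right-hand side requires correctly tracking how $\tau$ on the $A$-side interacts with $\hat{\tau}$ on the $B$-side. Once one sets up the expansion so that each term is displayed as an $A$-piece bulleted with a $B$-piece, the relevant cancellations are exactly parallel to those in the proof of Proposition \ref{pro:Zco-coass}, with the antisymmetric twist $(\id\otimes\id-\tau)$ in \eqref{PLcoLie} playing the role that $(\id\otimes\id+\tau)$ played in \eqref{Zcoass} and the pre-Lie coassociator $\theta$ replacing the Zinbiel coproduct $\vartheta$.
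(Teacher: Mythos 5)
Your plan is correct and is essentially the intended argument: note that the paper itself states Proposition~\ref{pro:aff-preli} as a cited result from \cite{LZB} and gives no proof, but your outline is precisely the pre-Lie analogue of the proofs the paper does give for Propositions~\ref{pro:aff-Zalg} and \ref{pro:Zco-coass} (forward direction from Proposition~\ref{pro:tensor-lie} plus the grading check; converse by feeding $a_k\otimes\partial_{s_k}$ into the Jacobi/co-Jacobi identity for the special perm (co)algebra of Examples~\ref{ex:grperm} and \ref{ex:grpermco} and comparing coefficients of monomials such as $x_1^2\partial_2$), and the extraction of the pre-Lie identity from the $x_1^2\partial_2$-coefficient checks out. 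The only cosmetic point is that ``$\bz$-graded Poisson algebra'' in the first sentence of the statement is a typo for ``$\bz$-graded Lie algebra,'' which your argument implicitly corrects.
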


To provide the affinization of pre-Lie bialgebras, we first introduce the
concept of completed Lie bialgebras.

\begin{defi}\label{def:comp-Libialg}
A {\bf completed Lie bialgebra} is a triple $(\g, [-,-], \delta)$ consisting of a
vector space $\g=\oplus_{i\in\bz}\g_{i}$, a bilinear bracket $[-,-]: \g\otimes\g
\rightarrow\g$ and a linear map $\delta: \g\rightarrow\g\otimes\g$ such that
\begin{enumerate}\itemsep=0pt
\item[$(i)$] $(\g=\oplus_{i\in\bz}\g_{i}, [-,-])$ is a $\bz$-graded Lie algebra;
\item[$(ii)$] $(\g=\oplus_{i\in\bz}\g_{i}, \delta)$ is a completed Lie coalgebra;
\item[$(iii)$] for any $g_{1}, g_{2}\in\g$,
$$
\delta([g_{1}, g_{2}])=(\ad_{\g}(g_{1})\,\hat{\otimes}\,\id
+\id\,\hat{\otimes}\,\ad_{\g}(g_{1}))(\delta(g_{2}))-(\ad_{\g}(g_{2})\,\hat{\otimes}\,\id
+\id\,\hat{\otimes}\,\ad_{\g}(g_{2}))(\delta(g_{1})).
$$
\end{enumerate}
\end{defi}

Then, for the affinization of pre-Lie bialgebras, we have

\begin{thm}[\cite{LZB}]\label{thm:pre-lie}
Let $(A, \circ, \theta)$ be a pre-Lie bialgebra and $(B=\oplus_{i\in\bz}B_{i},
\diamond, \omega)$ be a quadratic $\bz$-graded perm algebra, and $(A\otimes B, [-,-])$
be the induced Lie algebra from $(A, \circ)$ by $(B, \diamond)$. Define a linear
map $\delta: A\otimes B\rightarrow(A\otimes B)\,\hat{\otimes}\,(A\otimes B)$ by
Eq. \eqref{PLcoLie} for $\nu=\nu_{\omega}$. Then $(A\otimes B, [-,-], \delta)$ is
a completed Lie bialgebra, which is called the {\bf completed Lie bialgebra induced
from $(A, \circ, \theta)$ by $(B=\oplus_{i\in\bz}B_{i}, \diamond, \omega)$}.

Moreover, if $(B=\oplus_{i\in\bz}B_{i}, \diamond, \omega)$ is the quadratic $\bz$-graded
perm algebra given in Example \ref{ex:qu-perm}, then $(A\otimes B, [-,-], \delta)$ is
a completed Lie bialgebra if and only if $(A, \circ, \theta)$ is a pre-Lie bialgebra.
\end{thm}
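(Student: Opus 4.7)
The plan is to mirror the proof strategy used for Theorem \ref{thm:Z-perm-ass}, splitting the verification into the three defining components of a completed Lie bialgebra: the $\bz$-graded Lie algebra structure on $A \otimes B$, the completed Lie coalgebra structure on $A \otimes B$, and the cross compatibility between the bracket and the cobracket.

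First, Proposition \ref{pro:aff-preli} immediately yields that $(A \otimes B, [-,-])$ is a $\bz$-graded Lie algebra. Second, since $(B=\oplus_{i\in\bz}B_{i}, \diamond, \omega)$ is a quadratic $\bz$-graded perm algebra, Lemma \ref{lem:comp-dual} produces the completed perm coalgebra $(B, \nu_{\omega})$, so the coalgebra half of Proposition \ref{pro:aff-preli} applied with $\nu = \nu_{\omega}$ gives that $(A \otimes B, \delta)$ is a completed Lie coalgebra.

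The heart of the proof is condition (iii) of Definition \ref{def:comp-Libialg}. For arbitrary $a \otimes b,\; a' \otimes b' \in A \otimes B$ I would expand $\delta([a \otimes b,\; a' \otimes b'])$ via \eqref{ind-lie} and \eqref{PLcoLie}, expand the right-hand side of (iii) analogously, and then regroup the $B$-tensors using identities induced by the invariance and skew-symmetry of $\omega$. In direct parallel with the computation in Theorem \ref{thm:Z-perm-ass}, the left nondegeneracy of $\hat{\omega}$ produces canonical identities such as $\nu_{\omega}(b \diamond b') = \sum_{i,j,\alpha} b'_{1,i,\alpha} \otimes (b \diamond b'_{2,j,\alpha})$ together with its symmetric variants and a correction term $\Phi$ encoded by $\omega(b',\; e \diamond (f \diamond b))$. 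Once the $B$-side is normalized into these canonical pieces, each resulting block of $A$-tensors matches one of the two compatibility conditions from the definition of a pre-Lie bialgebra, and the whole expression collapses to zero.

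For the converse direction we specialize to the quadratic $\bz$-graded perm algebra of Example \ref{ex:qu-perm}. The converse halves of Proposition \ref{pro:aff-preli} already upgrade the $\bz$-graded Lie algebra and completed Lie coalgebra structures on $A \otimes B$ into a pre-Lie algebra and a pre-Lie coalgebra structure on $A$. To recover the pre-Lie bialgebra compatibility I would evaluate condition (iii) on generators of the form $a \otimes \partial_s$ and $a' \otimes \partial_t$ with $s, t \in \{1, 2\}$, expand both sides in the homogeneous basis $\{x_1^{i_1} x_2^{i_2} \partial_k\}$ of $B$, and compare coefficients of carefully chosen monomials (for instance, of $\partial_1 \otimes \partial_2$ in the $(s, t) = (1, 1)$ case, and of $\partial_1 \otimes \partial_1$ in the $(s, t) = (1, 2)$ case), thereby isolating each of the pre-Lie bialgebra compatibility equations in turn. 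The main obstacle throughout is the bookkeeping required to manage the infinite sums produced by $\nu_{\omega}$ together with the symmetrizations $\hat{\tau}$; no new conceptual ingredient is needed beyond those already deployed for the Zinbiel/infinitesimal case, but the verification is intricate because each rearrangement must be justified through left nondegeneracy of $\hat{\omega}$ rather than by elementwise manipulations.
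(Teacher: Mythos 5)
The paper itself offers no proof of this theorem: it is imported verbatim from \cite{LZB} (note the citation in the theorem header), so there is no in-paper argument to compare against step by step. Your outline is nevertheless exactly the template the paper follows for the parallel result it does prove in full, Theorem \ref{thm:Z-perm-ass} --- delegate the algebra and coalgebra halves to Proposition \ref{pro:aff-preli} and Lemma \ref{lem:comp-dual}, verify the cocycle condition by normalizing the $B$-side tensors into $\nu_{\omega}(b\diamond b')$, $\hat{\tau}(\nu_{\omega}(b\diamond b'))$ and the correction term $\Phi$ via left nondegeneracy of $\hat{\omega}$, and for the converse specialize to Example \ref{ex:qu-perm} and compare coefficients of chosen monomials --- so the strategy is sound and consistent with the source. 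The only caveat is that the central computation (showing the resulting $A$-blocks collapse onto the two pre-Lie bialgebra compatibility identities) is described rather than executed, so as written it is a correct plan rather than a completed verification.
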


Let $(\g=\oplus_{i\in\bz}\g_{i}, [-,-])$ be a $\bz$-graded Lie algebra.
Suppose that $r=\sum_{i,j,\alpha}x_{i\alpha}\otimes y_{j\alpha}\in\g\,\hat{\otimes}\,\g$.
We denote $[r_{12}, r_{13}]:=\sum_{i,j,k,l,\alpha,\beta}[x_{i,\alpha}, x_{k,\beta}]
\otimes y_{j,\alpha}\otimes y_{l,\beta}$, $[r_{12}, r_{23}]:=\sum_{i,j,k,l,\alpha,\beta}
x_{i,\alpha}\otimes[y_{j,\alpha}, x_{k,\beta}]\otimes y_{l,\beta}$,
$[r_{13}, r_{23}]:=\sum_{i,j,k,l,\alpha,\beta}x_{i,\beta}\otimes x_{k,\alpha}\otimes
[y_{j,\alpha}, y_{l,\beta}]$. The equation
$$
\mathbf{C}_{r}:=[r_{12}, r_{13}]+[r_{13}, r_{23}]+[r_{12}, r_{23}]=0
$$
is called the {\bf classical Yang-Baxter equation} (or $\CYBE$) in
$(\g=\oplus_{i\in\bz}\g_{i}, [-,-])$. If $r\in\g\,\hat{\otimes}\,\g$ satisfies
$\mathbf{C}_{r}=0$ as an element in $\g\,\hat{\otimes}\,\g\,\hat{\otimes}\,\g$,
then $r$ is called a {\bf completed solution} of the $\AYBE$ in
$(A=\oplus_{i\in\bz}A_{i}, \cdot)$.

\begin{pro}[\cite{HBG}]\label{pro:splie-bia}
Let $(\g=\oplus_{i\in\bz}\g_{i}, [-,-])$ be a $\bz$-graded Lie algebra,
$r\in\g\,\hat{\otimes}\,\g$ and $\delta_{r}: \g\rightarrow\g\,\hat{\otimes}\,\g$ be
the linear map defined by
\begin{align}
\delta_{r}(g)=(\id\,\hat{\otimes}\,\ad_{\g}(g)
+\ad_{\g}(g)\,\hat{\otimes}\,\id)(r),          \label{cli-cobo}
\end{align}
for any $g\in\g$. If $r$ is a skew-symmetric completed solution of the $\CYBE$ in
$(\g=\oplus_{i\in\bz}\g_{i}, [-,-])$ then $(\g=\oplus_{i\in\bz}\g_{i}, [-,-]
\delta_{r})$ is a completed Lie bialgebra, which is called a {\bf triangular
completed Lie bialgebra} associated with $r$.
\end{pro}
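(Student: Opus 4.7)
The plan is to verify the axioms of a completed Lie bialgebra for $(\g, [-,-], \delta_r)$. Since $(\g=\oplus_{i\in\bz}\g_{i}, [-,-])$ is already a $\bz$-graded Lie algebra by hypothesis, there are two things to check: that $(\g, \delta_r)$ is a completed Lie coalgebra, and that the $1$-cocycle compatibility between $[-,-]$ and $\delta_r$ holds.

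The cocycle compatibility and the coantisymmetry are the easy steps. For the cocycle, substituting the definition of $\delta_r$ and using the Jacobi identity in the form $\ad_{\g}([g_{1}, g_{2}]) = [\ad_{\g}(g_{1}),\ \ad_{\g}(g_{2})]$ on each tensor slot of $r$ yields the required identity by direct expansion; this step holds for arbitrary $r \in \g\,\hat{\otimes}\,\g$ and uses neither skew-symmetry nor the $\CYBE$. Coantisymmetry $\hat{\tau}\delta_{r} = -\delta_{r}$ is immediate from $\hat{\tau}(r) = -r$, since applying $\hat{\tau}$ swaps the two summands $\ad_{\g}(g)\,\hat{\otimes}\,\id$ and $\id\,\hat{\otimes}\,\ad_{\g}(g)$ while applying $\hat{\tau}$ to $r$, producing the overall sign change.

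The main obstacle is the co-Jacobi identity. Following Drinfeld's classical computation, I would expand
\[
(\id\,\hat{\otimes}\,\delta_{r})\delta_{r}(g) - (\hat{\tau}\,\hat{\otimes}\,\id)(\id\,\hat{\otimes}\,\delta_{r})\delta_{r}(g) - (\delta_{r}\,\hat{\otimes}\,\id)\delta_{r}(g)
\]
by applying the adjoint actions on $r$ in each tensor slot, and then use the Jacobi identity in $\g$ together with the skew-symmetry of $r$ to reorganize the terms. The resulting expression becomes a sum of three pieces, each of which is the completed Schouten bracket $\mathbf{C}_{r} = [r_{12}, r_{13}] + [r_{12}, r_{23}] + [r_{13}, r_{23}]$ with $\ad_{\g}(g)$ acting in one of the three tensor factors. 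Since $r$ is a completed solution of the $\CYBE$, each such term vanishes in $\g\,\hat{\otimes}\,\g\,\hat{\otimes}\,\g$, giving co-Jacobi. Finite-dimensionality of each graded piece $\g_{i}$ guarantees that the infinite sums in the completed tensor products are well-defined and that the formal manipulations go through exactly as in the classical finite-dimensional case.
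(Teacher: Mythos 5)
The paper gives no proof of this proposition at all --- it is quoted from \cite{HBG} --- so there is nothing internal to compare against; your argument is the standard Drinfeld coboundary computation (the cocycle condition from the fact that $g\mapsto \ad_{\g}(g)\,\hat{\otimes}\,\id+\id\,\hat{\otimes}\,\ad_{\g}(g)$ is a representation, hence valid for arbitrary $r$; coantisymmetry from $\hat{\tau}(r)=-r$; co-Jacobi from the identity expressing the co-Jacobiator of $\delta_{r}$ as the diagonal adjoint action of $g$ on $\mathbf{C}_{r}$), transplanted to the completed graded setting, which is exactly what the citation implicitly relies on. The outline is correct; the one step you assert rather than derive is the reduction of co-Jacobi to $\mathbf{C}_{r}=0$, which is precisely where both the skew-symmetry of $r$ and the Jacobi identity are genuinely needed, so a complete write-up should display that computation.
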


Recall that a pre-Lie bialgebra $(A, \circ, \theta)$ is called {\bf coboundary}
if there exists an element $r\in A\otimes A$ such that
\begin{align}
\theta(a)=\theta_{r}(a):=\big(\fl_{A}(a)\otimes\id
+\id\otimes(\fl_{A}-\fr_{A})(a)\big)(r),                \label{preli-cobo}
\end{align}
for all $a\in A$. Let $(A, \circ)$ be a pre-Lie algebra and $r\in A\otimes A$.
We say $r$ is a solution of the {\bf pre-Lie Yang-Baxter equation} (or $\PLYBE$)
in $(A, \circ)$ if
\begin{align*}
\mathbf{PL}_{r}:&=r_{13}\circ r_{12}+r_{23}\circ r_{12}+r_{21}\circ r_{13}
+r_{23}\circ r_{13} \\[-1mm]
&\quad-r_{23}\circ r_{21}-r_{12}\circ r_{23}-r_{13}\circ r_{21}-r_{13}\circ r_{23}=0,
\end{align*}
where $r_{13}\circ r_{12}:=\sum_{i,j}(x_{i}\circ x_{j})\otimes y_{j}\otimes y_{i}$,
$r_{23}\circ r_{12}:=\sum_{i,j}x_{j}\otimes(x_{i}\circ y_{j})\otimes y_{i}$,
$r_{21}\circ r_{13}:=\sum_{i,j}(y_{i}\circ x_{j})\otimes x_{i}\otimes y_{j}$,
$r_{23}\circ r_{13}:=\sum_{i,j}x_{j}\otimes x_{i}\otimes(y_{i}\circ y_{j})$,
$r_{23}\circ r_{21}:=\sum_{i,j}y_{j}\otimes(x_{i}\circ x_{j})\otimes y_{i}$,
$r_{12}\circ r_{23}:=\sum_{i,j}x_{i}\otimes(y_{i}\circ x_{j})\otimes y_{j}$,
$r_{13}\circ r_{21}:=\sum_{i,j}(x_{i}\circ y_{j})\otimes x_{j}\otimes y_{i}$,
$r_{13}\circ r_{23}:=\sum_{i,j}x_{i}\otimes x_{j}\otimes(y_{i}\circ y_{j})$.
If $r$ is symmetric, then the $\PLYBE$ can be simplified as
$$
\mathbf{PL}_{r}=r_{12}\circ r_{13}-r_{12}\circ r_{23}-r_{13}\circ r_{23}+r_{23}\circ r_{13},
$$
which is called $\mathcal{S}$-equation in \cite{Bai1}.

\begin{pro}[\cite{Bai1}]\label{pro:spec-plbia}
Let $(A, \circ)$ be a pre-Lie algebra, $r\in A\otimes A$ and
$\theta_{r}: A\rightarrow A\otimes A$ be a linear map defined by Eq. \eqref{preli-cobo}.
If $r$ is a symmetric solution of the $\PLYBE$ in $(A, \circ)$, then $(A, \circ,
\theta_{r})$ is a pre-Lie bialgebra, which is called a {\bf triangular pre-Lie bialgebra}
associated with $r$.
\end{pro}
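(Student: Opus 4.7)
The plan is to verify directly the three defining properties of a pre-Lie bialgebra for $(A, \circ, \theta_r)$: that $(A, \theta_r)$ is a pre-Lie coalgebra and that the two compatibility conditions between $\circ$ and $\theta_r$ hold. Writing $r = \sum_i x_i \otimes y_i$, the symmetry $r = \tau(r)$ provides the re-indexing identity $\sum_i x_i \otimes y_i = \sum_i y_i \otimes x_i$ and expands the coboundary formula as
\begin{align*}
\theta_r(a) = \sum_i \bigl( (a \circ x_i) \otimes y_i + x_i \otimes (a \circ y_i) - x_i \otimes (y_i \circ a) \bigr).
\end{align*}
A first easy consequence is that $\sum_i y_i \otimes (a \circ x_i) = \sum_i x_i \otimes (a \circ y_i)$, so the first two pairs of terms in $(\theta_r - \tau \theta_r)(a)$ cancel, leaving only an antisymmetrization of $\sum_i x_i \otimes (y_i \circ a)$.

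For the pre-Lie coalgebra axiom I would expand each of $(\id \otimes \theta_r)\theta_r(a)$, $(\tau \otimes \id)(\id \otimes \theta_r)\theta_r(a)$, $(\theta_r \otimes \id)\theta_r(a)$, and $(\tau \otimes \id)(\theta_r \otimes \id)\theta_r(a)$ via the coboundary formula, obtaining sums over pairs $(i,j)$ of nine terms each involving iterated pre-Lie products. I would then invoke the pre-Lie identity $(b_1 \circ b_2) \circ b_3 - b_1 \circ (b_2 \circ b_3) = (b_2 \circ b_1) \circ b_3 - b_2 \circ (b_1 \circ b_3)$ to convert associator discrepancies into their symmetric counterparts, and use the symmetry of $r$ to relabel paired summation indices. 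The difference of the two sides of the coalgebra axiom should then arrange itself into a tensor contraction of $r$ with $\mathbf{PL}_r$, which vanishes by hypothesis in its $\mathcal{S}$-equation form $r_{12}\circ r_{13} - r_{12}\circ r_{23} - r_{13}\circ r_{23} + r_{23}\circ r_{13} = 0$ valid for symmetric $r$.

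For the two compatibility conditions between $\circ$ and $\theta_r$, I would substitute $\theta_r$ on both sides and expand. The left side applies $\theta_r$ to a pre-Lie product or commutator of $a_1, a_2$, while the right side applies $\hat{\fl}_A(a_k)$ and $\hat{\fr}_A(a_k)$ to $\theta_r(a_j)$ or to $(\theta_r - \tau\theta_r)(a_j)$. Using the pre-Lie identity and the symmetry of $r$, each term on the left should be matched by a corresponding term on the right, up to a residual proportional to the $\mathcal{S}$-equation form of $\mathbf{PL}_r$. The main obstacle is bookkeeping: each condition expands into many similar-looking tensor terms, and keeping track of signs, permutations, and summation indices is error-prone. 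I would organize the calculation using the symbolic $r_{ij}\circ r_{kl}$ notation throughout, mirroring the classical proof that a skew-symmetric solution of the $\CYBE$ yields a Lie bialgebra, so that the $\mathbf{PL}_r$ pattern remains visible at every stage and the hypothesis delivers the conclusion.
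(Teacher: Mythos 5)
This proposition is stated in the paper with the citation [\cite{Bai1}] and no proof is given there --- it is a recalled result from Bai's theory of left-symmetric bialgebras and the $\mathcal{S}$-equation --- so there is no in-paper argument to measure your proposal against; the relevant comparison is with the proof in the cited source, which is exactly the direct verification you describe. Your preliminary reductions are correct: reading Eq. \eqref{preli-cobo} with the pre-Lie operations $\hat{\fl}_A,\hat{\fr}_A$ (the $\fl_A,\fr_A$ in that display are a typo for them), the expansion $\theta_r(a)=\sum_i\bigl((a\circ x_i)\otimes y_i+x_i\otimes(a\circ y_i)-x_i\otimes(y_i\circ a)\bigr)$ is right, and the symmetry relabelling does collapse $(\theta_r-\tau\theta_r)(a)$ to $\sum_i\bigl((y_i\circ a)\otimes x_i-x_i\otimes(y_i\circ a)\bigr)$, i.e.\ to the antisymmetrization you name. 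The strategy for the remaining steps --- expand, trade associators via the pre-Lie identity, relabel via symmetry of $r$, and identify the residue with a contraction of $\mathbf{PL}_r$ in its $\mathcal{S}$-equation form --- is the correct and standard one and is known to go through.

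That said, as written this is an outline, not a proof: beyond the cancellation in $(\theta_r-\tau\theta_r)(a)$, every decisive identity is asserted with ``should'' and none of the three axioms (the pre-Lie coalgebra identity and the two compatibility conditions) is actually verified. The substance of the theorem lives precisely in those computations, so you need to carry at least one of them out in full and indicate clearly how the others follow. When you do, you will find the bookkeeping splits more cleanly than your description suggests: for a coboundary $\theta_r$ with $r$ symmetric, the two compatibility conditions between $\circ$ and $\theta_r$ close up using only the symmetry of $r$ and the pre-Lie identity, and the hypothesis $\mathbf{PL}_r=0$ is consumed entirely by the pre-Lie coalgebra axiom, where the discrepancy between the two sides is the image of the tensor $\mathbf{PL}_r$ under operators built from $\hat{\fl}_A(a)$ and $\hat{\fr}_A(a)$. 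Organizing the calculation around that split (rather than expecting a residual multiple of $\mathbf{PL}_r$ in every axiom) is what makes the verification tractable.
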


\begin{pro}[\cite{LZB}]\label{pro:PLYBE-CYBE}
Let $(A, \circ)$ be a pre-Lie algebra and $(B=\oplus_{i\in\bz}B_{i}, \diamond,
\omega)$ be a quadratic $\bz$-graded perm algebra, and $(A\otimes B, [-,-])$ be the
induced $\bz$-graded Lie algebra. Suppose that $r=\sum_{i}x_{i}\otimes y_{i}\in A\otimes
A$ is a symmetric solution of the $\PLYBE$ in $(A, \circ)$. Then $\widehat{r}$ given
by Eq. \eqref{r-indass} is a skew-symmetric completed solution of the $\CYBE$ in
$(A\otimes B, [-,-])$.
\end{pro}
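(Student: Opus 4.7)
The plan is to mirror the proof strategy of Proposition~\ref{pro:ZYBE-AYBE}, exploiting the parallel between the Zinbiel/commutative associative pair and the pre-Lie/Lie pair. First I would expand
\[
[\widehat{r}_{12},\widehat{r}_{13}] + [\widehat{r}_{12},\widehat{r}_{23}] + [\widehat{r}_{13},\widehat{r}_{23}]
\]
using the definition of the induced bracket in Eq.~\eqref{ind-lie}. Because the bracket has two summands (one for each ordering), each of the three commutators splits into four pure tensors, giving twelve terms altogether, each of the form $(a\text{-factor})\otimes(b\text{-factor})$ where the $a$-factor is a pre-Lie product of two of the $x_i,y_i$'s and the $b$-factor is a perm product of two of the $e_j,f_j$'s.

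Next I would split the bookkeeping into an $A$-side and a $B$-side. On the $A$-side, since $r$ is symmetric, the eight terms of $\mathbf{PL}_r$ collapse to the $\mathcal{S}$-equation form
\[
\mathbf{PL}_r = r_{12}\circ r_{13} - r_{12}\circ r_{23} - r_{13}\circ r_{23} + r_{23}\circ r_{13},
\]
so I only need to recognize this four-term combination inside the expansion. On the $B$-side, I would compute the scalars
\[
\hat{\omega}\Bigl(\textstyle\sum_{p,q}(e_p\diamond e_q)\otimes f_p\otimes f_q,\ e_s\otimes e_u\otimes e_v\Bigr),\qquad
\hat{\omega}\Bigl(\textstyle\sum_{p,q}e_q\otimes(f_q\diamond e_p)\otimes f_p,\ e_s\otimes e_u\otimes e_v\Bigr),
\]
and their permuted cousins using the invariance identity $\omega(b_1\diamond b_2,b_3)=\omega(b_2,b_1\diamond b_3)$. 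By left nondegeneracy of $\hat{\omega}$, this yields the same perm-coalgebra identities already used in Proposition~\ref{pro:ZYBE-AYBE}, for instance
\[
\sum_{p,q}e_q\otimes(f_q\diamond e_p)\otimes f_p = \sum_{p,q}\bigl((e_q\diamond e_p)\otimes f_p\otimes f_q - (e_p\diamond e_q)\otimes f_p\otimes f_q\bigr),
\]
together with the analogous formulas for $\sum_{p,q}e_p\otimes e_q\otimes(f_p\diamond f_q)$ and for the symmetric variants. These identities depend only on $(B,\diamond,\omega)$, not on the Zinbiel versus pre-Lie choice on $A$, so they transfer verbatim.

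I would then assemble the pieces: after collecting the twelve $A$-tensors according to their $B$-coefficients $(e_p\diamond e_q)\otimes f_p\otimes f_q$ versus $(e_q\diamond e_p)\otimes f_p\otimes f_q$, the sum should rearrange to
\[
[\widehat{r}_{12},\widehat{r}_{13}]+[\widehat{r}_{12},\widehat{r}_{23}]+[\widehat{r}_{13},\widehat{r}_{23}] = \mathbf{PL}_r\bullet\Xi_1 + \sigma(\mathbf{PL}_r)\bullet\Xi_2
\]
for some $\Xi_1,\Xi_2\in B\,\hat{\otimes}\,B\,\hat{\otimes}\,B$ and some index permutation $\sigma$, and this vanishes since $r$ solves the $\PLYBE$. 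Finally, skew-symmetry of $\widehat{r}$ is almost automatic: the antisymmetry of $\omega$ combined with left nondegeneracy of $\hat{\omega}$ yields $\sum_j e_j\otimes f_j = -\sum_j f_j\otimes e_j$, and combined with the symmetry of $r$ this gives $\hat{\tau}(\widehat{r})=-\widehat{r}$.

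The main obstacle will be the sign bookkeeping: the induced Lie bracket is a \emph{commutator} while the induced commutative product in Proposition~\ref{pro:ZYBE-AYBE} is an \emph{anticommutator}, so several of the relative signs between straight and twisted summands flip. I would expect to double-check the calculation by tracking, for each of the four terms of the $\mathcal{S}$-equation, precisely which two of the twelve expansion terms it comes from and with what sign, verifying that these signs are consistent with the perm-coalgebra identities above.
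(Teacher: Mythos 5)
Your proposal is correct and follows essentially the same route as the paper: the paper itself defers this proposition to \cite{LZB} without proof, but its proof of the Zinbiel analogue (Proposition \ref{pro:ZYBE-AYBE}) is exactly the template you describe, and the computation does close up as you predict, with $\mathbf{C}_{\widehat{r}}=\mathbf{PL}_{r}\bullet\big(\sum_{p,q}(e_{p}\diamond e_{q})\otimes f_{p}\otimes f_{q}\big)-(\id\otimes\tau)(\mathbf{PL}_{r})\bullet\big(\sum_{p,q}(e_{q}\diamond e_{p})\otimes f_{p}\otimes f_{q}\big)$ after invoking the symmetry of $r$ and the perm identities. The only slip is the bookkeeping count: each commutator $[\widehat{r}_{kl},\widehat{r}_{mn}]$ expands into two summand families (the induced bracket \eqref{ind-lie} has two terms), so there are six terms in total, not twelve; this does not affect the argument.
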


Let $(A, \circ)$ be a pre-Lie algebra and $(B=\oplus_{i\in\bz}B_{i}, \diamond,
\omega)$ be a quadratic $\bz$-graded perm algebra, and $(A\otimes B, [-,-])$ be the
induced $\bz$-graded Lie algebra. If $r=\sum_{i}x_{i}\otimes y_{i}$ is a symmetric
solution of the $\ZYBE$ in $(A, \ast)$. By Proposition \ref{pro:PLYBE-CYBE}, we get
that $\widehat{r}$ is a skew-symmetric completed solution of the $\CYBE$ in
$(A\otimes B, [-,-])$. Thus, $(A\otimes B, [-,-], \delta_{\widehat{r}})$ is a
completed infinitesimal bialgebra, where $\delta_{\widehat{r}}$ is given by
Eq. \eqref{cli-cobo} for $\widehat{r}$. One can check $(A\otimes B, [-,-],
\delta_{\widehat{r}})=(A\otimes B, [-,-], \delta)$, where $(A\otimes B, [-,-], \delta)$
is the completed Lie bialgebra induced from $(A, \circ, \theta)$ by
$(B=\oplus_{i\in\bz}B_{i}, \diamond, \omega)$. Thus, we obtain

\begin{pro}[\cite{LZB}]\label{pro:indu-trilie}
Let $(A, \circ, \theta)$ be a pre-Lie bialgebra, $(B=\oplus_{i\in\bz}B_{i}, \diamond,
\omega)$ be a quadratic $\bz$-graded perm algebra, and $(A\otimes B, [-,-], \delta)$ be
the induced completed Lie bialgebra from $(A, \circ, \theta)$ by $(B, \diamond, \omega)$.
Then $(A\otimes B, [-,-], \delta)$ is triangular if $(A, \circ, \theta)$ is triangular.
\end{pro}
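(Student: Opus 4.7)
The plan is to follow exactly the strategy already used for the associative case in Theorem \ref{thm:indu-triass}, transporting it through the pre-Lie / Lie dictionary. Assume $(A,\circ,\theta)$ is triangular, so there exists a symmetric $r=\sum_{i}x_{i}\otimes y_{i}\in A\otimes A$ solving $\mathbf{PL}_{r}=0$ with $\theta=\theta_{r}$ as in Eq.~\eqref{preli-cobo}. By Proposition \ref{pro:PLYBE-CYBE}, the lift
$$
\widehat{r}=\sum_{i}\sum_{j\in\Omega}(x_{i}\otimes e_{j})\otimes(y_{i}\otimes f_{j})
\in(A\otimes B)\,\hat{\otimes}\,(A\otimes B)
$$
(with $\{e_{j}\}_{j\in\Omega}$ and $\{f_{j}\}_{j\in\Omega}$ homogeneous dual bases with respect to $\omega(-,-)$) is a skew-symmetric completed solution of the $\CYBE$ in the induced $\bz$-graded Lie algebra $(A\otimes B,[-,-])$. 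Proposition \ref{pro:splie-bia} then immediately produces a triangular completed Lie bialgebra $(A\otimes B,[-,-],\delta_{\widehat{r}})$, with $\delta_{\widehat{r}}$ defined by Eq.~\eqref{cli-cobo}.

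The only remaining task is to identify $\delta_{\widehat{r}}$ with the cobracket $\delta$ of Eq.~\eqref{PLcoLie} (for $\nu=\nu_{\omega}$) coming from $\theta_{r}$. The plan is a direct computation on a pure tensor $a\otimes b$: expand $\delta_{\widehat{r}}(a\otimes b)=(\id\,\hat{\otimes}\,\ad_{A\otimes B}(a\otimes b)+\ad_{A\otimes B}(a\otimes b)\,\hat{\otimes}\,\id)(\widehat{r})$ using the definition \eqref{ind-lie} of $[-,-]$, which produces summands of the shape $(x_{i}\otimes e_{j})\otimes((a\circ y_{i})\otimes(b\diamond f_{j}))$ and their cyclic/transposed partners. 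Expand $\delta(a\otimes b)$ using $\theta_{r}(a)=\sum_{i}\bigl(x_{i}\otimes(a\circ y_{i})+x_{i}\otimes(y_{i}\circ a)-(a\circ x_{i})\otimes y_{i}\bigr)$ together with $\nu_{\omega}(b)=\sum_{l,k,\alpha}b_{1,l,\alpha}\otimes b_{2,k,\alpha}$. Then, using the invariance $\omega(b_{1}\diamond b_{2},b_{3})=\omega(b_{2},b_{1}\diamond b_{3})$ and the left nondegeneracy of the induced pairing $\hat{\omega}(-,-)$, verify the identities
$$
\sum_{j}e_{j}\otimes(f_{j}\diamond b)=\sum_{l,k,\alpha}b_{1,l,\alpha}\otimes b_{2,k,\alpha},\qquad
\sum_{j}(e_{j}\diamond b)\otimes f_{j}=-\sum_{l,k,\alpha}b_{2,k,\alpha}\otimes b_{1,l,\alpha},
$$
together with $\sum_{j}(b\diamond e_{j})\otimes f_{j}=\sum_{j}e_{j}\otimes(b\diamond f_{j})$ expressed through $\nu_{\omega}(b)$. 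These identities are precisely the $B$-side bookkeeping lemmas already used in the proof of Theorem \ref{thm:indu-triass}, and they convert the $\{e_{j},f_{j}\}$-summation appearing in $\delta_{\widehat{r}}$ into the Sweedler notation appearing in $\delta$.

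The main obstacle is the combinatorial matching in this last step: each of the six summands in the $\theta_{r}$-expansion of $\delta$ must be paired with the correct summand of the $\ad$-expansion of $\delta_{\widehat{r}}$, and the leftover ``cross'' terms must cancel. The key inputs to this cancellation are the symmetry $r=\tau(r)$, which allows one to freely swap $x_{i}\leftrightarrow y_{i}$ and absorb the sign from $\hat{\tau}(\widehat{r})=-\widehat{r}$, and the skew-symmetry $\omega(e_{s},e_{t})=-\omega(e_{t},e_{s})$, which gives $\sum_{j}e_{j}\otimes f_{j}=-\sum_{j}f_{j}\otimes e_{j}$. With these two identities in hand the bookkeeping is parallel to the associative case: one collects the six summands of $\delta(a\otimes b)-\delta_{\widehat{r}}(a\otimes b)$ and verifies that their coefficients, read against any $e_{s}\otimes e_{t}$, vanish termwise. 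This yields $\delta=\delta_{\widehat{r}}$, hence $(A\otimes B,[-,-],\delta)$ is the triangular completed Lie bialgebra associated to $\widehat{r}$.
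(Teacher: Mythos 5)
Your overall strategy is exactly the one the paper uses (and the paper itself only sketches it): lift $r$ to $\widehat{r}$ via Proposition~\ref{pro:PLYBE-CYBE}, invoke Proposition~\ref{pro:splie-bia} to get the triangular completed Lie bialgebra $(A\otimes B,[-,-],\delta_{\widehat{r}})$, and then verify $\delta=\delta_{\widehat{r}}$ by the same $B$-side bookkeeping identities ($\sum_{j}e_{j}\otimes(f_{j}\diamond b)=\sum_{l,k,\alpha}b_{1,l,\alpha}\otimes b_{2,k,\alpha}$, etc.) that appear in the proof of Theorem~\ref{thm:indu-triass}. Those identities are stated correctly and the reductions they permit are sound.

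There is, however, one concrete error that would derail the final cancellation as written: your expansion of the coboundary cobracket reads
$\theta_{r}(a)=\sum_{i}\bigl(x_{i}\otimes(a\circ y_{i})+x_{i}\otimes(y_{i}\circ a)-(a\circ x_{i})\otimes y_{i}\bigr)$,
which is the \emph{Zinbiel} pattern $\bigl(\id\otimes(\fl_{A}+\fr_{A})(a)-\fl_{A}(a)\otimes\id\bigr)(r)$ with $\ast$ replaced by $\circ$. The pre-Lie coboundary formula Eq.~\eqref{preli-cobo} is $\bigl(\hat{\fl}_{A}(a)\otimes\id+\id\otimes(\hat{\fl}_{A}-\hat{\fr}_{A})(a)\bigr)(r)$, i.e.
$\theta_{r}(a)=\sum_{i}\bigl((a\circ x_{i})\otimes y_{i}+x_{i}\otimes(a\circ y_{i})-x_{i}\otimes(y_{i}\circ a)\bigr)$,
so two of your three summands carry the wrong sign. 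Since the adjoint expansion of $\delta_{\widehat{r}}(a\otimes b)$ produces terms of the form $((a\circ x_{i})\otimes(b\diamond e_{j}))\otimes(y_{i}\otimes f_{j})$ with the signs dictated by Eq.~\eqref{ind-lie}, the termwise matching against your version of $\theta_{r}$ would leave uncancelled residue rather than giving $\delta(a\otimes b)-\delta_{\widehat{r}}(a\otimes b)=0$. Once the formula is corrected, the rest of your plan (using $r=\tau(r)$, $\sum_{j}e_{j}\otimes f_{j}=-\sum_{j}f_{j}\otimes e_{j}$, and left nondegeneracy of $\hat{\omega}$) goes through exactly as in the associative case.
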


In addition, we have the following commutative diagram:
$$
\xymatrix@C=3cm@R=0.5cm{
\txt{$r$ \\ {\tiny a symmetric solution}\\ {\tiny of the $\PLYBE$ in $(A, \circ)$}}
\ar[d]_{{\rm Pro.}~\ref{pro:PLYBE-CYBE}}\ar[r]^{{\rm Pro.}~\ref{pro:spec-plbia}} &
\txt{$(A, \circ, \theta_{r})$ \\ {\tiny a triangular pre-Lie bialgebra}}
\ar[d]^{{\rm Thm.}~\ref{thm:pre-lie}}_{{\rm Pro.}~\ref{pro:indu-trilie}} \\
\txt{$\widehat{r}$ \\ {\tiny a skew-symmetric completed solution}\\ {\tiny of the $\CYBE$
in $(A\otimes B, [-,-])$}}
\ar[r]^{{\rm Pro.}~\ref{pro:splie-bia}\quad} &
\txt{$(A\otimes B, [-,-], \delta)=(A\otimes B, [-,-], \delta_{\widehat{r}})$ \\
{\tiny a completed Lie bialgebra}}}
$$

\subsection{Affinization of pre-Poisson bialgebras}\label{subsec:affPPbia}
In this subsection, we give the affinization of pre-Poisson bialgebras.
A Poisson algebra $(P, \cdot, [-,-])$ is called a {\bf $\bz$-graded Poisson algebra}
if there is a linear decomposition $P=\oplus_{i\in\bz}P_{i}$ such that $(P, \cdot)$ is a
$\bz$-graded commutative associative algebra and $(P, [-,-])$ is a $\bz$-graded Lie algebra.

\begin{pro}\label{pro:aff-PPalg}
Let $(A, \ast, \circ)$ be a finite-dimensional pre-Poisson algebra and
$(B=\oplus_{i\in\bz}B_{i}, \diamond)$ be a $\bz$-graded perm algebra.
If we define two binary operations $\cdot$ and $[-,-]$ on $A\otimes B$
by Eqs. \eqref{ind-ass} and \eqref{ind-lie} respectively, then $(A\otimes B, \cdot,
[-,-])$ is a $\bz$-graded Poisson algebra. Moreover, if $(B=\oplus_{i\in\bz}B_{i},
\diamond)$ is the $\bz$-graded perm algebra given in Example \ref{ex:grperm},
then $(A\otimes B, \cdot, [-,-])$ is a $\bz$-graded Poisson algebra if and only
if $(A, \ast, \circ)$ is a pre-Poisson algebra.
\end{pro}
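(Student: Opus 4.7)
The plan is to split the statement into its two assertions and reduce each to earlier results plus a single new computation. For the forward direction, Propositions \ref{pro:aff-Zalg} and \ref{pro:aff-preli} already supply that $(A\otimes B, \cdot)$ is a $\bz$-graded commutative associative algebra and that $(A\otimes B, [-,-])$ is a $\bz$-graded Lie algebra, so only the Leibniz rule between $[-,-]$ and $\cdot$ remains. This is exactly the content of Proposition \ref{pro:tensor-poi}, except that $(B=\oplus_{i\in\bz}B_{i}, \diamond)$ now carries a $\bz$-grading. Since both \eqref{ind-ass} and \eqref{ind-lie} send $(A\otimes B_{i})\otimes(A\otimes B_{j})$ into $A\otimes B_{i+j}$, the calculation in the proof of Proposition \ref{pro:tensor-poi} goes through verbatim on homogeneous elements and extends linearly, yielding the desired $\bz$-graded Poisson structure.

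For the converse, I assume $(B, \diamond)$ is the specific $\bz$-graded perm algebra of Example \ref{ex:grperm} and that $(A\otimes B, \cdot, [-,-])$ is a $\bz$-graded Poisson algebra. Associativity and commutativity of $\cdot$ force $(A, \ast)$ to be a Zinbiel algebra by Proposition \ref{pro:aff-Zalg}, and the Jacobi identity for $[-,-]$ forces $(A, \circ)$ to be a pre-Lie algebra by Proposition \ref{pro:aff-preli}. What remains is to deduce the compatibilities \eqref{PPalg1} and \eqref{PPalg2}, which must come from the Leibniz rule. The strategy is to test the Leibniz identity on triples of the form $a_{1}\otimes\partial_{s_{1}}$, $a_{2}\otimes\partial_{s_{2}}$, $a_{3}\otimes\partial_{s_{3}}$ with carefully chosen $s_{i}\in\{1,2\}$, expand all three bracket-product combinations using the explicit perm product of Example \ref{ex:grperm}, and read off the coefficients of suitable monomials $x_{1}^{\alpha}x_{2}^{\beta}\partial_{t}$. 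Concretely, a choice such as $(\partial_{1},\partial_{1},\partial_{2})$ applied to the Leibniz identity $[a_{1}\otimes\partial_{1},\;(a_{2}\otimes\partial_{1})(a_{3}\otimes\partial_{2})]=[a_{1}\otimes\partial_{1},\,a_{2}\otimes\partial_{1}](a_{3}\otimes\partial_{2})+(a_{2}\otimes\partial_{1})[a_{1}\otimes\partial_{1},\,a_{3}\otimes\partial_{2}]$ should isolate \eqref{PPalg2} by comparing coefficients of a monomial in which the symmetric combination $a_{2}\ast a_{1}+a_{1}\ast a_{2}$ appears, while a complementary choice such as $(\partial_{2},\partial_{1},\partial_{1})$ produces the commutator combination $a_{1}\circ a_{2}-a_{2}\circ a_{1}$ and yields \eqref{PPalg1}.

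The main obstacle will be bookkeeping: ensuring that one choice of $(s_{1},s_{2},s_{3})$ and one monomial cleanly extracts each compatibility rather than coupling them. Because the perm product of Example \ref{ex:grperm} acts asymmetrically on $\partial_{1}$ and $\partial_{2}$, the $B$-monomials arising from the six orderings of $b$-factors occupy different coordinate positions, so comparing coefficients of, for instance, $x_{1}^{2}\partial_{2}$ versus $x_{1}x_{2}\partial_{1}$ separates the two compatibilities cleanly, as already occurs in the converse halves of Propositions \ref{pro:aff-Zalg} and \ref{pro:aff-preli}. Once the right monomial is chosen in each case, the required identity drops out with no further structural input, and the proof is complete.
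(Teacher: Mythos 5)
Your proposal is correct and follows essentially the same route as the paper: the forward direction by combining Propositions \ref{pro:tensor-ass}, \ref{pro:aff-Zalg} and \ref{pro:aff-preli} (with the Leibniz rule from the computation of Proposition \ref{pro:tensor-poi}, which is grading-compatible), and the converse by specializing the Leibniz identity to elements $a_{i}\otimes\partial_{s_{i}}$ and comparing coefficients of monomials such as $x_{1}^{2}\partial_{2}$ and $x_{2}^{2}\partial_{1}$. The only discrepancy is cosmetic: your tentative assignment of which triple $(s_{1},s_{2},s_{3})$ yields which compatibility is swapped (e.g.\ $(\partial_{1},\partial_{1},\partial_{2})$ with the coefficient of $x_{1}^{2}\partial_{2}$ actually produces Eq.~\eqref{PPalg1}, not \eqref{PPalg2}), but since you flagged this as bookkeeping to be settled, the argument goes through.
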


\begin{proof}
If $(A, \ast, \circ)$ is a pre-Poisson algebra and $(B=\oplus_{i\in\bz}B_{i}, \diamond)$
is a $\bz$-graded perm algebra, by Propositions \ref{pro:tensor-ass}, \ref{pro:aff-Zalg}
and \ref{pro:aff-preli}, we get that $(A\otimes B, \cdot, [-,-])$ is a $\bz$-graded
Poisson algebra. Conversely, if $(A\otimes B, \cdot, [-,-])$ is a $\bz$-graded
Poisson algebra and $(B=\oplus_{i\in\bz}B_{i}, \diamond)$ is the $\bz$-graded perm
algebra given in Example \ref{ex:grperm}, we only need to show the Eqs. \eqref{PPalg1}
and \eqref{PPalg2} hold. In fact, by direct calculation, for any $a_{1}, a_{2}, a_{3}\in A$,
we have
\begin{align*}
[(a_{1}\otimes\partial_{1}),\ \ (a_{2}\otimes\partial_{2})(a_{3}\otimes\partial_{1})]
&=(a_{1}\circ(a_{2}\ast a_{3}))\otimes x_{1}x_{2}\partial_{1}
-((a_{2}\ast a_{3})\circ a_{1})\otimes x_{1}x_{2}\partial_{1}\\[-1mm]
&\quad+(a_{1}\circ(a_{3}\ast a_{2}))\otimes x_{1}^{2}\partial_{2}
-((a_{3}\ast a_{2})\circ a_{1})\otimes x_{1}x_{2}\partial_{1},\\[-10mm]
\end{align*}
\begin{align*}
[(a_{1}\otimes\partial_{1}),\ \ (a_{2}\otimes\partial_{2})](a_{3}\otimes\partial_{1})
&=((a_{1}\circ a_{2})\ast a_{3})\otimes x_{1}x_{2}\partial_{1}
+(a_{3}\ast(a_{1}\circ a_{2}))\otimes x_{1}^{2}\partial_{2}\\[-1mm]
&\quad-((a_{2}\circ a_{1})\ast a_{3})\otimes x_{1}x_{2}\partial_{1}
-(a_{3}\ast(a_{2}\circ a_{1}))\otimes x_{1}x_{2}\partial_{1},
\end{align*}
and
\begin{align*}
(a_{2}\otimes\partial_{2})[(a_{1}\otimes\partial_{1}),\ \ (a_{3}\otimes\partial_{1})]
&=(a_{2}\ast(a_{1}\circ a_{3}))\otimes x_{1}x_{2}\partial_{1}
+((a_{1}\circ a_{3})\ast a_{2})\otimes x_{1}^{2}\partial_{2}\\[-1mm]
&\quad-(a_{2}\ast(a_{3}\circ a_{1}))\otimes x_{1}x_{2}\partial_{1}
-((a_{3}\circ a_{1})\ast a_{2})\otimes x_{1}^{2}\partial_{2}.
\end{align*}
Comparing the coefficients of $x_{1}^{2}\partial_{2}$, we get $(a_{1}\ast a_{3}-
a_{3}\ast a_{1})\ast a_{2}=a_{1}\circ(a_{3}\ast a_{2})-a_{3}\ast(a_{1}\circ a_{2})$.
Similarly, comparing the coefficients of $x_{2}^{2}\partial_{1}$ in equation
$[(a_{1}\otimes\partial_{1}),\; (a_{2}\otimes\partial_{2})(a_{3}\otimes\partial_{2})]
=[(a_{1}\otimes\partial_{1}),\; (a_{2}\otimes\partial_{2})](a_{3}\otimes\partial_{2})
+(a_{2}\otimes\partial_{2})[(a_{1}\otimes\partial_{1}),\; (a_{3}\otimes\partial_{2})]$,
we get $(a_{2}\ast a_{3}+a_{3}\ast a_{2})\circ a_{1}=a_{3}\ast (a_{2}\circ a_{1})
+a_{2}\ast(a_{3}\circ a_{1})$. Thus, by Propositions \ref{pro:aff-Zalg} and
\ref{pro:aff-preli}, we obtain that $(A, \ast, \circ)$ is a Poisson algebra.
\end{proof}

A Poisson coalgebra $(P, \Delta, \delta)$ is called a {\bf completed Poisson
coalgebra} if there is a linear decomposition $P=\oplus_{i\in\bz}P_{i}$ such that
$(P, \Delta)$ is a completed cocommutative associative coalgebra and $(P, \theta)$
is a completed Lie coalgebra. We now give the dual version of Proposition \ref{pro:aff-PPalg}.

\begin{pro}\label{pro:PPco-coP}
Let $(A, \vartheta, \theta)$ be a finite-dimensional pre-Poisson coalgebra
and $(B=\oplus_{i\in\bz}B_{i}, \nu)$ be a completed perm coalgebra.
Define two linear maps $\Delta, \delta: A\otimes B\rightarrow(A\otimes B)
\,\hat{\otimes}\,(A\otimes B)$ by Eqs. \eqref{Zcoass} and \eqref{PLcoLie} respectively.
Then $(A\otimes B, \Delta, \delta)$ is a completed Poisson coalgebra.
Moreover, if $(B=\oplus_{i\in\bz}B_{i}, \nu)$ is the completed perm coalgebra given in
Example \ref{ex:grpermco}, then $(A\otimes B, \Delta, \delta)$ is a completed Poisson
coalgebra if and only if $(A, \vartheta, \theta)$ is a pre-Poisson coalgebra.
\end{pro}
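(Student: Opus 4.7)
The plan is to establish the three defining conditions of a completed Poisson coalgebra for $(A\otimes B, \Delta, \delta)$. Two of them are already available: Proposition \ref{pro:Zco-coass} applied to the Zinbiel coalgebra $(A,\vartheta)$ underlying the pre-Poisson coalgebra gives that $(A\otimes B,\Delta)$ is a completed cocommutative coassociative coalgebra, while the second half of Proposition \ref{pro:aff-preli} applied to the pre-Lie coalgebra $(A,\theta)$ gives that $(A\otimes B,\delta)$ is a completed Lie coalgebra. So the real work is to verify the Poisson compatibility
\[
(\id\,\hat{\otimes}\,\Delta)\delta=(\delta\,\hat{\otimes}\,\id)\Delta+(\hat{\tau}\,\hat{\otimes}\,\id)(\id\,\hat{\otimes}\,\delta)\Delta
\]
on $A\otimes B$.

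To do this I would reuse the bullet notation from the proof of Proposition \ref{pro:Zco-coass} and expand both sides on a pure tensor $a\otimes b$, using \eqref{Zcoass} and \eqref{PLcoLie}. Each of $(\id\,\hat{\otimes}\,\Delta)\delta(a\otimes b)$ and the right-hand side becomes a sum whose $A$-factor is a combination of $(\vartheta\otimes\id)\theta(a)$, $(\id\otimes\vartheta)\theta(a)$, $(\theta\otimes\id)\vartheta(a)$, $(\id\otimes\theta)\vartheta(a)$ (precomposed with various flips), and whose $B$-factor is one of $(\nu\,\hat{\otimes}\,\id)\nu(b)$, $(\id\,\hat{\otimes}\,\nu)\nu(b)$ or the twist $(\hat{\tau}\,\hat{\otimes}\,\id)(\nu\,\hat{\otimes}\,\id)\nu(b)$. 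Since $(B,\nu)$ is a completed perm coalgebra, all three of these $B$-factor expressions coincide, so after collapsing them and using $\hat{\tau}^{2}=\id$ together with the braid-type identity $(\id\,\hat{\otimes}\,\hat{\tau})(\hat{\tau}\,\hat{\otimes}\,\id)(\id\,\hat{\otimes}\,\hat{\tau})=(\hat{\tau}\,\hat{\otimes}\,\id)(\id\,\hat{\otimes}\,\hat{\tau})(\hat{\tau}\,\hat{\otimes}\,\id)$ already exploited in the proof of Proposition \ref{pro:Zco-coass}, the compatibility reduces to an identity purely on $A^{\otimes 3}$. Regrouping the $A$-side along the $S_{3}$-orbits on tensor positions, this identity splits into two pieces that are precisely the pre-Poisson coalgebra axioms \eqref{PPco1} and \eqref{PPco2} applied to $(A,\vartheta,\theta)$.

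For the converse, when $(B,\nu)$ is the specific completed perm coalgebra of Example \ref{ex:grpermco}, I would evaluate both sides of the compatibility on test elements such as $a\otimes\partial_{1}$ and $a\otimes\partial_{2}$ and compare coefficients of basis triples of the form $x_{1}^{i_{1}}x_{2}^{i_{2}}\partial_{s}\otimes x_{1}^{j_{1}}x_{2}^{j_{2}}\partial_{t}\otimes x_{1}^{k_{1}}x_{2}^{k_{2}}\partial_{u}$. Choosing the exponents and derivation indices appropriately, as in the converse halves of Propositions \ref{pro:Zco-coass} and \ref{pro:aff-preli}, isolates the relations \eqref{PPco1} and \eqref{PPco2} individually; combined with the Zinbiel and pre-Lie coalgebra axioms already furnished by those two propositions, this shows that $(A,\vartheta,\theta)$ is a pre-Poisson coalgebra.

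The hard part will be the bookkeeping in the forward direction, since $\Delta$ carries an $\id+\hat{\tau}$ and $\delta$ carries an $\id-\hat{\tau}$, so both sides of the compatibility initially expand into a large number of summands. The decisive simplification is to push all $B$-side computations through the perm-coalgebra identities first, reducing every $B$-factor to a single canonical form; once this is done, what remains is a manageable identity on $A^{\otimes 3}$ that matches \eqref{PPco1} and \eqref{PPco2} term-by-term, and no further non-trivial analysis is required.
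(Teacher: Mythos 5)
Your plan follows essentially the same route as the paper: both directions of the coalgebra structure are delegated to Propositions \ref{pro:Zco-coass} and \ref{pro:aff-preli}, the Poisson compatibility is checked by expanding in the bullet notation and collapsing all $B$-factors via the completed perm coalgebra identities, and the converse is obtained by comparing coefficients of basis triples $x_{1}^{i_{1}}x_{2}^{i_{2}}\partial_{s}\otimes\cdots$ on test elements $a\otimes\partial_{1}$. The only small correction is that the residual identity on $A^{\otimes3}$ uses the Zinbiel coalgebra axiom \eqref{zinbco} in addition to \eqref{PPco1} and \eqref{PPco2}, but this is part of the pre-Poisson coalgebra hypothesis, so nothing is missing.
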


\begin{proof}
If $(A, \vartheta, \theta)$ is a pre-Poisson coalgebra and $(B=\oplus_{i\in\bz}B_{i}, \nu)$
is a completed perm coalgebra, by Propositions \ref{pro:Zco-coass} and \ref{pro:aff-preli},
we get that $(A\otimes B, \Delta)$ is a completed cocommutative coassociative coalgebra
and $(A\otimes B, \delta)$ is a completed Lie coalgebra. Moreover, by Eqs. \eqref{zinbco}
\eqref{PPco1} and \eqref{PPco2}, for any $a\in A$ and $b\in B$, we have
\begin{align*}
&\;(\delta\,\hat{\otimes}\,\id)(\Delta(a\otimes b))
+(\hat{\tau}\,\hat{\otimes}\,\id)((\id\,\hat{\otimes}\,\delta)(\Delta(a\otimes b))\\
=&\;(\theta\otimes\id)(\vartheta(a))\bullet(\nu\,\hat{\otimes}\,\id)(\nu(b))
-(\tau\theta\otimes\id)(\vartheta(a))\bullet(\tau\nu\,\hat{\otimes}\,\id)(\nu(b))\\[-1mm]
&\ \ +(\id\otimes\tau)((\tau\otimes\id)((\id\otimes\theta)(\vartheta(a))))\bullet
(\id\,\hat{\otimes}\,\hat{\tau})((\hat{\tau}\,\hat{\otimes}\,\id)
((\id\,\hat{\otimes}\,\nu)(\nu(b))))\\[-1mm]
&\ \ -(\tau\otimes\id)((\id\otimes\tau)((\tau\otimes\id)((\id\otimes\theta)
(\vartheta(a)))))\bullet(\tau\,\hat{\otimes}\,\id)((\id\,\hat{\otimes}\,\hat{\tau})
((\hat{\tau}\,\hat{\otimes}\,\id)((\id\,\hat{\otimes}\,\nu)(\nu(b)))))\\[-1mm]
&\ \ +(\tau\otimes\id)((\id\otimes\theta)(\vartheta(a)))\bullet
(\hat{\tau}\,\hat{\otimes}\,\id)((\id\,\hat{\otimes}\,\nu)(\nu(b)))
+(\tau\theta\otimes\id)(\vartheta(a))\bullet(\hat{\tau}\nu\,\hat{\otimes}\,\id)(\nu(b))\\[-1mm]
&\ \ -(\tau\otimes\id)((\id\otimes\tau\theta)(\vartheta(a)))\bullet
(\hat{\tau}\,\hat{\otimes}\,\id)((\id\,\hat{\otimes}\,\hat{\tau}\nu)(\nu(b)))\\[-1mm]
&\ \ -(\id\otimes\tau)((\tau\theta\otimes\id)(\vartheta(a)))\bullet
(\id\,\hat{\otimes}\,\hat{\tau})((\hat{\tau}\nu\,\hat{\otimes}\,\id)(\nu(b)))\\
=&\; (\id\otimes\vartheta)(\theta(a))\bullet(\id\,\hat{\otimes}\,\nu)(\nu(a))
+(\id\otimes\tau\vartheta)(\theta(a))\bullet(\id\,\hat{\otimes}\,\hat{\tau}\nu)(\nu(a))\\[-1mm]
&\ \ -(\tau\otimes\id)((\id\otimes\tau)((\vartheta\otimes\id
+(\tau\otimes\id)(\vartheta\otimes\id))(\theta(a))))\bullet(\hat{\tau}\,\hat{\otimes}\,\id)
((\id\,\hat{\otimes}\,\hat{\tau})((\nu\,\hat{\otimes}\,\id)(\nu(b))))\\
=&\; (\id\,\hat{\otimes}\,\Delta)(\delta(a\otimes b)).
\end{align*}
Thus, $(A\otimes B, \Delta, \delta)$ is a completed Poisson coalgebra.

Conversely, if $(A\otimes B, \Delta, \delta)$ is a completed Poisson coalgebra and
$(B=\oplus_{i\in\bz}B_{i}, \nu)$ is the completed perm coalgebra given in Example
\ref{ex:grpermco}, by Propositions \ref{pro:Zco-coass} and \ref{pro:aff-preli},
we get that $(A, \Delta)$ is a Zinbiel coalgebra and $(A, \delta)$ is a pre-Lie coalgebra.
Thus, to prove that $(A, \vartheta, \theta)$ is a pre-Poisson coalgebra, we need to show
the Eqs. \eqref{PPco1} and \eqref{PPco2} hold. In fact, note that
\begin{align*}
&\;(\id\,\hat{\otimes}\,\hat{\tau})((\id\,\hat{\otimes}\,\nu)(\nu(\partial_{1})))\\
=&\;\sum_{i_{1}, i_{2}\in\bz}\sum_{j_{1}, j_{2}\in\bz}
\Big(x_{1}^{i_{1}}x_{2}^{i_{2}}\partial_{1}\otimes
x_{1}^{-i_{1}-j_{1}}x_{2}^{1-i_{2}-j_{2}+1}\partial_{1}\otimes
x_{1}^{j_{1}}x_{2}^{j_{2}}\partial_{1}
-x_{1}^{i_{1}}x_{2}^{i_{2}}\partial_{1}\otimes
x_{1}^{-i_{1}-j_{1}+1}x_{2}^{1-i_{2}-j_{2}}\partial_{1}\otimes
x_{1}^{j_{1}}x_{2}^{j_{2}}\partial_{2}\\[-5mm]
&\qquad\qquad\quad-x_{1}^{i_{1}}x_{2}^{i_{2}}\partial_{2}\otimes
x_{1}^{1-i_{1}-j_{1}}x_{2}^{-i_{2}-j_{2}+1}\partial_{1}\otimes
x_{1}^{j_{1}}x_{2}^{j_{2}}\partial_{1}
+x_{1}^{i_{1}}x_{2}^{i_{2}}\partial_{2}\otimes
x_{1}^{1-i_{1}-j_{1}+1}x_{2}^{-i_{2}-j_{2}}\partial_{1}\otimes
x_{1}^{j_{1}}x_{2}^{j_{2}}\partial_{2}\Big)
\end{align*}
and
\begin{align*}
&\;(\hat{\tau}\,\hat{\otimes}\,\id)((\id\,\hat{\otimes}\,\hat{\tau})
((\id\,\hat{\otimes}\,\nu)(\nu(\partial_{1}))))\\
=&\;\sum_{i_{1}, i_{2}\in\bz}\sum_{j_{1}, j_{2}\in\bz}
\Big(x_{1}^{-i_{1}-j_{1}}x_{2}^{1-i_{2}-j_{2}+1}\partial_{1}\otimes
x_{1}^{i_{1}}x_{2}^{i_{2}}\partial_{1}\otimes
x_{1}^{j_{1}}x_{2}^{j_{2}}\partial_{1}
-x_{1}^{-i_{1}-j_{1}+1}x_{2}^{1-i_{2}-j_{2}}\partial_{1}\otimes
x_{1}^{i_{1}}x_{2}^{i_{2}}\partial_{1}\otimes
x_{1}^{j_{1}}x_{2}^{j_{2}}\partial_{2}\\[-5mm]
&\qquad\qquad\quad-x_{1}^{1-i_{1}-j_{1}}x_{2}^{-i_{2}-j_{2}+1}\partial_{1}\otimes
x_{1}^{i_{1}}x_{2}^{i_{2}}\partial_{2}\otimes
x_{1}^{j_{1}}x_{2}^{j_{2}}\partial_{1}
+x_{1}^{1-i_{1}-j_{1}+1}x_{2}^{-i_{2}-j_{2}}\partial_{1}\otimes
x_{1}^{i_{1}}x_{2}^{i_{2}}\partial_{2}\otimes
x_{1}^{j_{1}}x_{2}^{j_{2}}\partial_{2}\Big).
\end{align*}
Comparing the coefficients of $x_{1}^{i_{1}}x_{2}^{i_{2}}\partial_{1}\otimes
x_{1}^{j_{1}}x_{2}^{j_{2}}\partial_{1}\otimes x_{1}^{-i_{1}-j_{1}}
x_{2}^{2-i_{2}-j_{2}}\partial_{1}$ and $x_{1}^{i_{1}}x_{2}^{i_{2}}\partial_{1}\otimes
x_{1}^{j_{1}}x_{2}^{j_{2}}\partial_{2}\otimes x_{1}^{2-i_{1}-j_{1}}
x_{2}^{-i_{2}-j_{2}}\partial_{2}$ in equation $(\id\,\hat{\otimes}\,\Delta)
(\delta(a\otimes\partial_{1}))=(\delta\,\hat{\otimes}\,\id)(\Delta(a\otimes\partial_{1}))
+(\hat{\tau}\,\hat{\otimes}\,\id)((\id\,\hat{\otimes}\,\delta)
(\Delta(a\otimes\partial_{1}))$, we get $((\theta-\tau\theta)\otimes\id)\vartheta
=(\id\otimes\vartheta)\theta-(\tau\otimes\id)(\id\otimes\theta)\vartheta$ and
$((\vartheta+\tau\vartheta)\otimes\id)\theta=(\id\otimes\theta)\vartheta
+(\tau\otimes\id)(\id\otimes\theta)\vartheta$ respectively.
Thus, $(A, \vartheta)$ is a pre-Poisson coalgebra.
\end{proof}

We now give the notion and results on completed Poisson bialgebras.

\begin{defi}\label{def:CPoisbia}
A {\bf completed Poisson bialgebra} is a quintuple $(P, \cdot, [-,-], \Delta, \delta)$
consisting of a vector space $P=\oplus_{i\in\bz}P_{i}$, two bilinear maps $\cdot, [-,-]:
P\,\hat{\otimes}\,P\rightarrow P$ and two linear maps $\Delta, \delta: P\rightarrow
P\,\hat{\otimes}\,P$ such that
\begin{enumerate}\itemsep=0pt
\item[$(i)$] $(P, \cdot, [-,-])$ is a $\bz$-graded Poisson algebra;
\item[$(ii)$] $(P, \Delta, \delta)$ is a completed Poisson coalgebra;
\item[$(iii)$] $(P, \cdot, \Delta)$ is a completed infinitesimal bialgebra;
\item[$(iv)$] $(P, [-,-], \delta)$ is a completed Lie bialgebra;
\item[$(v)$] for any $p_{1}, p_{2}\in P$,
\begin{align}
&\Delta([p_{1}, p_{2}])=\big(\ad_{P}(p_{1})\,\hat{\otimes}\,\id
+\id\,\hat{\otimes}\,\ad_{P}(p_{1})\big)\Delta(p_{2})+\big(\fu_{P}(p_{2})\,\hat{\otimes}\,\id
-\id\,\hat{\otimes}\,\fu_{P}(p_{2})\big)\delta(p_{1}),  \label{cPbialg1}\\
&\qquad\quad \delta(p_{1}p_{2})=(\fu_{P}(p_{1})\,\hat{\otimes}\,\id)\delta(p_{2})
+(\fu_{P}(p_{2})\,\hat{\otimes}\,\id)\delta(p_{1})      \label{cPbialg2}\\[-1mm]
&\qquad\qquad\qquad\qquad+(\id\,\hat{\otimes}\,\ad_{P}(p_{1}))\Delta(p_{2})
+(\id\,\hat{\otimes}\,\ad_{P}(p_{2}))\Delta(p_{1}).    \nonumber
\end{align}
\end{enumerate}
\end{defi}

\begin{thm}\label{thm:PP-Poisbia}
Let $(A, \ast, \circ, \vartheta, \theta)$ be a finite-dimensional pre-Poisson bialgebra,
$(B=\oplus_{i\in\bz}B_{i}, \diamond, \omega)$ be a quadratic $\bz$-graded perm
algebra and $(A\otimes B, \cdot, [-,-])$ be the induced $\bz$-graded Poisson
algebra from $(A, \ast, \circ)$ by $(B=\oplus_{i\in\bz}B_{i}, \diamond)$.
Define two linear maps $\Delta, \delta: A\otimes B\rightarrow(A\otimes B)\otimes
(A\otimes B)$ by Eqs. \eqref{Zcoass} and \eqref{PLcoLie} for $\nu=\nu_{\omega}$.
Then $(A\otimes B, \cdot, [-,-], \Delta, \delta)$ is a completed Poisson bialgebra,
which is called the {\bf completed Poisson bialgebra induced from $(A, \ast, \circ,
\vartheta, \theta)$ by $(B, \diamond, \omega)$}.

Moreover, if $(B=\oplus_{i\in\bz}B_{i}, \diamond, \omega)$ is the quadratic $\bz$-graded
perm algebra given in Example \ref{ex:qu-perm}, then $(A\otimes B, \cdot, [-,-],
\Delta, \delta)$ is a completed Poisson bialgebra if and only if
$(A, \ast, \circ, \vartheta, \theta)$ is a pre-Poisson bialgebra.
\end{thm}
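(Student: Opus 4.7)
The plan is to assemble the completed Poisson bialgebra axioms from the building blocks already proved and then verify only the two cross-compatibility conditions \eqref{cPbialg1} and \eqref{cPbialg2} by hand. First I would note that Proposition \ref{pro:aff-PPalg} gives condition $(i)$ of Definition \ref{def:CPoisbia}, namely that $(A\otimes B,\cdot,[-,-])$ is a $\bz$-graded Poisson algebra; Proposition \ref{pro:PPco-coP} (applied to the completed perm coalgebra $(B,\nu_{\omega})$ coming from Lemma \ref{lem:comp-dual}) gives condition $(ii)$, that $(A\otimes B,\Delta,\delta)$ is a completed Poisson coalgebra; Theorem \ref{thm:Z-perm-ass} gives $(iii)$, the completed infinitesimal bialgebra axiom; and Theorem \ref{thm:pre-lie} gives $(iv)$, the completed Lie bialgebra axiom. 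So only the two cross-compatibilities in $(v)$ remain.

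To check \eqref{cPbialg1} on elements $a\otimes b, a'\otimes b'\in A\otimes B$, I would expand $\Delta([a\otimes b,a'\otimes b'])$ by unfolding the definitions of $[-,-]$ (Eq. \eqref{ind-lie}), $\Delta$ (Eq. \eqref{Zcoass}), and the induced Zinbiel coproduct. On the right-hand side, I expand the terms $(\ad_{A\otimes B}(a\otimes b)\,\hat{\otimes}\,\id+\id\,\hat{\otimes}\,\ad_{A\otimes B}(a\otimes b))\Delta(a'\otimes b')$ and $(\fu_{A\otimes B}(a'\otimes b')\,\hat{\otimes}\,\id-\id\,\hat{\otimes}\,\fu_{A\otimes B}(a'\otimes b'))\delta(a\otimes b)$ using the same definitions. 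The resulting difference can be organized as a sum of terms of the form $X\bullet Y$, where $X$ is a combination of iterated applications of $\vartheta,\theta,\fl_{A},\fr_{A},\hat{\fl}_{A},\hat{\fr}_{A}$ in $A^{\otimes 3}$ and $Y$ is a combination of iterated applications of $\nu_{\omega}$ and $\diamond$ in $B^{\,\hat{\otimes}\,3}$. The invariance of $\omega$, the relation $\hat{\omega}(\nu_{\omega}(b_{1}), b_{2}\otimes b_{3})=-\omega(b_{1},b_{2}\diamond b_{3})$, and the left nondegeneracy of $\hat{\omega}$, together with the perm-algebra identities for $\diamond$, let me rewrite each $Y$ as a fixed list of $B$-valued tensors; the task then reduces to verifying that every coefficient $X$ vanishes when the pre-Poisson bialgebra compatibility conditions \eqref{PPbialg2} and \eqref{PPbialg3} are imposed. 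Equation \eqref{cPbialg2} is handled identically, using \eqref{PPbialg1} and \eqref{PPbialg4} instead; here the symmetric piece of \eqref{Zcoass} and the antisymmetric piece of \eqref{PLcoLie} match up exactly with the $(\fl_{A}+\fr_{A})$ and $(\hat{\fl}_{A}-\hat{\fr}_{A})$ combinations appearing in the pre-Poisson bialgebra axioms.

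For the converse, assume $(B,\diamond,\omega)$ is the specific quadratic $\bz$-graded perm algebra of Example \ref{ex:qu-perm}. Propositions \ref{pro:aff-PPalg} and \ref{pro:PPco-coP} already recover that $(A,\ast,\circ)$ is a pre-Poisson algebra and $(A,\vartheta,\theta)$ is a pre-Poisson coalgebra, while the converses in Theorem \ref{thm:Z-perm-ass} and Theorem \ref{thm:pre-lie} recover that $(A,\ast,\vartheta)$ is a Zinbiel bialgebra and $(A,\circ,\theta)$ is a pre-Lie bialgebra. The only remaining task is to extract the four cross-compatibility equations \eqref{PPbialg1}--\eqref{PPbialg4}. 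I would do this by evaluating \eqref{cPbialg1} and \eqref{cPbialg2} on elements of the form $a\otimes\partial_{s}$ and $a'\otimes x_{1}^{i}x_{2}^{j}\partial_{t}$ for $s,t\in\{1,2\}$ and well-chosen $i,j$, and reading off coefficients of monomials such as $x_{1}^{2}\partial_{2}$, $x_{2}^{2}\partial_{1}$, and $\partial_{1}\otimes\partial_{1}$ in the appropriate tensor slot; each such choice isolates exactly one of \eqref{PPbialg1}--\eqref{PPbialg4}, mirroring the coefficient-extraction arguments already used in the proofs of Proposition \ref{pro:aff-PPalg} and Theorem \ref{thm:Z-perm-ass}.

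The principal obstacle is purely combinatorial: the direct verification of \eqref{cPbialg1} and \eqref{cPbialg2} involves on the order of a dozen terms on each side, and matching them requires applying the perm coalgebra relations for $\nu_{\omega}$, the invariance of $\omega$, and the identities $\hat{\tau}(\nu_{\omega}(b\diamond b'))=\sum_{i,j,\alpha}b'_{2,j,\alpha}\otimes(b\diamond b'_{1,i,\alpha})$ derived in the proof of Theorem \ref{thm:Z-perm-ass} in a coordinated way, so that all four pre-Poisson bialgebra compatibilities \eqref{PPbialg1}--\eqref{PPbialg4} can be invoked simultaneously. Once this bookkeeping is carried out, both the forward and converse directions follow mechanically from the cited building-block results.
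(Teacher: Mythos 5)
Your proposal follows essentially the same route as the paper's proof: conditions $(i)$--$(iv)$ of Definition \ref{def:CPoisbia} are assembled from Propositions \ref{pro:aff-PPalg} and \ref{pro:PPco-coP} and Theorems \ref{thm:Z-perm-ass} and \ref{thm:pre-lie}, the cross-compatibilities \eqref{cPbialg1}--\eqref{cPbialg2} are checked by reducing the $B$-side tensors to $\nu_{\omega}(b\diamond b')$, $\hat{\tau}(\nu_{\omega}(b\diamond b'))$ and the auxiliary term $\Phi$ via left nondegeneracy of $\hat{\omega}$, and the converse is obtained by coefficient extraction in the special perm algebra of Example \ref{ex:qu-perm}. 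The only small discrepancy is your pairing of the compatibility axioms: in the paper's computation the verification of \eqref{cPbialg1} uses \eqref{PPbialg2} together with a combination of \eqref{PPbialg3} \emph{and} \eqref{PPbialg4} (the latter two jointly kill the $\Phi$-coefficient), rather than splitting cleanly as \eqref{PPbialg2}--\eqref{PPbialg3} versus \eqref{PPbialg1}--\eqref{PPbialg4}; this is a bookkeeping detail, not a gap in the method.
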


\begin{proof}
If $(A, \ast, \circ, \vartheta, \theta)$ is a pre-Poisson bialgebra and
$(B=\oplus_{i\in\bz}B_{i}, \diamond, \omega)$ be a quadratic $\bz$-graded perm
algebra, by Propositions \ref{pro:aff-PPalg} and \ref{pro:PPco-coP}, Theorems
\ref{thm:Z-perm-ass} and \ref{thm:pre-lie}, we get that $(A\otimes B, \cdot, [-,-])$
is a $\bz$-graded Poisson algebra, $(A\otimes B, \Delta, \delta)$ is a completed Poisson
coalgebra, $(A\otimes B, \cdot, \Delta)$ is a completed infinitesimal bialgebra and
$(A\otimes B, [-,-], \delta)$ is a completed Lie bialgebra.
Moreover, for any $a, a'\in A$ and $b, b'\in B$, we have
\begin{align*}
&\;\Delta\big([a\otimes b,\; a'\otimes b']\big)
-\big(\ad_{A\otimes B}(a\otimes b)\,\hat{\otimes}\,\id
+\id\,\hat{\otimes}\,\ad_{A\otimes B}(a\otimes b)\big)
\Delta(a'\otimes b')\\[-1mm]
&\qquad-\big(\fu_{A\otimes B}(a'\otimes b')\,\hat{\otimes}\,\id
-\id\,\hat{\otimes}\,\fu_{A\otimes B}(a'\otimes b')\big)\delta(a\otimes b)\\
=&\;\vartheta(a\circ a')\bullet\nu_{\omega}(b\diamond b')
+\tau(\vartheta(a\circ a'))\bullet\hat{\tau}(\nu_{\omega}(b\diamond b'))\\[-1mm]
&\; -\vartheta(a'\circ a)\bullet\nu_{\omega}(b'\diamond b)
-\tau(\vartheta(a'\circ a))\bullet\hat{\tau}(\nu_{\omega}(b'\diamond b))\\[-1mm]
&\; +\sum_{i,j,\alpha}\Big(-(\hat{\fl}_{A}(a)\otimes\id)(\vartheta(a'))
\bullet((b\diamond b'_{1,i,\alpha})\otimes b'_{2,j,\alpha})+(\hat{\fr}_{A}(a)\otimes\id)
(\vartheta(a'))\bullet((b'_{1,i,\alpha}\diamond b)\otimes b'_{2,j,\alpha})\\[-4mm]
&\qquad\ \ -(\hat{\fl}_{A}(a)\otimes\id)(\tau(\vartheta(a')))\bullet
((b\diamond b'_{2,j,\alpha})\otimes b'_{1,i,\alpha})+(\hat{\fr}_{A}(a)\otimes\id)
(\tau(\vartheta(a')))\bullet((b'_{2,j,\alpha}\diamond b)\otimes b'_{1,i,\alpha})\\
&\qquad\ \ -(\id\otimes\hat{\fl}_{A}(a))(\vartheta(a'))\bullet(b'_{1,i,\alpha}\otimes
(b\diamond b'_{2,j,\alpha}))+(\id\otimes\hat{\fr}_{A}(a))(\vartheta(a'))\bullet
(b'_{1,i,\alpha}\otimes(b'_{2,j,\alpha}\diamond b))\\[-1mm]
&\qquad\ \ -(\id\otimes\hat{\fl}_{A}(a))(\tau(\vartheta(a')))\bullet(b'_{2,j,\alpha}
\otimes(b\diamond b'_{1,i,\alpha}))+(\id\otimes\hat{\fr}_{A}(a))(\tau(\vartheta(a')))
\bullet(b'_{2,j,\alpha}\otimes(b'_{1,i,\alpha}\diamond b))\Big)\\[-1mm]
\end{align*}
\begin{align*}
&\; +\sum_{i,j,\alpha}\Big(-(\fl_{A}(a')\otimes\id)(\theta(a))\bullet
((b'\diamond b_{1,i,\alpha})\otimes b_{2,j,\alpha})-(\fr_{A}(a')\otimes\id)(\theta(a))
\bullet((b_{1,i,\alpha}\diamond b')\otimes b_{2,j,\alpha})\\[-4mm]
&\qquad\ \ +(\fl_{A}(a')\otimes\id)(\tau(\theta(a)))\bullet((b'\diamond b_{2,j,\alpha})
\otimes b_{1,i,\alpha})+(\fr_{A}(a')\otimes\id)(\tau(\theta(a)))\bullet((b_{2,j,\alpha}
\diamond b')\otimes b_{1,i,\alpha})\\
&\qquad\ \ +(\id\otimes\fl_{A}(a'))(\theta(a))\bullet(b_{1,i,\alpha}\otimes
(b'\diamond b_{2,j,\alpha}))+(\id\otimes\fr_{A}(a'))(\theta(a))\bullet(b_{1,i,\alpha}
\otimes(b_{2,j,\alpha}\diamond b'))\\[-1mm]
&\qquad\ \ -(\id\otimes\fl_{A}(a'))(\tau(\theta(a)))\bullet(b_{2,j,\alpha}\otimes
(b'\diamond b_{1,i,\alpha}))-(\id\otimes\fr_{A}(a'))(\tau(\theta(a)))\bullet
(b_{2,j,\alpha}\otimes(b_{1,i,\alpha}\diamond b'))\Big).
\end{align*}
Similar to the proof of Theorem \ref{thm:Z-perm-ass}, by the left nondegeneracy of
$\hat{\omega}(-,-)$, every formula of the form $\sum_{i,j,\alpha}(b'_{1,i,\alpha}
\otimes(b\diamond b'_{2,j,\alpha}))$ can be transformed into a combination of
$\nu_{\omega}(b\diamond b')$, $\hat{\tau}(\nu_{\omega}(b\diamond b'))$ and $\Phi$,
where $\Phi\in B\otimes B$ such that $\hat{\omega}(\Phi,\; e\otimes f)
=\omega(b',\; e\diamond(f\diamond b))$. Thus, we get
\begin{align*}
&\;\Delta\big([a\otimes b,\; a'\otimes b']\big)
-\big(\ad_{A\otimes B}(a\otimes b)\,\hat{\otimes}\,\id
+\id\,\hat{\otimes}\,\ad_{A\otimes B}(a\otimes b)\big)
\Delta(a'\otimes b')\\[-1mm]
&\qquad-\big(\fu_{A\otimes B}(a'\otimes b')\,\hat{\otimes}\,\id
-\id\,\hat{\otimes}\,\fu_{A\otimes B}(a'\otimes b')\big)\delta(a\otimes b)\\[-1mm]
=&\; \Big(\vartheta(a\circ a')-\vartheta(a'\circ a)-(\hat{\fl}_{A}(a)\otimes\id)(\vartheta(a'))
-(\id\otimes\hat{\fl}_{A}(a))(\vartheta(a'))+(\id\otimes\hat{\fr}_{A}(a))
(\vartheta(a'))\\[-2mm]
&\qquad-(\fl_{A}(a')\otimes\id)(\theta(a))+(\id\otimes\fl_{A}(a'))(\theta(a))
+(\id\otimes\fr_{A}(a'))(\theta(a))\Big)\bullet\nu_{\omega}(b\diamond b')\\[-1mm]
&\; +\tau\Big(\vartheta(a\circ a')-\vartheta(a'\circ a)-(\hat{\fl}_{A}(a)\otimes\id)
(\vartheta(a'))-(\id\otimes\hat{\fl}_{A}(a))(\vartheta(a'))+(\id\otimes\hat{\fr}_{A}(a))
(\vartheta(a'))\\[-2mm]
&\qquad-(\fl_{A}(a')\otimes\id)(\theta(a))+(\id\otimes\fl_{A}(a'))(\theta(a))
+(\id\otimes\fr_{A}(a'))(\theta(a))\Big)\bullet\tau(\nu_{\omega}(b\diamond b'))\\[-1mm]
&\; -\Big(\vartheta(a'\circ a)+\tau(\vartheta(a'\circ a))-(\hat{\fr}_{A}(a)\otimes\id)
(\tau(\vartheta(a')))-(\id\otimes\hat{\fr}_{A}(a))(\vartheta(a'))
+(\fl_{A}(a')\otimes\id)(\theta(a))\\[-2mm]
&\qquad+(\fl_{A}(a')\otimes\id)(\tau(\theta(a)))-(\id\otimes\fl_{A}(a'))(\theta(a))
+(\id\otimes\fl_{A}(a'))(\tau(\theta(a)))\Big)\bullet\Phi.
\end{align*}
By Eqs. \eqref{PPbialg3} and \eqref{PPbialg4}, we get $\vartheta(a'\circ a)
+\tau(\vartheta(a'\circ a))-(\hat{\fr}_{A}(a)\otimes\id)(\tau(\vartheta(a')))
-(\id\otimes\hat{\fr}_{A}(a))(\vartheta(a'))+(\fl_{A}(a')\otimes\id)(\theta(a))
-(\fl_{A}(a')\otimes\id)(\tau(\theta(a)))-(\id\otimes\fl_{A}(a'))(\theta(a))
+(\id\otimes\fl_{A}(a'))(\tau(\theta(a)))=0$. Thus, from this equation and
Eq. \eqref{PPbialg2}, we can obtain $\Delta\big([a\otimes b,\; a'\otimes b']\big)
-\big(\ad_{A\otimes B}(a\otimes b)\,\hat{\otimes}\,\id+\id\,\hat{\otimes}\,\ad_{A\otimes B}
(a\otimes b)\big)\Delta(a'\otimes b')-\big(\fu_{A\otimes B}(a'\otimes b')\,\hat{\otimes}\,\id
-\id\,\hat{\otimes}\,\fu_{A\otimes B}(a'\otimes b')\big)\delta(a\otimes b)=0$.
Similarly, we also have $\delta\big((a\otimes b)(a'\otimes b')\big)
-\big(\fu_{A\otimes B}(a\otimes b)\,\hat{\otimes}\,\id\big)(\delta(a'\otimes b'))
-\big(\fu_{A\otimes B}(a'\otimes b')\,\hat{\otimes}\,\id\big)(\delta(a\otimes b))
-\big(\id\,\hat{\otimes}\,\ad_{A\otimes B}(a\otimes b)\big)(\Delta(a'\otimes b'))
-\big(\id\,\hat{\otimes}\,\ad_{A\otimes B}(a'\otimes b')\big)(\Delta(a\otimes b))=0$
for any $a, a'\in A$ and $b, b'\in B$. Thus, $(A\otimes B, \cdot, [-,-], \Delta,
\delta)$ is a completed Poisson bialgebra.

Conversely, if $(B=\oplus_{i\in\bz}B_{i}, \diamond, \omega)$ is the quadratic $\bz$-graded
perm algebra given in Example \ref{ex:qu-perm} and $(A\otimes B, \cdot, [-,-], \Delta,
\delta)$ is a completed Poisson bialgebra, by comparing the coefficients of $x_{1}^{i_{1}}
x_{2}^{i_{2}}\partial_{2}\otimes x_{1}^{2-i_{1}}x_{2}^{-i_{2}}\partial_{1}$ in equation
\begin{align*}
&\;\Delta\big([a\otimes\partial_{1},\; a'\otimes\partial_{1}]\big)
-\big(\ad_{A\otimes B}(a\otimes\partial_{1})\,\hat{\otimes}\,\id
+\id\,\hat{\otimes}\,\ad_{A\otimes B}(a\otimes\partial_{1})\big)
\Delta(a'\otimes\partial_{1})\\[-1mm]
&\qquad-\big(\fu_{A\otimes B}(a'\otimes\partial_{1})\,\hat{\otimes}\,\id
-\id\,\hat{\otimes}\,\fu_{A\otimes B}(a'\otimes\partial_{1})\big)
\delta(a\otimes\partial_{1})=0,
\end{align*}
we get $\vartheta(a\circ a'-a'\circ a)-(\fl_{A}(a')\otimes\id)\theta(a)
+(\id\otimes\fl_{A}(a'))\theta(a)+(\id\otimes\fr_{A}(a'))\theta(a)
-(\hat{\fl}_{A}(a)\otimes\id)\vartheta(a')-(\id\otimes\hat{\fl}_{A}(a))\vartheta(a')
+(\id\otimes\hat{\fr}_{A}(a))\vartheta(a')=0$. That is, Eq. \eqref{PPbialg2} holds.
Similarly, Eqs. \eqref{PPbialg1}, \eqref{PPbialg3} and \eqref{PPbialg4} hold.
Thus, by Propositions \ref{pro:aff-PPalg} and \ref{pro:PPco-coP}, Theorems
\ref{thm:Z-perm-ass} and \ref{thm:pre-lie}, we get that $(A, \ast, \circ, \vartheta,
\theta)$ is a pre-Poisson bialgebra.
\end{proof}

For the construction of finite-dimensional Poisson bialgebras, we have the
following corollary.

\begin{cor}\label{cor:ind-Pois}
Let $(A, \ast, \circ, \vartheta, \theta)$ be a finite-dimensional pre-Poisson bialgebra,
$(B, \diamond, \omega)$ be a quadratic perm algebra and $(A\otimes B, \cdot, [-,-])$
be the induced $\bz$-graded Poisson algebra from $(A, \ast, \circ)$ by
$(B=\oplus_{i\in\bz}B_{i}, \diamond)$. Define two linear maps $\Delta, \delta:
A\otimes B\rightarrow(A\otimes B)\otimes(A\otimes B)$ by Eq. \eqref{fZcoass} and
\begin{align}
\delta(a\otimes b)
&=(\id\otimes\id-\tau)\big(\theta(a)\bullet\nu_{\omega}(b)\big)     \label{fplcoli} \\
&:=\sum_{(a)}\sum_{(b)}\Big((a_{(1)}\otimes b_{(1)})\otimes(a_{(2)}\otimes b_{(2)})
+(a_{(2)}\otimes b_{(2)})\otimes(a_{(1)}\otimes b_{(1)})\Big),   \nonumber
\end{align}
for any $a\in A$ and $b\in B$, where $\theta(a)=\sum_{(a)}a_{(1)}\otimes a_{(2)}$ and
$\nu_{\omega}(b)=\sum_{(b)}b_{(1)}\otimes b_{(2)}$ in the Sweedler notation.
Then $(A\otimes B, \cdot, [-,-], \Delta, \delta)$ is a Poisson bialgebra, which is
called the {\bf Poisson bialgebra induced from $(A, \ast, \circ, \vartheta, \theta)$
by $(B, \diamond, \omega)$}.
\end{cor}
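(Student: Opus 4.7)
The plan is to prove the forward direction by verifying each of the five conditions of Definition \ref{def:CPoisbia} in turn, and then to prove the converse by specializing to the particular perm algebra of Example \ref{ex:qu-perm} and extracting the four pre-Poisson bialgebra compatibility conditions \eqref{PPbialg1}--\eqref{PPbialg4} by comparing coefficients. First, conditions $(i)$--$(iv)$ of Definition \ref{def:CPoisbia} are essentially free: Proposition \ref{pro:aff-PPalg} gives the $\bz$-graded Poisson algebra $(A\otimes B,\cdot,[-,-])$; Proposition \ref{pro:PPco-coP} gives the completed Poisson coalgebra $(A\otimes B,\Delta,\delta)$; Theorem \ref{thm:Z-perm-ass} gives the completed infinitesimal bialgebra $(A\otimes B,\cdot,\Delta)$; and Theorem \ref{thm:pre-lie} gives the completed Lie bialgebra $(A\otimes B,[-,-],\delta)$. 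So the entire content of the forward direction reduces to verifying the two mixed compatibility conditions \eqref{cPbialg1} and \eqref{cPbialg2}.

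For condition \eqref{cPbialg1}, the strategy is to expand $\Delta([a\otimes b,a'\otimes b'])$ using the defining formulas \eqref{Zcoass} and \eqref{ind-lie}, and to expand $(\ad_{A\otimes B}(a\otimes b)\,\hat{\otimes}\,\id+\id\,\hat{\otimes}\,\ad_{A\otimes B}(a\otimes b))\Delta(a'\otimes b')$ and $(\fu_{A\otimes B}(a'\otimes b')\,\hat{\otimes}\,\id-\id\,\hat{\otimes}\,\fu_{A\otimes B}(a'\otimes b'))\delta(a\otimes b)$ similarly. Then every $B$-term that appears in the combination should be transformed via the key nondegeneracy trick used inside the proof of Theorem \ref{thm:Z-perm-ass}: using the left nondegeneracy of $\hat\omega(-,-)$ together with the invariance of $\omega$, every expression of the form $\sum_{i,j,\alpha}(b'_{1,i,\alpha}\otimes(b\diamond b'_{2,j,\alpha}))$ can be rewritten as a combination of $\nu_\omega(b\diamond b')$, $\hat\tau(\nu_\omega(b\diamond b'))$, and the auxiliary tensor $\Phi\in B\otimes B$ characterized by $\hat\omega(\Phi,e\otimes f)=\omega(b',\,e\diamond(f\diamond b))$. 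After this reduction, the difference of the three sides of \eqref{cPbialg1} organizes itself into exactly two $A$-theoretic expressions: one multiplied by $\nu_\omega(b\diamond b')$ (together with its $\hat\tau$-image) and one multiplied by $\Phi$. The first expression vanishes by \eqref{PPbialg2} and the second by the combination of \eqref{PPbialg3} and \eqref{PPbialg4}. Condition \eqref{cPbialg2} is handled by the symmetric/dual argument, using \eqref{PPbialg1} and again \eqref{PPbialg3}--\eqref{PPbialg4}.

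For the converse, assume $(B,\diamond,\omega)$ is the specific quadratic $\bz$-graded perm algebra of Example \ref{ex:qu-perm}. From Propositions \ref{pro:aff-PPalg} and \ref{pro:PPco-coP} and Theorems \ref{thm:Z-perm-ass} and \ref{thm:pre-lie}, the completed Poisson bialgebra assumption already forces $(A,\ast,\circ)$ to be a pre-Poisson algebra, $(A,\vartheta,\theta)$ to be a pre-Poisson coalgebra, $(A,\ast,\vartheta)$ to be a Zinbiel bialgebra, and $(A,\circ,\theta)$ to be a pre-Lie bialgebra. So only \eqref{PPbialg1}--\eqref{PPbialg4} remain to be checked. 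To extract them, I would substitute carefully chosen pairs of basis vectors of $B$ (for instance $\partial_1,\partial_1$; $\partial_1,\partial_2$; $\partial_1,x_1^{-1}\partial_2$, etc.) into \eqref{cPbialg1} and \eqref{cPbialg2} and compare coefficients of specific monomials in $x_1,x_2$ and the chosen $\partial_k$-slots. For example, as the proof of the converse in Theorem \ref{thm:PP-Poisbia} already sketches for $a\otimes\partial_1$ and $a'\otimes\partial_1$, comparing the coefficient of $x_1^{i_1}x_2^{i_2}\partial_2\otimes x_1^{2-i_1}x_2^{-i_2}\partial_1$ in \eqref{cPbialg1} yields \eqref{PPbialg2}; analogous choices recover \eqref{PPbialg1}, \eqref{PPbialg3}, \eqref{PPbialg4}.

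The main obstacle is the sheer combinatorial bulk of the forward direction: the expansion of \eqref{cPbialg1} produces on the order of twenty tensor-product summands, each indexed by Sweedler notation in $A$ and an infinite sum over $B$-components of $\nu_\omega$. The critical clean-up step is the left-nondegeneracy rewriting, which collapses the $B$-components into at most three canonical shapes ($\nu_\omega(b\diamond b')$, $\hat\tau(\nu_\omega(b\diamond b'))$, $\Phi$) so that the pre-Poisson bialgebra axioms become directly applicable as $A$-identities. Once this rewriting is carried out systematically, the remaining verification is essentially bookkeeping, and the converse becomes a matter of choosing enough independent $B$-test vectors to isolate each of \eqref{PPbialg1}--\eqref{PPbialg4} one at a time.
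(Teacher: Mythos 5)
Your proposal is a correct outline, but it is aimed at the wrong target: what you have written is essentially the proof of Theorem \ref{thm:PP-Poisbia} (the full affinization theorem, including its converse direction), whereas the statement at hand is Corollary \ref{cor:ind-Pois}, which the paper obtains with no separate argument at all --- it is the specialization of Theorem \ref{thm:PP-Poisbia} to a finite-dimensional quadratic perm algebra $B=B_{0}$, where the completed tensor product $\hat{\otimes}$ reduces to the ordinary $\otimes$ and a completed Poisson bialgebra is just a Poisson bialgebra. The forward direction of your proposal does match the paper's proof of the theorem step for step (conditions $(i)$--$(iv)$ of Definition \ref{def:CPoisbia} from Propositions \ref{pro:aff-PPalg} and \ref{pro:PPco-coP} and Theorems \ref{thm:Z-perm-ass} and \ref{thm:pre-lie}; condition $(v)$ via the left-nondegeneracy rewriting into $\nu_{\omega}(b\diamond b')$, $\hat{\tau}(\nu_{\omega}(b\diamond b'))$ and $\Phi$, then Eqs. \eqref{PPbialg2}--\eqref{PPbialg4} for \eqref{cPbialg1} and the dual argument for \eqref{cPbialg2}), so your argument does establish the corollary once you add the one missing remark that in the finite-dimensional case the completed notions coincide with the ordinary ones. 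But the entire converse half of your proposal (extracting \eqref{PPbialg1}--\eqref{PPbialg4} by comparing coefficients over the perm algebra of Example \ref{ex:qu-perm}) is irrelevant here: the corollary makes no ``if and only if'' claim, and indeed cannot, since a finite-dimensional quadratic perm algebra is not the graded one used in the converse of the theorem. The efficient proof is two lines: cite Theorem \ref{thm:PP-Poisbia} and observe that $B=B_{0}$ finite-dimensional collapses all completed structures to ordinary ones.
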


\begin{ex}\label{ex:prePbia-Pbia}
Let $(A=\bk\{e_{1}, e_{2}\}, \ast, \circ)$ be the two dimensional pre-Poisson algebra
given in Example \ref{ex:preP-P}. Then $r=e_{1}\otimes e_{2}+e_{2}\otimes e_{1}$ is a
symmetric solution of the $\PPYBE$ in $(A, \ast, \circ)$. Thus, $r$ induces a triangular
pre-Poisson bialgebra structure on $A=\bk\{e_{1}, e_{2}\}$, where the coproducts
$\vartheta$ and $\theta$ on $A$ are given by $\vartheta(e_{1})=e_{2}\otimes e_{2}$,
$\vartheta(e_{2})=0$, $\theta(e_{1})=e_{1}\otimes e_{2}+e_{2}\otimes e_{1}$ and
$\theta(e_{2})=e_{2}\otimes e_{2}$.
Consider the two dimensional quadratic perm algebra $(B=\bk\{x_{1}, x_{2}\}, \diamond,
\omega)$ given in Example \ref{ex:Zbia-Assbia}. By Corollary \ref{cor:ind-Pois},
we have a Poisson bialgebra $(A\otimes B, \cdot, [-,-], \Delta, \delta)$, where the
Poisson algebra $(A\otimes B, \cdot, [-,-])$ is given in Example \ref{ex:preP-P},
the coproduct $\Delta$ is given in Example \ref{ex:Zbia-Assbia}, and the coproduct
$\delta$ is given by
$$
\delta(y_{1})=y_{1}\otimes y_{4}+y_{3}\otimes y_{2}-y_{4}\otimes y_{1}-y_{2}\otimes y_{3},
\quad \delta(y_{4})=y_{3}\otimes y_{4}-y_{4}\otimes y_{3}, \quad \Delta(y_{1})=\Delta(y_{3})=0,
$$
$y_{1}:=e_{1}\otimes x_{1}$, $y_{2}:=e_{1}\otimes x_{2}$, $y_{3}:=e_{2}\otimes x_{1}$,
$y_{4}:=e_{2}\otimes x_{2}$. Moreover, the skew-symmetric solution $\widehat{r}$ of the
$\AYBE$ in $(A\otimes B, \cdot)$ is also a skew-symmetric solution of the $\PYBE$ in
$(A\otimes B, \cdot, [-,-])$, and the Poisson bialgebra structure induced by
$\widehat{r}$ on $A\otimes B$ is just the Poisson bialgebra structure given as above.
\end{ex}

Recall that a pre-Poisson bialgebra $(A, \ast, \circ, \vartheta, \theta)$ is called
{\bf coboundary} if there exists $r\in A\otimes A$ such that $\vartheta=\vartheta_{r}$
and $\theta=\theta_{r}$, where $\vartheta_{r}$ and $\theta_{r}$ are given by Eqs.
\eqref{Zcobo} and \eqref{preli-cobo} respectively.
Let $(A, \ast, \circ)$ be a pre-Poisson algebra and $r=\sum_{i}x_{i}\otimes y_{i}
\in A\otimes A$. We say $r$ is a solution of the {\bf pre-Poisson Yang-Baxter equation}
(or $\PPYBE$) in $(A, \ast, \circ)$ if $\mathbf{Z}_{r}=\mathbf{PL}_{r}=0$. Then by
Propositions \ref{pro:spec-Zbia} and \ref{pro:spec-plbia}, we have

\begin{pro}[\cite{WS}]\label{pro:tri-PPbia}
Let $(A, \ast, \circ)$ be a pre-Poisson algebra, $r\in A\otimes A$ and $\vartheta_{r}:
\theta_{r}: A\rightarrow A\otimes A$ be linear maps defined by Eqa. \eqref{Zcobo} and
\eqref{preli-cobo} respectively. If $r$ is a symmetric solution of the $\PPYBE$ in
$(A, \ast, \circ)$, then $(A, \ast, \circ, \vartheta_{r}, \theta_{r})$ is a
pre-Poisson bialgebra, which is called a {\bf triangular pre-Poisson bialgebra}
associated with $r$.
\end{pro}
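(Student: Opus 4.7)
The plan is to reduce the pre-Poisson bialgebra conditions to already-established results plus the pure compatibility identities \eqref{PPbialg1}--\eqref{PPbialg4}, which must be verified by direct substitution.

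First I would dispose of the two ``diagonal'' parts using earlier results. Since $r$ is symmetric and $\mathbf{Z}_r=0$, Proposition \ref{pro:spec-Zbia} immediately gives that $(A,\ast,\vartheta_r)$ is a Zinbiel bialgebra. Since $r$ is symmetric and $\mathbf{PL}_r=0$, Proposition \ref{pro:spec-plbia} gives that $(A,\circ,\theta_r)$ is a pre-Lie bialgebra. In particular, $(A,\vartheta_r)$ is automatically a Zinbiel coalgebra and $(A,\theta_r)$ a pre-Lie coalgebra. So the entire content of what remains to prove is that $\vartheta_r$ and $\theta_r$ satisfy the four cross-compatibility equations \eqref{PPbialg1}, \eqref{PPbialg2}, \eqref{PPbialg3}, \eqref{PPbialg4} in Definition \ref{def:PLbialg}.

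Next, I would expand both sides of each of \eqref{PPbialg1}--\eqref{PPbialg4} by substituting the coboundary formulas
\[
\vartheta_r(a)=\sum_i\bigl(x_i\otimes(a\ast y_i+y_i\ast a)-(a\ast x_i)\otimes y_i\bigr),
\]
\[
\theta_r(a)=\sum_i\bigl((a\circ x_i)\otimes y_i+x_i\otimes(a\circ y_i-y_i\circ a)\bigr),
\]
writing $r=\sum_i x_i\otimes y_i=\sum_i y_i\otimes x_i$ by symmetry, and then regroup the resulting sum by the ``slot pattern'' in which $a_1,a_2$ act on $x_i,y_i$. For \eqref{PPbialg1} and \eqref{PPbialg2}, every resulting tensor leg is of the form $(\mbox{product of one of }a_1,a_2\mbox{ with one of }x_i,y_i)\otimes y_i$ or $x_i\otimes(\mbox{same})$; the terms with $a_1,a_2$ acting on the \emph{same} leg combine into expressions of the form $(a_1\circ a_2-a_2\circ a_1)\ast(-)$ or $(a_1\ast a_2+a_2\ast a_1)\circ(-)$, and an application of the two pre-Poisson axioms \eqref{PPalg1} and \eqref{PPalg2} turns them into the cross-leg expressions appearing on the right-hand sides. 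The remaining terms with $a_1,a_2$ on different legs already match on both sides.

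For \eqref{PPbialg3} and \eqref{PPbialg4} one additionally uses symmetry of $r$ to identify $\vartheta_r(a)+\tau\vartheta_r(a)$ and $\theta_r(a)-\tau\theta_r(a)$ with manifestly symmetric / skew combinations in which all occurrences of $\fl_A,\fr_A$ (resp.\ $\hat{\fl}_A,\hat{\fr}_A$) appear on equal footing; the compatibility then again reduces, slot by slot, to \eqref{PPalg1}, \eqref{PPalg2} applied with $a_1$ or $a_2$ replaced by the dummy variable $x_i$ or $y_i$. The main obstacle, as in the analogous triangular theorems for Lie, Zinbiel and pre-Lie bialgebras, is purely combinatorial bookkeeping: there is no new structural input beyond \eqref{PPalg1}--\eqref{PPalg2} and the symmetry $r=\tau(r)$, but the two identities \eqref{PPbialg3}--\eqref{PPbialg4} mix all four of $\fl_A,\fr_A,\hat{\fl}_A,\hat{\fr}_A$ acting on both tensor factors of $r$, so one must be careful that each monomial appearing on one side is matched by the correct counterpart on the other, especially after applying $\tau$.
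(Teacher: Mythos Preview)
The paper does not give its own proof of this proposition; it is quoted from \cite{WS} and used as a black box. So there is no argument in the paper to compare against, and your outline is a reasonable sketch of what a direct verification would look like.

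There is, however, a genuine gap in your reduction. The definition of a pre-Poisson bialgebra (Definition~\ref{def:PLbialg}) begins with the hypothesis that $(A,\vartheta,\theta)$ is a pre-Poisson \emph{coalgebra}, not merely a Zinbiel coalgebra and a pre-Lie coalgebra separately. This entails the two additional cross-coalgebra identities \eqref{PPco1} and \eqref{PPco2}, which you do not mention. Propositions~\ref{pro:spec-Zbia} and~\ref{pro:spec-plbia} give only that $(A,\vartheta_r)$ is a Zinbiel coalgebra and $(A,\theta_r)$ is a pre-Lie coalgebra individually; the mixed coassociativity conditions \eqref{PPco1}--\eqref{PPco2} relating $\vartheta_r$ and $\theta_r$ must be checked separately. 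They are handled by the same mechanism you describe for \eqref{PPbialg1}--\eqref{PPbialg4}: substitute the coboundary formulas, use symmetry of $r$, and reduce each slot to an instance of \eqref{PPalg1} or \eqref{PPalg2} (together with $\mathbf{Z}_r=0$ and $\mathbf{PL}_r=0$ where triple products of $r$-components appear). Once you add those two verifications, your outline is complete.
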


Let $(P=\oplus_{i\in\bz}P_{i}, \cdot, [-,-])$ be a $\bz$-graded Poisson algebra.
Suppose that $r=\sum_{i,j,\alpha}x_{i\alpha}\otimes y_{j\alpha}\in P\,\hat{\otimes}\,P$.
If $r$ satisfies $\mathbf{A}_{r}=\mathbf{C}_{r}=0$ as an element in
$P\,\hat{\otimes}\,P\,\hat{\otimes}\,P$, then $r$ is called a {\bf completed solution}
of the $\PYBE$ in $(A=\oplus_{i\in\bz}A_{i}, \cdot, [-,-])$.
The same argument of the proof for \cite[Theorem 2]{NB} extends to the completed case.
We obtain the following proposition.

\begin{pro}\label{pro:poi-tri}
Let $(P=\oplus_{i\in\bz}P_{i}, \cdot, [-,-])$ be a $\bz$-graded Poisson algebra
and $r\in P\,\hat{\otimes}\,P$ is a completed solution of the $\PYBE$ in
$(P=\oplus_{i\in\bz}P_{i}, \cdot, [-,-])$. Define two bilinear maps $\Delta_{r},
\delta_{r}: A\rightarrow A\,\hat{\otimes}\,A$ by Eqs. \eqref{cass-cobo} and
\eqref{cli-cobo} respectively. If $r$ is skew-symmetric, then $(A, \cdot, [-,-],
\Delta_{r}, \delta_{r})$ is a completed Poisson bialgebra, which is called a
{\bf triangular completed Poisson bialgebra} associated with $r$.
\end{pro}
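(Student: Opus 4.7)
The plan is to verify, one by one, the five axioms of Definition \ref{def:CPoisbia} for the quintuple $(P,\cdot,[-,-],\Delta_{r},\delta_{r})$. Axiom (i) is given. For axioms (iii) and (iv), note that a skew-symmetric completed solution of the $\PYBE$ is in particular a skew-symmetric completed solution of the $\AYBE$ in $(P,\cdot)$ and of the $\CYBE$ in $(P,[-,-])$. Hence Proposition \ref{pro:ass-tri} applies to give that $(P,\cdot,\Delta_{r})$ is a completed infinitesimal bialgebra, and Proposition \ref{pro:splie-bia} applies to give that $(P,[-,-],\delta_{r})$ is a completed Lie bialgebra. In particular, $(P,\Delta_{r})$ is a completed cocommutative coassociative coalgebra and $(P,\delta_{r})$ is a completed Lie coalgebra, which are the first two requirements in axiom (ii).

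What remains is the Poisson coalgebra compatibility in axiom (ii) and the two compatibility identities \eqref{cPbialg1} and \eqref{cPbialg2} in axiom (v). The essential input for all three is the Leibniz rule in $(P,\cdot,[-,-])$, which translates into the operator identities
\begin{align*}
\fu_{P}([p_{1},p_{2}])&=\ad_{P}(p_{1})\fu_{P}(p_{2})-\fu_{P}(p_{2})\ad_{P}(p_{1}),\\
\ad_{P}(p_{1}p_{2})&=\fu_{P}(p_{1})\ad_{P}(p_{2})+\fu_{P}(p_{2})\ad_{P}(p_{1}),
\end{align*}
for any $p_{1},p_{2}\in P$. Substituting the coboundary formulas \eqref{cass-cobo} and \eqref{cli-cobo} into \eqref{cPbialg1} and \eqref{cPbialg2}, expanding via these two identities, and using the skew-symmetry $r=-\hat{\tau}(r)$, the two compatibility conditions collapse to tautologies. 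For the Poisson coalgebra condition $(\id\,\hat{\otimes}\,\Delta_{r})\delta_{r}=(\delta_{r}\,\hat{\otimes}\,\id)\Delta_{r}+(\hat{\tau}\,\hat{\otimes}\,\id)(\id\,\hat{\otimes}\,\delta_{r})\Delta_{r}$, the same substitution, combined with skew-symmetry and the Leibniz rule, rewrites the difference of the two sides as a linear combination of $\mathbf{A}_{r}$ and $\mathbf{C}_{r}$ placed into appropriate tensor slots; both vanish by the hypothesis $\mathbf{A}_{r}=\mathbf{C}_{r}=0$.

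This is precisely the strategy used in the finite-dimensional case in \cite{NB}, so the remaining question is whether the same chain of identities makes sense in the completed setting. The main obstacle is therefore bookkeeping: every occurrence of $r$ now lives in $P\,\hat{\otimes}\,P$ and is expanded as a possibly infinite sum $\sum_{i,j,\alpha}x_{i,\alpha}\otimes y_{j,\alpha}$, and the products and brackets $r_{12}r_{13}$, $[r_{12},r_{13}]$, etc., become infinite sums in $P\,\hat{\otimes}\,P\,\hat{\otimes}\,P$. I would carefully check that each manipulation (applying $\ad_{P}(p)\,\hat{\otimes}\,\id$ or $\id\,\hat{\otimes}\,\fu_{P}(p)$, flipping tensor factors via $\hat{\tau}$, reindexing double sums, and collecting terms) is well-defined in the completed tensor product. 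Because the operators $\ad_{P}(p)$ and $\fu_{P}(p)$ act on a single tensor slot and preserve the $\bz$-grading up to a fixed shift, each such expression is a legitimate element of $P\,\hat{\otimes}\,P$ or $P\,\hat{\otimes}\,P\,\hat{\otimes}\,P$, and all formal rearrangements used in the finite-dimensional proof go through term by term.

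Once these checks are in place, combining (i)--(v) gives that $(P,\cdot,[-,-],\Delta_{r},\delta_{r})$ is a completed Poisson bialgebra, completing the proof.
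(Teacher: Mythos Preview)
Your proposal is correct and follows exactly the approach the paper takes: the paper does not give a detailed proof but simply remarks that ``the same argument of the proof for \cite[Theorem 2]{NB} extends to the completed case,'' and your outline spells out precisely that argument together with the well-definedness checks in the completed tensor product. Your write-up is in fact more explicit than the paper's one-line justification.
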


Thus, by Propositions \ref{pro:ZYBE-AYBE} and \ref{pro:PLYBE-CYBE}, we get

\begin{pro}\label{pro:PPYBE-PYBE}
Let $(A, \ast, \circ)$ be a pre-Poisson algebra and $(B=\oplus_{i\in\bz}B_{i}, \diamond,
\omega)$ be a quadratic $\bz$-graded perm algebra, and $(A\otimes B, \cdot, [-,-])$ be the
induced $\bz$-graded Poisson algebra. Suppose that $r=\sum_{i}x_{i}\otimes y_{i}\in
A\otimes A$ is a symmetric solution of the $\PPYBE$ in $(A, \ast, \circ)$. Then
the element $\widehat{r}$ defined by Eq. \eqref{r-indass} is a skew-symmetric
completed solution of the $\PYBE$ in $(A\otimes B, \cdot, [-,-])$.
\end{pro}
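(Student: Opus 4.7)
The plan is to reduce the statement directly to the two prior results on the individual components of the pre-Poisson structure. By definition, a symmetric $r \in A \otimes A$ is a solution of the $\PPYBE$ in $(A,\ast,\circ)$ exactly when $\mathbf{Z}_r = 0$ and $\mathbf{PL}_r = 0$. Likewise, a skew-symmetric element $\widehat{r} \in (A \otimes B)\,\hat{\otimes}\,(A \otimes B)$ is a completed solution of the $\PYBE$ in the induced $\bz$-graded Poisson algebra $(A \otimes B, \cdot, [-,-])$ exactly when $\mathbf{A}_{\widehat{r}} = 0$ and $\mathbf{C}_{\widehat{r}} = 0$ as elements of the triple completed tensor product. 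Thus the assertion splits into two independent pieces, one for the commutative associative part and one for the Lie part.

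First I would observe that since $r$ is a symmetric solution of the $\PPYBE$ in $(A,\ast,\circ)$, it is in particular a symmetric solution of the $\ZYBE$ in the underlying Zinbiel algebra $(A,\ast)$. Applying Proposition~\ref{pro:ZYBE-AYBE} to the induced $\bz$-graded commutative associative algebra $(A\otimes B, \cdot)$, we obtain that the element $\widehat{r}$ defined by Eq.~\eqref{r-indass} is a skew-symmetric completed solution of the $\AYBE$ in $(A \otimes B, \cdot)$; in particular $\widehat{r} = -\hat{\tau}(\widehat{r})$ and $\mathbf{A}_{\widehat{r}} = 0$.

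Next I would observe that the same element $r$, viewed as a symmetric element of $A \otimes A$, is also a symmetric solution of the $\PLYBE$ in the underlying pre-Lie algebra $(A,\circ)$. Applying Proposition~\ref{pro:PLYBE-CYBE} to the induced $\bz$-graded Lie algebra $(A \otimes B, [-,-])$, we obtain that the \emph{same} tensor $\widehat{r}$ is a skew-symmetric completed solution of the $\CYBE$ in $(A \otimes B, [-,-])$, so that $\mathbf{C}_{\widehat{r}} = 0$. Combining these two facts yields that $\widehat{r}$ is simultaneously skew-symmetric, annihilated by $\mathbf{A}$ with respect to $\cdot$, and annihilated by $\mathbf{C}$ with respect to $[-,-]$, which is exactly the condition to be a skew-symmetric completed solution of the $\PYBE$ in the $\bz$-graded Poisson algebra $(A \otimes B, \cdot, [-,-])$.

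The only subtlety, and the step one should verify carefully, is that the \emph{same} formula \eqref{r-indass} for $\widehat{r}$ is used in both Proposition~\ref{pro:ZYBE-AYBE} and Proposition~\ref{pro:PLYBE-CYBE}; this is what allows the two conclusions to be combined into a statement about the $\PYBE$. Since both propositions are formulated using a homogeneous basis $\{e_j\}_{j\in\Omega}$ of $B$ with dual basis $\{f_j\}_{j\in\Omega}$ with respect to $\omega(-,-)$, the element $\widehat{r} = \sum_i \sum_{j\in\Omega} (x_i \otimes e_j) \otimes (y_i \otimes f_j)$ is indeed the same in both applications, and no further calculation is required beyond invoking these two results. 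Hence there is no genuine obstacle; the proposition is essentially a corollary of the Zinbiel and pre-Lie cases established earlier.
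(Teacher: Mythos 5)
Your proposal is correct and coincides with the paper's own argument: the paper derives this proposition exactly by combining Proposition~\ref{pro:ZYBE-AYBE} and Proposition~\ref{pro:PLYBE-CYBE}, using that the $\PPYBE$ is by definition $\mathbf{Z}_{r}=\mathbf{PL}_{r}=0$ and a completed solution of the $\PYBE$ is by definition $\mathbf{A}_{\widehat{r}}=\mathbf{C}_{\widehat{r}}=0$ for the same tensor $\widehat{r}$ of Eq.~\eqref{r-indass}.
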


By Theorem \ref{thm:indu-triass} and Proposition \ref{pro:indu-trilie}, we have

\begin{pro}\label{pro:indu-tripoi}
Let $(A, \ast, \circ, \vartheta, \theta)$ be a pre-Poisson bialgebra,
$(B=\oplus_{i\in\bz}B_{i}, \diamond, \omega)$ be a quadratic $\bz$-graded perm algebra
and $(A\otimes B, \cdot, [-,-], \Delta, \delta)$ be the induced completed Poisson
bialgebra from $(A, \ast, \circ, \vartheta, \theta)$ by $(B, \diamond, \omega)$.
Then $(A\otimes B, \cdot, [-,-], \Delta, \delta)$ is a triangular completed Poisson bialgebra
if $(A, \ast, \circ, \vartheta, \theta)$ is a triangular pre-Poisson bialgebra.
\end{pro}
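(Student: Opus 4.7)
The plan is to reduce the statement to the two component results already established for the Zinbiel part and the pre-Lie part. Since $(A, \ast, \circ, \vartheta, \theta)$ is triangular, by definition there exists a symmetric $r \in A \otimes A$ with $\vartheta = \vartheta_r$, $\theta = \theta_r$, and $\mathbf{Z}_r = \mathbf{PL}_r = 0$, i.e.\ $r$ is a symmetric solution of the $\PPYBE$ in $(A, \ast, \circ)$. I would first invoke Proposition \ref{pro:PPYBE-PYBE} to lift $r$ to the element $\widehat{r} \in (A\otimes B)\,\hat{\otimes}\,(A\otimes B)$ defined by Eq.~\eqref{r-indass}, obtaining a skew-symmetric completed solution of the $\PYBE$ in the induced $\bz$-graded Poisson algebra $(A \otimes B, \cdot, [-,-])$.

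Next, by Proposition \ref{pro:poi-tri}, the element $\widehat{r}$ produces a triangular completed Poisson bialgebra $(A\otimes B, \cdot, [-,-], \Delta_{\widehat{r}}, \delta_{\widehat{r}})$, where $\Delta_{\widehat{r}}$ and $\delta_{\widehat{r}}$ are defined by Eqs.~\eqref{cass-cobo} and \eqref{cli-cobo} applied to $\widehat{r}$. It therefore suffices to check that this triangular completed Poisson bialgebra coincides with the induced completed Poisson bialgebra $(A\otimes B, \cdot, [-,-], \Delta, \delta)$ produced by Theorem \ref{thm:PP-Poisbia}. Concretely, I need the two identities
\begin{equation*}
\Delta = \Delta_{\widehat{r}}, \qquad \delta = \delta_{\widehat{r}},
\end{equation*}
as linear maps $A\otimes B \to (A\otimes B)\,\hat{\otimes}\,(A\otimes B)$.

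But the first identity is precisely the content of Theorem \ref{thm:indu-triass} applied to the underlying triangular Zinbiel bialgebra $(A, \ast, \vartheta_r)$ and the quadratic $\bz$-graded perm algebra $(B=\oplus_{i\in\bz}B_i, \diamond, \omega)$, while the second identity is the content of Proposition \ref{pro:indu-trilie} applied to the underlying triangular pre-Lie bialgebra $(A, \circ, \theta_r)$ and the same $(B, \diamond, \omega)$. Both of those results were proved by the same type of computation: expanding $\Delta_{\widehat{r}}$ (respectively $\delta_{\widehat{r}}$) using the chosen homogeneous basis $\{e_j\}$ and its $\omega$-dual $\{f_j\}$, then recognizing the $B$-tensors via the left nondegeneracy of $\hat{\omega}(-,-)$ as the components $\nu_{\omega}(b)$ of the coperm structure produced by $\omega$, matching the formulas \eqref{Zcoass} and \eqref{PLcoLie}.

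Combining these three inputs gives $(A\otimes B, \cdot, [-,-], \Delta, \delta) = (A\otimes B, \cdot, [-,-], \Delta_{\widehat{r}}, \delta_{\widehat{r}})$, hence the induced completed Poisson bialgebra is triangular. No new obstacle appears at the Poisson level: the compatibility axioms \eqref{cPbialg1} and \eqref{cPbialg2} are automatic from the triangular form, and the essential work was already carried out in Theorem \ref{thm:indu-triass} and Proposition \ref{pro:indu-trilie}. The only subtle point to keep in mind is that the sign/twist conventions in \eqref{Zcoass} (symmetrization by $\id + \hat{\tau}$) and \eqref{PLcoLie} (antisymmetrization by $\id - \hat{\tau}$) match the skew-symmetry of $\widehat{r}$, which is what makes the cross-terms $\sum_j e_j \otimes f_j = -\sum_j f_j \otimes e_j$ recombine correctly with the symmetric $r$-part, exactly as in the proof of Theorem \ref{thm:indu-triass}.
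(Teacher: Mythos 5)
Your proposal is correct and follows essentially the same route as the paper: the paper derives this proposition directly from Theorem \ref{thm:indu-triass} and Proposition \ref{pro:indu-trilie}, exactly as you do, with the lift of $r$ to $\widehat{r}$ via Proposition \ref{pro:PPYBE-PYBE} and the identifications $\Delta=\Delta_{\widehat{r}}$, $\delta=\delta_{\widehat{r}}$ supplying the triangular structure through Proposition \ref{pro:poi-tri}. Your write-up merely makes explicit the gluing step (that the same $\widehat{r}$ serves both coproducts), which the paper leaves implicit.
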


In addition, we have the following commutative diagram:
$$
\xymatrix@C=3cm@R=0.5cm{
\txt{$r$ \\ {\tiny a symmetric solution}\\ {\tiny of the $\PPYBE$ in $(A, \ast, \circ)$}}
\ar[d]_{{\rm Pro.}~\ref{pro:PPYBE-PYBE}}\ar[r]^{{\rm Pro.}~\ref{pro:tri-PPbia}} &
\txt{$(A, \ast, \circ, \vartheta_{r}, \theta_{r})$ \\ {\tiny a triangular pre-Poisson
bialgebra}}\ar[d]^{{\rm Thm.}~\ref{thm:PP-Poisbia}}_{{\rm Pro.}~\ref{pro:indu-tripoi}} \\
\txt{$\widehat{r}$ \\ {\tiny a skew-symmetric completed solution}\\ {\tiny of the $\PYBE$
in $(A\otimes B, \cdot, [-,-])$}} \ar[r]^{{\rm Pro.}~\ref{pro:poi-tri}\qquad\quad} &
\txt{$(A\otimes B, \cdot, [-,-], \Delta, \delta)=(A\otimes B, \cdot, [-,-],
\Delta_{\widehat{r}}, \delta_{\widehat{r}})$ \\ {\tiny a completed Poisson bialgebra}}}
$$

Let $(\g, [-,-])$ be a Lie algebra and $(V, \rho)$ be a representation of $(\g, [-,-])$.
Recall that a linear map $\mathcal{T}: V\rightarrow\g$ is called an
{\bf $\mathcal{O}$-operator of $(\g, [-,-])$ associated to $(V, \rho)$} if
for any $v_{1}, v_{2}\in V$,
$$
[\mathcal{T}(v_{1}),\; \mathcal{T}(v_{2})]=
\mathcal{T}\big(\rho(\mathcal{T}(v_{1}))(v_{2})-\rho(\mathcal{T}(v_{2}))(v_{1})\big).
$$
An {\bf $\mathcal{O}$-operator of a Poisson algebra $(P, \cdot, [-,-])$
associated to a representation $(V, \mu, \rho)$} is a linear map $\mathcal{T}:
V\rightarrow P$ such that it is both an $\mathcal{O}$-operator of $(P, \cdot)$
associated to representation $(V, \mu)$ and an $\mathcal{O}$-operator of
$(P, [-,-])$ associated to representation $(V, \rho)$.

\begin{pro}[\cite{NB}]\label{pro:o-poi}
Let $(P, \ast, [-,-])$ be a Poisson algebra, $r\in P\otimes P$ be skew-symmetric.
Then $r$ is a solution of the $\PYBE$ in $(P, \ast, [-,-])$ if and only if $r^{\sharp}:
P^{\ast}\rightarrow P$ is an $\mathcal{O}$-operator of $(P, \ast, [-,-])$ associated
to the coregular representation $(P^{\ast}, -\fu_{P}^{\ast}, \ad_{P}^{\ast})$.
\end{pro}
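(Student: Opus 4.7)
The plan is to exploit that the Poisson Yang--Baxter equation is defined as the simultaneous vanishing of the associative Yang--Baxter expression $\mathbf{A}_{r}$ and the classical Yang--Baxter expression $\mathbf{C}_{r}$, and that the $\mathcal{O}$-operator condition for a Poisson algebra is likewise the conjunction of two separate $\mathcal{O}$-operator conditions: one for the commutative associative structure $(P,\cdot)$ on the representation $(P^{\ast},-\fu_{P}^{\ast})$, and one for the Lie structure $(P,[-,-])$ on the representation $(P^{\ast},\ad_{P}^{\ast})$. Thus the statement will follow by combining two componentwise equivalences.

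For the associative component I will directly invoke Proposition \ref{pro:o-ass}: since $r\in P\otimes P$ is skew-symmetric, $\mathbf{A}_{r}=0$ in $(P,\cdot)$ holds if and only if $r^{\sharp}:P^{\ast}\to P$ is an $\mathcal{O}$-operator of the commutative associative algebra $(P,\cdot)$ associated to the coregular representation $(P^{\ast},-\fu_{P}^{\ast})$. For the Lie component I will use the standard analogue (well known from \cite{BD,Bai} and implicit in the setup via Propositions \ref{pro:splie-bia} and \ref{pro:PLYBE-CYBE}): for a skew-symmetric $r\in P\otimes P$, the identity $\mathbf{C}_{r}=0$ in $(P,[-,-])$ is equivalent to $r^{\sharp}$ being an $\mathcal{O}$-operator of $(P,[-,-])$ associated to the coadjoint representation $(P^{\ast},\ad_{P}^{\ast})$. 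Since the $\mathcal{O}$-operator condition for the Poisson algebra $(P,\cdot,[-,-])$ on $(P^{\ast},-\fu_{P}^{\ast},\ad_{P}^{\ast})$ is defined as the conjunction of these two conditions, combining the two equivalences yields the Proposition.

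The only substantive verification is the pairing argument in each component, which I sketch as follows. Write $r=\sum_{i}x_{i}\otimes y_{i}$ with $r=-\tau(r)$. For arbitrary $\xi_{1},\xi_{2},\xi_{3}\in P^{\ast}$, pair $\mathbf{A}_{r}$ (respectively $\mathbf{C}_{r}$) with $\xi_{1}\otimes\xi_{2}\otimes\xi_{3}$; using the definition $\langle\xi_{2},r^{\sharp}(\xi_{1})\rangle=\langle\xi_{1}\otimes\xi_{2},r\rangle$ together with the skew-symmetry of $r$, each tensor leg $r_{ij}$ becomes an expression in $r^{\sharp}(\xi_{i})$ or $r^{\sharp}(\xi_{j})$, and each remaining argument contracts through $-\fu_{P}^{\ast}$ or $\ad_{P}^{\ast}$. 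The resulting identity, valid for all $\xi_{1},\xi_{2},\xi_{3}$, is exactly the $\mathcal{O}$-operator equation evaluated on $(\xi_{1},\xi_{2})$ paired against $\xi_{3}$, and by nondegeneracy of the pairing the equivalence follows.

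The main (mild) obstacle is bookkeeping: ensuring that the sign $-\fu_{P}^{\ast}$ appearing in the coregular representation matches the sign convention for $\mu^{\ast}$ given in item $(ii)$ of the notation list in the introduction, and that the bracket $\ad_{P}^{\ast}$ produces the correct sign in the CYBE pairing. Once these sign conventions are tracked carefully through the definitions of $r_{12}r_{13}$, $r_{12}r_{23}$, $r_{23}r_{13}$ and of $[r_{12},r_{13}]$, $[r_{13},r_{23}]$, $[r_{12},r_{23}]$, the proof is a direct dualization. No compatibility between $\cdot$ and $[-,-]$ (the Leibniz rule) needs to be invoked, because both the PYBE and the Poisson $\mathcal{O}$-operator condition decouple cleanly into associative and Lie parts; this decoupling is the conceptual reason the Proposition holds.
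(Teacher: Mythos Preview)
Your argument is correct and is exactly the standard decoupling approach: the $\PYBE$ is by definition the conjunction $\mathbf{A}_{r}=0$ and $\mathbf{C}_{r}=0$, the Poisson $\mathcal{O}$-operator condition is by definition the conjunction of the associative and Lie $\mathcal{O}$-operator conditions, and Proposition~\ref{pro:o-ass} together with its Lie-algebra analogue give the two separate equivalences. Note, however, that the paper does not supply its own proof of this Proposition; it is quoted from \cite{NB}, so there is nothing further to compare against.
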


The $\mathcal{O}$-operator of pre-Lie algebras was considered in \cite{Bai}.
Recall that an {\bf $\mathcal{O}$-operator of a pre-Lie algebra $(A, \circ)$
associated to a representation $(V, \hat{\kl}, \hat{\kr})$} is a linear map
$\mathcal{T}: V\rightarrow A$ such that
$$
\mathcal{T}(v_{1})\circ\mathcal{T}(v_{2})
=\mathcal{T}\big(\hat{\kl}(\mathcal{T}(v_{1}))(v_{2})+\hat{\kr}(\mathcal{T}(v_{2}))(v_{1})\big),
$$
for any $v_{1}, v_{2}\in V$. Let $(P, \ast, \circ)$ be a pre-Poisson and
$(V, \kl, \kr, \hat{\kl}, \hat{\kr})$ be a representation of $(P, \ast, \circ)$.
A linear map $\mathcal{T}: V\rightarrow P$ is called an {\bf $\mathcal{O}$-operator
of pre-Poisson algebra $(A, \ast, \circ)$ associated to representation
$(V, \kl, \kr, \hat{\kl}, \hat{\kr})$} if $\mathcal{T}$ is both an $\mathcal{O}$-operator
of $(P, \ast)$ associated to $(V, \kl, \kr)$ and an $\mathcal{O}$-operator
of $(P, \circ)$ associated to $(V, \hat{\kl}, \hat{\kr})$. By \cite[Theorem 2]{NB}
and \cite[Theorem 2]{NB}, we obtain

\begin{pro}\label{pro:o-preP}
Let $(A, \ast, \circ)$ be a pre-Poisson algebra, $r\in A\otimes A$ be symmetric.
Then $r$ is a solution of the $\PPYBE$ in $(A, \ast, \circ)$ if and only if $r^{\sharp}:
A^{\ast}\rightarrow A$ is an $\mathcal{O}$-operator of $(A, \ast, \circ)$ associated to
the coregular representation $(A^{\ast}, -\fl_{A}^{\ast}-\fr_{A}^{\ast}, \fr_{A}^{\ast},
\hat{\fl}_{A}^{\ast}-\hat{\fr}_{A}^{\ast}, -\hat{\fr}_{A}^{\ast})$.
\end{pro}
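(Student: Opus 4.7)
The plan is to reduce the equivalence to the analogous characterizations for the Zinbiel and pre-Lie components, since both the $\PPYBE$ and the notion of an $\mathcal{O}$-operator of a pre-Poisson algebra decompose by definition into those two pieces.

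First, recall that $r$ is a solution of the $\PPYBE$ in $(A,\ast,\circ)$ precisely when $\mathbf{Z}_{r}=0$ and $\mathbf{PL}_{r}=0$, i.e., when $r$ is simultaneously a solution of the $\ZYBE$ in the Zinbiel algebra $(A,\ast)$ and of the $\PLYBE$ in the pre-Lie algebra $(A,\circ)$. Unwinding the definition of an $\mathcal{O}$-operator of a pre-Poisson algebra, the linear map $r^{\sharp}:A^{\ast}\to A$ is an $\mathcal{O}$-operator of $(A,\ast,\circ)$ associated to the coregular representation $(A^{\ast},-\fl^{\ast}_{A}-\fr^{\ast}_{A},\fr^{\ast}_{A},\hat{\fl}^{\ast}_{A}-\hat{\fr}^{\ast}_{A},-\hat{\fr}^{\ast}_{A})$ if and only if $r^{\sharp}$ is both an $\mathcal{O}$-operator of the Zinbiel algebra $(A,\ast)$ associated to $(A^{\ast},-\fl^{\ast}_{A}-\fr^{\ast}_{A},\fr^{\ast}_{A})$ and an $\mathcal{O}$-operator of the pre-Lie algebra $(A,\circ)$ associated to $(A^{\ast},\hat{\fl}^{\ast}_{A}-\hat{\fr}^{\ast}_{A},-\hat{\fr}^{\ast}_{A})$. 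Thus it suffices to establish each component equivalence separately and then combine them.

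The Zinbiel half is exactly Proposition \ref{pro:o-Zib}, which gives the equivalence between symmetric solutions of the $\ZYBE$ in $(A,\ast)$ and $\mathcal{O}$-operators of $(A,\ast)$ with respect to the stated Zinbiel coregular representation. The pre-Lie half is its natural counterpart, established by Bai in the context of the $\mathcal{S}$-equation: for symmetric $r$, the equation $\mathbf{PL}_{r}=0$ holds if and only if $r^{\sharp}$ is an $\mathcal{O}$-operator of $(A,\circ)$ associated to $(A^{\ast},\hat{\fl}^{\ast}_{A}-\hat{\fr}^{\ast}_{A},-\hat{\fr}^{\ast}_{A})$. To verify this piece, I would pair the symmetric simplification $\mathbf{PL}_{r}=r_{12}\circ r_{13}-r_{12}\circ r_{23}-r_{13}\circ r_{23}+r_{23}\circ r_{13}$ against an arbitrary test element $\xi_{1}\otimes\xi_{2}\otimes\xi_{3}\in(A^{\ast})^{\otimes 3}$, translate each of the four resulting pairings using the defining relation $\langle\xi_{2},\,r^{\sharp}(\xi_{1})\rangle=\langle\xi_{1}\otimes\xi_{2},\,r\rangle$ together with the duality identities for $\hat{\fl}^{\ast}_{A}$ and $\hat{\fr}^{\ast}_{A}$, and then check that the vanishing of the total pairing for all $\xi_{3}$ is equivalent to the $\mathcal{O}$-operator identity
$$
r^{\sharp}(\xi_{1})\circ r^{\sharp}(\xi_{2})=r^{\sharp}\!\left((\hat{\fl}^{\ast}_{A}-\hat{\fr}^{\ast}_{A})(r^{\sharp}(\xi_{1}))(\xi_{2})+(-\hat{\fr}^{\ast}_{A})(r^{\sharp}(\xi_{2}))(\xi_{1})\right)
$$
holding for all $\xi_{1},\xi_{2}\in A^{\ast}$.

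The technical core, and the main obstacle, is the sign-tracking in the pre-Lie piece: one must exploit the symmetry $\tau(r)=r$ to identify dual pairings arising from different summands of $\mathbf{PL}_{r}$, and must expand the action $\hat{\fl}^{\ast}_{A}-\hat{\fr}^{\ast}_{A}$ consistently against the companion action $-\hat{\fr}^{\ast}_{A}$. The computation is formally parallel to the Zinbiel computation underlying Proposition \ref{pro:o-Zib} and introduces no genuinely new difficulty. Combining the Zinbiel and pre-Lie component equivalences then yields the proposition at once.
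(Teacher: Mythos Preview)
Your proposal is correct and follows essentially the same approach as the paper: the paper simply observes (in the sentence preceding the proposition) that the result follows by combining the known characterizations for the Zinbiel and pre-Lie components, citing the relevant literature. Your decomposition into the $\ZYBE$/Zinbiel-$\mathcal{O}$-operator equivalence (Proposition~\ref{pro:o-Zib}) and the $\PLYBE$/pre-Lie-$\mathcal{O}$-operator equivalence (the $\mathcal{S}$-equation result of Bai) is exactly this strategy, with the added benefit that you spell out how the pre-Lie verification would proceed.
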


Let $(B=B_{0}, \diamond, \omega)$ be a (finite-dimensional) quadratic perm algebra,
$\{e_{1}, e_{2},\cdots,e_{n}\}$ be a basis of $B$ and $\{f_{1}, f_{2},\cdots,f_{n}\}$
be the dual basis of $\{e_{1}, e_{2},\cdots,e_{n}\}$ with respect to $\omega(-,-)$.
Suppose $(A, \ast, \circ)$ is a pre-Poisson algebra. By Proposition
\ref{pro:PPYBE-PYBE}, we get
$$
\widehat{r}=\sum_{i}\sum_{j}(x_{i}\otimes e_{j})\otimes(y_{i}\otimes f_{j})
\in(A\otimes B)\otimes(A\otimes B)
$$
is a skew-symmetric solution of the $\PYBE$ in the induced Poisson algebra $(A\otimes B,
\cdot, [-,-])$ if $r=\sum_{i}x_{i}\otimes y_{i}$ is a symmetric solution
of the $\PPYBE$ in $(A, \ast, \circ)$. Thus, $\widehat{r}^{\sharp}: (A\otimes B)^{\ast}
\rightarrow A\otimes B$  is an $\mathcal{O}$-operator of $(A\otimes B, \cdot, [-,-])$
associated to representation $((A\otimes B)^{\ast}, -\fu^{\ast}_{A\otimes B},
\ad^{\ast}_{A\otimes B})$. In Proposition \ref{pro:ind-oper}, we have shown that
$\widehat{r}^{\sharp}=r^{\sharp}\otimes\kappa^{\sharp}$ as linear maps. Thus, we obtain
the following commutative diagram:
$$
\xymatrix@C=3cm@R=0.5cm{
\txt{$r$ \\ {\tiny a symmetric solution} \\ {\tiny of the $\PPYBE$ in $(A, \ast, \circ)$}}
\ar[d]_-{{\rm Pro.}~\ref{pro:PPYBE-PYBE}}\ar[r]^-{{\rm Pro.}~\ref{pro:o-preP}} &
\txt{$r^{\sharp}$\\ {\tiny an $\mathcal{O}$-operator of $(A, \ast, \circ)$
associated} \\ {\tiny to $(A^{\ast}, -\fl^{\ast}_{A}-\fr^{\ast}_{A}, \fr^{\ast}_{A},
\hat{\fl}^{\ast}_{A}-\hat{\fr}^{\ast}_{A}, -\hat{\fr}^{\ast}_{A})$}}
\ar[d]^-{\mbox{$-\otimes\kappa^{\sharp}$}} \\
\txt{$\widehat{r}$ \\ {\tiny a skew-symmetric solution} \\ {\tiny of the $\PYBE$ in
$(A\otimes B, \cdot, [-,-])$}} \ar[r]^-{{\rm Pro.}~\ref{pro:o-poi}}
& \txt{$\widehat{r}^{\sharp}=r^{\sharp}\otimes\kappa^{\sharp}$ \\
{\tiny an $\mathcal{O}$-operator of $(A\otimes B, \cdot, [-,-])$ } \\
{\tiny associated to $((A\otimes B)^{\ast}, -\fu^{\ast}_{A\otimes B},
\ad^{\ast}_{A\otimes B})$}}}
$$

\subsection{Affinization of quasi-Frobenius pre-Poisson algebras}\label{subsec:affQFPP}
In this subsection, we give a construction of quasi-Frobenius $\bz$-graded
Poisson algebra from quasi-Frobenius pre-Poisson algebras corresponding to a class of
symmetric solutions of the $\PPYBE$.

\begin{defi}\label{def:frob-Z}
Let $(A, \circ)$ be a pre-Lie algebra. If there is a symmetric nondegenerate
bilinear form $\varpi(-,-)$ on $A$ satisfying
$$
\varpi(a_{1}\circ a_{2},\; a_{3})-\varpi(a_{1},\; a_{2}\circ a_{3})
=\varpi(a_{2}\circ a_{1},\; a_{3})-\varpi(a_{2},\; a_{1}\circ a_{3}),
$$
for any $a_{1}, a_{2}, a_{3}\in A$, then $(A, \circ, \varpi)$ is called a
{\bf quasi-Frobenius pre-Lie algebra}.

Let $(A, \ast, \circ)$ be a pre-Poisson algebra and $\varpi(-,-)$ be a symmetric
nondegenerate bilinear form on $A$. If $(A, \ast, \varpi)$ is a quasi-Frobenius
Zinbiel algebra and $(A, \circ, \varpi)$ is a quasi-Frobenius pre-Lie algebra, then
$(A, \ast, \circ, \varpi)$ is called a {\bf quasi-Frobenius pre-Poisson algebra}.
\end{defi}

The quasi-Frobenius pre-Lie algebras is closely related to the $L$-dendriform
algebras (see \cite{BLN} for details). For the quasi-Frobenius pre-Poisson algebras,
we can show that it is closely related to symmetric solutions of the $\PPYBE$.

\begin{pro}\label{pro:quasi-PP}
Let $(A, \ast, \circ)$ be a pre-Poisson algebra with a nondegenerate bilinear form
$\varpi(-,-)$, $\{e_{1}, e_{2},\cdots,e_{n}\}$ be a basis of $A$ and $\{f_{1},
f_{2},\cdots,f_{n}\}$ be the dual basis of $\{e_{1}, e_{2},\cdots,e_{n}\}$ with
respect to $\varpi(-,-)$. Denote $r=\sum_{i}e_{i}\otimes f_{i}\in A\otimes A$.
Then $r$ is a symmetric solution of the $\PPYBE$ in $(A, \ast, \circ)$ if and only
if $(A, \ast, \circ, \varpi)$ is a quasi-Frobenius pre-Poisson algebra.
\end{pro}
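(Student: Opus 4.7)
The plan is to decompose the claim along the same structural lines as the definitions of PPYBE and quasi-Frobenius pre-Poisson algebra. By definition, $r$ is a symmetric solution of the PPYBE in $(A,\ast,\circ)$ precisely when $r$ is symmetric, $\mathbf{Z}_r=0$, and $\mathbf{PL}_r=0$. Similarly, $(A,\ast,\circ,\varpi)$ is quasi-Frobenius as a pre-Poisson algebra precisely when $(A,\ast,\varpi)$ is quasi-Frobenius Zinbiel and $(A,\circ,\varpi)$ is quasi-Frobenius pre-Lie (with $\varpi$ symmetric and nondegenerate in both cases). Since symmetry of $r=\sum_i e_i\otimes f_i$ is equivalent to symmetry of $\varpi$, the proposition reduces to two parallel equivalences: one for the Zinbiel pair $(\ast,\varpi)$ and one for the pre-Lie pair $(\circ,\varpi)$.

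The Zinbiel half is immediate: Proposition \ref{pro:quasi-Z} already shows that $r$ is a symmetric solution of the $\ZYBE$ in $(A,\ast)$ if and only if $(A,\ast,\varpi)$ is a quasi-Frobenius Zinbiel algebra, with exactly the same $r$ constructed from the dual basis.

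For the pre-Lie half, I would mirror the argument of Proposition \ref{pro:quasi-Z}. Using symmetry of $r$, the $\PLYBE$ simplifies to the $\mathcal{S}$-equation
\[
\mathbf{PL}_r = r_{12}\circ r_{13} - r_{12}\circ r_{23} - r_{13}\circ r_{23} + r_{23}\circ r_{13},
\]
as noted in the excerpt. Substituting $r=\sum_i e_i\otimes f_i$ and using the defining relation $\varpi(f_m,e_n)=\delta_{mn}$ to contract each of the four terms against a generic triple $e_i\otimes e_j\otimes f_k$, every $\circ$-term becomes a scalar of the form $\varpi(e_k\circ f_s,f_t)$ or $\varpi(f_s\circ e_k,f_t)$. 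The resulting identity
\[
\sum_{i,j,k}\Bigl(\varpi(e_k\circ f_i,\,f_j)-\varpi(f_j\circ e_k,\,f_i)-\varpi(e_k\circ f_j,\,f_i)+\varpi(f_i\circ e_k,\,f_j)\Bigr)\,e_i\otimes e_j\otimes f_k = 0
\]
holds for all bases if and only if, after relabeling and using nondegeneracy, the quasi-Frobenius pre-Lie identity
\[
\varpi(a_1\circ a_2,a_3)-\varpi(a_1,a_2\circ a_3)=\varpi(a_2\circ a_1,a_3)-\varpi(a_2,a_1\circ a_3)
\]
is satisfied for all $a_1,a_2,a_3\in A$. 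This gives $\mathbf{PL}_r=0\Longleftrightarrow (A,\circ,\varpi)$ is quasi-Frobenius pre-Lie.

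Combining the Zinbiel equivalence and the pre-Lie equivalence yields the proposition, since both sides are conjunctions of the two separate conditions. The main technical obstacle is the bookkeeping in the pre-Lie step: tracking the four terms of the $\mathcal{S}$-equation, pairing them correctly against test triples, and extracting the symmetric/antisymmetric combination that matches Definition 5.1. The structure, however, is entirely parallel to the computation already carried out in the proof of Proposition \ref{pro:quasi-Z}, so no genuinely new idea is required beyond the appropriate simplification of $\mathbf{PL}_r$ for symmetric $r$.
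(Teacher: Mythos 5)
Your proposal is correct and takes essentially the same route as the paper: the paper likewise disposes of the Zinbiel half by invoking Proposition \ref{pro:quasi-Z} and handles the pre-Lie half by the analogous dual-basis contraction of the $\mathcal{S}$-equation (stating that for symmetric $r$ the $\mathcal{S}$-equation holds iff the quasi-Frobenius pre-Lie identity holds), then combines the two conditions. The only difference is that the paper leaves the pre-Lie computation as ``similar to the proof of Proposition \ref{pro:quasi-Z}'' while you sketch the contraction explicitly; note also that the identity you state carries the minus signs of the paper's definition of a quasi-Frobenius pre-Lie algebra, whereas the paper's proof writes it with plus signs, an apparent sign typo there rather than a flaw in your argument.
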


\begin{proof}
Similar to the proof of Proposition \ref{pro:quasi-Z}, if $r$ is symmetric,
we can obtain that $r$ is a solution of the $\mathcal{S}$-equation if and only if
$\varpi(a_{1}\circ a_{2},\; a_{3})+\varpi(a_{1},\; a_{2}\circ a_{3})
=\varpi(a_{2}\circ a_{1},\; a_{3})+\varpi(a_{2},\; a_{1}\circ a_{3})$
for any $a_{1}, a_{2}, a_{3}\in A$. In addition to the conclusion
of Proposition \ref{pro:quasi-Z}, we get this proposition.
\end{proof}

\begin{defi}
Let $(\g=\oplus_{i\in\bz}\g_{i}, [-,-])$ be a $\bz$-graded Lie algebra. If there is an
antisymmetric nondegenerate graded bilinear form $\mathcal{B}(-,-)$ on
$\g=\oplus_{i\in\bz}\g_{i}$ satisfying
\begin{align*}
\mathcal{B}([g_{1}, g_{2}],\; g_{3})+\mathcal{B}([g_{2}, g_{3}],\; g_{1})
+\mathcal{B}([g_{3}, g_{1}],\; g_{2})=0,
\end{align*}
for any $g_{1}, g_{2}, g_{3}\in\g$, then $(\g=\oplus_{i\in\bz}\g_{i}, [-,-], \mathcal{B})$
is called a {\bf quasi-Frobenius $\bz$-graded Lie algebra}. 

Let $(P=\oplus_{i\in\bz}P_{i}, \cdot, [-,-])$ be a $\bz$-graded Poisson algebra
and $\mathcal{B}(-,-)$ be a antisymmetric nondegenerate graded bilinear form on
$P=\oplus_{i\in\bz}P_{i}$. If $(P, \cdot, \mathcal{B})$ is a $\bz$-graded commutative
associative algebra with Connes cocycle and $(P, [-,-], \mathcal{B})$ is a
quasi-Frobenius $\bz$-graded Lie algebra, then $(P, \cdot, [-,-], \mathcal{B})$
is called a {\bf quasi-Frobenius $\bz$-graded Poisson algebra}.
\end{defi}

Recently, Poisson algebras with some special bilinear forms and their related algebra
structures, especially quasi-Frobenius Poisson algebras, have been studied in \cite{LB}.

\begin{pro}\label{pro:Frob-PP}
Let $(A, \ast, \circ, \varpi)$ be a quasi-Frobenius pre-Poisson algebra,
$(B=\oplus_{i\in\bz}B_{i}, \diamond, \omega)$ be a quadratic $\bz$-graded perm algebra
and $(A\otimes B, \cdot, [-,-])$ be the induced Poisson algebra.
Define a bilinear form $\mathcal{B}(-,-)$ on $A\otimes B$ by Eq. \eqref{bilinear}.
Then $(A\otimes B, \cdot, [-,-], \mathcal{B})$ is a quasi-Frobenius
$\bz$-graded Poisson algebra.

Furthermore, if the quadratic $\bz$-graded perm algebra is given in Example \ref{ex:qu-perm},
then $(A\otimes B, \cdot, [-,-], \mathcal{B})$ is a quasi-Frobenius $\bz$-graded Poisson
algebra if and only if $(A, \ast, \circ, \varpi)$ is a quasi-Frobenius pre-Poisson algebra.
\end{pro}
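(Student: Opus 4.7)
The plan is to decompose the quasi-Frobenius $\bz$-graded Poisson structure on $A\otimes B$ into its associative and Lie pieces, handle the associative piece by citing Proposition \ref{pro:Con-ass}, and treat the Lie piece by a direct computation that mirrors Proposition \ref{pro:Con-ass} but with the pre-Lie/perm bracket in place of the Zinbiel/perm product. The bilinear form $\mathcal{B}(-,-)$ defined by Eq.~\eqref{bilinear} is immediately antisymmetric, nondegenerate and graded because $\varpi$ is symmetric nondegenerate on $A$ while $\omega$ is antisymmetric nondegenerate graded on $B$, and $\mathcal{B}$ manifestly respects the $\bz$-grading inherited from $B$.

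For the forward direction, Proposition \ref{pro:Con-ass} gives that $(A\otimes B,\cdot,\mathcal{B})$ is a $\bz$-graded commutative associative algebra with Connes cocycle, since $(A,\ast,\varpi)$ is quasi-Frobenius Zinbiel. It remains to verify the cyclic identity
\[
\mathcal{B}([x_{1},x_{2}],x_{3})+\mathcal{B}([x_{2},x_{3}],x_{1})+\mathcal{B}([x_{3},x_{1}],x_{2})=0
\]
on pure tensors $x_{k}=a_{k}\otimes b_{k}$. I would expand each bracket via Eq.~\eqref{ind-lie}, obtaining six terms of the form $\varpi(a_{i}\circ a_{j},a_{k})\,\omega(b_{i}\diamond b_{j},b_{k})$. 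Collecting terms that share the same $\omega(b_{i}\diamond b_{j},b_{k})$ and applying the invariance $\omega(b\diamond b',b'')=\omega(b,b'\diamond b''-b''\diamond b')$ together with antisymmetry of $\omega$, the $\omega$-coefficients can be grouped into two independent combinations (corresponding to $\omega(b_{2}\diamond b_{3},b_{1})$ and $\omega(b_{3}\diamond b_{2},b_{1})$). Each resulting $\varpi$-bracket is precisely the quasi-Frobenius pre-Lie identity from Definition \ref{def:frob-Z} applied in two different ways, so both vanish. This is the same pattern as in the proof of Proposition \ref{pro:Con-ass}, with $\ast$ replaced by $\circ$ and the symmetric 2-cocycle condition replaced by the pre-Lie quasi-Frobenius condition.

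For the converse, assume $(B,\diamond,\omega)$ is the quadratic $\bz$-graded perm algebra of Example \ref{ex:qu-perm} and that $(A\otimes B,\cdot,[-,-],\mathcal{B})$ is quasi-Frobenius $\bz$-graded Poisson. Symmetry of $\varpi$ follows from antisymmetry of $\mathcal{B}$ combined with antisymmetry of $\omega$. The Zinbiel 2-cocycle identity on $(A,\ast,\varpi)$ is recovered by Proposition \ref{pro:Con-ass}. For the pre-Lie quasi-Frobenius identity, I would evaluate the cyclic identity on carefully chosen tensors such as $a_{1}\otimes\partial_{1}$, $a_{2}\otimes\partial_{1}$, $a_{3}\otimes x_{1}^{-1}\partial_{2}$ (mirroring the choice made at the end of the proof of Proposition \ref{pro:Con-ass}). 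The nontrivial $\omega$-pairings in Example \ref{ex:qu-perm} are arranged so that all but one scalar multiple of the desired identity survives, yielding
\[
\varpi(a_{1}\circ a_{2},a_{3})-\varpi(a_{1},a_{2}\circ a_{3})=\varpi(a_{2}\circ a_{1},a_{3})-\varpi(a_{2},a_{1}\circ a_{3}).
\]
Combined with the Zinbiel 2-cocycle condition, this establishes that $(A,\ast,\circ,\varpi)$ is quasi-Frobenius pre-Poisson.

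The main obstacle I anticipate is the bookkeeping in the forward Lie computation: six $\varpi$-$\omega$ products must be reorganized using the perm invariance of $\omega$ so that the remaining $\varpi$-expressions match the two-sided form of the pre-Lie quasi-Frobenius identity. Care is needed because, unlike the Zinbiel product which is used symmetrically in Eq.~\eqref{ind-ass}, the bracket in Eq.~\eqref{ind-lie} picks up a sign swap between the two summands, which interacts nontrivially with antisymmetry of $\omega$. Once the right regrouping is done the identity drops out; in the converse, the only subtlety is choosing perm-algebra elements whose $\omega$-pairings isolate the pre-Lie identity rather than returning a trivial linear combination of known identities.
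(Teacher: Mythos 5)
Your plan is correct and follows essentially the same route as the paper's proof: the associative/Connes-cocycle part is delegated to Proposition \ref{pro:Con-ass}, the Lie cyclic identity is verified by expanding the six $\varpi$--$\omega$ products and grouping them into the coefficients of $\omega(b_{2}\diamond b_{3},b_{1})$ and $\omega(b_{3}\diamond b_{2},b_{1})$, each of which vanishes by the quasi-Frobenius pre-Lie identity, and the converse uses exactly the same test elements $a_{1}\otimes\partial_{1}$, $a_{2}\otimes\partial_{1}$, $a_{3}\otimes x_{1}^{-1}\partial_{2}$ to isolate that identity.
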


\begin{proof}
For any $a_{1}, a_{2}, a_{3}\in A$ and $b_{1}, b_{2}, b_{3}\in B$, since
\begin{align*}
&\;\mathcal{B}\big([a_{1}\otimes b_{1},\; a_{2}\otimes b_{2}],\ \ a_{3}\otimes b_{3}\big)
+\mathcal{B}\big([a_{2}\otimes b_{2},\; a_{3}\otimes b_{3}],\ \ a_{1}\otimes b_{1}\big)\\[-1mm]
&\qquad+\mathcal{B}\big([a_{3}\otimes b_{3},\; a_{1}\otimes b_{1}],\ \
a_{2}\otimes b_{2}\big)\\[-1mm]
=&\;\Big(\varpi(a_{2}\circ a_{3},\; a_{1})-\varpi(a_{1}\circ a_{2},\; a_{3})
-\varpi(a_{1}\circ a_{3},\; a_{2})+\varpi(a_{2}\circ a_{1},\; a_{3})\Big)
\omega(b_{2}\diamond b_{3},\; b_{1})\\[-2mm]
&\quad+\Big(\varpi(a_{1}\circ a_{2},\; a_{3})
+\varpi(a_{1}\circ a_{3},\; a_{2})-\varpi(a_{3}\circ a_{2},\; a_{1})
-\varpi(a_{3}\circ a_{1},\; a_{2})\Big)\omega(b_{3}\diamond b_{2},\; b_{1})\\
=&\; 0.
\end{align*}
Thus, by Proposition \ref{pro:Con-ass}, we obtain that $(A\otimes B, \cdot, [-,-],
\mathcal{B})$ is a quasi-Frobenius $\bz$-graded Poisson algebra.
Conversely, suppose that the quadratic $\bz$-graded perm algebra $(B=\oplus_{i\in\bz}B_{i},
\diamond, \omega)$ is given in Example \ref{ex:qu-perm} and $(A\otimes B, \cdot, [-,-],
\mathcal{B})$ is a quasi-Frobenius $\bz$-graded Poisson algebra. By Proposition
\ref{pro:Con-ass}, to prove that $(A, \ast, \circ, \varpi)$ is a quasi-Frobenius pre-Poisson
algebra, we only need to show that $(A, \circ, \varpi)$ is a quasi-Frobenius
pre-Lie algebra. In fact, for any $a_{1}, a_{2}, a_{3}\in A$, note that
\begin{align*}
0&=\mathcal{B}\big([a_{1}\otimes \partial_{1},\; a_{2}\otimes \partial_{1}],\ \
a_{3}\otimes x_{1}^{-1}\partial_{2}\big)+\mathcal{B}\big([a_{2}\otimes \partial_{1},\;
a_{3}\otimes x_{1}^{-1}\partial_{2}],\ \ a_{1}\otimes \partial_{1}\big)\\[-1mm]
&\qquad+\mathcal{B}\big([a_{3}\otimes x_{1}^{-1}\partial_{2},\;
a_{1}\otimes \partial_{1}],\ \ a_{2}\otimes \partial_{1}\big)\\
&= -\varpi(a_{1}\circ a_{2}-a_{2}\circ a_{1},\; a_{3})+\varpi(a_{1},\; a_{2}\circ a_{3})
-\varpi(a_{2},\; a_{1}\circ a_{3}).
\end{align*}
We get that $(A, \circ, \varpi)$ is a quasi-Frobenius pre-Lie algebra and
$(A, \ast, \circ, \varpi)$ is a quasi-Frobenius pre-Poisson algebra.
\end{proof}

Combining Theorem \ref{thm:quasi-ass-eq}, Propositions \ref{pro:PPYBE-PYBE},
\ref{pro:quasi-PP} and \ref{pro:Frob-PP}, we obtain the equivalences.

\begin{thm}\label{thm:quasi-P-eq}
Let $(A, \ast, \circ)$ be a pre-Poisson algebra with a nondegenerate bilinear form
$\varpi(-,-)$, $\{e_{1}, e_{2},\cdots,e_{n}\}$ be a basis of $A$ and $\{f_{1},
f_{2},\cdots,f_{n}\}$ be the dual basis of $\{e_{1}, e_{2},\cdots,e_{n}\}$ with
respect to $\varpi(-,-)$. Denote $r=\sum_{i}e_{i}\otimes f_{i}\in A\otimes A$
and suppose $(A\otimes B, \cdot, [-,-])$ be the Poisson algebra induced by
$(A, \ast, \circ)$ and the $\bz$-graded perm algebra $(B, \diamond)$ given in
Example \ref{ex:grperm}. Define a bilinear form $\mathcal{B}(-,-)$ on $A\otimes B$
by Eq. \eqref{bilinear} for $\omega(-,-)$ given in Example \ref{ex:qu-perm}.
Then the following conditions are equivalent.
\begin{enumerate}\itemsep=0pt
\item[$(i)$] $(A, \ast, \circ, \varpi)$ is a quasi-Frobenius pre-Poisson algebra;
\item[$(ii)$] $r$ is a symmetric solution of the $\PPYBE$ in $(A, \ast, \circ)$;
\item[$(iii)$] $\widehat{r}:=\sum_{i}\sum_{i_{1}, i_{2}\in\bz}\big((e_{i}\otimes
  x_{1}^{i_{1}}x_{2}^{i_{2}}\partial_{1})\otimes(f_{i}\otimes x_{1}^{-i_{1}}x_{2}^{-i_{2}}
  \partial_{2})+(e_{i}\otimes x_{1}^{i_{1}}x_{2}^{i_{2}}\partial_{2})\otimes(f_{i}\otimes
  x_{1}^{-i_{1}}x_{2}^{-i_{2}}\partial_{1})\big)\in(A\otimes B)\,\hat{\otimes}\,(A\otimes B)$
  is a skew-symmetric completed solution of the $\PYBE$ in $(A\otimes B, \cdot, [-,-])$;
\item[$(iv)$] $((A\otimes B, \cdot, [-,-], \mathcal{B})$ is a quasi-Frobenius $\bz$-graded
  Poisson algebra.
\end{enumerate}
\end{thm}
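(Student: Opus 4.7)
The plan is to assemble the four-way equivalence by a cycle of implications, exploiting the splitting principle that each condition decomposes into a Zinbiel/associative half and a pre-Lie/Lie half. Explicitly, $(i)$ decomposes into $(A,\ast,\varpi)$ being quasi-Frobenius Zinbiel together with $(A,\circ,\varpi)$ being quasi-Frobenius pre-Lie; $(ii)$ into $\mathbf{Z}_{r}=0$ and $\mathbf{PL}_{r}=0$; $(iii)$ into $\mathbf{A}_{\widehat{r}}=0$ and $\mathbf{C}_{\widehat{r}}=0$; and $(iv)$ into $(A\otimes B,\cdot,\mathcal{B})$ being a $\bz$-graded commutative associative algebra with Connes cocycle together with $(A\otimes B,[-,-],\mathcal{B})$ being a quasi-Frobenius $\bz$-graded Lie algebra.

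With this in hand, I would first record three implications that follow immediately from the preceding results: $(i)\Leftrightarrow(ii)$ is Proposition \ref{pro:quasi-PP}; $(ii)\Rightarrow(iii)$ is Proposition \ref{pro:PPYBE-PYBE}; and $(i)\Leftrightarrow(iv)$ is Proposition \ref{pro:Frob-PP}, whose ``only if'' uses the rigidity of the perm algebra in Example \ref{ex:qu-perm}. This already gives $(i)\Leftrightarrow(ii)\Leftrightarrow(iv)$ together with $(ii)\Rightarrow(iii)$, so the only remaining task is to produce one implication from $(iii)$ back into the cycle.

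To close the loop I would prove $(iii)\Rightarrow(ii)$. The skew-symmetry of $\widehat{r}$ forces symmetry of $r$ via the identity $\sum_{j}e_{j}\otimes f_{j}=-\sum_{j}f_{j}\otimes e_{j}$ already appearing in the proof of Proposition \ref{pro:ZYBE-AYBE}. Splitting the completed $\PYBE$ as $\mathbf{A}_{\widehat{r}}+\mathbf{C}_{\widehat{r}}=0$ forces each summand to vanish separately (since the former lives in the associative part of the $\bz$-graded Poisson structure and the latter in the Lie part). The vanishing of $\mathbf{A}_{\widehat{r}}$ combined with the $(iii)\Rightarrow(ii)$ direction of Theorem \ref{thm:quasi-ass-eq} delivers $\mathbf{Z}_{r}=0$. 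For the pre-Lie half I would retrace the proof of Proposition \ref{pro:PLYBE-CYBE}, which rewrites $\mathbf{C}_{\widehat{r}}$ as a combination of $\bullet$-products of the symmetric form of $\mathbf{PL}_{r}$ (the $\mathcal{S}$-equation) with elements of $B\,\hat{\otimes}\,B\,\hat{\otimes}\,B$ built from $\diamond$; for the specific perm algebra of Example \ref{ex:qu-perm}, the left nondegeneracy of $\hat{\omega}$ together with coefficient comparison in the homogeneous basis $\{x_{1}^{i_{1}}x_{2}^{i_{2}}\partial_{1},\,x_{1}^{i_{1}}x_{2}^{i_{2}}\partial_{2}\mid i_{1},i_{2}\in\bz\}$ should invert that computation and yield $\mathbf{PL}_{r}=0$.

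The hard part is precisely this converse on the pre-Lie side: Proposition \ref{pro:PLYBE-CYBE} is stated only in the forward direction, so additional work is needed to extract $\mathbf{PL}_{r}=0$ from $\mathbf{C}_{\widehat{r}}=0$. My expectation is that the same mechanism underlying the ``if and only if'' halves of Theorem \ref{thm:quasi-ass-eq} and Proposition \ref{pro:Frob-PP}---direct coefficient comparison of distinguished triple monomials such as $x_{1}^{2}\partial_{2}$, exploiting the separability of the perm products into $\partial_{1},\partial_{2}$ components---will carry this through. Once $\mathbf{Z}_{r}=\mathbf{PL}_{r}=0$ is established, we are back at $(ii)$, and the cycle $(i)\Rightarrow(ii)\Rightarrow(iii)\Rightarrow(ii)\Leftrightarrow(i)\Leftrightarrow(iv)$ closes, completing the four-way equivalence.
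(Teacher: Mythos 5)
Your proposal matches the paper's proof, which is precisely the one-line combination of Theorem \ref{thm:quasi-ass-eq} with Propositions \ref{pro:PPYBE-PYBE}, \ref{pro:quasi-PP} and \ref{pro:Frob-PP}. You are in fact more careful than the paper: the return implication from $(iii)$ (inverting Proposition \ref{pro:PLYBE-CYBE} on the pre-Lie side, since the definition of a completed solution of the $\PYBE$ already gives $\mathbf{A}_{\widehat{r}}=\mathbf{C}_{\widehat{r}}=0$ separately) is left entirely implicit there, and your coefficient-comparison sketch in the homogeneous basis of Example \ref{ex:qu-perm} is the natural way to supply it.
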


\begin{ex}\label{ex:FPP-FP}
Consider the two dimensional quasi-Frobenius pre-Poisson algebra $(A=\bk\{e_{1}, e_{2}\},
\ast$, $\circ, \varpi)$, where the pre-Poisson algebra $(A=\bk\{e_{1}, e_{2}\}, \ast, \circ)$
is given in \ref{ex:Zbia-Assbia} and the bilinear form $\varpi(-,-)$ is given by
$\varpi(e_{1}, e_{2})=1$. Then $r=e_{1}\otimes e_{2}+e_{2}\otimes e_{1}$ is a
symmetric solution of the $\PPYBE$ in $(A, \ast, \circ)$. Let $(B=\bk\{x_{1}, x_{2}\},
\diamond, \omega)$ be the two dimensional quadratic perm algebra given in Example
\ref{ex:Zbia-Assbia}. Then the element $\widehat{r}$ defined by Eq. \eqref{r-indass} is a
skew-symmetric solution of the $\PYBE$ in $(A\otimes B, \cdot, \circ)$, and the bilinear
form $\mathcal{B}(-,-)$ on $A\otimes B$ is given by $\mathcal{B}(e_{2}\otimes x_{2},\;
e_{1}\otimes x_{1})=1=\mathcal{B}(e_{1}\otimes x_{2},\; e_{2}\otimes x_{1})$.
\end{ex}

 \end{document}